\newtheorem{thm}{Theorem}[section]
\newtheorem{prop}[thm]{Proposition}
\newtheorem{lemma}[thm]{Lemma}
\newtheorem{cor}[thm]{Corollary}
\theoremstyle{definition}
\newtheorem{defn}[thm]{Definition}
\newtheorem{notation}[thm]{Notation}
\theoremstyle{remark}
\newtheorem{remark}[thm]{Remark}
\numberwithin{equation}{section}
\def\N{\mathbb{N}}
\def\R{\mathds{R}}
\def\C{\mathds{C}}
\def\Imm{\Im\mathrm{m}}
\def\Ree{\Re\mathrm{e}}
\def\ci{\dot\iota}
\def\Tr{\mathrm {Tr}}
\begin{document}

\title[QED $\overset{\hslash\to 0}{\longrightarrow} $ CED]{Towards a derivation of Classical ElectroDynamics of charges and fields from QED}

\author{Zied Ammari}
\address{Univ Rennes, [UR1], CNRS, IRMAR - UMR 6625, F-35000 Rennes, France.}
\email{zied.ammari@univ-rennes1.fr}

\author{Marco Falconi}
\address{Politecnico di Milano \\ D-Mat\\ Piazza Leonardo da Vinci \\ 20133 Milano \\ Italy}
\email{marco.falconi@polimi.it}

\author{Fumio Hiroshima}
\address{Faculty of Mathematics\\ Kyushu University\\ Motooka 744\\ Nishiku, Fukuoka 819-0385\\ Japan}
\email{hiroshima@math.kyushu-u.ac.jp}

\date{February 2, 2022}

\subjclass[2020]{Primary  81V10 , 35Q61, 81Q20   ; Secondary 	81S08, 28C20  }
\keywords{Pauli--Fierz, Newton--Maxwell system, semiclassical analysis, Wigner measures, Liouville equation, generalized  flow.}

\begin{abstract}
The purpose of this article is twofold:
  \begin{itemize}
  \item On one hand, we rigorously derive the Newton--Maxwell equation in the
    Coulomb gauge from first principles of quantum electrodynamics in
    agreement with the formal Bohr's correspondence principle of quantum
    mechanics.
  \item On the other hand, we establish the global
   well-posedness of the Newton--Maxwell system on energy-spaces under weak
    assumptions on the charge distribution.
  \end{itemize}
  Both results improve the state of the art, and are obtained by
  incorporating semiclassical and measure theoretical techniques. One of the novelties is the use of quantum propagation properties in order to build global solutions of
  the Newton--Maxwell equation.
\end{abstract}
\maketitle
\tableofcontents

\section{Introduction}
\label{sec.intro}

The dynamics of charged particles in interaction with the electromagnetic
field is a topic that has been at the heart of physical and mathematical
investigations for the most part of the last two centuries, and it is still
of great interest for a large community of scientists, \cite{MR0436782}. In the current state
of the art, we are still far from having a complete understanding of such
dynamics, especially from a mathematical perspective: the interaction carries
singularities of various nature both at the classical and quantum levels,
that are quite difficult to deal with (see, \emph{e.g.},~\cite{MR3837508,MR1410064}). It is well known that, from a
classical standpoint, an atomic system is not stable:   according to Larmor's formula electrons would lose
energy  by radiation when accelerated by the nucleus' electrostatic force,
thus collapsing ! In addition, the charge distribution of a given particle
cannot be concentrated on a single point.   Otherwise, it would yield an infinite electrostatic energy caused by the self-interaction of the particle with its own field; and consequently leading to known pathological behaviors of the Abraham--Lorentz radiation reaction  force.
On the other hand, if one passes to the quantum theory (that was indeed developed in good part to address the issues
raised by classical electrodynamics), a stable atomic model can be
defined. However, the relativistic nature of the quantum electromagnetic
field introduces new divergences (both infrared and ultraviolet) that require
a so-called renormalization procedure to be dealt with.  Unfortunately,  renormalization
 for quantum electrodynamics, both with relativistic lepton
fields and nonrelativistic point charges, has not yet been put on firm
mathematical grounds (see \cite{spohn2004dcp}, for a more detailed
discussion).
 In the absence of such Lorentz covariant fundamental theory, one can alternatively state that the classical theory of electrodynamics is not appropriate for describing the behavior of an electron at a distance less than its Compton wavelength; and thus justifying the introduction of extended (non-point) charge distributions.

\medskip

The aim of this paper is to study the mathematical interplay between the quantum and
classical features of electrodynamics, in the case of extended and
nonrelativistic charges in interaction with the electromagnetic
field. Physically, we are thinking about molecules, ions or atoms (possibly
different among each other) interacting both among themselves, with a pair
potential, and with the electromagnetic field. From a classical standpoint,
the dynamics is governed by the Newton--Maxwell system of equations, a coupled
PDE-ODE nonlinear system, while at the quantum level the linear evolution is
generated by the so-called Pauli--Fierz Hamiltonian,  \cite{MR1639713}. We rigorously derive the
classical dynamics starting from the quantum one in the limit $\hslash\to 0$ (here
$\hslash$ is, as usual in semiclassical analysis, a dimensionless parameter
measuring how strong the quantum effects are in the system). Thus, verifying
the validity of Bohr's correspondence principle. In addition, we prove global
well-posedness of the classic Newton--Maxwell system for ``rough'' charge
distributions.

Bohr's correspondence principle states that ``in the limit
  of large quantum numbers'' (\emph{i.e.}\ at scales where the noncommutative
  nature of observables becomes irrelevant) the physical description of a
  quantum system shall become the one dictated by its classical
  analogue. While such commonly accepted physical principle is nowadays
  mathematically understood to a good extent for particle systems, less is
  known for field theories. For the electrodynamics of extended charges,
  there are partial results available, limited only to coherent initial
  quantum states of minimal uncertainty, with no initial correlation between
  the particle's and field's subsystems~\cite{Knowles2009LimitingDI}.

We extend such analysis in two directions, in Theorem
  \ref{thm:main-2}: On one hand, we relax the assumptions on the charge
  distributions of the particles, and on the other hand, we prove the
  correspondence principle for a very general class of quantum states,
  including highly correlated and incoherent ones. Moreover, our approach is
  different from the one in \cite{Knowles2009LimitingDI} and uses the more
  general concept of Wigner measures. This allows us to formulate an
  essentially complete mathematical picture of the correspondence principle
  for such  fundamental interaction.

  \medskip  A second element of novelty is that we use the
    semiclassical features of quantum dynamics \emph{to prove results on the
      classical dynamics} in a way that was not, up to our knowledge,
    explored before in QFT. In fact, we prove by infinite dimensional semiclassical analysis and measure theoretical techniques, global
    well-posedness for the Newton-Maxwell system, under weak
    regularity assumptions on the particles' charge distributions (see Theorem \ref{thm:gflow} and Theorem \ref{thm:main-1}). The
    regularity of the charge distribution affects the fixed point estimates
    for such nonlinear evolution system, and it plays a crucial role for the
    ODE part of the system: below a certain regularity threshold
    ($\dot{H}^{-s}\cap \dot{H}^{s+1}$, where $0\leq s < \frac{d}{2}$ is the Sobolev
    index of the electromagnetic field) the vector field of interaction is
    not Lipschitz, and thus it is not possible to close a standard fixed
    point argument as in \cite{falconi2014jmp}. While on the PDE part a lack
    of regularity may be compensated by finer (spacetime) estimates, for the
    ODE part there is less space of maneuver (see \S~\ref{stab.had} for a discussion).  Instead, we take a ``quantum
    detour'', exploiting the well-defined quantum dynamics to define the
    classical one; we find this approach very natural from a physical
    perspective (classical theories are only an approximation of quantum
    ones), and even if mathematically it could seem at first sight
    surprising, it reflects the regularizing character of the quantum
    dynamics (think, \emph{e.g.}, of Coulomb potentials). We also
    prove some new interesting regularity propagation estimates, inherited
    from the corresponding quantum ones (see Proposition
    \ref{prop.propag.NM}). Moreover, we
    conjecture that the assumptions
    \eqref{eq:hypo1}-(\hyperref[eq:hypo2]{$A_2^{(1)}$}) used in this article
    are thresholds for the Hadamard stability  of the Newton--Maxwell
    equation in the space $X^1$ defined in \eqref{spX1}.

\medskip
 One of the main tools in our investigation  is \emph{Wigner measures}. This mathematical object is  of course well-known in homogenization and in semiclassical analysis over finite dimensional spaces, see for instance \cite{MR2839299,MR1627111,MR1438151,MR1251718,MR1069518}. In recent years, these measures have been extended on the one hand to many-body theory in the mean filed scaling \cite{ammari2008ahp,MR3379490}, and on the other to interacting quantum fields  \cite{ammari2014jsp,ammari2017sima}. Our analysis here  expands the previous investigations to  systems of quantum particles coupled to quantized fields.  In addition, we provide here several improvements of those techniques summarized in Appendices~\ref{appx} and \ref{appy}.

\medskip

Let us conclude this general introduction by providing some references. In addition to
\cite{falconi2014jmp} there are quite some works devoted to the Newton--Maxwell
system of equations \cite{appel2001ap, MR1217117,bambusi1996lmp, bauer2013cpde,
  bauer2001ahp, imaykin2004ahp, imaykin2004mon, imaykin2011jmp,
  kiessling1999pla, komech2000cpde}. Whereas the Pauli--Fierz dynamics and nonrelativistic quantum electrodynamics have attracted a great interest from the mathematical physics community, see, \emph{e.g.},~\cite{MR700181,MR1996868,MR2361881,MR2863851,MR1682684,MR3201223,MR1878286,MR2077252,MR2502527,MR1856401,MR2498761}.
   Apart from the already mentioned paper
\cite{Knowles2009LimitingDI}, let us mention some other works where the classical
limit of the Pauli--Fierz Hamiltonian has been studied in a different scaling, and with a different physical interpretation
\cite{correggi2019jst, correggi2020arxiv, correggi2021jems, leopold2019ahp,
  leopold2020sima}.

\medskip

In the rest of this introduction we will define some notations,  present the
mathematical setting of the problem and conclude  with the statement of our
main results (Theorems~\ref{thm:main-1}
and~\ref{thm:main-2}). In Section~\ref{sec.NM}, we focus on the classical
Newton--Maxwell system and its uniqueness properties. Section~\ref{sec.PF} is
devoted to the quantum system, in particular we provide some  new uniform estimates
for the Pauli--Fierz Hamiltonian that are crucial to our analysis. In Section~\ref{sec.CH}, we study the classical limit $\hslash\to 0$ of the quantum dynamics and derive the characteristic equation satisfied by its Wigner measures.  Finally, Section~\ref{sec.proof} is devoted to the proof of
Theorems~\ref{thm:main-1} and~\ref{thm:main-2}.  The Appendix~\ref{appx} contains abstract results -- of independent  interest -- concerning:
\begin{itemize}
\item Equivalence between characteristic and Liouville equations.
\item Probabilistic representation of measure-valued solutions of Liouville equations.
\item Construction of generalized global flow for abstract initial value problems.
\end{itemize}
The measure theoretical techniques presented in Appendix~\ref{appx} are crucial in the proof of our main results. It
puts in a conclusive form several ideas that appeared previously in \cite{MR4134153,MR3721874,rouffort2018general}. While
Appendix~\ref{appy} establishes convergence results towards Wigner measures under optimal regularity assumptions on quantum states.

\bigskip
{\bf Acknowledgments:} M.F.\ acknowledges support from ``Istituto
Nazionale di Alta Matematica (INdAM)'' through the ``Progetto Giovani GNFM
2020: Emergent Features in Quantum Bosonic Theories and Semiclassical
Analysis'', and from the European Research Council (ERC) under the European
Union’s Horizon 2020 research and innovation programme (ERC CoG UniCoSM,
grant agreement n.724939). F.H.\ acknowledges support from  JSPS KAKENHI 20K20886 and JSPS KAKENHI 20H01808.

\bigskip
{\bf Notations and main results:}
In this paragraph  we first introduce the precise mathematical formulation of the quantum and classical models
of electrodynamics that we shall consider and then state our main results. Recall that classical
electrodynamics is governed by the Newton--Maxwell equation while quantum electrodynamics is described
by the Pauli--Fierz Hamiltonian. Our main contributions are summarized in Theorem \ref{thm:main-1} where
existence of unique global solutions of the Newton--Maxwell equation is established and in Theorem \ref{thm:main-2}
where the above solutions are derived as the classical limit $\hslash\to 0$ of
quantum Pauli--Fierz dynamics.

\subsubsection*{Newton--Maxwell equation:}
Consider $n$ extended classical particles in the configuration space $\R^d$,
with dimension $d\geq 3$, interacting with an electromagnetic field.  The
$i^{\text{th}}$ particle has an assigned mass $m_i>0$ and a charge
distribution\footnote{The total charge of the $i^{\text{th}}$ particle is
  given,  up to a dimensional constant, by $\int_{\R^d}^{}\varphi_i(x)
  \mathrm{d}x\in \mathbb{R}$. Therefore, our discussion includes as well the
  electrodynamics of charges that are globally neutral, but with a nontrivial
  charge distribution, like for example a water molecule.}  $\varphi_i:\R^d\to \R$; and
their dynamics is completely characterized by their momenta
$p_i=(p_i^\nu)_{\nu=1,\dots,d}\in\R^d$, and positions
$q_i=(q_i^\nu)_{\nu=1,\dots,d}\in\R^d$. On the other hand, it is convenient for our
purposes to describe the electromagnetic field in the Coulomb gauge by a
complex vector field
$$
\alpha =(\alpha_\lambda)_{\lambda=1,\dots,d-1}:\R^d\longrightarrow \C^{d-1}\,.
$$
Thus the Hamiltonian of the Newton--Maxwell system takes the form
\begin{equation}
\label{eq:ENM}
H(p,q,\alpha)=
\sum_{i=1}^n \frac{1}{2m_i} \big(p_i-A_i(q_i,\alpha)\big)^2 +V(q)+
\sum_{\lambda=1}^{d-1} \int_{\R^d}  \; \bar{\alpha}_\lambda(k) \,|k| \, \alpha_\lambda(k)\,\mathrm{d}k\,.
\end{equation}
Here $p=(p_1,\cdots,p_n)\in\R^{dn}$, $q=(q_1,\cdots,q_n)\in\R^{dn}$ and $V$ is the smeared
$d$-dimensional Coulomb potential
\begin{equation*}
V(q)=\sum_{1\leq i<j\leq n} w_{ij}(q_i-q_j) \,,
\end{equation*}
such that
\begin{equation}
\label{eq:w}
w_{ij}=g\, \varphi_i*\frac{1}{|\cdot|^{d-2}}*\varphi_j\,,
\end{equation}
where $g>0$ is a suitable (dimensional) constant. In this work we will require
minimal mathematical assumptions on the potential $V$ (that therefore could
be different from the electrostatic potential above), and on the Fourier
transform of the $\varphi_i$, see hypotheses \eqref{eq:hypo0}-\eqref{eq:hypo1} and
\eqref{eq:hypo2} below. The electromagnetic vector potential (smeared by the
charge distribution $\varphi_i$) is defined by
\begin{equation*}
A_i(q_i,\alpha)= \sum_{\lambda=1}^{d-1} \int_{\R^d} \, \frac{\epsilon_\lambda(k)}{\sqrt{2|k|}}
\, \bigl( \bar{\chi}_i(k)\,\alpha_\lambda(k) \,e^{2\pi\ci k\cdot q_i} +\chi_i(k)\,\bar{\alpha}_\lambda(k) \,e^{-2\pi\ci k\cdot q_i}\bigr) \,\mathrm{d}k\, \in\R^d\,,
\end{equation*}
such that
\begin{equation*}
\{\epsilon_\lambda(k),\tfrac{k}{\lvert k  \rvert_{}^{}}\}_{\lambda=1,\dots,d-1}
\end{equation*}
is an O.N.B of $\R^d$ and
\begin{equation}
\label{eq:chi}
\chi_i(\cdot)=\mathscr{F}^{-1} \bigl[\varphi_i\bigr](\cdot)= \int_{\R^d}  \, e^{2\pi\ci x \, \cdot \,(\,\cdot \,)} \, \varphi_i(x)\,\mathrm{d}x\,.
\end{equation}
Let us recall that the scalar potential is not a dynamical variable in the
Coulomb gauge. Moreover, we denote the components of the vector potential by
\begin{equation*}
A_i(q_i,\alpha)=\big(A^\nu_i(q_i,\alpha)\big)_{\nu=1,\dots,d}\;.
\end{equation*}
The Newton--Maxwell equations of motion read as:
\begin{subequations}
\label{eq:Newton-Maxwell}
\begin{align}
       \partial_t p_i &=\frac{1}{m_i} \sum_{\nu=1}^d \big(
       p_i^\nu-A^\nu_i(q_i,\alpha)\big) \, \nabla_{q_i}A^\nu_i(q_i,\alpha)- \nabla_{q_i} V(q)\;,         \label{eq:NMp} \\
        \partial_t q_i &=\frac{1}{m_i} \, \big(p_i-A_i(q_i,\alpha)\big)\;, \label{eq:NMq}\\
      \ci \partial_t \alpha_\lambda(k)&=|k| \alpha_\lambda(k)-\sum_{i=1}^n \frac{1}{m_i} \, \frac{\chi_i(k)}{\sqrt{2|k|}}
        \, \big(p_i-A_i(q_i,\alpha)\big) \cdot \epsilon_\lambda(k) \,e^{-2\pi\ci k\cdot q_i}\;.
        \label{eq:NMfa}
\end{align}
\end{subequations}
Consider respectively the following weighted $L^2$ spaces $\dot{\mathfrak{H}}^{\sigma}$ and
$\mathfrak{H}^\sigma$ with the norms
\begin{eqnarray}
\displaystyle
\|\alpha\|_{\dot{\mathfrak{H}}^\sigma}&= &\left(\sum_{\lambda=1}^{d-1} \int_{\R^d}  \, |k|^{2\sigma} \, |\alpha_\lambda(k)|^2\,\mathrm{d}k\right)^{1/2},\\
\|\alpha\|_{\mathfrak{H}^\sigma}&= & \left(\sum_{\lambda=1}^{d-1} \int_{\R^d}  \, (1+|k|^{2})^{\sigma} \, |\alpha_\lambda(k)|^2\,\mathrm{d}k\right)^{1/2},
\end{eqnarray}
and define the functional inner product spaces $\dot X^\sigma=\R^{dn}\times \R^{dn}\times
\dot{\mathfrak{H}}^\sigma$ and $X^\sigma=\R^{dn}\times \R^{dn}\times \mathfrak{H}^\sigma$ endowed respectively with the
norms\footnote{Let us remark that with the above notation, the $p_i$s and
  $q_i$s are $d$-dimensional vectors.}
\begin{equation}
\label{spX1}
\|u\|^2_{\dot X^\sigma}= \sum_{i=1}^{n} \bigl(|p_i|^2+|q_i|^2\bigr) +\|\alpha\|^2_{\dot{\mathfrak{H}}^\sigma}\,, \quad
\|u\|^2_{X^\sigma}=\sum_{i=1}^{n} \bigl(|p_i|^2+|q_i|^2\bigr)+\|\alpha\|^2_{\mathfrak{H}^\sigma}\,,
\end{equation}
with $u=(p,q,\alpha)$ in $\dot X^\sigma$ or $X^\sigma$. The spaces $X^\sigma$ with the above
inner product are always complete, and thus Hilbert, while $\dot X^{\sigma}$ are
complete only for $\sigma< \frac{d}{2}$. Through out the article, we will use indifferently the canonical identifications,
$$
\dot X^\sigma\equiv \C^{dn}\times \dot{\mathfrak{H}}^\sigma \qquad \text{ and } \qquad X^\sigma\equiv \C^{dn}\times \mathfrak{H}^\sigma\,,
$$
implemented by the complex structure
$$
z\equiv q+\ci p\,,
$$
where $\ci$ denotes the  imaginary unit. Note that $\mathfrak{H}^0=L^2 (\mathds{R}^d, \mathds{C}^{d-1}  )$ and $X^0=\dot{X}^0=\mathbb C^{dn}\times \mathfrak{H}^0$.

\subsubsection*{Pauli--Fierz Hamiltonian:}
The formal quantization of the above Newton--Maxwell system gives the
so-called Pauli--Fierz model of non-relativistic quantum electrodynamics that
we recall below.  The Hilbert space of the quantized particles-field system
is
\begin{equation*}
  \mathscr{H} =L^2 (\mathds{R}_x^{dn}, \mathds{C} )\otimes\Gamma_{\mathrm{s}}\bigl(L^2 (\mathds{R}_k^d, \mathds{C}^{d-1}  )\bigr) \,,
\end{equation*}
where $\Gamma_{\mathrm{s}}\bigl( \mathfrak{H}^0 \bigr)$ denotes the symmetric Fock space over $\mathfrak{H}^0$.
Let $\hat{p}=(\hat p_1,\dots,\hat p_n)$ and $\hat{q}=(\hat q_1,\dots,\hat q_n)$ denote the particles momenta  and position operators,
\begin{equation*}
  \hat{p}_i=-\ci\hslash\nabla_{x_i} \,,\quad \quad \hat{q}_i=x_i\; .
\end{equation*}
The $\hslash$-scaled annihilation-creation operators for the field are defined for any $f=\bigl(f_1,\dots,f_{d-1}\bigr)\in L^2 (\mathds{R}^d, \mathds{C}^{d-1} )$ as
\begin{equation}
\label{eq:an-cr1}
  \hat{a}(f)=\sum_{\lambda=1}^{d-1}\int_{\mathds{R}^d}  \,\bar{f}_\lambda(k)\,\hat{a}_\lambda(k) \,\mathrm{d}k
  \; ,\qquad \hat{a}^{*}(f)=\sum_{\lambda=1}^{d-1}\int_{\mathds{R}^d}  \,{f}_\lambda(k)\,\hat{a}^*_\lambda(k)\, \mathrm{d}k\; ,
\end{equation}
with  $\hat{a}_\lambda(k)$ and $\hat{a}_\lambda^{*}(k)$ are the annihilation-creation operator-valued distributions satisfying the $\hslash$-scaled canonical commutation
relations:
\begin{equation}
\label{eq:an-cr2}
  [\hat{a}_\lambda(k),\hat{a}_\lambda^{*}(k')]=\hslash\,\delta(k-k')\,\delta_{\lambda,\lambda'}\; .
\end{equation}
With these notations the field Hamiltonian is given by
\begin{equation*}
  \hat{H}_{\mathrm{f}}=\sum_{\lambda=1}^{d-1}\int_{\R^d}\;\lvert k  \rvert \,
  \hat{a}_\lambda^{*}(k)\hat{a}_\lambda(k)\,\mathrm{d}k\;,
\end{equation*}
 and the smeared quantum electromagnetic vector potential is defined as
\begin{equation}
\label{eq:vectpot}
  A_i(\hat q_i,\hat a)=\sum_{\lambda=1}^{d-1} \int_{\R^d} \, \frac{\epsilon_\lambda(k)}{\sqrt{2|k|}}
\, \bigl(\bar{\chi}_i(k) \,\hat{a}_\lambda(k) \,e^{2\pi\ci k\cdot \hat q_i} + \chi_i(k)\,\hat{a}^{*}_\lambda(k) \,e^{-2\pi\ci k\cdot \hat q_i}\bigr)\,\mathrm{d}k\;.
\end{equation}
Here, $V$ and $\chi_i$ are respectively the same potential and form factor as for the
Newton--Maxwell system.  In particular, $\chi_i$ is the Fourier transform of the
$i^{\text{th}}$ particle's charge distribution $\varphi_i$ and
${\bigl(\epsilon_\lambda(k)\bigr)}_{\lambda=1,\dotsc,d-1}$ are the above defined polarization vectors satisfying,
for almost all \ $k\in \mathbb{R}^d$ and for all $\lambda,\lambda'\in \{1,\dotsc, d-1\}$, the identities:
\begin{equation}
\label{eq:Coulgauge}
k\cdot \epsilon_\lambda(k)=0\,\qquad \text{and } \qquad \epsilon_\lambda(k)\cdot \epsilon_{\lambda'}(k)=\delta_{\lambda,\lambda'}\,.
\end{equation}
The Pauli--Fierz Hamiltonian of $n$-particles interacting with the quantized
electromagnetic field takes then the form
\begin{equation}
  \label{eq:PF}
  \hat{H}_{\hslash}=\sum_{i=1}^n {\frac{1}{2m_i}} {\bigl(\hat{p}_i-A_i(\hat q_i,\hat a)\bigr)}^2+V(\hat q)+\hat{H}_{\mathrm{f}} \;.
\end{equation}
It is known that the above Pauli--Fierz Hamiltonian is self-adjoint under convenient assumptions on the
form factors $\chi_i$ and the potential $V$ (see Section \ref{sec.PF} for more details).
For convenience, let us also define the non-interacting  Hamiltonian,
\begin{equation*}
  \hat{H}_{\hslash}^0=\sum_{i=1}^n {\frac{1}{2m_i}} \hat{p}_i^2+ \hat{H}_{\mathrm{f}}\; ,
\end{equation*}
the  $\hslash$-scaled number operator
\begin{equation}
\label{eq:Nop}
\hat{N}_\hslash=\sum_{\lambda=1}^{d-1}\int_{\R^d}\;
  \hat{a}_\lambda^{*}(k)\hat{a}_\lambda(k)\,\mathrm{d}k\;,
\end{equation}
and the weighted modulus momentum operator defined for $\sigma\geq 0$ as
\begin{equation}
\label{eq:dgk}
{\rm d}\Gamma(|k|^{2\sigma})=\sum_{\lambda=1}^{d-1}\int_{\R^d}\; |k|^{2\sigma}\;
  \hat{a}_\lambda^{*}(k)\hat{a}_\lambda(k)\,\mathrm{d}k\;.
\end{equation}

\subsubsection*{Wigner measures:}

In order to establish the Bohr's correspondence principle between quantum and
classical electrodynamics it is useful to use the very general concept of
Wigner (or semiclassical) measures, and not to restrict to coherent states
only. The semiclassical techniques of Wigner measures are a well known tool
that efficiently relates the quantum and classical states while taking the
effective  limit $\hslash\to0$. In particular, Wigner measures have been extensively
studied in finite dimensions (see, \emph{e.g.},~\cite{MR1251718} and
references therein contained). Moreover, such concept has been extended to
infinite dimensional phase spaces in \cite{ammari2008ahp} and applied to some
fundamental examples of many-body and quantum field theories in
\cite{ammari2014jsp,ammari2017sima,ammari2011jmpa,MR3379490}.

\medskip
In the sequel, we denote by
$\mathfrak{P}(X^\sigma)$ the set of all Borel probability measures over $X^\sigma$ and recall that
a density matrix  on $\mathscr H$ is a normalized non-negative trace class operator on
$\mathscr{H}$ called sometimes a quantum normal state.
\begin{defn}[Wigner measures]
\label{def:wigner}
      A Borel probability measure  $\mu\in\mathfrak{P}(X^0)$ is a Wigner measure of a family of density matrices
      $\bigl(\varrho _{\hslash}\bigr)_{
      \hslash \in (0,1)}$ on  the Hilbert space $\mathscr H$ if and only if there exists a countable subset
      $\mathscr{E }\subset  (0,1)$ with $0\in \overline{\mathscr{E }}$ (the closure of  ${\mathscr{E }}$)  such
    that for any $\xi=(p,q,\alpha) \in X^0$:
    \begin{equation}
\label{eq:wigner}
\lim_{\hslash \to 0,\hslash \in \mathscr{E  }} \Tr\big[ \varrho_{\hslash} \;\mathcal{W}\big(\pi q,-\pi p,\sqrt{2}\pi \alpha\big)\big] =
\int_{u\in X^0} e^{2\pi\ci  \Ree\langle \xi, u\rangle_{X^0}}\; \mathrm{d}\mu(u)\,.
\end{equation}
Here, $\mathcal{W}(\cdot)$ depends
on the parameter $\hslash$ and denotes the Weyl--Heisenberg operator defined according
to \eqref{eq:weyl-Heis}, \eqref{eq:weyl} and \eqref{eq:repWeyl}.
\end{defn}
\begin{notation}[Wigner measures set]
The above definition extends to any family of density matrices $\bigl(\varrho _{\hslash}\bigr)_{
      \hslash \in \mathscr{I}}$ indexed  by an arbitrary subset $\mathscr{I}$ of {the interval} $(0,1)$ such that $0\in\overline{\mathscr{I}}$ {(the closure of $\mathscr{I}$).} In such case, we simply denote
      the collection of all the Wigner (probability) measures of $\bigl(\varrho _{\hslash}\bigr)_{
      \hslash \in \mathscr{I}}$ by
      \begin{equation}
      \label{not.Mj}
      \mathscr{M}(\varrho _{\hslash}, \hslash\in \mathscr{I}) \,.
      \end{equation}
\end{notation}
Note that in general the set of Wigner measures $\mathscr{M}(\varrho _{\hslash}, \hslash \in
(0,1 ))$ is not empty if a mild assumption on the density matrices is assumed (see Proposition \ref{wig}). {Moreover, by extracting subsequences one can always choose a family of quantum normal states with a single Wigner measure, \emph{i.e.} there exists $\mathscr{I}$ such that \eqref{not.Mj} is a singleton.}
\medskip
Now, the convergence of quantum electrodynamics towards classical electrodynamics when
$\hslash\to 0$ can be reformulated as the formal commutative diagram:

\begin{equation*}
  \begin{tikzcd}[row sep=1.5cm, column sep=4.5cm]
    \varrho_{\hslash} \arrow[r, mapsto,"e^{-\frac{i}{\hslash}t \hat{H}_{\hslash}}\;(\,\cdot\,) \; e^{\frac{i}{\hslash}t \hat{H}_{\hslash}}"] \arrow[d, "\hslash\to 0" left]& \varrho_{\hslash}(t)\arrow[d,"\hslash\to 0" right]\\
    \mu_0 \arrow[r, mapsto, "(\Phi_t)\,_{\sharp}\, (\,\cdot \,)" swap]&\mu_t
  \end{tikzcd}
\end{equation*}

Indeed, consider a family of density matrices $(\varrho _{\hslash})_{\hslash\in(0,1)}$ at time
$t=0$; then its time evolution satisfies
$$
\varrho _{\hslash }(t)=e^{-
      \frac{i}{\hslash }t \,\hat H_\hslash}\,\varrho _{\hslash}\,e^{\frac{i}{\hslash } t \, \hat H_\hslash}\,.
$$
In order to link the quantum and classical dynamics as $\hslash\to 0$, it is enough
to show that if the family of states $(\varrho _{\hslash})_{\hslash\in(0,1)}$ admits a single
Wigner measure $\mu_0$, \emph{i.e.}
$$
 \mathscr{M}(\varrho _{\hslash}, \hslash\in (0,1))=\{\mu_0\} \,,
$$
then the family $\bigl(\varrho _{\hslash }(t)\bigr)_{\hslash\in(0,1)}$ admits a
unique Wigner measure at any time $t\in\R$ given by the  push-forward measure
\begin{equation}
  \label{int.eqt}
  \mu_t= (\Phi _t)\,_{\sharp}\, \mu_{0}\;,
\end{equation}
with $\Phi_t$ the flow that solves the classical Newton--Maxwell equation. In other words, the
right hand side of \eqref{int.eqt} denotes the image measure or pushforward measure
defined for all Borel sets $B$  of $X^\sigma$ as
$$
(\Phi_t)\,_\sharp\,\mu_0(B)=\mu_0\big((\Phi_t)^{-1}(B)\big)\,.
$$
It is worth noting that such Bohr's correspondence principle scheme was successfully proved to be
true for the Nelson model with Yukawa interaction, in a different scaling (in
which the particles are many but remain quantum), both with and without
ultraviolet cutoffs in \cite{ammari2014jsp} and \cite{ammari2017sima},
respectively.

\subsubsection*{Main results:}
The two ingredients to be chosen in the quantum and classical systems
\eqref{eq:ENM}-\eqref{eq:PF} are the potential $V$ and the form factors
$\chi_i$s. Although $V$ and $\chi_i$s are physically related through the charge
distributions $\varphi_{i}$s, we prefer to consider a more general context by
picking the following hypotheses, for all $i=1,\dotsc,n$:
\begin{equation}
\label{eq:hypo0}\tag{$A_0$}
        V\in \mathscr{C}_b^2(\R^{dn},\R)\,,
\end{equation}
\begin{equation}
\label{eq:hypo1}\tag{$A_1$}
        |\,\cdot \,|^{-1}\,\chi_i(\cdot )\in L^2 (\R^d, \C)\,,
\end{equation}
and
\begin{equation}
\label{eq:hypo2}\tag{$A_2^{(\sigma)}$}
        |\,\cdot \,|^{\frac 3 2-\sigma}\,\chi_i(\cdot )
        \in L^2 (\R^d, \C)\,.
\end{equation}
The first condition means that $V$ is a bounded $\mathscr{C}^2$ function with
its first and second derivatives bounded. In our analysis the relevant range
for the parameter $\sigma$ in the assumption \eqref{eq:hypo2} is the interval
$[\frac{1}{2},1]$.  Notice that for $\sigma,\sigma'\in[\frac{1}{2},1]$ such that $\sigma\geq  \sigma'$,
one has
$$
(A_2^{(\sigma')})\; \Rightarrow \; (A_2^{(\sigma)})\,.
$$
In particular, (\hyperref[eq:hypo2]{$A_2^{(1)}$}) holds true if we assume \eqref{eq:hypo2} for some  $\sigma\in[\frac 1 2, 1[$.
Moreover, by interpolation if $\chi_i$ satisfies \eqref{eq:hypo1} and \eqref{eq:hypo2}, then for all
$-1\leq \lambda\leq \frac 3 2-\sigma$,
  $$
  \lvert \,\cdot \, \rvert_{}^{\lambda}\,\chi_i(\cdot )\in L^2(\mathds{R}^d,\mathds{C})\,.
  $$

Our first result concerns the flow of the Newton--Maxwell system of equations,
and reads as follows.
\begin{thm}[Generalized global flow]
\label{thm:main-1}
Let $\sigma\in[\frac 1 2, 1]$ and assume that \eqref{eq:hypo0}, \eqref{eq:hypo1} and \eqref{eq:hypo2}
are satisfied. Then for any initial condition $u_0\in X^\sigma$ there exists a unique global strong solution
$u(\,\cdot\,)\in \mathscr{C}(\R,X^\sigma)\cap \mathscr{C}^1(\R, X^{\sigma-1})$ of the Newton--Maxwell equation \eqref{eq:Newton-Maxwell}. Moreover, the generalized global flow
\begin{equation}
\label{eq:Gflowmap}
\begin{aligned}
\Phi_t:  X^\sigma & \rightarrow & X^\sigma\\
u_0 &\mapsto & u(t)
\end{aligned}
\end{equation}
is Borel measurable.
\end{thm}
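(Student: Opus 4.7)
The plan is to bypass a direct fixed-point analysis of the Newton--Maxwell system---which is hindered by the lack of Lipschitzianity of the classical vector field in the regularity range $\sigma\in[\frac12,1]$---and instead to obtain the flow through a \emph{quantum detour}. Concretely, I would combine three ingredients: (i) the global well-posedness of the Pauli--Fierz dynamics $e^{-\ci t\hat H_\hslash/\hslash}$, which follows from the self-adjointness and uniform-in-$\hslash$ propagation estimates to be established in Section \ref{sec.PF}; (ii) the semiclassical passage to the limit $\hslash\to 0$ via Wigner measures, which identifies the candidate classical evolution as a family of Borel probability measures $\mu_t$ on $X^\sigma$ solving a measure-valued version of the Newton--Maxwell equation; and (iii) the abstract machinery of Appendix \ref{appx}, which converts such measure-valued solutions into a genuine Borel measurable generalized flow on $X^\sigma$.

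More precisely, first I would fix an arbitrary $u_0\in X^\sigma$ and associate to it a family of density matrices $(\varrho_\hslash)_{\hslash\in(0,1)}$ on $\mathscr H$ (for instance, suitable coherent states centered at $u_0$) admitting $\delta_{u_0}$ as its unique Wigner measure, and carrying enough uniform moment bounds that the evolved states $\varrho_\hslash(t)=e^{-\ci t\hat H_\hslash/\hslash}\varrho_\hslash\, e^{\ci t\hat H_\hslash/\hslash}$ remain in the class where Wigner measures are guaranteed to exist (cf.\ Proposition \ref{wig}). The uniform Pauli--Fierz energy and number-operator estimates of Section \ref{sec.PF}, together with hypotheses \eqref{eq:hypo1}--(\hyperref[eq:hypo2]{$A_2^{(\sigma)}$}), then yield tightness and propagation of the $X^\sigma$-moments, so that along a countable subset $\mathscr E\subset(0,1)$ with $0\in\overline{\mathscr E}$ the evolved states admit, at each time $t\in\R$, a Wigner measure $\mu_t\in\mathfrak P(X^\sigma)$.

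The next step is to identify $\mu_t$. Using the analysis of Section \ref{sec.CH}, the family $(\mu_t)_{t\in\R}$ satisfies the characteristic equation associated to the Hamiltonian \eqref{eq:ENM}. Invoking the equivalence between characteristic and Liouville equations established in Appendix \ref{appx}, this means that $\mu_t$ is a measure-valued solution of the Liouville equation driven by the Newton--Maxwell vector field. The probabilistic representation theorem of the same appendix then provides a disintegration of $\mu_t$ concentrated on integral curves of \eqref{eq:Newton-Maxwell} in $X^\sigma$; applied to $\mu_0=\delta_{u_0}$, it singles out at least one trajectory $t\mapsto u(t)\in\mathscr C(\R,X^\sigma)\cap\mathscr C^1(\R,X^{\sigma-1})$ emanating from $u_0$, which provides existence together with the regularity propagation asserted by Proposition \ref{prop.propag.NM}.

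The main obstacle will be uniqueness, which is precisely the point where the classical Cauchy--Lipschitz approach breaks down. I would handle it not pointwise but at the level of Liouville solutions: combining the fixed-point estimates that \emph{do} hold in a weaker functional space with the $X^\sigma$-propagation bounds, one shows that any two probabilistic representatives of a Liouville solution issued from $\delta_{u_0}$ must coincide, so that $\mu_t=\delta_{u(t)}$ is the \emph{unique} Wigner measure of $(\varrho_\hslash(t))_{\hslash\in(0,1)}$. This uniqueness forces convergence of the whole family (not merely along a subsequence) and singles out an unambiguous map $u_0\mapsto u(t)$; the abstract generalized-flow construction of Appendix \ref{appx} finally assembles these maps into a Borel measurable $\Phi_t:X^\sigma\to X^\sigma$, yielding \eqref{eq:Gflowmap} and completing the proof.
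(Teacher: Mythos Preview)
Your overall architecture---coherent states centered at $u_0$, quantum propagation, Wigner measures, characteristic/Liouville equation, probabilistic representation---matches the paper's and correctly delivers \emph{existence} of a global trajectory through each $u_0\in X^\sigma$. The gap is in your treatment of \emph{uniqueness}, where you misidentify both the difficulty and the cure.

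You write that uniqueness ``is precisely the point where the classical Cauchy--Lipschitz approach breaks down'' and propose to establish it ``not pointwise but at the level of Liouville solutions''. This is backwards. What fails under the sole hypotheses \eqref{eq:hypo0}--\eqref{eq:hypo2} is the \emph{fixed-point existence} argument in $X^\sigma$ (see the discussion after Proposition~\ref{s1:prop2}); pointwise uniqueness, by contrast, is obtained directly and elementarily in Proposition~\ref{s1:prop2} via a Gr\"onwall inequality in the $X^0$ norm, using that both candidate solutions are a priori bounded in $X^\sigma$ (this is exactly where the assumption \eqref{eq:hypo2} with $\sigma\in[\tfrac12,1]$ enters, through Lemma~\ref{s1:lem3}(ii)). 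More seriously, your proposed route is circular: the abstract generalized-flow construction you invoke (Theorem~\ref{thm:gflow}) \emph{requires} pointwise uniqueness as hypothesis~\ref{cd:b}; you cannot extract uniqueness from that machinery. The probabilistic representation (Theorem~\ref{thm.eta}) alone, without uniqueness, only tells you that $\eta$ is concentrated on \emph{some} set of integral curves---it does not rule out branching from $u_0$, nor does it yield a well-defined Borel map $u_0\mapsto u(t)$.

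The correct logical order, as in the paper, is: (1) prove pointwise uniqueness in $X^\sigma$ by the direct Gr\"onwall argument of Proposition~\ref{s1:prop2}; (2) use the quantum detour to produce, for each $u_*\in X^\sigma$, a measure-valued solution $(\tilde\mu_t)_t$ of the Liouville equation with $\tilde\mu_0=\delta_{u_*}$; (3) feed both ingredients into Theorem~\ref{thm:gflow} to conclude that the set $\mathcal G$ of initial data admitting a global solution is Borel with $\delta_{u_*}(\mathcal G)=1$, hence $u_*\in\mathcal G$; since $u_*$ was arbitrary, $\mathcal G=X^\sigma$ and the flow $\Phi_t$ is globally defined and Borel.
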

The above theorem is a consequence of  a uniqueness property for the Newton--Maxwell solutions, proved in Proposition \ref{s1:prop2},  combined with the existence of global solutions derived from the Pauli--Fierz quantum dynamics by means of the general theory in Appendix \ref{appx}.
 Its proof, provided in Section \ref{sec.proof},  is quite different from standard nonlinear fixed point arguments which require the charge distribution to be more regular. We also prove, by the same method, further quantitative propagation estimates satisfied by the Newton--Maxwell flow in Proposition \ref{prop.propag.NM}.

\medskip

Let now $(\varrho_\hslash)_{\hslash\in (0,1)}$ be a family of density matrices on the total
Hilbert space $\mathscr{H}$ of the particles-field quantum system
(\emph{i.e.} $\varrho_\hslash$ is a normalized non-negative trace class operator on
$\mathscr{H}$). The main assumptions on the family of states $(\varrho_\hslash)_{\hslash\in
  (0,1)}$ are:
\begin{align}
\label{eq:Ass0}\tag{$S_0^{(\delta)}$}
&\exists C_0>0, \;\forall \hslash\in (0,1), \qquad  \Tr\big[ \varrho_\hslash \; (\hat{H}_\hslash^0+1)^{\delta}\big] \leq C_0,\\
\label{eq:Ass1}\tag{$S_1^{(\delta)}$}
&\exists C_1>0, \;\forall \hslash\in (0,1), \qquad
\Tr\big[ \varrho_\hslash \; (\hat{N}_\hslash+\hat{q}^2+1)^\delta\big] \leq C_1\,,\\
\intertext{and}
\label{eq:Ass2}\tag{$S_2^{(\sigma)}$}
&\exists C_2>0, \;\forall \hslash\in (0,1), \qquad  \Tr\big[ \varrho_\hslash \; {\rm d}\Gamma(|k|^{2\sigma})\big] \leq C_2\,.
\end{align}

Under  such assumptions, we can prove the Bohr's correspondence principle.
\begin{thm}[Classical limit]
\label{thm:main-2}
Let $\delta\in(0,1]$, $\sigma\in[\frac 1 2, 1]$ and assume that \eqref{eq:hypo0}, \eqref{eq:hypo1} and \eqref{eq:hypo2}
 are satisfied. Let $(\varrho_\hslash)_{\hslash\in (0,1)}$  be a family of density matrices on $\mathscr{H}$ satisfying the assumptions
  \eqref{eq:Ass0}, \eqref{eq:Ass1} and \eqref{eq:Ass2}. Assume that for some probability measure $\mu_0\in\mathfrak{P}(X^0)$
\begin{equation}
\label{eq:wigsingle}
\mathcal{M}(\varrho_\hslash, \hslash\in (0,1))=\{\mu_0\}\,.
\end{equation}
Then for all times $t\in\R$,
$$
\mathcal{M}\big(e^{-\ci \frac t \hslash \hat H_\hslash}\,\varrho_\hslash \, e^{\ci \frac t \hslash \hat H_\hslash},
\; \hslash\in (0,1)\big)=\{\mu_t\}\,,
$$
with $\mu_t\in\mathfrak{P}(X^0)$ satisfying the following statements:
\begin{itemize}
\item [(i)] $\mu_t$ is a Borel probability measure on $X^\sigma$,
\item [(ii)] $\mu_t$ is the push-forward measure of $\mu_0$ by  the generalized flow $\Phi_t$  of
the Newton--Maxwell equation constructed in Theorem \ref{thm:main-1},
\emph{i.e.} for all Borel subsets $B$ of $X^\sigma$, and all $t\in\R$,
\begin{equation}
\label{eq:immeas}
\mu_t(B)=(\Phi_t)\,_\sharp\,\mu_0(B)=\mu_0\big((\Phi_t)^{-1}(B)\big)\,.
\end{equation}
\end{itemize}
Moreover, in the general case where \eqref{eq:wigsingle} is not assumed the statement below holds true for all times,
    \begin{equation}
    \label{eq:genres}
       \mathcal{M}\big(e^{-\ci \frac t \hslash \hat H_\hslash}\,\varrho_\hslash \, e^{\ci
      \frac t \hslash \hat H_\hslash}, \; \hslash\in (0,1)\big)=\big\{(\Phi_t)\,_{\sharp}\,\mu_0, \; \mu_0\in
    \mathcal{M}(\varrho_\hslash, \hslash\in (0,1)) \big\}\,.
    \end{equation}
\end{thm}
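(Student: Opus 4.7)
The strategy is a three-step semiclassical transport argument: first propagate the a priori bounds \eqref{eq:Ass0}--\eqref{eq:Ass2} along the Heisenberg flow; then derive a characteristic (weak transport) equation for the time-dependent Wigner measures; and finally identify the unique measure-valued solution of this equation as the push-forward of $\mu_0$ by the generalized Newton--Maxwell flow of Theorem \ref{thm:main-1}. Concretely, the first step uses the quadratic-form estimates of Section \ref{sec.PF} and Gronwall arguments: commuting $(\hat H_\hslash^0+1)^\delta$, $(\hat N_\hslash+\hat q^2+1)^\delta$ and ${\rm d}\Gamma(|k|^{2\sigma})$ with $\hat H_\hslash$ produces lower-order terms controlled by \eqref{eq:hypo0}--\eqref{eq:hypo2}, so that $(S_0^{(\delta)})$, $(S_1^{(\delta)})$ and $(S_2^{(\sigma)})$ hold for $\varrho_\hslash(t)$ with constants locally uniform in $t$. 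This guarantees via Appendix \ref{appy} that $\mathcal{M}(\varrho_\hslash(t),\hslash\in\mathscr{I})$ is non-empty along suitable subsequences, and that every extracted Wigner measure is supported in $X^\sigma$, which is item (i).

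Next I would differentiate in $t$ the Weyl expectation
\begin{equation*}
\mathcal{F}_\hslash(t,\xi) := \Tr\bigl[\varrho_\hslash(t)\,\mathcal{W}(\pi q,-\pi p,\sqrt{2}\pi\alpha)\bigr],
\qquad \xi=(p,q,\alpha)\in X^0,
\end{equation*}
via the Heisenberg equation, and expand $[\hat H_\hslash,\mathcal{W}(\cdot)]$ using the $\hslash$-scaled commutation relations \eqref{eq:an-cr2}. For $\xi$ in a suitable dense test class (with $\alpha$ smooth and compactly supported in $k$), the bounds from Step 1 allow one to pass to the limit $\hslash\to 0$ along any subsequence realizing a Wigner measure, producing for every family $(\mu_t)_{t\in\R}$ of Wigner measures of $(\varrho_\hslash(t))$ a characteristic equation of the form
\begin{equation*}
\int_{X^0} e^{2\pi\ci\,\Ree\langle\xi,u\rangle_{X^0}}\, d\mu_t(u)
= \int_{X^0} e^{2\pi\ci\,\Ree\langle\xi,u\rangle_{X^0}}\, d\mu_0(u)
+ \int_0^t\!\!\int_{X^0} \mathcal{L}[\xi](u)\, d\mu_s(u)\,ds,
\end{equation*}
where $\mathcal{L}[\xi]$ is the symbol of the Poisson bracket $\{H,\,e^{2\pi\ci\,\Ree\langle\xi,\cdot\rangle_{X^0}}\}$ associated with the classical Hamiltonian \eqref{eq:ENM}. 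This is precisely what Section \ref{sec.CH} is set up to establish.

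The conclusion then rests on the abstract machinery of Appendix \ref{appx}. By the equivalence between characteristic and Liouville equations, every such $(\mu_t)$ is a measure-valued solution of the Newton--Maxwell Liouville equation; by the probabilistic representation, $\mu_t$ disintegrates as a superposition, indexed by initial data distributed as $\mu_0$, of trajectories of the Newton--Maxwell system. Uniqueness of these trajectories in $X^\sigma$ (Proposition \ref{s1:prop2}) and Borel measurability of the generalized flow $\Phi_t$ (Theorem \ref{thm:main-1}) then force $\mu_t=(\Phi_t)_\sharp\mu_0$. Since this determines the limit uniquely from $\mu_0$ alone, no subsequence extraction is ultimately needed and $\mathcal{M}(\varrho_\hslash(t),\hslash\in(0,1))=\{\mu_t\}$, giving (ii); applying the same scheme to each element of $\mathcal{M}(\varrho_\hslash,\hslash\in(0,1))$ in the general case yields \eqref{eq:genres}.

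The main obstacle is the derivation of the characteristic equation at the borderline regularity $\sigma=\tfrac12$ in \eqref{eq:hypo2}: the interaction term $\tfrac{1}{m_i}(\hat p_i-A_i)\cdot\nabla A_i$ has a classical symbol too rough to treat by standard symbolic calculus, and controlling its Wick-ordered remainders as $\hslash\to 0$ requires both the propagated modulus-momentum bound \eqref{eq:Ass2} and the refined Wigner-convergence results of Appendix \ref{appy}. It is precisely here that the \emph{quantum detour} behind Theorem \ref{thm:main-1} becomes essential: the classical flow $\Phi_t$ is not constructed independently but read off a posteriori from the quantum dynamics, so that the uniqueness invoked in Step 3 remains available in regimes where a direct nonlinear fixed-point argument for Newton--Maxwell would fail.
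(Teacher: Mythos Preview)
Your three-stage architecture (propagate the bounds, derive the characteristic equation, identify the measure via the Liouville/probabilistic representation machinery of Appendix~\ref{appx}) is exactly the paper's strategy, and your account of Step~3 matches the argument closely. Two points deserve comment.

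First, a technical difference: the paper carries out Step~2 in the \emph{interaction representation}. It works with $\tilde\varrho_\hslash(t)=e^{\ci\frac{t}{\hslash}\hat H_\hslash^0}\varrho_\hslash(t)e^{-\ci\frac{t}{\hslash}\hat H_\hslash^0}$ rather than $\varrho_\hslash(t)$, so the Duhamel formula (Proposition~\ref{lemma:Duhamel}) involves the commutator with $\hat H_I(s)$ and the resulting characteristic equation (Corollary~\ref{s5.cor1}) is written for the non-autonomous vector field $\vartheta(t,\cdot)=\Phi_{-t}^0\circ G\circ\Phi_t^0$. One then recovers $\mu_t$ from $\tilde\mu_t$ via $\mu_t=(\Phi_t^0)_\sharp\tilde\mu_t$ (Lemma~\ref{lemma:rhoti}). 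Your direct differentiation of $\mathcal{F}_\hslash(t,\xi)$ would force you to handle the unbounded free part $[\hat H_\hslash^0,\mathcal{W}(\xi)]$ explicitly; this is not wrong, but the interaction picture sidesteps it cleanly, and the abstract Liouville theory of Appendix~\ref{appx} is tailored to a bounded-on-bounded-sets vector field such as $\vartheta$.

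Second, a genuine gap: your treatment of general $\delta\in(0,1]$ does not work as stated. You claim that ``commuting $(\hat H_\hslash^0+1)^\delta$, $(\hat N_\hslash+\hat q^2+1)^\delta$ \ldots\ with $\hat H_\hslash$ produces lower-order terms'', but the propagation estimates of Section~\ref{sec:regularity} are proved only at the level of quadratic forms (i.e.\ essentially $\delta=1$); no commutator calculus for fractional powers is established, and it is not clear one can be. The paper handles $\delta<1$ by a completely different route (see the ``(bis)'' argument in Section~\ref{sec.proof} and Remark~\ref{rem:2}(3)): it regularizes the state via $\varrho_{\hslash,R}=\chi_R(A)\varrho_\hslash\chi_R(A)/\Tr[\cdot]$ with $A=\hat N_\hslash+\hat q^2+\hat H_\hslash^0$, so that $(\varrho_{\hslash,R})$ satisfies the full $\delta=1$ assumptions, applies the already-proved result to $\varrho_{\hslash,R}$, and then lets $R\to\infty$ using the trace-norm comparison of Wigner measures $|\mu_t-(\Phi_t)_\sharp\mu_{0,R}|\le\liminf_\hslash\|\varrho_\hslash(t)-\varrho_{\hslash,R}(t)\|_{\mathscr L^1}$. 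Without this approximation step your sketch covers only $\delta=1$.
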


\begin{remark}
  \label{rem:2}
  $\phantom{i}$
  \begin{enumerate}
  \item The case $\sigma=1$ corresponds to the weakest requirement for the form
    factors $\chi_i$ while for $\sigma=\frac{1}{2}$ the assumptions on the quantum states
    \eqref{eq:Ass2} are the weakest.
  \item The statement \eqref{eq:genres} can be complemented by the fact that given $\mu_0\in
    \mathcal{M}(\varrho_\hslash, \hslash\in (0,1))$ and the associated family $(\varrho_\hslash)_{\hslash\in
      \mathscr{E}}$ that converges to it, then $\bigl(\varrho_\hslash(t)\bigr)_{\hslash\in
      \mathscr{E}}$ converges to $(\Phi_t)\,_{\sharp}\,\mu_0$ for all $t\in \R$ (convergence here is according to \eqref{eq:wigner} and Definition \ref{def:wigner}).
  \item Most of the analysis in this article  is devoted to the case $\delta=1$ and \eqref{eq:wigsingle}. The proof
  of  Theorem \ref{thm:main-2} in the general case is given only at the end of Section \ref{sec.proof} (\S-\ref{proof:impro} Proof of Theorem \ref{thm:main-2} (bis)).
  \item Examples of density matrices satisfying the above hypothesis \eqref{eq:Ass0}--\eqref{eq:Ass2} include  coherent states, product states or superpositions (see Section \ref{sec.CH}  and \cite{ammari2008ahp}).
        \end{enumerate}
\end{remark}

\section{Newton--Maxwell equation}
\label{sec.NM}

In this section, we prove a uniqueness result for solutions of the
Newton--Maxwell equation \eqref{eq:Newton-Maxwell} on the spaces $X^\sigma$ with
$\sigma\in[\frac{1}{2},1]$. Note that from now on we will refer only to $V$ and $\chi_i$,
$i=1,\dotsc,n$, satisfying the hypothesis \eqref{eq:hypo0}, \eqref{eq:hypo1} and
\eqref{eq:hypo2}, and no more to the charge distributions $\varphi_i$s. In
particular, the following lemma justifies the assumption \eqref{eq:hypo0} on
the potential $V$ by showing it to be true for the electrostatic potentials
between the charges.

\begin{lemma}
  Let $\sigma\in[\frac{1}{2},1]$ and assume that \eqref{eq:hypo1} and \eqref{eq:hypo2}
  are satisfied. Then the potentials $w_{ij}$, $i<j=1,\dotsc,n$, defined by
  \eqref{eq:w}, belong to $\mathscr{C}_b^2(\R^d)$.
\end{lemma}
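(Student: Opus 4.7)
The plan is to transition to Fourier variables, observe that the statement reduces to an $L^1$ integrability condition on $(1+|k|^2)\widehat{w_{ij}}(k)$, and dispatch this by Cauchy--Schwarz using the two hypotheses \eqref{eq:hypo1} and \eqref{eq:hypo2}.

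First I would use the definition \eqref{eq:w} of $w_{ij}$ together with the fact that, in dimension $d\geq 3$, the Riesz kernel $\lvert\,\cdot\,\rvert^{-(d-2)}$ has Fourier transform $c_d\,\lvert k\rvert^{-2}$ for a positive constant $c_d$. Since, by convention \eqref{eq:chi}, $\chi_i=\mathscr{F}^{-1}[\varphi_i]$ and hence $\mathscr{F}[\varphi_i](k)=\chi_i(-k)$, this gives
\begin{equation*}
\widehat{w_{ij}}(k)=c_d\,g\,\frac{\chi_i(-k)\,\chi_j(-k)}{\lvert k\rvert^2}\,.
\end{equation*}
To obtain $w_{ij}\in\mathscr{C}_b^2(\R^d)$, it is enough to prove that $(1+|k|^2)\,\widehat{w_{ij}}\in L^1(\R^d)$: then the inverse Fourier transform recovers $w_{ij}$ and, after multiplication by any monomial $k^\alpha$ with $|\alpha|\leq 2$, the function $k^\alpha\widehat{w_{ij}}$ is still in $L^1$, so by Riemann--Lebesgue the derivative $\partial^\alpha w_{ij}$ is continuous and bounded.

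Thus the task reduces to estimating the two contributions
\begin{equation*}
\int_{\R^d}\frac{|\chi_i(k)||\chi_j(k)|}{|k|^2}\,\mathrm{d}k\qquad\text{and}\qquad\int_{\R^d}|\chi_i(k)||\chi_j(k)|\,\mathrm{d}k\,.
\end{equation*}
The first one is handled directly by Cauchy--Schwarz, since it is bounded by $\bigl\||\cdot|^{-1}\chi_i\bigr\|_{L^2}\bigl\||\cdot|^{-1}\chi_j\bigr\|_{L^2}$, which is finite by \eqref{eq:hypo1}. For the second, I would note that $\chi_i\in L^2(\R^d)$: splitting into $\{|k|\leq 1\}$ and $\{|k|>1\}$, on the low-frequency region $|\chi_i|^2\leq |k|^{-2}|\chi_i|^2$ while on the high-frequency region $|\chi_i|^2\leq |k|^{3-2\sigma}|\chi_i|^2$ (since $3-2\sigma\geq 1$ for $\sigma\in[\tfrac12,1]$), and both pieces are controlled by \eqref{eq:hypo1} and \eqref{eq:hypo2} respectively (this is essentially the interpolation statement already announced in the excerpt). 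Another Cauchy--Schwarz then bounds the second integral by $\|\chi_i\|_{L^2}\|\chi_j\|_{L^2}<\infty$.

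There is no real obstacle here; the only subtle point is the low-frequency behavior of $\widehat{w_{ij}}$, which carries the $|k|^{-2}$ Coulomb singularity, and this is precisely absorbed by the infrared assumption \eqref{eq:hypo1}, while the ultraviolet integrability is ensured by \eqref{eq:hypo2}. Combining these estimates yields $(1+|k|^2)\widehat{w_{ij}}\in L^1(\R^d)$ and therefore $w_{ij}\in\mathscr{C}_b^2(\R^d)$.
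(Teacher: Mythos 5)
Your argument is correct and is essentially the paper's own proof, made more explicit: pass to Fourier variables, show $(1+|k|^2)\widehat{w_{ij}}\in L^1(\R^d)$ by Cauchy--Schwarz (the infrared singularity $|k|^{-2}$ controlled by \eqref{eq:hypo1}, the ultraviolet tail by the interpolation of \eqref{eq:hypo1} and \eqref{eq:hypo2} giving $\chi_i\in L^2$), and conclude by Riemann--Lebesgue. The paper's proof is terser --- it explicitly invokes only \eqref{eq:hypo1} and says ``repeating the same argument for the derivatives'' --- so your version usefully spells out precisely where \eqref{eq:hypo2} enters, namely for the integrability of $\chi_i\chi_j$ itself, needed for the second derivatives.
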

\begin{proof}
Firstly, the expression \eqref{eq:w} is well defined as a tempered distribution. Hence,
$$
w_{ij}= \mathscr{F} \bigl[\mathscr{F}^{-1} [w_{ij}]\bigr]= g \mathscr{F} \biggl[ \frac{\mathscr{F}^{-1}[\varphi_i]\mathscr{F}^{-1}[\varphi_j]}{|\cdot|^2}\biggr]= g \mathscr{F}\biggl[\frac{\chi_i\chi_j}{|\cdot|^2}\biggr],
$$
where $\mathscr{F}$ denotes here the Fourier transform on
$\mathscr{S}'(\R^d)$, and $\mathscr{F}^{-1}$ its inverse (see also \eqref{eq:chi}). Now, the assumption
\eqref{eq:hypo1} yields that $w_{ij}\in \mathscr{C}_\infty(\R^d)$ as the Fourier
transform of an $L^1$ function. Repeating the same argument for the
derivatives one proves the result.
\end{proof}

Let us now prove some preliminary lemmas, concerning the
electromagnetic vector potential and its derivative.

\begin{lemma}
\label{s1:lem1}
Let $\sigma\in[\frac{1}{2},1]$ and assume that \eqref{eq:hypo1} and \eqref{eq:hypo2}
are satisfied. Then
\begin{itemize}
\item [\textrm{(i)}] $\forall (q,\alpha)\in \R^{dn}\times L^2$, $\forall i\in \{1,\dotsc,n\}$,
 $$|A_i(q,\alpha)|\leq  \sqrt{2(d-1)}\,\big\|\frac{\chi_i}{\sqrt{|\cdot|}}\big\|_{L^2} \, \|\alpha\|_{L^2},$$

\item [\textrm{(ii)}] $\forall (q,\alpha)\in \R^{dn}\times\dot{\mathfrak{H}}^{1/2}$, $\forall i\in \{1,\dotsc,n\}$,
$$|A_i(q,\alpha)|\leq  \sqrt{2(d-1)}\,\big\|\frac{\chi_i}{|\cdot|}\big\|_{L^2} \, \|\alpha\|_{\dot{\mathfrak{H}}^{1/2}}.$$
\end{itemize}
\end{lemma}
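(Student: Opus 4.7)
The two bounds are direct consequences of the Cauchy--Schwarz inequality applied to the integral defining $A_i(q,\alpha)$, so the plan is short but I should make the role of the hypotheses explicit.

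First I would bound the absolute value of the vector in $\R^d$ defined by the integral. Since the integrand is a sum of a complex number and its conjugate, its modulus is at most twice the modulus of either summand; together with $|\epsilon_\lambda(k)|=1$ (which follows from \eqref{eq:Coulgauge}), this yields the pointwise estimate
\begin{equation*}
|A_i(q,\alpha)| \;\le\; \sqrt{2}\sum_{\lambda=1}^{d-1}\int_{\R^d} \frac{|\chi_i(k)|}{\sqrt{|k|}}\,|\alpha_\lambda(k)|\,\mathrm{d}k\,.
\end{equation*}
Observe that the phases $e^{\pm 2\pi\ci k\cdot q_i}$ disappear in the modulus, so the bound is uniform in $q\in\R^{dn}$.

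Next, for (i) I would factor the integrand as $\bigl(|\chi_i(k)|/\sqrt{|k|}\bigr)\cdot|\alpha_\lambda(k)|$ and apply Cauchy--Schwarz on $L^2(\R^d)$ for each $\lambda$, obtaining
\begin{equation*}
|A_i(q,\alpha)| \;\le\; \sqrt{2}\,\Bigl\|\tfrac{\chi_i}{\sqrt{|\cdot|}}\Bigr\|_{L^2}\sum_{\lambda=1}^{d-1}\|\alpha_\lambda\|_{L^2}\,.
\end{equation*}
A further Cauchy--Schwarz in the finite sum over $\lambda$ replaces $\sum_\lambda\|\alpha_\lambda\|_{L^2}$ by $\sqrt{d-1}\,\|\alpha\|_{L^2}$, giving the claimed constant $\sqrt{2(d-1)}$. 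The finiteness of $\|\chi_i/\sqrt{|\cdot|}\|_{L^2}$ is precisely the interpolation statement (with $\lambda=-1/2$) already noted between \eqref{eq:hypo1} and \eqref{eq:hypo2} in the range $\sigma\in[\tfrac12,1]$.

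For (ii) I would instead factor as $\bigl(|\chi_i(k)|/|k|\bigr)\cdot\bigl(|k|^{1/2}|\alpha_\lambda(k)|\bigr)$ and again apply Cauchy--Schwarz termwise, then sum in $\lambda$ as before; this directly produces
\begin{equation*}
|A_i(q,\alpha)| \;\le\; \sqrt{2(d-1)}\,\Bigl\|\tfrac{\chi_i}{|\cdot|}\Bigr\|_{L^2}\|\alpha\|_{\dot{\mathfrak{H}}^{1/2}}\,,
\end{equation*}
where finiteness of $\|\chi_i/|\cdot|\|_{L^2}$ is exactly hypothesis \eqref{eq:hypo1}. There is no serious obstacle here: the only mild point to check is the correct bookkeeping of constants (the factor $\sqrt{2}$ from $1/\sqrt{2|k|}$ combined with the factor $2$ from the two conjugate summands, then the $\sqrt{d-1}$ from the polarization sum), and the verification that both weights $|\cdot|^{-1/2}\chi_i$ and $|\cdot|^{-1}\chi_i$ lie in $L^2$ under the standing assumptions.
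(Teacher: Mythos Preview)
Your proof is correct and follows essentially the same route as the paper's: both are a direct application of Cauchy--Schwarz to the defining integral, the only cosmetic difference being that the paper applies Cauchy--Schwarz jointly in $(\lambda,k)$ while you apply it first in $k$ and then in the finite sum over $\lambda$, arriving at the identical constant $\sqrt{2(d-1)}$. Your bookkeeping of the constants and your remark on why $|\cdot|^{-1/2}\chi_i\in L^2$ by interpolation are accurate.
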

\begin{proof}
  By the Cauchy-Schwarz inequality (both w.r.t.\ $\lambda$ and $k$):
\begin{eqnarray*}
|A_i(q,\alpha)| &\leq &\left| \sum_{\lambda=1}^{d-1} \int_{\R^d} \, \frac{1}{\sqrt{2|k|}}
\, \big(\bar{\chi}_i(k)\epsilon_\lambda(k) \,\alpha_\lambda(k) \,e^{2\pi\ci k\cdot q} +\mathrm{h.c.}\big)\,\mathrm{d}k \right|\\
&\leq  & 2\biggl(\sum_{\lambda=1}^{d-1}  \big\|\frac{\chi_i}{\sqrt{2|\cdot|}}\big\|^2_{L^2}\biggr)^{\frac{1}{2}} \,
\|\alpha\|_{L^2} \quad \text{ or } \quad 2\biggl(\sum_{\lambda=1}^{d-1}  \big\|\frac{\chi_{i}}{\sqrt{2}|\cdot|}\big\|^2_{L^2}\biggr)^{\frac{1}{2}} \,
 \|\alpha\|_{\dot{\mathfrak{H}}^{1/2}}\,.
\end{eqnarray*}
\end{proof}

\begin{lemma}
\label{s1:lem2}
Let $\sigma\in[\frac{1}{2},1]$ and assume that \eqref{eq:hypo1} and \eqref{eq:hypo2}
are satisfied. Then for all $\nu=1,\dots,d$:
\begin{itemize}
\item [\textrm{(i)}]  $\forall (q,\alpha)\in \R^{dn}\times L^2$, $\forall i\in \{1,\dotsc,n\}$,
$$|\nabla_q A^\nu_i(q,\alpha)|\leq\sqrt{2(d-1)}\, \big\||\cdot|^{1/2} \chi_i\big\|_{L^2} \, \|\alpha\|_{L^2}\,,$$

\item [\textrm{(ii)}]  $\forall(q,\alpha)\in\R^{dn}\times\dot{\mathfrak{H}}^{1/2}$, $\forall i\in \{1,\dotsc,n\}$,
$$|\nabla_q A^\nu_i(q,\alpha)|\leq  \sqrt{2(d-1)}\,\big\|\chi_i\big\|_{L^2} \, \|\alpha\|_{\dot{\mathfrak{H}}^{1/2}}\,.$$
\end{itemize}
\end{lemma}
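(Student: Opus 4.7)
The plan is to mimic the proof of Lemma \ref{s1:lem1} almost verbatim, differentiating the explicit integral representation of $A_i^\nu(q,\alpha)$ under the integral sign and then applying Cauchy--Schwarz to the resulting kernel in $(k,\lambda)$. Since $A_i$ depends on $q$ only through $q_i$ via the exponentials $e^{\pm 2\pi\ci k\cdot q_i}$, taking $\nabla_q$ brings down a factor proportional to $k$ and flips the relative sign between the creation and annihilation pieces. Using $|\epsilon_\lambda^\nu(k)|\leq |\epsilon_\lambda(k)|=1$ together with the triangle inequality inside the integral yields the pointwise bound
\begin{equation*}
|\nabla_q A_i^\nu(q,\alpha)| \;\leq\; 2\sum_{\lambda=1}^{d-1}\int_{\R^d} \frac{|k|}{\sqrt{2|k|}}\,|\chi_i(k)|\,|\alpha_\lambda(k)|\,\mathrm{d}k
\;=\; \sqrt{2}\sum_{\lambda=1}^{d-1}\int_{\R^d} |k|^{1/2}\,|\chi_i(k)|\,|\alpha_\lambda(k)|\,\mathrm{d}k\,,
\end{equation*}
so that the effect of the derivative is simply to replace the weight $|k|^{-1/2}$ of Lemma \ref{s1:lem1} by $|k|^{+1/2}$ (absorbing the harmless overall constant).

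From here I would run Cauchy--Schwarz exactly as in the preceding lemma, but distribute the weight $|k|^{1/2}$ differently in each case. For statement (i) I would keep the full weight on the $\chi_i$ side, apply Cauchy--Schwarz in $k$ against $\alpha_\lambda$, and then perform the discrete Cauchy--Schwarz over $\lambda\in\{1,\dots,d-1\}$; this yields the claimed bound
\begin{equation*}
|\nabla_q A_i^\nu(q,\alpha)| \;\leq\; \sqrt{2(d-1)}\,\big\||\cdot|^{1/2}\chi_i\big\|_{L^2}\,\|\alpha\|_{L^2}\,.
\end{equation*}
For statement (ii) I would instead put the full $|k|^{1/2}$ on the $\alpha_\lambda$ side of the Cauchy--Schwarz in $k$, which produces the $\dot{\mathfrak H}^{1/2}$ norm of $\alpha$ and leaves the unweighted $L^2$ norm of $\chi_i$:
\begin{equation*}
|\nabla_q A_i^\nu(q,\alpha)| \;\leq\; \sqrt{2(d-1)}\,\|\chi_i\|_{L^2}\,\|\alpha\|_{\dot{\mathfrak H}^{1/2}}\,.
\end{equation*}

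Finally, I would note that the relevant weighted norms are finite under the standing assumptions: $\||\cdot|^{1/2}\chi_i\|_{L^2}$ in (i) and $\|\chi_i\|_{L^2}$ in (ii) are controlled by interpolation between \eqref{eq:hypo1} and \eqref{eq:hypo2} in the permitted range $\sigma\in[\frac{1}{2},1]$, as recorded in the paragraph following the statement of \eqref{eq:hypo2} (the admissible exponents $\lambda=\tfrac12$ and $\lambda=0$ both lie in $[-1,\tfrac32-\sigma]$). There is no real conceptual obstacle; the only minor subtleties are to justify differentiation under the integral sign (the differentiated integrand is dominated uniformly in $q$ by the $L^1$ function $|k|^{1/2}|\chi_i(k)||\alpha_\lambda(k)|$ by the same Cauchy--Schwarz estimate) and to keep track of which factor of $|k|$ is placed on which side of Cauchy--Schwarz to obtain the two distinct norm pairings in (i) and (ii).
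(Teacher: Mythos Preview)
Your proof is correct and follows essentially the same approach as the paper: differentiate under the integral to bring down a factor of $k$, then apply Cauchy--Schwarz in $(k,\lambda)$ with the weight $|k|^{1/2}$ placed either on $\chi_i$ (for (i)) or on $\alpha_\lambda$ (for (ii)). The paper's proof is slightly terser, omitting the explicit justification of differentiation under the integral sign and the interpolation remark, but the argument is the same.
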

\begin{proof}
By the Cauchy-Schwarz inequality:
\begin{eqnarray*}
|\nabla_q A^\nu_i(q,\alpha)|&\leq& \left|  \sum_{\lambda=1}^{d-1} \int_{\R^d}   \, \frac{k}{\sqrt{2|k|}} \, \big(\bar{\chi}_i(k)\epsilon^\nu_\lambda(k) \, \alpha_\lambda(k) e^{2\pi\ci k\cdot q}+\mathrm{h.c.}\big)\,\mathrm{d}k\right|\\
&\leq  & 2\biggl(\sum_{\lambda=1}^{d-1} \tfrac{1}{2}\big\||\cdot|^{1/2} \chi_i\big\|^2_{L^2}\biggr)^{\frac{1}{2}}  \|\alpha\|_{L^2}
\quad \text{ or } \quad 2\biggl(\sum_{\lambda=1}^{d-1} \tfrac{1}{2}\big\|\chi_i\big\|^2_{L^2}\biggr)^{\frac{1}{2}} \, \|\alpha\|_{\dot{\mathfrak{H}}^{1/2}}.
\end{eqnarray*}
\end{proof}

\begin{lemma}
\label{s1:lem3}
Let $\sigma\in[\frac{1}{2},1]$ and assume that \eqref{eq:hypo1} and \eqref{eq:hypo2}
are satisfied. Then for all $q_1,q_2\in\R^{dn}$ and $\nu=1,\dots,d$:
\begin{itemize}
\item [\textrm{(i)}]  $\forall \alpha_1,\alpha_2\in L^2$, $\forall i\in \{1,\dotsc,n\}$,
$$
\big|A_i(q_1,\alpha_1)-A_i(q_2,\alpha_2)\big| \lesssim  \big\|\frac{\chi_i}{\sqrt{|\cdot|}}\big\|_{L^2}
\,\|\alpha_1-\alpha_2\|_{L^2}+ |q_1-q_2| \, \big\|\sqrt{|\cdot|}\chi_i\big\|_{L^2} \,
\|\alpha_2\|_{L^2}\,.
$$

\item [\textrm{(ii)}] $\forall\alpha_1,\alpha_2\in \mathfrak{H}^{\sigma}$, $\forall i\in \{1,\dotsc,n\}$,
$$
\big|\nabla_{q_1}A^\nu_i(q_1,\alpha_1)-\nabla_{q_2}A^\nu_i (q_2,\alpha_2)\big| \lesssim
\big\|\sqrt{|\cdot|}\chi_i\big\|_{L^2}
\,\|\alpha_1-\alpha_2\|_{L^2}+ |q_1-q_2| \;\; \big\| \,|\cdot|^{\frac 3 2-\sigma}\,\chi_i\big\|_{L^2} \,
\|\alpha_2\|_{\dot{\mathfrak{H}}^{\sigma}}\,.
$$
\end{itemize}
\end{lemma}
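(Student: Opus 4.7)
The plan is to prove both bounds by a simple telescoping decomposition through the intermediate point $(q_1,\alpha_2)$, combined with the explicit integral representation of $A_i(q,\alpha)$ given in the introduction. Concretely, I would write
\begin{equation*}
A_i(q_1,\alpha_1)-A_i(q_2,\alpha_2)=A_i(q_1,\alpha_1-\alpha_2)+\bigl(A_i(q_1,\alpha_2)-A_i(q_2,\alpha_2)\bigr),
\end{equation*}
and the exactly analogous decomposition for $\nabla_q A_i^\nu$. The $\alpha$-difference piece is handled by the already-proved Lemmas \ref{s1:lem1} and \ref{s1:lem2} via linearity of $A_i$ in $\alpha$. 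The $q$-difference piece is treated by pulling out the phase difference from the integrand and using the elementary Lipschitz bound $|e^{2\pi\ci k\cdot q_1}-e^{2\pi\ci k\cdot q_2}|\leq 2\pi|k|\,|q_1-q_2|$, after which Cauchy--Schwarz in $(\lambda,k)$ closes the estimate.

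For (i), Lemma \ref{s1:lem1}(i) applied to $\alpha_1-\alpha_2$ gives the first summand $\|\chi_i/\sqrt{|\cdot|}\|_{L^2}\|\alpha_1-\alpha_2\|_{L^2}$ directly. For the second summand, the $|k|^{-1/2}$ weight from the defining integral combines with the extra $|k|$ from the phase bound to produce an overall weight $|k|^{1/2}$ against $|\chi_i(k)|\,|\alpha_{2,\lambda}(k)|$; Cauchy--Schwarz then yields $|q_1-q_2|\,\|\sqrt{|\cdot|}\chi_i\|_{L^2}\|\alpha_2\|_{L^2}$, and the weighted norm on $\chi_i$ is finite by interpolation between \eqref{eq:hypo1} and \eqref{eq:hypo2}.

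For (ii), differentiation of the phase supplies an additional factor of $2\pi\ci k$ in the integrand, so the $\alpha$-difference piece now has weight $|k|^{1/2}$ (handled by Lemma \ref{s1:lem2}(i), giving $\|\sqrt{|\cdot|}\chi_i\|_{L^2}\|\alpha_1-\alpha_2\|_{L^2}$), while the $q$-difference piece has weight $|k|^{-1/2}\cdot|k|\cdot|k|=|k|^{3/2}$ against $|\chi_i(k)|\,|\alpha_{2,\lambda}(k)|$. The only genuinely new move is the distribution of this weight: I would split $|k|^{3/2}=|k|^{3/2-\sigma}\cdot|k|^{\sigma}$ and apply Cauchy--Schwarz, pairing $|k|^{3/2-\sigma}$ with $\chi_i$ and $|k|^{\sigma}$ with $\alpha_{2,\lambda}$. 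This produces exactly $|q_1-q_2|\,\||\cdot|^{3/2-\sigma}\chi_i\|_{L^2}\|\alpha_2\|_{\dot{\mathfrak{H}}^{\sigma}}$, and the $\chi_i$-factor is finite precisely by hypothesis \eqref{eq:hypo2}.

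I do not anticipate any serious obstacle: the phase-Lipschitz bound and Cauchy--Schwarz are both elementary, and the telescoping decomposition is forced by the bilinear structure of $A_i$ in $(e^{2\pi\ci k\cdot q},\alpha)$. The one point that requires attention is the weight-splitting $|k|^{3/2}=|k|^{3/2-\sigma}|k|^{\sigma}$ in (ii); this is what makes the estimate compatible with the full parameter family $(A_2^{(\sigma)})$ for $\sigma\in[\tfrac 1 2,1]$, rather than with a single regularity assumption on $\chi_i$, and it is essential for the later application of this lemma at the sharp Sobolev scale on $\alpha$.
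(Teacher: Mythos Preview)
Your proposal is correct and follows essentially the same approach as the paper: telescoping the difference into an $\alpha$-variation piece and a $q$-variation piece, bounding the phase difference by $|k|\,|q_1-q_2|$, and applying Cauchy--Schwarz with the weight splitting $|k|^{3/2}=|k|^{3/2-\sigma}|k|^{\sigma}$ in part (ii). The only cosmetic difference is that you invoke Lemmas \ref{s1:lem1} and \ref{s1:lem2} explicitly for the $\alpha$-difference piece via linearity of $A_i$ in $\alpha$, whereas the paper performs the same bound directly inside the integral; the content is identical.
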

\begin{proof} Estimating the left hand side of \textit{(i)}, the Cauchy-Schwarz inequality yields:
\begin{eqnarray*}
&&\hspace{-.3in}\big|A_i(q_1,\alpha_1)-A_i(q_2,\alpha_2)\big| \\
&&\hspace{.8in}\lesssim
\sum_{\lambda=1}^{d-1} \int_{\R^d}  \, \left| \frac{\chi_i(k)}{\sqrt{|k|}}\right| \, \left(
\left| \alpha_{1,\lambda}(k)-\alpha_{2,\lambda}(k)  \right| +
\left| \big( e^{2\pi\ci k\cdot q_1}-e^{2\pi\ci k\cdot q_2}\big) \alpha_{2,\lambda}(k)\right|\right) \,\mathrm{d}k\\
&&\hspace{.8in}\lesssim
\sum_{\lambda=1}^{d-1} \int_{\R^d}  \, \left| \frac{\chi_i(k)}{\sqrt{|k|}}\right| \, \left(
\left|\alpha_{1,\lambda}(k)-\alpha_{2,\lambda}(k)  \right| +
\, |k| \, |q_1-q_2|\, |\alpha_{2,\lambda}(k)|\right)\,\mathrm{d}k\\
&&\hspace{.8in}\lesssim   \big\|\frac{\chi_i}{\sqrt{|\cdot|}}\big\|_{L^2}
\,\|\alpha_1-\alpha_2\|_{L^2}+ |q_1-q_2| \, \big\|\sqrt{|\cdot|}\chi_i\big\|_{L^2} \|\alpha_2\|_{L^2}.
\end{eqnarray*}
A similar argument gives
\begin{eqnarray*}
&&\hspace{-.5in}\big|\nabla_{q_1}A^\nu_i(q_1,\alpha_1)-\nabla_{q_2}A^\nu_i(q_2,\alpha_2)\big| \\
&&\hspace{1in}\lesssim
\sum_{\lambda=1}^{d-1} \int_{\R^d}  \,  \sqrt{|k|} \left|\chi_i(k)\right|  \left(
\left| \alpha_{1,\lambda}(k)-\alpha_{2,\lambda}(k)  \right| +
 |k| \, |q_1-q_2|\, \left|\alpha_{2,\lambda}(k)\right|\right)\,\mathrm{d}k\\
 &&\hspace{1in}\lesssim  \big\|\sqrt{|\cdot|}\chi_i\big\|_{L^2}
\,\|\alpha_1-\alpha_2\|_{L^2}+ |q_1-q_2| \;\; \big\| \,|\cdot|^{\frac 3 2 -\sigma}\,\chi_{i}\big\|_{L^2} \,
\|\alpha_2\|_{\dot{\mathfrak{H}}^{\sigma}} \,.
\end{eqnarray*}
\end{proof}

The Newton--Maxwell system \eqref{eq:Newton-Maxwell} is a semi-linear PDE of the form
\begin{equation}
\label{eq:NMc}
\left\{
\begin{aligned}
&\frac{d}{dt} u(t)=\mathcal{L}( u(t))+ F(u(t))&,\\
&u(0)  = u_0 \in X^\sigma,& \\
\end{aligned}
\right.
\end{equation}
where $t\to u(t)=(p(t),q(t),\alpha(t))$ is a solution,  $\mathcal{L}(u)=(0,0,-i|k|\alpha)$ is a linear operator defined
for all $u=(p,q,\alpha)\in \dot X^{1}$, and $F$ is the ``nonlinearity" given by
\begin{subequations}
\label{eq:F}
\begin{align}
       \bigg(F(u)\bigg)_{p_i}&=\frac{1}{m_i} \sum_{\nu=1}^d \big(
       p_i^\nu-A^\nu_i(q_i,\alpha)\big) \, \nabla_{q_i}A^\nu_i(q_i,\alpha)- \nabla_{q_i} V(q),         \label{eq:Fp} \\
        \bigg(F(u)\bigg)_{q_i}&=\frac{1}{m_i} \, \big(p_i-A_i(q_i,\alpha)\big), \label{eq:Fq}\\
      \bigg(F(u)\bigg)_{\alpha_\lambda}(k)&=\ci\sum_{i=1}^n \frac{1}{m_i} \, \frac{\chi_i(k)}{\sqrt{2|k|}}
        \, \big(p_i-A_i(q_i,\alpha)\big) \cdot \epsilon_\lambda(k) \,e^{-2\pi\ci k\cdot q_i}.
        \label{eq:Ffa}
\end{align}
\end{subequations}
Here the notations $\left( \,\cdot\, \right)_{p_i}$, $\left( \,\cdot\, \right)_{q_i}$  and  $\left( \,\cdot\, \right)_{\alpha_\lambda}$ refer
to the components $p_i$, $q_i$ and $\alpha_\lambda$ respectively. We note also that $F$ contains a linear part in \eqref{eq:Fq}.

\medskip
For convenience, given a (finite) collection of functions
$\{f_i,\dotsc,f_n\}$, all belonging to the same Banach space $X$, we adopt the
following notation:
\begin{equation*}
  \lVert f  \rVert_X^{}= \max_{i\in \{1,\dotsc,n\}}\lVert f_i  \rVert_X^{}\; .
\end{equation*}
The vector field $F$ representing the Newton--Maxwell nonlinearity has the
following property.

\begin{prop}
\label{s1:prop1}
Let $\sigma\in[\frac{1}{2},1]$ and assume that \eqref{eq:hypo0}, \eqref{eq:hypo1}
and \eqref{eq:hypo2} are satisfied. Then the nonlinearity $F:X^{\sigma}\to X^{\sigma}$ is
a continuous, and bounded on bounded sets, vector field.
\end{prop}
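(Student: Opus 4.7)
The plan is to verify separately that $F(u)$ belongs to $X^\sigma$ with norm controlled polynomially in $\|u\|_{X^\sigma}$ (giving boundedness on bounded sets), and that $F$ is sequentially continuous as a map $X^\sigma\to X^\sigma$. The $p_i$- and $q_i$-components of $F(u)$ automatically take values in $\R^{d}$; the real work concerns the field component $(F(u))_{\alpha_\lambda}$, whose $\mathfrak{H}^\sigma$-norm must be finite and must depend continuously on $u$.

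For boundedness, the pointwise inequality
\begin{equation*}
\bigl|(F(u))_{\alpha_\lambda}(k)\bigr|\;\lesssim\; \sum_{i=1}^n \frac{|\chi_i(k)|}{\sqrt{|k|}} \bigl(|p_i| + |A_i(q_i,\alpha)|\bigr)
\end{equation*}
combined with Lemma \ref{s1:lem1}(i) reduces everything to the single integrability check
\begin{equation*}
I_\sigma \;:=\; \int_{\R^d} (1+|k|^2)^\sigma \, \frac{|\chi_i(k)|^2}{|k|}\, \mathrm{d}k \;<\;\infty.
\end{equation*}
This I verify by splitting the domain: near the origin, the comparison $|k|^{-1}\leq |k|^{-2}$ bounds the integrand by $|\chi_i|^2/|k|^2$, which is locally integrable thanks to \eqref{eq:hypo1}; near infinity, $|k|^{2\sigma-1}\leq |k|^{3-2\sigma}$ (valid precisely because $\sigma\leq 1$), reducing matters to \eqref{eq:hypo2}. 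The $p_i$- and $q_i$-components are controlled directly by Lemmas \ref{s1:lem1} and \ref{s1:lem2}, together with the $\mathscr{C}_b^2$ bound on $V$ provided by \eqref{eq:hypo0}.

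For continuity, the $q_i$- and $p_i$-components are handled by the Lipschitz-type estimates of Lemma \ref{s1:lem3} (the latter after rewriting the product difference in the standard telescoping form $ab-a'b' = (a-a')b + a'(b-b')$) and the uniform bound on $\nabla^2 V$ from \eqref{eq:hypo0}. The delicate point, and what I expect to be the main obstacle, is the field component: a naive pointwise Lipschitz estimate in $q_i$ introduces an extra factor $|k|$ inside the integral, forcing integrability of $(1+|k|^2)^\sigma |k|\,|\chi_i|^2$, which is no longer available for $\sigma > \tfrac{1}{2}$ under \eqref{eq:hypo2}. I bypass this obstruction by a Lebesgue dominated-convergence argument: for each fixed $k$, the map $u\mapsto (F(u))_{\alpha_\lambda}(k)$ is continuous on $X^\sigma$ (by Lemma \ref{s1:lem3}(i) applied pointwise, together with continuity of $q_i\mapsto e^{-2\pi\ci k\cdot q_i}$); and since any convergent sequence $u_n\to u$ in $X^\sigma$ is bounded, the uniform pointwise domination
\begin{equation*}
(1+|k|^2)^\sigma \,\bigl|(F(u_n))_{\alpha_\lambda}(k) - (F(u))_{\alpha_\lambda}(k)\bigr|^2 \;\lesssim\; (1+|k|^2)^\sigma \,\frac{|\chi_i(k)|^2}{|k|}
\end{equation*}
holds with right-hand side in $L^1(\R^d)$ by the integrability check above. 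Dominated convergence then yields $\|(F(u_n))_{\alpha_\lambda} - (F(u))_{\alpha_\lambda}\|_{\mathfrak{H}^\sigma}\to 0$, closing the argument.
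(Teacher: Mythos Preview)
Your proof is correct and follows essentially the same approach as the paper's. Both arguments establish boundedness via Lemmas~\ref{s1:lem1}--\ref{s1:lem2} together with the integrability $\int (1+|k|^2)^\sigma |k|^{-1}|\chi_i(k)|^2\,\mathrm{d}k<\infty$ (which you obtain by splitting near and far from the origin, and the paper obtains from the equivalent observation $\sigma-\tfrac12\le\tfrac32-\sigma$), and both obtain continuity of the scalar components from Lemma~\ref{s1:lem3} and of the field component by dominated convergence with that same integrable majorant.
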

\begin{proof}
 One checks, thanks to Lemmas~\ref{s1:lem1} and~\ref{s1:lem2} that $F$ maps
 $X^{1/2}$ into $X^1$ and is bounded on bounded sets, with the
 estimates below holding true:
\begin{subequations}
\label{eq:Fbd}
\begin{align}
\bigg(F(u)\bigg)_{p_i}&\lesssim (|p|+\|\alpha\|_{L^2}) \,\big\|\frac{\chi_i}{\sqrt{|\cdot|}}\big\|_{L^2} \;
\big\|\sqrt{|\cdot|} \chi_i\big\|_{L^2} \; \|\alpha\|_{L^2}+\sup_q|\nabla_q V(q)|\\
\bigg(F(u)\bigg)_{q_i}&\lesssim  |p|+\big\|\frac{\chi_i}{\sqrt{|\cdot|}}\big\|_{L^2} \; \|\alpha\|_{L^2}\\
\left\|\bigg(F(u)\bigg)_{\alpha_\lambda}\right\|_{\dot{\mathfrak{H}}^1} &\lesssim
\big\|\sqrt{|\cdot|} \chi\big\|_{L^2} \; \bigg(|p|+ \big\|\frac{\chi}{\sqrt{|\cdot|}}\big\|_{L^2} \, \|\alpha\|_{L^2}\bigg)\\
\left\|\bigg(F(u)\bigg)_{\alpha_\lambda}\right\|_{L^2} &\lesssim
\big\|\frac{\chi}{\sqrt{|\cdot|}}\big\|_{L^2} \; \bigg(|p|+ \big\|\frac{\chi}{\sqrt{|\cdot|}}\big\|_{L^2} \, \|\alpha\|_{L^2}\bigg).
\end{align}
\end{subequations}
In order to prove continuity of $F$ for  $\sigma\in[\frac{1}{2},1]$, it is enough
to show the continuity of the following three maps over $\R^{dn}\times \mathfrak{H}^{\sigma}$, for any $i=1,\dotsc,n$:
\begin{itemize}
\item [(a)] $ (q,\alpha)\to A^\nu_i(q,\alpha)\in\R$
\item [(b)] $ (q,\alpha)\to \nabla_q A^\nu_i(q,\alpha)\in\R$
\item [(c)] $ (q,\alpha)\to \frac{\chi_i(k)}{\sqrt{|k|}} e^{-2\pi\ci k\cdot q}\in \mathfrak{H}^\sigma$.
\end{itemize}
  Indeed, continuity of the maps (a) and (b) is a consequence of Lemma \ref{s1:lem3} while the map (c) is continuous thanks to dominated convergence and to the bounds,
\begin{align}
\left\|\frac{\chi_i(k)}{\sqrt{|k|}} e^{-2\pi\ci k\cdot q_1}-\frac{\chi_i(k)}{\sqrt{|k|}} e^{-2\pi\ci k\cdot q_2}\right\|_{L^2}^2 &\leq
\int_{\R^d} \;  \left(\frac{|\chi_i(k)|}{\sqrt{|k|}}  \,\bigg| e^{-2\pi\ci k\cdot q_1}-e^{-2\pi\ci k\cdot q_2}\bigg|\right)^2\,\mathrm{d}k\,,
\intertext{and}
\left\|\frac{\chi_i(k)}{\sqrt{|k|}} e^{-2\pi\ci k\cdot q_1}-\frac{\chi_i(k)}{\sqrt{|k|}} e^{-2\pi\ci k\cdot q_2}\right\|_{\dot{\mathfrak{H}}^\sigma}^2 &\leq
\int_{\R^d} \;  \left(|k|^{\sigma-1/2} |\chi_i(k)| \,\bigg|
e^{-2\pi\ci k\cdot q_1}-e^{-2\pi\ci k\cdot q_2}\bigg|\right)^2\,\mathrm{d}k\,,
\end{align}
since $\sigma-\frac 1 2 \leq \frac 3 2 -\sigma$ for $\sigma\in [\frac{1}{2},1]$.
\end{proof}

Let $I$ be an open  interval containing  the origin. We are interested in  \emph{strong} solutions $u(\,\cdot\,)$ of
the Newton--Maxwell equation \eqref{eq:Newton-Maxwell} or equivalently   \eqref{eq:NMc} such that
$$
u(\,\cdot\,)\in \mathscr{C}(I, X^\sigma)\cap\mathscr{C}^1(I, X^{\sigma-1}),
$$
with $\sigma\in[\frac{1}{2},1]$ and \eqref{eq:NMc} is satisfied for all $t\in I$. In particular, these solutions verify the  following Duhamel formula  for all $t\in I$,
\begin{equation}
\label{eq:Duh}
u(t)=\mathcal{D}_0(t) u(0)+\int_0^t \,\mathcal{D}_0(t-s) F(u(s)) \, \mathrm{d}s\,
\end{equation}
where  $\mathcal{D}_0(\cdot)$ is the free field flow defined as
\begin{equation}
\label{eq:D0}
\mathcal{D}_0(t)(p,q,\alpha)=(p,q,e^{-\ci t |\cdot|} \alpha).
\end{equation}

\begin{prop}
\label{s1:prop2}
Let $\sigma\in[\frac{1}{2},1]$ and assume that \eqref{eq:hypo0}, \eqref{eq:hypo1}
and \eqref{eq:hypo2} are satisfied. Consider  an open interval $I$ containing  the origin and let
$u_1,u_2\in \mathscr{C}(I,X^\sigma)$  two strong  solutions of the Newton--Maxwell equation \eqref{eq:NMc} such that
$$
u_1(0)=u_2(0).
$$
Then $u_1(t)=u_2(t)$ for all $t\in I$.
\end{prop}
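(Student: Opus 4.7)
The plan is a Duhamel-plus-Gr\"onwall uniqueness argument, carried out in the weaker space $X^0$ rather than in $X^\sigma$, with the \emph{a~priori} $X^\sigma$-boundedness of the two candidate solutions on compact subintervals supplying the constants in the Lipschitz estimates. Set $v(t)=u_1(t)-u_2(t)$. Since both $u_j$ are strong solutions with the same initial datum, Duhamel's formula \eqref{eq:Duh} yields
\begin{equation*}
v(t)=\int_0^t \mathcal{D}_0(t-s)\bigl(F(u_1(s))-F(u_2(s))\bigr)\,\mathrm{d}s,\qquad t\in I.
\end{equation*}
Fix a compact subinterval $[a,b]\subset I$ containing the origin and set $M:=\sup_{j,\,s\in[a,b]}\|u_j(s)\|_{X^\sigma}<\infty$. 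Since $\mathcal{D}_0(t)$ acts as a phase rotation on $\alpha$ and trivially on $(p,q)$, it preserves the $X^0$ norm, so that
\begin{equation*}
\|v(t)\|_{X^0}\leq \left|\int_0^t \|F(u_1(s))-F(u_2(s))\|_{X^0}\,\mathrm{d}s\right|.
\end{equation*}

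The heart of the proof is to establish a local Lipschitz bound
\begin{equation*}
\|F(u_1(s))-F(u_2(s))\|_{X^0}\leq C(M)\,\|v(s)\|_{X^0},\qquad s\in[a,b].
\end{equation*}
This I would prove component-wise by applying the telescoping identity $f_1g_1-f_2g_2=(f_1-f_2)g_1+f_2(g_1-g_2)$ to each product appearing in \eqref{eq:Fp}--\eqref{eq:Ffa} and then inserting Lemmas \ref{s1:lem1}--\ref{s1:lem3} together with the bound $\|\nabla^2 V\|_\infty<\infty$ granted by \eqref{eq:hypo0}. Specifically: Lemma \ref{s1:lem3}(i) controls $\Delta A_i$ and gives the $q$-component estimate; Lemma \ref{s1:lem2}(ii) and Lemma \ref{s1:lem3}(ii) handle $\nabla_{q_i}A_i^\nu$ and its Lipschitz constant in the $p$-component, using $\sigma\geq\tfrac12$ to dominate $\|\alpha\|_{\dot{\mathfrak{H}}^{1/2}}$ by $\|\alpha\|_{\mathfrak{H}^\sigma}\leq M$ and using \eqref{eq:hypo2} to bound the weight $\bigl\||\cdot|^{3/2-\sigma}\chi_i\bigr\|_{L^2}$; finally the $\alpha$-component, needing only an $L^2=\mathfrak{H}^0$ estimate, is dealt with via \eqref{eq:hypo1} together with the elementary bound $|e^{-2\pi\ci k\cdot q_1}-e^{-2\pi\ci k\cdot q_2}|\leq 2\pi|k|\,|q_1-q_2|$.

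Plugging the Lipschitz bound into the integral inequality and invoking the classical Gr\"onwall lemma forces $v\equiv 0$ on $[a,b]$; since $[a,b]$ was an arbitrary compact subinterval of $I$ containing $0$, $u_1=u_2$ throughout $I$. The step I expect to be delicate is controlling $\Delta\nabla_{q_i}A_i^\nu$ in the $p$-equation: Lemma \ref{s1:lem3}(ii) pays a factor $\|\alpha_2\|_{\dot{\mathfrak{H}}^\sigma}$ and the weighted norm $\bigl\||\cdot|^{3/2-\sigma}\chi_i\bigr\|_{L^2}$, so the whole argument hinges sharply on $\sigma\geq\tfrac12$ and on \eqref{eq:hypo2}. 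This is precisely the threshold at which $F$ stops being locally Lipschitz on the natural energy space, which is why the standard fixed-point construction of solutions breaks down and the quantum detour of Theorem \ref{thm:main-1} is required for existence, even though uniqueness remains accessible by the present direct approach.
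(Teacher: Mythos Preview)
Your proposal is correct and follows essentially the same route as the paper: Duhamel in $X^0$, the \emph{a priori} bound $M=\sup_{j,\,s\in J}\|u_j(s)\|_{X^\sigma}$ on compact subintervals, a componentwise Lipschitz estimate on $F$ via Lemmas~\ref{s1:lem1}--\ref{s1:lem3} together with $V\in\mathscr{C}^2_b$, and then Gr\"onwall. You have also correctly singled out Lemma~\ref{s1:lem3}(ii) as the place where the $\mathfrak{H}^\sigma$-regularity of $\alpha_2$ and the hypothesis \eqref{eq:hypo2} are genuinely consumed.
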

\begin{proof}
For any bounded interval $J\subset I$, note that
$$
M=\max_{i=1,2}\left(\sup_{J}\|u_i(t)\|_{X^\sigma}\right)<\infty.
$$
Using the Duhamel formula \eqref{eq:Duh}, one finds that for all $t\geq 0$,
$$
\|u_1(t)-u_2(t)\|_{X^0}\leq \int^t_0  \|F(u_1(s))-F(u_1(s))\|_{X^0} \; \mathrm{d}s.
$$
The following estimate holds true (we drop the $s$-dependence in the right hand side):
\begin{eqnarray*}
&\left| \bigg( F(u_1(s))-F(u_2(s))\bigg)_{p_i}\right|& \\ &\lesssim & \hspace{-.5in}\sum_{\nu=1}^{d}\left(\left|p^\nu_{1,i}-p^\nu_{2,i}\right|
+\left|A^\nu_i(q_{1,i},\alpha_1)-A^\nu_i(q_{2,i},\alpha_2)\right|\right)  \,\left|\nabla_{q_{2,i}}A^\nu_i(q_{2,i},\alpha_2)\right| \\
&& \hspace{-.5in} + \left(|p^\nu_{2,i}|+|A^\nu_i(q_{2,i},\alpha_2)|\right) \,
\left|\nabla_{q_{1,i}}A^\nu_i(q_{1,i},\alpha_1)-\nabla_{q_{1,i}}A^\nu_i(q_{1,i},\alpha_1)\right| \\
&& \hspace{-.5in} +\left|\nabla_{q_{1,i}} V(q_1)-\nabla_{q_{2,i}} V(q_2)\right|\\
&\overset{\text{Lemma }\ref{s1:lem1}-\ref{s1:lem3} }{\lesssim}& |p_1(s)-p_2(s)|+|q_1(s)-q_2(s)|
+\|\alpha_1(s)-\alpha_2(s)\|_{L^2}\,.
\end{eqnarray*}
Similarly,
\begin{eqnarray*}
\left| \bigg( F(u_1(s))-F(u_2(s))\bigg)_{q_i}\right|& \lesssim & \sum_{\nu=1}^{d} \left|p^\nu_{1,i}-p^\nu_{2,i}\right|
+\left|A^\nu_i(q_{1,i},\alpha_1)-A^\nu_i(q_{2,i},\alpha_2)\right| \\
&\overset{\text{Lemma }\ref{s1:lem1}-\ref{s1:lem3} }{\lesssim}& |p_1(s)-p_2(s)|+|q_1(s)-q_2(s)|+\|\alpha_1(s)-\alpha_2(s)\|_{L^2}
\end{eqnarray*}
and
\begin{eqnarray*}
\left\| \bigg( F(u_1(s))-F(u_2(s))\bigg)_{\alpha_\lambda}\right\|_{L^2}& \lesssim & \sum_{i=1}^{n} \left|p_{1,i}-p_{2,i}\right|
+\left|A_i(q_{1,i},\alpha_1)-A_i(q_{2,i},\alpha_2)\right| \\
&& + |q_{1,i}-q_{2,i}| \\
&\overset{\text{Lemma }\ref{s1:lem1}-\ref{s1:lem3} }{\lesssim}&
|p_1(s)-p_2(s)|+|q_1(s)-q_2(s)| \\ && +\|\alpha_1(s)-\alpha_2(s)\|_{L^2}\,.
\end{eqnarray*}
Hence,
\begin{eqnarray*}
  \|u_1(t)-u_2(t)\|_{X^0}&\lesssim& \int^t_0
  |p_1(s)-p_2(s)|+|q_1(s)-q_2(s)|+\|\alpha_1(s)-\alpha_2(s)\|_{L^2}\; \mathrm{d}s\\
  &\leq& C\; \int^t_0
  \|u_1(s)-u_2(s)\|_{X^0}\;\mathrm{d}s\,,
\end{eqnarray*}
and so Gr\"onwall's inequality yields the result.
\end{proof}

\paragraph{\emph{Local well-posedness:}}
 \label{stab.had} We indicate here the main difficulty in proving the local well-posedness in $X^1$ of the Newton--Maxwell equation by means of a standard fixed point argument.
 Take $(p_0,q_0,\alpha_0)\in X^1$ an initial data, $T>0$ a given time and $R$ a positive real number. Consider the natural complete metric space
 $$
 E=\{u(\cdot)\in \mathscr{C}([0,T],X^1) : \sup_{[0,T]} \|u(s)-(p_0,q_0,e^{-\ci s|\cdot|} \alpha_0)\|_{X^1}\leq R\}\,.
 $$
So, the question is to prove the existence of  a unique fixed point for the mapping
\begin{align}
\tau: E &\longrightarrow E \\
u(\cdot)&\longrightarrow \tau(u)(\cdot)\,,
\end{align}
defined by
$$
\tau(u)(t)=(p_0,q_0,e^{-\ci t|\cdot|} \alpha_0)+\int_0^t \big(F(u(s))_p,F(u(s))_q, e^{-\ci (t-s)|\cdot|} F(u(s))_\alpha\big) \,ds\,,
$$
where $F$ is the vector field given in \eqref{eq:F}. Firstly, one has to choose $T>0$ small enough such that $\tau$ maps $E$ into itself. Secondly,
to prove the contraction property for $\tau$ it is sufficient to establish the following inequality for all $u_1,u_2\in E$,
\begin{equation}
\label{eq.pofix}
\int_0^T \| F(u_1(s))-F(u_2(s))\|_{X^1} \,ds \leq T \,C(R)\, \sup_{[0,T]} \|u_1(s)-u_2(s)\|_{X^1} \,,
\end{equation}
with $C(R)$ is a constant depending essentially on $R$ and $T$ has to be chosen small enough such that $T C(R)<1$. Now, the estimate \eqref{eq.pofix} requires essentially the bounds
$$
\|\frac{\chi_j(\cdot)}{\sqrt{2|\cdot|}} \, \big(e^{-2\pi\ci k\cdot q_{j,1}(s)}- e^{-2\pi\ci k\cdot q_{j,2}(s)}\big) \|_{\dot H^1} \lesssim \sup_{[0,T]} |q_{j,1}(s)-q_{j,2}(s)|\,
$$
to be true while we only have $|\cdot|^{1/2}\chi_j(\cdot)\in L^2$. Hence, the argument can not be closed in this way and probably more refined analysis is needed.  In particular taking advantage of the time oscillation on front of the field component could be helpful; but even in this case it is unclear to us how to close the estimates.

\smallskip
Finally, examining the vector field $F$ in \eqref{eq:F}, one notices that a small variation on the positions of the particles leads to a large variation of the fields in the $L^2$-norm whenever we assume only $|\cdot|^{1/2-\varepsilon}\chi_j(\cdot)\in L^2$ for an arbitrary small $\varepsilon>0$. In some sense, this indicates that the assumptions \eqref{eq:hypo1}-(\hyperref[eq:hypo2]{$A_2^{(1)}$}) are  thresholds for the Hadamard stability of the Newton--Maxwell equation.
We believe that these questions of fixed point and stability are interesting in themselves and relevant for classical electrodynamics and we hope that our work will stimulate some interest on them, see \cite{PhysRevD.15.2850} and \cite{MR2303098}.

\section{Pauli--Fierz Hamiltonians}
\label{sec.PF}

In this section, we review some features of the Pauli--Fierz Hamiltonian and prove uniform estimates that are  useful in the analysis of the classical limit.  Recall that such  Hamiltonian   describes a system of many quantum $d$-dimensional extended charges ($d\geq 3$) interacting with the quantized electromagnetic field in the Coulomb gauge, see \emph{e.g}.~\cite{spohn2004dcp}.  Moreover, it
 is known that the Pauli--Fierz operator $\hat H_\hslash$ in \eqref{eq:PF} is essentially self-adjoint under suitable regularity assumptions on the charge  distributions. For various  approaches to the study of self-adjointness for Pauli--Fierz-type Hamiltonians, see for instance  \cite{falconi2015mpag,hasler2008rmp,hiroshima2000cmp,hiroshima2002ahp,matte2017mpag}.

\begin{thm}[\cite{hiroshima2002ahp}]
  \label{prop:2}
  Assume \eqref{eq:hypo0}, \eqref{eq:hypo1} and
  \emph{(}\hyperref[eq:hypo2]{$A_2^{(1)}$}\emph{)}. Then
  $\hat{H}_{\hslash}$ is self-adjoint on $D(\hat{H}_{\hslash}^0)=D(\hat{p}^2)\cap D(\hat{H}_{\mathrm{f}})$.
\end{thm}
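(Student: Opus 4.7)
The natural approach is Kato--Rellich: show that the perturbation $\hat W := \hat H_\hslash - \hat H_\hslash^0$ is symmetric and infinitesimally $\hat H_\hslash^0$-bounded on $D(\hat H_\hslash^0)=D(\hat p^2)\cap D(\hat H_{\mathrm f})$, so that $\hat H_\hslash$ is self-adjoint there. Expanding the kinetic square,
\begin{equation*}
  \hat H_\hslash \;=\; \hat H_\hslash^0 \;+\; V(\hat q) \;+\; \sum_{i=1}^n\frac{1}{2m_i}\Bigl(-2\,A_i(\hat q_i,\hat a)\cdot\hat p_i \;+\; A_i(\hat q_i,\hat a)^2\Bigr),
\end{equation*}
where the ``cross term'' has been symmetrised using the Coulomb gauge identity $k\cdot\epsilon_\lambda(k)=0$, which formally implies $[\hat p_i,A_i(\hat q_i,\hat a)]=0$ (more precisely, that the divergence of the smeared vector potential with respect to $q_i$ vanishes). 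The term $V(\hat q)$ is a bounded self-adjoint operator by \eqref{eq:hypo0}, so the analysis reduces to the quantised interaction.

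\textbf{Step 1: the linear-in-$A$ term.} The main tool is the standard $N_\tau$-estimate relative to $\hat H_{\mathrm f}$: for $f\in L^2$,
\begin{equation*}
  \|\hat a(f)\psi\|^2\leq \bigl\||\cdot|^{-1/2}f\bigr\|_{L^2}^2\,\langle \psi,\hat H_{\mathrm f}\psi\rangle,\qquad
  \|\hat a^*(f)\psi\|^2\leq \bigl\||\cdot|^{-1/2}f\bigr\|_{L^2}^2\,\langle\psi,\hat H_{\mathrm f}\psi\rangle + \hslash\|f\|_{L^2}^2\|\psi\|^2.
\end{equation*}
Applied fibrewise in $\hat q_i$ with $f(k)=\chi_i(k)\epsilon_\lambda(k)/\sqrt{2|k|}$, and invoking \eqref{eq:hypo1} together with the interpolation $\||\cdot|^{1/2}\chi_i\|_{L^2}<\infty$ supplied by (\hyperref[eq:hypo2]{$A_2^{(1)}$}), one obtains a uniform bound of the form $\|A_i(\hat q_i,\hat a)\psi\|\leq C(\|\hat H_{\mathrm f}^{1/2}\psi\|+\|\psi\|)$. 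Combining this with Cauchy--Schwarz and Young's inequality gives
\begin{equation*}
  \bigl\|A_i(\hat q_i,\hat a)\cdot\hat p_i\,\psi\bigr\| \;\leq\; \varepsilon\bigl(\|\hat p_i^2\psi\|+\|\hat H_{\mathrm f}\psi\|\bigr) + C_\varepsilon\|\psi\|
\end{equation*}
for every $\varepsilon>0$, which is infinitesimally small relative to $\hat H_\hslash^0$.

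\textbf{Step 2: the quadratic-in-$A$ term and conclusion.} This is the delicate piece. The operator $A_i^2$ is a sum of Wick-ordered quadratic expressions in $\hat a,\hat a^*$; iterating the bounds of Step~1 (or applying a second-order $N_\tau$-estimate), together with \eqref{eq:hypo1} and (\hyperref[eq:hypo2]{$A_2^{(1)}$}), yields
\begin{equation*}
  \|A_i(\hat q_i,\hat a)^2\psi\|\;\leq\; C\bigl(\|\hat H_{\mathrm f}\psi\|+\|\psi\|\bigr),
\end{equation*}
again made infinitesimally small after absorbing a small multiple of $\hat H_{\mathrm f}$ via Young's inequality. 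Summing all contributions, $\hat W$ is shown to be symmetric and infinitesimally $\hat H_\hslash^0$-bounded, so Kato--Rellich delivers self-adjointness on $D(\hat H_\hslash^0)$.

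\textbf{Main obstacle.} The quadratic term $A_i^2$ is the crux: a naive bound would want to control it by $\hat H_{\mathrm f}^2$, which is \emph{not} part of $\hat H_\hslash^0$. The key technical point is therefore the second-order $N_\tau$ estimate giving an $\hat H_{\mathrm f}$-bound (rather than $\hat H_{\mathrm f}^2$-bound), which is exactly the regime captured by the conditions $|\cdot|^{-1}\chi_i\in L^2$ and $|\cdot|^{1/2}\chi_i\in L^2$ of \eqref{eq:hypo1} and (\hyperref[eq:hypo2]{$A_2^{(1)}$}) -- the latter being, in particular, the ultraviolet regularity needed to close the estimate. Once this estimate is in hand the Kato--Rellich argument is routine, and the full statement is exactly the one proved in~\cite{hiroshima2002ahp}.
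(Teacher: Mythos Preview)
The paper does not prove this theorem; it is cited from~\cite{hiroshima2002ahp}. Your Kato--Rellich outline, however, contains a real gap in Step~2. The second-order estimate
\[
  \|A_i(\hat q_i,\hat a)^2\psi\|\;\leq\; C\bigl(\|\hat H_{\mathrm f}\psi\|+\|\psi\|\bigr)
\]
is correct (it is the $m=2$ case of~\eqref{eq.a.est}--\eqref{eq.ast.est}, and is used verbatim in the proof of Lemma~\ref{s3.lemE}), but the constant $C$ is a fixed quadratic expression in $\|\chi_i/|\cdot|\|_{L^2}$ and $\|\chi_i/\sqrt{|\cdot|}\|_{L^2}$. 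There is no Young's inequality that makes it infinitesimal: the right-hand side is already linear in $\hat H_{\mathrm f}$, with no auxiliary parameter to trade off. Consequently Kato--Rellich only yields self-adjointness when this $C$ (together with the cross-term contribution) is strictly less than~$1$, i.e.\ for sufficiently small form factors. The hypotheses \eqref{eq:hypo1} and (\hyperref[eq:hypo2]{$A_2^{(1)}$}) impose no such smallness, and the whole point of~\cite{hiroshima2002ahp}---signalled already by its title---is self-adjointness for \emph{arbitrary} coupling strength.

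The correct route is not to bound $\hat H_\hslash-\hat H_\hslash^0$ relative to $\hat H_\hslash^0$, but to compare $\|(\hat H_\hslash+\gamma)\psi\|$ and $\|(\hat H_\hslash^0+1)\psi\|$ directly, exploiting the positivity of the covariant kinetic energy $(p_j-A_j)^2$. Expanding $\|\hat H_\hslash\psi\|^2$, the mixed terms $\langle(p_i-A_i)^2\psi,(p_j-A_j)^2\psi\rangle$ for $i\neq j$ are non-negative (the covariant derivatives commute), and the cross term with $\hat H_{\mathrm f}$ is controlled from below via the commutator $[\hat H_{\mathrm f},A_j]$, which is only $(\hat H_{\mathrm f}+1)^{1/2}$-bounded. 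This produces $\sum_j\|(p_j-A_j)^2\psi\|^2+\|\hat H_{\mathrm f}\psi\|^2\lesssim\|\hat H_\hslash\psi\|^2+\|\psi\|^2$ with constants independent of the size of~$\chi_j$; the paper carries out exactly this computation in the proof of Lemma~\ref{s3.lemE}. Self-adjointness itself, in the original reference, is then completed by a separate argument for essential self-adjointness on a core that does not reduce to Kato--Rellich.
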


 We recall some useful inequalities on $\hat a^\sharp$ and $\hat H_{\rm f}$. Let $\psi\in D(\hat H_{\rm f}^{m/2})$.  Suppose that $\|f_j/\sqrt{|\cdot|}\|_2<\infty$ for $j=1,\ldots, m$. Then $\psi\in D(\prod_{j=1}^m   \hat a(f_j))\cap D(\prod_{j=1}^m   \hat a^\ast(f_j))$ and there exists
a constant $c>0$ depending only on $\|f_j/\sqrt{|\cdot|}\|_2$ such that
\begin{equation}
\label{eq.a.est}
\|\prod_{j=1}^m   \hat a(f_j)\psi\|\leq c
\|\hat H_{\rm f}^{m/2}\psi\|\,,
\end{equation}
\begin{equation}
\label{eq.ast.est}
\|\prod_{j=1}^m  \hat a^\ast (f_j)\psi\|\leq c
\|(\hat H_{\rm f}+1)^{m/2}\psi\|\,.
\end{equation}

 \begin{lemma}
   \label{s3.lemE}
 Assume \eqref{eq:hypo0}, \eqref{eq:hypo1} and \emph{(}\hyperref[eq:hypo2]{$A_2^{(1)}$}\emph{)}. Then there exist $c,C,\gamma>0$ independent of $\hslash$
 such that for all $\hslash\in (0,1)$ and for all $\psi\in D(\hat{H}_{\hslash}^0)$,
  \begin{align}
  \label{eq:unH}
    c\| (\hat{H}_{\hslash}+\gamma)\psi  \|\leq \| (\hat{H}_{\hslash}^0+1)\psi  \|
    \leq C
    \| (\hat{H}_{\hslash}+\gamma)\psi  \| .
  \end{align}
\end{lemma}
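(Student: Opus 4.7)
The plan is to establish, uniformly in $\hslash\in(0,1)$, a Kato--Rellich-type bound $\|(\hat H_\hslash-\hat H_\hslash^0)\psi\|\leq a\|\hat H_\hslash^0\psi\|+b\|\psi\|$ with $a<1$, and derive \eqref{eq:unH} from it by triangle-inequality manipulations together with a large, $\hslash$-uniform choice of $\gamma$. The first step is to use the Coulomb gauge condition \eqref{eq:Coulgauge}: transversality $\epsilon_\lambda(k)\cdot k=0$ forces $\hat p_i\cdot A_i(\hat q_i,\hat a)=A_i(\hat q_i,\hat a)\cdot\hat p_i$, so one writes
\begin{equation*}
\hat H_\hslash-\hat H_\hslash^0 = V(\hat q)-\sum_{i=1}^n \frac{1}{m_i}\,A_i(\hat q_i,\hat a)\cdot \hat p_i + \sum_{i=1}^n\frac{1}{2m_i}\,A_i(\hat q_i,\hat a)^2\,.
\end{equation*}

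Next I bound each term uniformly in $\hslash$. Assumption \eqref{eq:hypo0} gives $\|V(\hat q)\psi\|\leq\|V\|_\infty\|\psi\|$. For the field-operator terms, I apply \eqref{eq.a.est}--\eqref{eq.ast.est} to the form factor $f_\lambda(k)=\chi_i(k)\epsilon_\lambda(k)e^{\pm 2\pi\ci k\cdot\hat q_i}/\sqrt{2|k|}$; the unimodular position exponential does not affect the relevant $L^2$-type norms, and assumption \eqref{eq:hypo1}, combined by interpolation with (\hyperref[eq:hypo2]{$A_2^{(1)}$}), provides $\|f_\lambda/\sqrt{|\cdot|}\|_{L^2}<\infty$ and $\|f_\lambda\|_{L^2}<\infty$. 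The resulting bounds
\begin{equation*}
\|A_i(\hat q_i,\hat a)\psi\|\leq c\|(\hat H_{\rm f}+1)^{1/2}\psi\|\,,\qquad \|A_i(\hat q_i,\hat a)^2\psi\|\leq c\|(\hat H_{\rm f}+1)\psi\|
\end{equation*}
hold with $c$ independent of $\hslash$, because the $\hslash$-dependent piece of the scaled commutator \eqref{eq:an-cr2} is of order $\hslash\leq 1$ and is absorbed. For the cross term, $\hat p_i$ commutes with $\hat H_{\rm f}$ and with the Fock-space operators inside $A_i$, so applying the same estimate to $\hat p_i\psi$ yields $\|A_i\cdot\hat p_i\psi\|\leq c\|(\hat H_{\rm f}+1)^{1/2}\hat p_i\psi\|$. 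Since $\hat p_i^2$ and $\hat H_{\rm f}+1$ are commuting nonnegative operators, Young's inequality $2XY\leq \epsilon X^2+\epsilon^{-1}Y^2$ in operator form gives $\|(\hat H_{\rm f}+1)^{1/2}\hat p_i\psi\|^2\leq \tfrac{\epsilon}{2}\|\hat p_i^2\psi\|^2+\tfrac{1}{2\epsilon}\|(\hat H_{\rm f}+1)\psi\|^2$, both of whose right-hand sides are controlled by $\|(\hat H_\hslash^0+1)\psi\|^2$.

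Combining the above produces the Kato--Rellich bound with $\hslash$-independent constants. The upper inequality in \eqref{eq:unH} is then immediate from $\|\hat H_\hslash\psi\|\leq (1+a)\|\hat H_\hslash^0\psi\|+b\|\psi\|$. For the lower inequality one rearranges into $(1-a)\|\hat H_\hslash^0\psi\|\leq\|\hat H_\hslash\psi\|+b\|\psi\|$ and picks $\gamma$ large enough---uniformly in $\hslash$---that $\hat H_\hslash+\gamma\geq 1$; the same Kato--Rellich bound, promoted to a quadratic-form inequality, implies the uniform-in-$\hslash$ lower boundedness of $\hat H_\hslash$ and hence the feasibility of this choice.

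\textbf{Main obstacle.} The decisive technical point is securing the strict inequality $a<1$ uniformly in $\hslash$. The constants in \eqref{eq.a.est}--\eqref{eq.ast.est} are determined by $\|\chi_i/|\cdot|\|_{L^2}$ and $\|\chi_i\|_{L^2}$ and are not a priori small, and the Young trick alone only redistributes the contributions of $\hat p_i^2$ and $\hat H_{\rm f}$ inside $\hat H_\hslash^0$ without lowering the overall relative bound. The standard remedies are either a high/low momentum splitting $\chi_i=\chi_i^{\mathrm{lo}}+\chi_i^{\mathrm{hi}}$, chosen so that the high-momentum part has arbitrarily small relative bound and the low-momentum part is bounded (contributing only to $b$), or the quadratic-form inequality $(\hat p_i-A_i)^2\geq\tfrac{1}{2}\hat p_i^2-A_i^2$ combined with $A_i^2\lesssim \hat H_{\rm f}+1$ to bound $\hat p_i^2$ directly by $\hat H_\hslash+\text{const}$. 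Either variant is the content of \cite{hiroshima2002ahp} invoked already in Theorem~\ref{prop:2}, and the semiclassical refinement consists in verifying that all constants remain $\hslash$-uniform, which follows from the Cauchy--Schwarz origin of $\hat a^*(f)\hat a(f)\leq \|f/\sqrt{|\cdot|}\|_{L^2}^2\,\hat H_{\rm f}$ and from $\hslash\leq 1$.
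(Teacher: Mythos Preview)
Your Kato--Rellich strategy has a genuine gap that you yourself flag but do not close. The obstacle is not merely technical: for the $A_i^2$ term one has $\|A_i^2\psi\|\leq c\,\|(\hat H_{\rm f}+1)\psi\|$ with $c$ determined by $\|\chi_i/\sqrt{|\cdot|}\|_{L^2}$ and $\|\chi_i\|_{L^2}$, and since $\|\hat H_{\rm f}\psi\|\leq\|\hat H_\hslash^0\psi\|$ is sharp on pure-field states, the relative bound of $A_i^2$ with respect to $\hat H_\hslash^0$ is exactly this $c$, which is \emph{not} small for general form factors. Neither of your proposed remedies fixes this. A high/low split $\chi_i=\chi_i^{\mathrm{lo}}+\chi_i^{\mathrm{hi}}$ makes the $A_i^{\mathrm{hi}}$-contributions small, but $(A_i^{\mathrm{lo}})^2$ is still an unbounded field operator whose $\hat H_{\rm f}$-relative bound is governed by $\|\chi_i^{\mathrm{lo}}/\sqrt{|\cdot|}\|_{L^2}$, which is not small; the low-momentum part cannot be absorbed into the constant $b$. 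The diamagnetic inequality $(\hat p_i-A_i)^2\geq\tfrac12\hat p_i^2-A_i^2$ only yields form bounds and still requires the form-relative bound of $A_i^2$ versus $\hat H_{\rm f}$ to be less than $1$, which is the same obstruction. This is precisely why self-adjointness for \emph{arbitrary} coupling in \cite{hiroshima2002ahp} is nontrivial and is not obtained via Kato--Rellich; invoking that reference does not supply the missing $\hslash$-uniform argument here, as the paper itself notes.

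The paper circumvents the smallness issue entirely by working with squared norms and positivity rather than perturbation theory. One expands $\|\hat H_\hslash\psi\|^2=\|\sum_j\frac{1}{2m_j}(\hat p_j-A_j)^2\psi\|^2+\|\hat H_{\rm f}\psi\|^2+2\Ree\langle\cdot,\cdot\rangle$. The first term dominates $\sum_j\|\frac{1}{2m_j}(\hat p_j-A_j)^2\psi\|^2$ because the off-diagonal terms $\langle(\hat p_i-A_i)^2\psi,(\hat p_j-A_j)^2\psi\rangle$ are nonnegative (the covariant derivatives for distinct particles commute). The cross term with $\hat H_{\rm f}$ is bounded from below via the commutator $[\hat H_{\rm f},A_j]$, which brings in $\|\sqrt{|\cdot|}\chi_j\|_{L^2}$ (this is where (\hyperref[eq:hypo2]{$A_2^{(1)}$}) enters) and is controlled by an $\epsilon$-small multiple of the main terms plus a constant. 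This yields $\sum_j\|(\hat p_j-A_j)^2\psi\|^2+\|\hat H_{\rm f}\psi\|^2\lesssim\|\hat H_\hslash\psi\|^2+\|\psi\|^2$ with $\hslash$-independent constants and \emph{no} smallness hypothesis on the coupling. One then recovers $\|\hat p_j^2\psi\|$ from $\hat p_j^2=(\hat p_j-A_j)^2+2A_j(\hat p_j-A_j)+A_j^2$ and the same commutator estimates. The reverse inequality is the easy direction and follows essentially as you wrote.
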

\begin{proof}
We set $A_j=A_j(\hat q_j,\hat a)$, $p_j=\hat p_j$,
$H^0=\hat H_{\hslash}^0$, $H_{\mathrm{f}}=\hat{H}_{\mathrm{f}}$, and $H=\hat H_{\hslash}$ for simplicity, and
we assume that $\psi$ is sufficiently smooth.
By Theorem \ref{prop:2} and the closed graph theorem
we obtain $(\ref{eq:unH})$, but it is not clear how $c$ and $C$ depend on
$\hslash$.
Hence we shall prove the lemma from the scratch.
In the proof below constants $c_j$ are independent of $\hslash\in (0,1]$ and
depend at most on the $L^2$-norms of $\chi_i/|\cdot|$, $\chi_i/\sqrt{|\cdot|}$, $\chi_i$ and
$\sqrt{|\cdot|}\chi_i$. Firstly we assume that $V=0$.
We estimate $\sum_{j=1}^n \|\frac{1}{2m_j} p_j^2\psi \|^2$ and
$\|H_{\rm f}\psi \|^2$.
We have the identity
$(p_j-A_j)^2\psi =p_j^2\psi -2A_j(p_j-A_j)\psi -A_j^2 \psi $ and thus
\begin{align}
\label{h1}
p_j^2\psi =(p_j-A_j)^2\psi +2A_j(p_j-A_j)\psi +A_j^2 \psi\,,
\end{align}
and
\begin{align*}
\|p_j^2\psi \|^2 \leq c_1(\|(p_j-A_j)^2\psi \|^2+\|A_j(p_j-A_j)\psi \|^2+
\|A_j^2 \psi \|^2)\,.
\end{align*}
We obtain
that
\begin{align*}
\|A_j(p_j-A_j)\psi \|^2
&\leq
c_2\langle(p_j-A_j)\psi , (H_{\rm f}+1)(p_j-A_j)\psi \rangle \\
&=
c_2\langle(p_j-A_j)^2\psi , (H_{\rm f}+1)\psi \rangle
+
c_2\langle(p_j-A_j)\psi , -[H_{\rm f},A_j] \psi \rangle.
\end{align*}
Note that $|\langle(p_j-A_j)\psi , -[H_{\rm f},A_j] \psi \rangle|\leq
c_3\|(p_j-A_j)\psi \|\| (H_{\rm f}+1)^{1/2}\psi \|$. Then
\begin{align*}
\|A_j(p_j-A_j)\psi \|^2\leq
c_4
(\|(p_j-A_j)^2\psi \|^2+\|(H_{\rm f}+1)\psi \|^2+\|\psi \|^2)\,.
 \end{align*}
Since from \eqref{eq.a.est}-\eqref{eq.ast.est}
$$\|A_j^2\psi\|\leq c\|(H_{\rm f}+1)\psi\|$$
follows, we conclude that
 \begin{align}
 \label{eq.pf.2}
 \sum_{j=1}^n \|\frac{1}{2m_j} p_j^2\psi \|^2
 \leq
c_5( \sum_{j=1}^n \|\frac{1}{2m_j}(p_j-A_j)^2\psi \|^2+\|(H_{\rm f}+1)\psi \|^2+\|\psi \|^2)\,.
 \end{align}
 Next we estimate
 $\|H_{\rm f}\psi \|^2$.
 We have
 \begin{align*}
 \|H\psi \|^2
 &=
 \|\sum_{j=1}^n \frac{1}{2m_j}(p_j-A_j)^2\psi +H_{\rm f}\psi \|^2\\
 &=
 \|\sum_{j=1}^n \frac{1}{2m_j}(p_j-A_j)^2\psi\|^2
+ \|H_{\rm f}\psi \|^2+2\Ree\langle\sum_{j=1}^n \frac{1}{2m_j}(p_j-A_j)^2\psi ,
 H_{\rm f} \psi \rangle\,.
 \end{align*}
 Since $[p_j-A_j,p_i-A_i]\psi=0$ for $i\neq j$, we have
  \begin{align}
 \label{eq.pf.1}
 &\|\sum_{j=1}^n \frac{1}{2m_j}(p_j-A_j)^2\psi \|^2\nonumber\\
 &=
 \sum_{j=1}^n \|\frac{1}{2m_j}(p_j-A_j)^2\psi \|^2+
 \sum_{i\neq j} \frac{1}{2m_j}\frac{1}{2m_i}
 \langle(p_j-A_j)^2\psi ,
 (p_i-A_i)^2\psi \rangle\\
& \geq
 \sum_{j=1}^n \|\frac{1}{2m_j}(p_j-A_j)^2\psi \|^2\,.\nonumber
 \end{align}
 In addition to this we can see that for arbitrary small $\epsilon>0$ there
 exists $c_7>0$ such that
 \begin{align*}
&\Ree\langle\sum_{j=1}^n \frac{1}{2m_j}(p_j-A_j)^2\psi ,  H_{\rm f} \psi \rangle  \\
&=
\sum_{j=1}^n \frac{1}{2m_j}\langle(p_j-A_j)\psi ,  H_{\rm f} (p_j-A_j)\psi \rangle
+
\sum_{j=1}^n \frac{1}{2m_j}\Ree\langle(p_j-A_j)\psi ,  [H_{\rm f}, -A_j]\psi \rangle  \\
& \geq
-c_6\sum_{j=1}^n \frac{1}{2m_j}\|(p_j-A_j)\psi \|\|(H_{\rm f}+1)^{1/2}\psi \|  \\
& \geq
-\epsilon \sum_{j=1}^n \frac{1}{2m_j}\|(p_j-A_j)^2\psi \|^2
-\epsilon \|(H_{\rm f}+1)\psi \| ^2-c_7\|\psi \|^2.
\end{align*}
Hence there exists $c_8>0$ such that
\begin{align*}
c_8 (\sum_{j=1}^n \|\frac{1}{2m_j}(p_j-A_j)^2\psi \|^2+\|H_{\rm f}\psi \|^2)
\leq
 \|H\psi \|^2+c_9\|\psi \|^2\,.
  \end{align*}
  Then combining \eqref{eq.pf.2} and \eqref{eq.pf.1} yield
  \begin{align*}
  &\|\sum_{j=1}^n \frac{1}{2m_j}p_j^2\psi \|^2+\|H_{\rm f}\psi \|^2
\leq
c_{10}
\sum_{j=1}^n  \| \frac{1}{2m_j} p_j^2\psi \|^2+\|H_{\rm f}\psi \|^2\\
&\leq
c_{11} (\sum_{j=1}^n \|\frac{1}{2m_j}(p_j-A_j)^2\psi \|^2+\|H_{\rm f}\psi \|^2
+\|\psi \|^2)\\
&\leq c_{12}(\|H\psi \|^2+\|\psi \|^2)
\leq
c_{13}\|(H+1)\psi \|^2.
\end{align*}
It finally follows that
$\|(H^0+1)\psi\|\leq c_{14}\|(H+1)\psi\|$.
We can also show the inequality of the right hand side,
but it is rather easier than that of the left hand side.
We have
\begin{align*}
\|H\psi\|\leq \| H^0\psi\|+
\|
\sum_{j=1}^n \frac{1}{m_j}A _jp_j\psi +
\sum_{j=1}^n \frac{1}{2m_j}A _j^2\psi
\|
\end{align*} and
\begin{align*}
\|
\sum_{j=1}^n \frac{1}{m_j}A _jp_j\psi +
\sum_{j=1}^n \frac{1}{2m_j}A _j^2\psi
\|
\leq c_{14}
(\|\sum_{j=1}^n \frac{1}{2m_j}p_j^2\psi\|+\|H_{\rm f}\psi\|+\|\psi\| )\,.
\end{align*}
Then it follows that
$$\|(H+1)\psi\|\leq c_{15}\|(H^0+1)\psi\|$$
and the proof is complete for $V=0$.  In the case of $V\neq 0$, \eqref{eq:unH} can be also derived straightforwardly since $V$ is bounded.
\end{proof}

From Lemma \ref{s3.lemE} we can immediately obtain
a form version of inequalities.
Suppose that $\hat{H}_{\hslash}+a>0$ and $\hat{H}_{\hslash}^0+b>0$ with some $a,b\in{\mathbb R}$.

\begin{cor}
 \label{s4.lemE}
 Assume \eqref{eq:hypo0}, \eqref{eq:hypo1} and \emph{(}\hyperref[eq:hypo2]{$A_2^{(1)}$}\emph{)}. Then there exist $c,C>0$ independent of $\hslash$
 such that for all $\hslash\in (0,1)$ and for all $\psi\in D(\hat{H}_{\hslash}^0)$,
  \begin{align}
  \label{eq:unHs}
    c\langle\psi,  (\hat{H}_{\hslash}+a)\psi \rangle\leq
    \langle\psi,  (\hat{H}_{\hslash}^0+b)\psi\rangle
    \leq C
    \langle\psi, (\hat{H}_{\hslash}+a)\psi\rangle\,.
  \end{align}
In particular
  \begin{align}
  \label{eq:unHs2}
    c\|(\hat{H}_{\hslash}+a)^{1/2}\psi \|
    \leq
    \| (\hat{H}_{\hslash}^0+b)^{1/2}\psi\|
    \leq C
    \| (\hat{H}_{\hslash}+a)^{1/2}\psi\|
  \end{align}
for $\psi\in D((\hat{H}_{\hslash}^0)^{1/2})$.
\end{cor}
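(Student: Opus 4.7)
The plan is to deduce the quadratic form inequalities from the operator norm estimates of Lemma \ref{s3.lemE} via the Löwner--Heinz operator monotonicity theorem applied to $t\mapsto t^{1/2}$ on $[0,\infty)$. First I would fix the positivity setup: since $V$ is bounded by \eqref{eq:hypo0} and $\hat H_\hslash^0\geq 0$, one can choose $\gamma>0$ and shift constants uniformly in $\hslash$ so that $\hat H_\hslash+\gamma\geq \mathds{1}$ and $\hat H_\hslash^0+1\geq \mathds{1}$. The positivity hypotheses $\hat H_\hslash+a>0$ and $\hat H_\hslash^0+b>0$ in the statement are then compatible with this choice up to absorbing bounded differences into $c$ and $C$.

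Second, the self-adjointness of $\hat H_\hslash+\gamma$ and $\hat H_\hslash^0+1$ lets me rewrite the norm bound of Lemma \ref{s3.lemE} as the quadratic form chain
\begin{equation*}
c^2\,(\hat H_\hslash+\gamma)^2 \;\leq\; (\hat H_\hslash^0+1)^2 \;\leq\; C^2\,(\hat H_\hslash+\gamma)^2
\end{equation*}
on $D(\hat H_\hslash^0)$, with constants independent of $\hslash$. Since the squared operators are nonnegative self-adjoint, I would apply Löwner--Heinz to obtain
\begin{equation*}
c\,(\hat H_\hslash+\gamma) \;\leq\; \hat H_\hslash^0+1 \;\leq\; C\,(\hat H_\hslash+\gamma)
\end{equation*}
as quadratic forms on $D((\hat H_\hslash^0)^{1/2})$, which is the desired inequality \eqref{eq:unHs} modulo the choice of constants $a,b$.

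Third, the passage from the canonical shifts $(\gamma,1)$ to arbitrary admissible $(a,b)$ is a matter of absorbing bounded differences: adding or subtracting a constant multiple of the identity to a strictly positive operator changes the form by a comparable amount, and can be incorporated into $c,C$ while keeping them $\hslash$-independent. Inequality \eqref{eq:unHs2} then follows immediately by writing $\langle\psi,(\hat H_\hslash+a)\psi\rangle=\|(\hat H_\hslash+a)^{1/2}\psi\|^2$, similarly for the free Hamiltonian, and taking square roots of \eqref{eq:unHs}; the initial form inequality is established on the core $D(\hat H_\hslash^0)$ and extends to $D((\hat H_\hslash^0)^{1/2})$ by density and closedness of the forms.

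The main (minor) subtlety is to justify the Löwner--Heinz step in this unbounded setting and to track that all constants remain uniform in $\hslash$. The first point is handled by the standard form version of operator monotonicity for nonnegative self-adjoint operators, while the second is automatic because the constants inherited from Lemma \ref{s3.lemE} are themselves $\hslash$-independent by construction.
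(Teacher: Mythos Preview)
Your argument is correct, but it takes a different route from the paper. The paper does \emph{not} derive \eqref{eq:unHs} from Lemma~\ref{s3.lemE}; instead it remarks that the form-level estimate can be proved directly by redoing the computations of Lemma~\ref{s3.lemE} at the quadratic-form level, which is in fact easier (one avoids the cross terms that arise when squaring $(p_j-A_j)^2+H_{\mathrm f}$). Then \eqref{eq:unHs2} is obtained from \eqref{eq:unHs} on $D(\hat H_\hslash^0)$ by a density/closedness argument: approximate $\psi\in D((\hat H_\hslash^0)^{1/2})$ by $\psi_n\in D(\hat H_\hslash^0)$ in graph norm of $(\hat H_\hslash^0)^{1/2}$, observe that $(\hat H_\hslash+a)^{1/2}\psi_n$ is Cauchy, and conclude by closedness.

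Your approach is more conceptual: you read Lemma~\ref{s3.lemE} as the form inequality $c^2(\hat H_\hslash+\gamma)^2\leq (\hat H_\hslash^0+1)^2\leq C^2(\hat H_\hslash+\gamma)^2$ on their common form domain $D(\hat H_\hslash^0)$, and then invoke L\"owner--Heinz (operator monotonicity of $t\mapsto t^{1/2}$, valid for unbounded nonnegative self-adjoint operators via the resolvent integral representation) to drop to the first powers. This is legitimate and has the advantage that it recycles Lemma~\ref{s3.lemE} rather than repeating its proof; it also yields \eqref{eq:unHs2} on $D((\hat H_\hslash^0)^{1/2})$ directly, without a separate density step, since L\"owner--Heinz automatically produces the inclusion of form domains. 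The trade-off is that you import a nontrivial piece of operator theory, whereas the paper's proof stays entirely elementary. Your handling of the shift from $(\gamma,1)$ to $(a,b)$ is correct because $V$ bounded makes the spectral lower bounds, and hence the comparison constants, uniform in $\hslash$.
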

\begin{proof}
The proof for \eqref{eq:unHs} is similar but rather easier than that of Lemma \ref{s3.lemE}. To show \eqref{eq:unHs2} we take an approximation.
\eqref{eq:unHs2} is true for $\psi\in D(\hat{H}_{\hslash}^0)$.
Let $\psi\in D((\hat{H}_{\hslash}^0)^{1/2})$. There exists a sequence
$\psi_n\in
D(\hat{H}_{\hslash}^0) $ such that $\psi_n\to\psi$ and
$(\hat{H}_{\hslash}^0)^{1/2}\psi_n\to
(\hat{H}_{\hslash}^0)^{1/2}\psi$ as $n\to\infty$.
Hence
$\hat{H}_{\hslash}^{1/2}\psi_n$
is a Cauchy sequence.
By the closedness of $\hat{H}_\hslash^{1/2}$  we see that
$\psi\in D(\hat{H}_{\hslash}^{1/2})$ and
the inequality of the left  hand side of \eqref{eq:unHs2} follows.
The inequality of the right hand side of \eqref{eq:unHs2} can be similarly proven.
\end{proof}

\subsection{Uniform  estimates}
\label{sec:basic-estimates}

In this paragraph, we use the following shorthand notations:
\begin{equation}
\label{eq:L2}
\mathfrak{H}=L^2(\mathds{R}^d,\mathds{C}^{d-1}\times \mathds{R}^d)\,,\qquad  \mathfrak{H}^0=L^2(\mathds{R}^d,\mathds{C}^{d-1})\,.
\end{equation}
Consider a \emph{separable} Hilbert space $\mathcal{H}$, and bounded
operators ${v}\in \mathscr{L}\bigl(\mathcal{H},\mathcal{H}\otimes \mathfrak{H}\bigr)$
such that there exist a.e.\ defined maps
\begin{equation}
\label{eq:vk}
k\mapsto {v}^{\nu}(k)=\bigl({v}_1^{\nu}(k),\dotsc,{v}_{d-1}^{\nu}(k)\bigr), \quad \forall \nu\in \{1,\dotsc,d\}
\end{equation}
such that for all $\phi\in \mathcal{H}$, and a.a.\ $k\in \mathds{R}^d$:

\begin{eqnarray*}
&&{v}_\lambda^{\nu}(k)\in \mathscr{L}(\mathcal{H});
\\
&& \mathcal{H}\ni {({v}\phi)}_\lambda^{\nu} (k)={v}_\lambda^{\nu}(k) \phi;
\\
&&\sum_{\lambda=1}^{d-1}\sum_{\nu=1}^d\int_{\mathds{R}^d}^{}\lVert {v}_\lambda^{\nu}(k)\phi \rVert_{\mathcal{H}}^2 \mathrm{d}k=\lVert {v}\phi
  \rVert_{\mathcal{H}\otimes \mathfrak{H}}^2\,.
\end{eqnarray*}

In particular, the given representation \eqref{eq:vk} defines an operator ${v}^{\nu}\in  \mathscr{L}(\mathcal{H},\mathcal{H}\otimes \mathfrak{H}^0)$, with
\begin{equation*}
 \lVert {v}^{\nu}  \rVert^2_{\mathscr{L}(\mathcal{H},\mathcal{H}\otimes \mathfrak{H}^0)}=\sum_{\lambda=1}^{d-1}\int_{\mathds{R}^d}^{}\lVert {v}_\lambda^{\nu}(k)  \rVert_{\mathscr{L}(\mathcal{H})}^2  \mathrm{d}k\; ;
\end{equation*}
and
\begin{equation*}
  \lVert {v} \rVert_{\mathscr{L}(\mathcal{H},\mathcal{H}\otimes \mathfrak{H})}^2=\sum_{\nu=1}^d\lVert {v}^{\nu}  \rVert_{\mathscr{L}(\mathcal{H},\mathcal{H}\otimes \mathfrak{H}^0)}^2\; .
\end{equation*}
This leads to the following definition.
\begin{defn}
  \label{def:1}
  Let ${v}\in \mathscr{L}\bigl(\mathcal{H},\mathcal{H}\otimes \mathfrak{H}\bigr)$ be
  as above; then $\hat{a}({v}^{\nu})$
  and $\hat{a}^{*}({v}^{\nu})$, $\nu\in \{1,\dotsc,d\}$, are the closed, densely defined operators on $\mathcal{H}\otimes
  \Gamma_{\mathrm{s}}\bigl(\mathfrak{H}\bigr)$ whose action is defined by
  \begin{equation}
  \label{eq:aa}
    \hat{a}({v}^{\nu})=\sum_{\lambda=1}^{d-1}\int_{\mathbb{R}^d}^{}(v_\lambda^{\nu})^{*}(k)\;
    \hat{a}_\lambda(k)  \mathrm{d}k\; ,\; \hat{a}^{*}({v}^{\nu})=\sum_{\lambda=1}^{d-1}\int_{\mathbb{R}^d}^{}{v}_\lambda^{\nu}(k)\;
    \hat{a}_\lambda^{*}(k)  \mathrm{d}k\; .
  \end{equation}
  Here $({v}_\lambda^{\nu})^{*}(k)\in \mathscr{L}(\mathcal{H})$ is the adjoint of ${v}_\lambda^{\nu}(k)$.
\end{defn}
If ${v}$ and ${w}$ are such that for a.a.\ $k\in \mathds{R}^d$ and all $\lambda,\lambda'\in \{1,\dotsc,d-1\}$ and $\nu\in
\{1,\dotsc,d\}$, $[({v}_\lambda^\nu)^{*}(k),{w}_{\lambda'}^\nu(k')]=0$, then the $d$-dimensional vectors $\hat{a}({v})$ and
$\hat{a}^{*}({w})$ (whose components are $\hat{a}({v}^{\nu})$ and $\hat{a}^{*}({w}^{\nu})$ respectively) satisfy generalized
canonical commutation relations on a dense set, as stated in the following lemma, whose proof is
straightforward.
\begin{lemma}
  \label{lemma:1}
  If for a.a.\ $(k,k')\in \mathds{R}^{2d}$ and all $\lambda,\lambda'\in \{1,\dotsc,d-1\}$ and
  $\nu\in \{1,\dotsc,d\}$,
  $$
  [({v}_\lambda^{\nu})^{*}(k),{w}_{\lambda'}^{\nu}(k')]=0\,,
  $$
  then $\hat{a}({v}^{\nu})$ and $\hat{a}^{*}({w}^{\nu})$ satisfy
  \begin{equation*}
    [\hat{a}({v}),\hat{a}^{*}({w})]:=\sum_{\nu=1}^d[\hat{a}({v}^{\nu}),\hat{a}^{*}({w}^{\nu})]
    =\hslash\, {v}^{*}\cdot {w}=\hslash\sum_{\lambda=1}^{d-1}\sum_{\nu=1}^d\int_{\mathds{R}^d}^{}(v_\lambda^\nu)^{*}(k)\;
    {w}_\lambda^{\nu}(k)  \,\mathrm{d}k\; .
  \end{equation*}
\end{lemma}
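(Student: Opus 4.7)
The plan is to verify the stated commutation relation by direct computation on a dense invariant domain—for instance, the algebraic span of finite-particle vectors in $\Gamma_{\mathrm{s}}(\mathfrak{H})$ tensored with $\mathcal{H}$—where all operators and their products are well defined thanks to the a.e.\ bounds on $v_\lambda^\nu(k)$ combined with \eqref{eq.a.est}--\eqref{eq.ast.est}. On such vectors, both $\hat{a}(v^\nu)$ and $\hat{a}^{*}(w^\nu)$ are obtained from the integral representation \eqref{eq:aa} in the strong sense, so the product $\hat{a}(v^\nu)\hat{a}^{*}(w^\nu)$ can be written as an iterated (Bochner) double integral, and similarly for the opposite ordering.

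First I would expand
\begin{equation*}
[\hat{a}(v^\nu),\hat{a}^{*}(w^\nu)]=\sum_{\lambda,\lambda'=1}^{d-1}\int\!\!\int \bigl[(v_\lambda^\nu)^{*}(k)\hat{a}_\lambda(k),\;w_{\lambda'}^\nu(k')\hat{a}^{*}_{\lambda'}(k')\bigr]\,\mathrm{d}k\,\mathrm{d}k'
\end{equation*}
and then use the Leibniz rule for commutators, writing
\begin{equation*}
[AB,CD]=A[B,C]D+AC[B,D]+[A,C]DB+C[A,D]B .
\end{equation*}
Two of these four terms vanish because $\hat{a}_\lambda(k)$ and $\hat{a}^{*}_{\lambda'}(k')$ act on the Fock factor while $(v_\lambda^\nu)^{*}(k)$ and $w_{\lambda'}^\nu(k')$ act on the $\mathcal{H}$ factor (so they commute as operators on the tensor product). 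A third term vanishes by the standing hypothesis $[(v_\lambda^\nu)^{*}(k),w_{\lambda'}^\nu(k')]=0$. What survives is
\begin{equation*}
(v_\lambda^\nu)^{*}(k)\,w_{\lambda'}^\nu(k')\;[\hat{a}_\lambda(k),\hat{a}^{*}_{\lambda'}(k')] = \hslash\,\delta(k-k')\,\delta_{\lambda,\lambda'}\,(v_\lambda^\nu)^{*}(k)\,w_{\lambda'}^\nu(k'),
\end{equation*}
where I used the $\hslash$-scaled CCR \eqref{eq:an-cr2}.

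Integrating out the delta collapses one of the $k$-integrals and the Kronecker $\delta$ collapses one sum, leaving $\hslash\sum_{\lambda}\int (v_\lambda^\nu)^{*}(k)\,w_\lambda^\nu(k)\,\mathrm{d}k$. Finally, summing over $\nu\in\{1,\dots,d\}$ in the definition of $[\hat{a}(v),\hat{a}^{*}(w)]$ reproduces exactly the right-hand side $\hslash\,v^{*}\cdot w$ stated in the lemma. All manipulations are valid on the chosen dense domain because the number-operator/energy bounds of the form \eqref{eq.ast.est} allow us to interchange the finite sums with the Bochner integrals and to apply the CCR pointwise as a quadratic form.

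The only delicate point — and what I would treat with some care rather than as a routine manipulation — is the justification of the formal use of $\delta(k-k')$ against the operator-valued integrand $(v_\lambda^\nu)^{*}(k)w_{\lambda'}^\nu(k')$. The clean way is to test with two finite-particle vectors $\Phi,\Psi$, reducing the identity to an equality of scalar iterated integrals in which the CCR may be used as a genuine distributional identity (producing the diagonal restriction $k=k'$). Once this is established on the dense domain, the asserted operator identity follows on that domain, which is the content of the lemma.
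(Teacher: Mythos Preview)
Your proof is correct and is precisely the straightforward computation the paper has in mind; the paper does not spell out any details beyond noting that the result follows directly from the CCR \eqref{eq:an-cr2} and the commutation hypothesis on $(v_\lambda^\nu)^{*}(k)$ and $w_{\lambda'}^\nu(k')$. Your added care about justifying the distributional identity via testing against finite-particle vectors is appropriate and goes slightly beyond what the paper records.
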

Let us adopt the following shorthand notation:
\begin{equation*}
    {\llbracket{v}^{*},{w}\rrbracket=0} \Leftrightarrow
    \text{for a.a.\ } (k,k')\in \mathds{R}^{2d}, \forall \lambda,\lambda'\in \{1,\dotsc,d-1\}, \nu\in \{1,\dotsc,d\}: [({v}_\lambda^\nu)^{*}(k),{w}_{\lambda'}^\nu(k')]=0.
\end{equation*}
In addition, let us denote by $\omega$ the operator of multiplication by $\lvert k \rvert_{}^{}$. Given ${v}\in
\mathscr{L}\bigl(\mathcal{H},\mathcal{H}\otimes \mathfrak{H}\bigr)$, with associated representation
$k\mapsto {v}^{\nu}(k)$ for a.a.\ $k\in \mathds{R}^d$, then $\omega^{\beta}{v}$ has $k\mapsto \lvert k \rvert_{}^{\beta}{v}^{\nu}(k)$ as
associated representation.
\begin{lemma}
  \label{lemma:2}
  Let ${v}\in \mathscr{L}\bigl(\mathcal{H},\mathcal{H}\otimes \mathfrak{H}\bigr)$, with associated
  representation $k\mapsto {v}^{\nu}(k)$ for a.a.\ $k\in \mathds{R}^d$. If $\llbracket{v}^{*},{v}\rrbracket=0$, then, uniformly w.r.t.\
  $\hslash\in (0,1)$: \vspace{1.5mm} {\normalfont{}
    \begin{enumerate}%[(i)]
      \setlength{\itemsep}{1.5mm}
    \item\label{item:1} $\lVert \hat{a}({v}^{\nu}) \,{(\hat{H}_{\mathrm{f}}+1)}^{-\frac{1}{2}} \rVert_{\mathscr{L}(\mathcal{H}\otimes
        \Gamma_{\mathrm{s}}(\mathfrak{H}))}\leq \lVert \omega^{-\frac{1}{2}}{v}^{\nu} \rVert_{\mathscr{L}(\mathcal{H},\mathcal{H}\otimes \mathfrak{H}^0)}\, ,$
    \item\label{item:3} $\lVert \hat{a}^{*}({v}^{\nu})\,{(\hat{H}_{\mathrm{f}}+1)}^{-\frac{1}{2}} \rVert_{\mathscr{L}(\mathcal{H}\otimes
        \Gamma_{\mathrm{s}}(\mathfrak{H}))}\leq \lVert \omega^{-\frac{1}{2}}{v}^{\nu} \rVert_{\mathscr{L}(\mathcal{H},\mathcal{H}\otimes \mathfrak{H}^0)}+\lVert {v}^{\nu} \rVert_{\mathscr{L}(\mathcal{H},\mathcal{H}\otimes \mathfrak{H}^0)}\, .$
    \end{enumerate}}
\end{lemma}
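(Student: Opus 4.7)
The strategy is to adapt the classical $N_\tau$-type bounds for annihilation and creation operators smeared by a scalar one-particle function to the present operator-valued setting. The key observation is that $(v^\nu_\lambda)^*(k)\in\mathscr{L}(\mathcal{H})$ and $\hat{a}^\sharp_{\lambda'}(k')$ act on different tensor factors of $\mathcal{H}\otimes \Gamma_{\mathrm{s}}(\mathfrak{H})$, so they commute trivially and the scalar-case manipulations transfer without new ideas. Uniformity in $\hslash\in(0,1)$ will be automatic: the $\hslash$ appearing in the canonical commutation relations \eqref{eq:an-cr2} is already absorbed into the definition of $\hat{H}_{\mathrm{f}}$, while the extra $\hslash$ produced by the commutator in item \eqref{item:3} is harmlessly bounded by $1$.

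For item \eqref{item:1}, I would first work on the dense subspace of finite-particle vectors contained in $D(\hat{H}_{\mathrm{f}}^{1/2})$, on which the integral representation \eqref{eq:aa} makes sense pointwise in $k$. The triangle inequality for Bochner integrals followed by Cauchy--Schwarz with weights $|k|^{\pm 1/2}$ (first in $k$, then over $\lambda$) yields
\begin{equation*}
\|\hat{a}(v^\nu)\psi\|^2\leq \biggl(\sum_{\lambda=1}^{d-1}\int_{\R^d}|k|^{-1}\|(v^\nu_\lambda)^*(k)\|^2_{\mathscr{L}(\mathcal{H})}\,\mathrm{d}k\biggr)\biggl(\sum_{\lambda=1}^{d-1}\int_{\R^d}|k|\,\|\hat{a}_\lambda(k)\psi\|^2\,\mathrm{d}k\biggr).
\end{equation*}
The first factor equals $\|\omega^{-1/2}v^\nu\|^2_{\mathscr{L}(\mathcal{H},\mathcal{H}\otimes\mathfrak{H}^0)}$, while the second is exactly $\langle\psi,\hat{H}_{\mathrm{f}}\psi\rangle\leq \|(\hat{H}_{\mathrm{f}}+1)^{1/2}\psi\|^2$ by the definition of $\hat{H}_{\mathrm{f}}$ and the fact that $\hat{a}^{*}_\lambda(k)$ is the formal adjoint of $\hat{a}_\lambda(k)$. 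A standard density argument then promotes the bound to the operator norm $\|\hat{a}(v^\nu)(\hat{H}_{\mathrm{f}}+1)^{-1/2}\|$.

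For item \eqref{item:3}, the assumption $\llbracket v^*,v\rrbracket=0$ lets me invoke the $\nu$-component of Lemma \ref{lemma:1} to obtain $[\hat{a}(v^\nu),\hat{a}^*(v^\nu)]=\hslash\,(v^\nu)^*\cdot v^\nu$, a positive bounded operator on $\mathcal{H}$ whose operator norm is dominated by $\|v^\nu\|^2_{\mathscr{L}(\mathcal{H},\mathcal{H}\otimes\mathfrak{H}^0)}$. Therefore
\begin{equation*}
\|\hat{a}^*(v^\nu)\psi\|^2=\|\hat{a}(v^\nu)\psi\|^2+\hslash\,\langle\psi,(v^\nu)^*\cdot v^\nu\,\psi\rangle\leq \|\hat{a}(v^\nu)\psi\|^2+\|v^\nu\|^2\|\psi\|^2,
\end{equation*}
using $\hslash<1$. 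Applying \eqref{item:1}, the bound $\|\psi\|\leq\|(\hat{H}_{\mathrm{f}}+1)^{1/2}\psi\|$, and the elementary inequality $\sqrt{a^2+b^2}\leq a+b$ for $a,b\geq 0$ then gives \eqref{item:3}. The only real technicality I expect is the rigorous handling of the pointwise-in-$k$ manipulations on a dense domain via Bochner-integral arguments; conceptually, the crucial point is the commutation hypothesis $\llbracket v^*,v\rrbracket=0$, without which the commutator would fail to collapse to an operator acting only on $\mathcal{H}$ and the clean bound $\|v^\nu\|$ would have to be replaced by a less natural symmetrization estimate.
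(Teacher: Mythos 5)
Your proof is correct and takes essentially the same approach as the paper: the $\omega^{\pm 1/2}$-weighted Cauchy--Schwarz for item~(1), and the CCR commutator identity $\lVert \hat{a}^{*}(v^{\nu})\psi\rVert^{2}=\lVert \hat{a}(v^{\nu})\psi\rVert^{2}+\hslash\langle\psi,(v^{\nu})^{*}\cdot v^{\nu}\psi\rangle$ combined with $\hslash<1$ for item~(2). The only cosmetic difference is that for item~(1) the paper expands $\hat{a}(v^{\nu})\Psi$ over Fock sectors and applies the bound $\lVert (v^{\nu})^{*}\omega^{-1/2}\rVert$ as a genuine operator norm on the first tensor slot, thereby avoiding the operator-valued-distribution formalism you invoke and acknowledge as the ``only real technicality''; both routes lead to the identical computation.
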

\begin{remark}
  \label{rem:3}
  Analogous estimates hold if we take $\omega\equiv1$, and replace $\hat{H}_{\mathrm{f}}$ with the number operator $\hat{N}_{\hslash}$ defined in \eqref{eq:Nop}.
\end{remark}
\begin{proof}
  Let us start with~\eqref{item:1}. By definition, one obtains
  \begin{equation*}
    \lVert \hat{a}({v}^{\nu})\Psi  \rVert_{}^2=\sum_{n\in \mathds{N}}^{}\lVert \hat{a}({v}^{\nu})\Psi^{(n)}  \rVert_{}^2=\sum_{n\in \mathds{N}}^{}\hslash n \lVert ({v}^{\nu})^{*}\otimes 1^{(n-1)}\Psi^{(n)}  \rVert_{}^2\; .
  \end{equation*}
  Hence multiplying and dividing by $\omega^{\frac{1}{2}}$ (acting on the first variable), using the symmetry
  of $\Psi^{(n)}$, and $\lVert ({v}^{\nu})^{*} \,\omega^{-\frac{1}{2}} \rVert_{\mathscr{L}(\mathcal{H}\otimes \mathfrak{H}^0,\mathcal{H})}^{}=\lVert
  \omega^{-\frac{1}{2}}{v}^{\nu} \rVert_{\mathscr{L}(\mathcal{H},\mathcal{H}\otimes \mathfrak{H}^0)}^{}$ one obtains
  \begin{equation*}
    \begin{split}
      \sum_{\nu=1}^d\lVert \hat{a}({v}^{\nu})\Psi  \rVert_{}^2\leq \sum_{j}^{}\lVert ({v}^{\nu})^{*} \,\omega^{-\frac{1}{2}}  \rVert_{\mathscr{L}(\mathcal{H}\otimes \mathfrak{H}^0,\mathcal{H})}^2\sum_{n\in \mathds{N}}^{}\hslash n \lVert \omega^{\frac{1}{2}}\otimes 1^{(n-1)}\Psi^{(n)}  &\rVert_{}^2\\\leq \lVert \omega^{-\frac{1}{2}}{v}^{\nu} \rVert_{\mathscr{L}(\mathcal{H},\mathcal{H}\otimes \mathfrak{H}^0)}^{} \lVert {(\hat{H}_{\mathrm{f}}+1)}^{\frac{1}{2}}\Psi  &\rVert_{}^2\; .
    \end{split}
  \end{equation*}
  To prove~\eqref{item:3}, one uses the canonical commutation relations:
  \begin{equation*}
    \begin{split}
      \lVert \hat{a}^{*}({v}^{\nu})\Psi  \rVert_{}^2=\langle \Psi  , \hat{a}({v}^{\nu})\hat{a}^{*}({v}^{\nu})\Psi \rangle_{}=\langle \Psi  , \hat{a}^{*}({v}^{\nu})\hat{a}({v}^{\nu})\Psi \rangle_{}+\sum_{\lambda=1}^{d-1}\int_{\mathds{R}^d}^{}\langle \Psi  ,   ({v}_\lambda^{\nu})^{*}(k){v}_\lambda^{\nu}(k) \,\Psi
      \rangle_{}\mathrm{d}k\\\leq \lVert \omega^{-\frac{1}{2}}{v}^{\nu} \rVert_{\mathscr{L}(\mathcal{H},\mathcal{H}\otimes \mathfrak{H}^0)}^{} \lVert {(\hat{H}_{\mathrm{f}}+1)}^{\frac{1}{2}}\Psi  \rVert_{}^2 + \lVert {v}^{\nu}  \rVert_{\mathscr{L}(\mathcal{H},\mathcal{H}\otimes \mathfrak{H}^0)}^2\lVert \Psi  \rVert_{}^2\; .
    \end{split}
  \end{equation*}
\end{proof}
We would like to apply the general estimates of Lemma~\ref{lemma:2} to the Pauli--Fierz Hamiltonian. To this
extent, let us identify
$$
\mathcal{H}\equiv L^2 (\mathds{R}^{dn}, \mathds{C} )$$
and eventually take
$$
{v}_\lambda^{\nu}(k,x_j)\equiv \lvert k
\rvert_{}^{-\frac{1}{2}} \,\epsilon_\lambda^{\nu}(k)\,\chi_i(k)\,e^{-2\pi\ci k\cdot x_j}\,,
$$
acting as a $x_j$-multiplicative operator on $L^2 (\mathds{R}^{dn}, \mathds{C} )$ with $x_j\in\R^d$ is the $j^{th}$ particle position.
\begin{cor}
  \label{cor:1}
  Let ${v},{w}\in \mathscr{L}\bigl(L^2,L^2\otimes \mathfrak{H}\bigr)$ such that
  $\llbracket{v}^{*},{v}\rrbracket=\llbracket{w}^{*},{w}\rrbracket=0$ and let $\|\cdot\|_{\mathscr{L}}=\|\cdot\|_{\mathscr{L}(\mathcal{H},\mathcal{H}\otimes \mathfrak{H})}$.
  Then the following bounds hold uniformly w.r.t.\ $\hslash\in
  (0,1)$: \vspace{1.5mm} {\normalfont{}
    \begin{enumerate}%[(i)]
      \setlength{\itemsep}{1.5mm}
    \item\label{item:4} $\lVert {(\hat{H}_{\mathrm{f}}+1)}^{-\frac{1}{2}}\,\hat{a}^{*}({v})\cdot \hat{a}^{*}({w})\,
      {(\hat{H}_{\mathrm{f}}+1)}^{-\frac{1}{2}} \rVert_{\mathscr{L}(\mathscr{H})}^{}\leq \lVert
      \omega^{-\frac{1}{2}}{v} \rVert_{\mathscr{L}}^{}(\lVert \omega^{-\frac{1}{2}}{w} \rVert_{\mathscr{L}}^{}+\lVert {w}
      \rVert_{\mathscr{L}}^{})$
    \item\label{item:7} $\lVert {(\hat{H}_{\mathrm{f}}+1)}^{-\frac{1}{2}}\,\hat{a}({v})\cdot \hat{a}({w})\,
      {(\hat{H}_{\mathrm{f}}+1)}^{-\frac{1}{2}} \rVert_{\mathscr{L}(\mathscr{H})}^{}\leq (\lVert
      \omega^{-\frac{1}{2}}{v} \rVert_{\mathscr{L}}^{}+\lVert {v} \rVert_{\mathscr{L}}^{})\lVert \omega^{-\frac{1}{2}}{w}
      \rVert_{\mathscr{L}}^{}$

    \item\label{item:8} $\lVert {(\hat{H}_{\mathrm{f}}+1)}^{-\frac{1}{2}}\,\hat{a}^{*}({v})\cdot \hat{a}({w})\,
      {(\hat{H}_{\mathrm{f}}+1)}^{-\frac{1}{2}} \rVert_{\mathscr{L}(\mathscr{H})}^{}\leq \lVert \omega^{-\frac{1}{2}}{v}
      \rVert_{\mathscr{L}}^{}\lVert \omega^{-\frac{1}{2}}{w} \rVert_{\mathscr{L}}^{}$

    \item\label{item:9} $\lVert {(\hat{H}_{\mathrm{f}}+1)}^{-\frac{1}{2}}\,\hat{a}({v})\cdot \hat{a}^{*}({w})\,
      {(\hat{H}_{\mathrm{f}}+1)}^{-\frac{1}{2}} \rVert_{\mathscr{L}(\mathscr{H})}^{}\leq (\lVert \omega^{-\frac{1}{2}}{v}
      \rVert_{\mathscr{L}}^{}+\lVert {v} \rVert_{\mathscr{L}}^{})(\lVert \omega^{-\frac{1}{2}}{w} \rVert_{\mathscr{L}}^{}+\lVert
      {w} \rVert_{\mathscr{L}}^{})$

    \item\label{item:5} $\lVert {(\hat{H}_{\hslash}^0+1)}^{-\frac{1}{2}}\,\hat{{p}}\cdot \hat{a}^{*}({v})\,
      {(\hat{H}_{\hslash}^0+1)}^{-\frac{1}{2}} \rVert_{\mathscr{L}(\mathscr{H})}^{}\leq \lVert \omega^{-\frac{1}{2}}{v}
      \rVert_{\mathscr{L}}^{}+\lVert {v} \rVert_{\mathscr{L}}^{}$

    \item\label{item:6} $\lVert {(\hat{H}_{\hslash}^0+1)}^{-\frac{1}{2}}\,\hat{a}^{*}({v})\cdot \hat{{p}}
      \,{(\hat{H}_{\hslash}^0+1)}^{-\frac{1}{2}} \rVert_{\mathscr{L}(\mathscr{H})}^{}\leq \lVert \omega^{-\frac{1}{2}}{v}
      \rVert_{\mathscr{L}}^{}+\lVert {v} \rVert_{\mathscr{L}}^{}\;.$
    \end{enumerate}}
\end{cor}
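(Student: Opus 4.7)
The plan is to treat items~(1)--(4) by a single factorization argument, and to reduce items~(5)--(6) to the analysis of a single $\hat{a}^{*}(v^\nu)$ by exploiting $[\hat{p},\hat{H}_\hslash^0]=0$.

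For items~(1)--(4), each operator decomposes as $\sum_{\nu=1}^d \hat{a}^{\sharp_1}(v^\nu)\hat{a}^{\sharp_2}(w^\nu)$, and I would split the product between the two resolvent factors:
\[
\|(\hat{H}_{\mathrm f}+1)^{-\frac{1}{2}}\,\hat{a}^{\sharp_1}(v^\nu)\hat{a}^{\sharp_2}(w^\nu)\,(\hat{H}_{\mathrm f}+1)^{-\frac{1}{2}}\|
\leq
\|(\hat{H}_{\mathrm f}+1)^{-\frac{1}{2}}\hat{a}^{\sharp_1}(v^\nu)\|\cdot\|\hat{a}^{\sharp_2}(w^\nu)(\hat{H}_{\mathrm f}+1)^{-\frac{1}{2}}\|.
\]
The right-hand factor is bounded directly by Lemma~\ref{lemma:2}: item~(1) supplies $\|\omega^{-\frac{1}{2}}w^\nu\|$ when $\sharp_2=\,$annihilation, and item~(2) supplies $\|\omega^{-\frac{1}{2}}w^\nu\|+\|w^\nu\|$ when $\sharp_2=\,$creation. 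The left-hand factor is controlled by passing to adjoints: since $(\hat{H}_{\mathrm f}+1)^{-\frac{1}{2}}$ is self-adjoint and $\hat{a}^{*}(v^\nu)^{*}=\hat{a}(v^\nu)$, the same bounds hold with the roles of $\hat{a}$ and $\hat{a}^{*}$ interchanged. Matching the four combinations of $(\sharp_1,\sharp_2)$ against Lemma~\ref{lemma:2} on each side delivers the four inequalities, modulo a final Cauchy--Schwarz sum over $\nu\in\{1,\dots,d\}$ converting per-$\nu$ norms into the global norms $\|\omega^{-\frac{1}{2}}v\|_{\mathscr{L}}$ and $\|v\|_{\mathscr{L}}$ (and likewise for $w$).

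For items~(5)--(6) I would use three ingredients. First, $\hat{p}^\nu$ commutes with $\hat{H}_\hslash^0=\sum_i\hat{p}_i^{2}/(2m_i)+\hat{H}_{\mathrm f}$, which lets one pull $(\hat{H}_\hslash^0+1)^{-\frac{1}{2}}$ across $\hat{p}^\nu$. Second, the elementary bound $\|\hat{p}^\nu(\hat{H}_\hslash^0+1)^{-\frac{1}{2}}\|\leq\sqrt{2\max_i m_i}$, a consequence of the operator inequality $(\hat{p}^\nu)^{2}\leq 2(\max_i m_i)\,\hat{H}_\hslash^0$. Third, operator monotonicity of $x\mapsto\sqrt{x}$ applied to $\hat{H}_{\mathrm f}+1\leq\hat{H}_\hslash^0+1$ (valid because $\sum_i\hat{p}_i^{2}/(2m_i)\geq 0$), which yields $\|(\hat{H}_{\mathrm f}+1)^{\frac{1}{2}}(\hat{H}_\hslash^0+1)^{-\frac{1}{2}}\|\leq 1$. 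Combining these, the quantity in~(5) is estimated per $\nu$ by a constant multiple of $\|\hat{a}^{*}(v^\nu)(\hat{H}_{\mathrm f}+1)^{-\frac{1}{2}}\|$, which is controlled by Lemma~\ref{lemma:2}~(2); summation over $\nu$ by Cauchy--Schwarz yields the right-hand side of~(5), and~(6) follows by taking the adjoint of~(5).

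No step looks particularly subtle --- the arguments are standard Fock-space estimates --- but some care is needed to pick the correct version (direct or adjoint) of Lemma~\ref{lemma:2} for each factor, and to verify that the uniformity in $\hslash$, which is the essential content of the corollary, is preserved throughout. The latter is automatic since Lemma~\ref{lemma:2} is already $\hslash$-uniform and the spectral inequalities invoked for items~(5)--(6) involve no $\hslash$-dependent constants.
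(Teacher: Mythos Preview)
Your approach is the intended one (the paper states the corollary without proof, as an immediate consequence of Lemma~\ref{lemma:2}), and your treatment of items~(1)--(4) is correct: the per-$\nu$ factorization followed by Cauchy--Schwarz over $\nu$ reproduces exactly the stated right-hand sides.

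There is, however, a slip in your handling of item~(6). You claim it follows from~(5) by taking adjoints, but
\[
\bigl(\hat{p}\cdot\hat{a}^{*}(v)\bigr)^{*}=\Bigl(\sum_{\nu}\hat{p}^{\nu}\hat{a}^{*}(v^{\nu})\Bigr)^{*}=\sum_{\nu}\hat{a}(v^{\nu})\hat{p}^{\nu}=\hat{a}(v)\cdot\hat{p},
\]
which is not $\hat{a}^{*}(v)\cdot\hat{p}$. The fix is trivial: item~(6) is obtained by the same direct factorization you used for~(5), simply placing the $\hat{p}^{\nu}(\hat{H}_{\hslash}^{0}+1)^{-1/2}$ factor on the right and bounding $\lVert(\hat{H}_{\hslash}^{0}+1)^{-1/2}\hat{a}^{*}(v^{\nu})\rVert=\lVert\hat{a}(v^{\nu})(\hat{H}_{\hslash}^{0}+1)^{-1/2}\rVert\leq\lVert\omega^{-1/2}v^{\nu}\rVert$ via Lemma~\ref{lemma:2}(1) and your third ingredient $\lVert(\hat{H}_{\mathrm f}+1)^{1/2}(\hat{H}_{\hslash}^{0}+1)^{-1/2}\rVert\leq 1$.

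A minor remark: your argument for~(5)--(6) produces the stated right-hand sides multiplied by a mass-dependent constant (your $\sqrt{2\max_i m_i}$, plus possibly a $\sqrt{d}$ from the $\nu$-summation depending on how one organizes Cauchy--Schwarz). The paper's statement suppresses such constants; they are harmless for every subsequent use of the corollary, and your observation that $\hslash$-uniformity is preserved is the essential point.
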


\subsection{Regularity properties}
\label{sec:regularity}

In order to understand the behavior  of  the Pauli--Fierz dynamics  as $\hslash\to 0$, it is
convenient to control the expectation of some suitable quantum observables \emph{uniformly in
  $\hslash\in (0,1)$}. Namely, we consider the number operator \eqref{eq:Nop}, the modulus  momentum operator \eqref{eq:dgk} and  the squared position operator $\hat q^2$, and establish the following regularity propagation estimates for  the Pauli--Fierz dynamics.  These estimates prove to be very useful in Section \ref{sec.CH}.

\subsubsection{Number estimates}
\label{sec:number-estimates}

The aim of present subsection is to study the propagation in time of
the photon number operator estimates on a
suitable dense domain.
The commutation relations of $\hat{N}_{\hslash}$ with $\hat{a}_\lambda(f)$ and $\hat{a}_\lambda^{*}(f)$ have a simple form:
\begin{align*}
  &[\hat{N}_{\hslash},\hat{a}_{\lambda}(f)]=-\hslash\, \hat{a}_{\lambda}(f),    \\
  &[\hat{N}_{\hslash},\hat{a}^{*}_{\lambda}(f)]=\hslash\, \hat{a}^{*}_{\lambda}  (f).
\end{align*}
We shall characterize a dense domain $\mathcal{D}\subset D(\hat{N}_{\hslash})$ such that for
all $\psi \in \mathcal{D}$ and for all $t\in {\mathbb R}$,
$$\psi(t)=\hat{U}_{\hslash}(t)\psi = e^{-\ci\frac{t}{\hslash} \hat H_\hslash} \psi \in D(\hat{N}_{\hslash}).$$
To this extent, it is technically
convenient to introduce a bounded and positive, approximation of $\hat{N}_{\hslash}$ that strongly converges
monotonically to it:
\begin{align*}
  \hat{N}_{\hslash}(\delta)=\hat{N}_{\hslash}e^{-\delta \hat{N}_{\hslash}} ,   \delta\geq 0.
\end{align*}
It is straightforward to check that on their dense common domain of definition, $\hat{H}_{\hslash}^0$ and $\hat{N}_{\hslash}(\delta)$
commute for all $\delta\geq 0$:
\begin{align*}
  [\hat{H}_{\hslash}^0,\hat{N}_{\hslash}(\delta)]=0.
\end{align*}
Let $\psi \in D(\hat{H}_\hslash^0)$, and define
\begin{align*}
  M(t)= \| \hat{N}_{\hslash}(\delta)\psi(t)  \| _{}^2.%=\| N_{\hslash}(\delta)\hat{U}_{\hslash}(t)\psi   \| _{}^2.
\end{align*}
The quantity $M(t)$ is finite for every $t\in {\mathbb R}$ and $\delta>0$, since $\hat{N}_{\hslash}(\delta)$, $\delta>0$, is a bounded
operator. In addition, the map $t\mapsto M(t)$ is differentiable, with
\begin{align}
  \label{eq:18}
  \dot{M}(t)=\frac{\ci}{\hslash}\big(\langle \hat{N}_{\hslash}(\delta)\hat{H}_{\hslash}\psi(t),   \hat{N}_{\hslash}(\delta)\psi(t) \rangle_{}-\langle \hat{N}_{\hslash}(\delta)\psi(t),   \hat{N}_{\hslash}(\delta)\hat{H}_{\hslash}\psi(t) \rangle_{}\big).
\end{align}
In light of \eqref{eq:18}, let us introduce the densely defined sesquilinear form
${\rm q}_{\delta,\hslash}:
{\mathscr H}\times{\mathscr H}\to {\mathbb C}$ defined by
\begin{align*}
  {\rm q}_{\delta,\hslash}(\phi,\psi)=\frac{\ci}{\hslash}\big(\langle \hat{N}_{\hslash}(\delta)\hat{H}_{\hslash}\phi,   \hat{N}_{\hslash}(\delta)\psi \rangle_{}-\langle \hat{N}_{\hslash}(\delta)\phi,   \hat{N}_{\hslash}(\delta)\hat{H}_{\hslash}\psi \rangle_{}\big),
\end{align*}
for any $\phi,\psi\in \mathscr{C}_0^{\infty}({\mathbb R}^{dn})\otimes \Gamma_{\mathrm{fin}}$, where $\Gamma_{\mathrm{fin}}$ is the subset of Fock space vectors with a finite number of photons.
Let us omit the explicit dependence on $\delta$ and $\hslash$ of the sesquilinear
form, \emph{i.e.}  ${\rm q}\equiv {\rm q}_{\delta,\hslash}$. It is possible to rewrite ${\rm q}$ by using the
commutator between $\hat{N}_{\hslash}(\delta)$ and $\hat{H}_{\hslash}$ (on $\mathscr{C}_0^{\infty}({\mathbb R}^{dn})\otimes \Gamma_{\mathrm{fin}}$):
\begin{align*}
  {\rm q}(\phi,\psi)=\frac{\ci}{\hslash}\big(\langle [\hat{N}_{\hslash}(\delta),\hat{H}_{\hslash}]\phi,   \hat{N}_{\hslash}(\delta)\psi \rangle_{}-\langle \hat{N}_{\hslash}(\delta)\phi,   [\hat{N}_{\hslash}(\delta),\hat{H}_{\hslash}]\psi \rangle_{}\big).
\end{align*}

\begin{lemma}
  \label{lemma:33}
  Assume \eqref{eq:hypo0}, \eqref{eq:hypo1} and
  \emph{(}\hyperref[eq:hypo2]{$A_2^{(1)}$}\emph{)}. Then there exists $C>0$ such that for all $\delta>0$, $\hslash\in (0,1)$ and
  $\phi,\psi\in \mathscr C_0^{\infty}({\mathbb R}^{dn})\otimes
  \Gamma_{\mathrm{fin}}$:
  \begin{align*}
    \| {\rm q}(\phi,\psi)  \| \leq C\big(\| (\hat{H}_{\hslash}^0+1) \phi \|\| \hat{N}_{\hslash}(\delta)\psi  \|+\| \hat{N}_{\hslash}(\delta)\phi  \|\| (\hat{H}_{\hslash}^0+1)\psi  \|\big).
  \end{align*}
  There exists also a constant $C'>0$ such that
  \begin{align*}
    \| {\rm q}(\phi,\psi)  \| \leq C'\big(\| (\hat{H}_{\hslash}+1) \phi \|\| \hat{N}_{\hslash}(\delta)\psi  \|+\| \hat{N}_{\hslash}(\delta)\phi  \|\| (\hat{H}_{\hslash}+1)\psi  \|\big).
  \end{align*}
\end{lemma}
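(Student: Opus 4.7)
The plan is to exploit the commutator form of $\mathrm{q}$ displayed in the paragraph preceding the lemma,
\[
\mathrm{q}(\phi,\psi)=\tfrac{\ci}{\hslash}\bigl(\langle [\hat N_\hslash(\delta),\hat H_\hslash]\phi,\hat N_\hslash(\delta)\psi\rangle-\langle \hat N_\hslash(\delta)\phi,[\hat N_\hslash(\delta),\hat H_\hslash]\psi\rangle\bigr),
\]
and to reduce both inequalities to the single operator estimate
\[
\bigl\|\tfrac{1}{\hslash}[\hat N_\hslash(\delta),\hat H_\hslash]\phi\bigr\|\leq C\,\|(\hat H_\hslash^0+1)\phi\|,
\]
with $C$ independent of $\delta>0$ and $\hslash\in(0,1)$. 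Once this is in hand, Cauchy--Schwarz applied to each of the two inner products above will yield the first inequality of the lemma, and the second will follow at once from the norm equivalence $\|(\hat H_\hslash^0+1)\,\cdot\,\|\sim \|(\hat H_\hslash+\gamma)\,\cdot\,\|$ furnished by Lemma~\ref{s3.lemE}.

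To compute the commutator I would first observe that $\hat N_\hslash(\delta)$ commutes with $\hat p_i^2$, $V(\hat q)$ and $\hat H_\mathrm{f}$, so only the coupling terms $-\hat p_i\cdot A_i-A_i\cdot\hat p_i+A_i^2$ of $\hat H_\hslash$ contribute. The $\hslash$-scaled canonical commutation relations give the intertwining identities $\hat a(g)F(\hat N_\hslash)=F(\hat N_\hslash+\hslash)\hat a(g)$ and $\hat a^*(g)F(\hat N_\hslash)=F(\hat N_\hslash-\hslash)\hat a^*(g)$ for every bounded Borel $F$. Applied to $F(x)=xe^{-\delta x}$, whose derivative satisfies $\|F'\|_{L^\infty(\R_+)}=1$ \emph{independently of $\delta$}, they produce
\[
\tfrac{1}{\hslash}[\hat N_\hslash(\delta),\hat a^\sharp(g)]=G^\sharp(\hat N_\hslash)\,\hat a^\sharp(g),\qquad \|G^\sharp\|_\infty\leq 1,
\]
uniformly in $(\hslash,\delta)$. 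Splitting $A_i=A_i^-+A_i^+$ into its annihilation and creation parts, I then infer that $\tfrac{1}{\hslash}[\hat N_\hslash(\delta),A_i]$ is dominated exactly like $A_i$, while $\tfrac{1}{\hslash}[\hat N_\hslash(\delta),A_i^2]$ reduces to its number-changing pieces $(A_i^\mp)^2$ (the number-preserving pieces $A_i^\mp A_i^\pm$ commute with $\hat N_\hslash$) and is dominated like $A_i^2$. In sum,
\[
\tfrac{1}{\hslash}[\hat N_\hslash(\delta),\hat H_\hslash]=\sum_{i=1}^n\tfrac{1}{2m_i}\bigl(\mathcal B_i\cdot\hat p_i+\hat p_i\cdot\mathcal B_i+\mathcal C_i\bigr),
\]
with $\mathcal B_i$ bounded like $A_i$, $\mathcal C_i$ like $A_i^2$, each modulated by a bounded function of $\hat N_\hslash$ that commutes with $\hat H_\hslash^0$.

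I would then estimate each summand on the energy domain using Corollary~\ref{cor:1}: items~(5)-(6), together with the standard commutation $\hat p_i\hat a^\sharp(v_i)=\hat a^\sharp(v_i)\hat p_i+O(\hslash)\hat a^\sharp(kv_i)$ whose remainder is tamed thanks to (\hyperref[eq:hypo2]{$A_2^{(1)}$}), yield $\|\hat p_i\cdot A_i^\sharp\phi\|+\|A_i^\sharp\cdot\hat p_i\phi\|\lesssim \|(\hat H_\hslash^0+1)\phi\|$; items~(1)-(4) similarly give $\|A_i^2\phi\|\lesssim \|(\hat H_\mathrm{f}+1)\phi\|\leq \|(\hat H_\hslash^0+1)\phi\|$. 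The bounded $\hat N_\hslash$-prefactors $G^\sharp$ commute with $\hat H_\hslash^0$ and do not spoil these estimates. The point I expect to require the most care is the uniformity in $\delta\downarrow 0$, which will be crucial later for letting $\delta\to 0$ and propagating genuine $\hat N_\hslash$-moments along the Pauli--Fierz dynamics: this uniformity is exactly what the $\delta$-free bound $\|F'\|_\infty=1$ delivers, whereas uniformity in $\hslash$ is inherited from Lemma~\ref{lemma:2}, Corollary~\ref{cor:1} and Lemma~\ref{s3.lemE}.
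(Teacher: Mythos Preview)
Your strategy is correct and, in its handling of the $\delta$-uniformity, cleaner than the paper's. Both arguments reduce the lemma to the operator bound $\|\tfrac{1}{\hslash}[\hat N_\hslash(\delta),\hat H_\hslash]\phi\|\le C\|(\hat H_\hslash^0+1)\phi\|$ and then invoke Lemma~\ref{s3.lemE} for the second inequality. The difference lies in how that commutator is controlled. The paper organizes $\hat H_\hslash$ through the covariant derivatives $D^j=\hat p_j-A_j$, writes $[\hat N_\hslash(\delta),D_\mu^j]=-\ci(\hslash\Pi_{j,\mu}+\hat N_\hslash\Pi_{j,\mu}(\delta))e^{-\delta\hat N_\hslash}$ with explicit $\delta$-dependent operators $\Pi_j(\delta),A_j(\delta),A_j(\delta^2)$, and then matches powers by hand: each occurrence of $|e^{\pm\hslash\delta}-1|$ is cancelled against a factor $(\hslash\delta)^{-\alpha}$ coming from $\|\hat N_1^\alpha e^{-\hslash\delta\hat N_1}\|$. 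Your mean-value-theorem observation that $F(x)=xe^{-\delta x}$ has $\|F'\|_{L^\infty(\R_+)}=1$ uniformly in $\delta$ packages this cancellation in one stroke and avoids the four-term decomposition $\mathrm I_1$--$\mathrm I_4$. What the paper's longer route buys is a fully explicit bookkeeping, useful as a template for the analogous momentum estimate (Lemma~\ref{lemma:3-s}) where the weight $|k|^{2\sigma}$ no longer commutes trivially with the coupling.

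Two small points of care in your write-up: the Coulomb gauge actually makes the dot-product commutator $[\hat p_i,\,\cdot\,]\cdot A_i^\sharp$ vanish, so the ``$O(\hslash)\hat a^\sharp(kv_i)$'' remainder you flag is in fact zero here and $(A_2^{(1)})$ is needed only through the $A_i^2$ piece; and Corollary~\ref{cor:1} provides \emph{form} bounds, whereas you need operator bounds like $\|A_i^\pm\!\cdot\hat p_i\,\phi\|+\|(A_i^\pm)^2\phi\|\lesssim\|(\hat H_\hslash^0+1)\phi\|$. These do hold---they follow from \eqref{eq.a.est}--\eqref{eq.ast.est} together with the commutativity of $\hat p_i$ and $\hat H_\mathrm f$, and the paper uses them in exactly the same way---but the reference should be to those estimates rather than to the sandwiched bounds of Corollary~\ref{cor:1}.
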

\begin{proof}
The commutator can be computed explicitly as follows
\begin{align*}
  [\hat{N}_{\hslash}(\delta),\hat{H}_{\hslash}]=\frac{1}{2}\sum_{j=1}^n \sum_{\mu=1}^d\big([\hat{N}_{\hslash}(\delta),{\rm D}_{\mu}^j]{\rm D}_{\mu}^j+{\rm D}_{\mu}^j [\hat{N}_{\hslash}(\delta),{\rm D}_{\mu}^j]\big),
\end{align*}
where ${\rm D}^j=\hat{{p}}_j-A_j$ is the covariant derivative. In addition,
\begin{align*}
  [\hat{N}_{\hslash}(\delta),{\rm D}_{\mu}^j]=-\ci(\hslash\, \Pi_{j,\mu}+\hat{N}_{\hslash}\Pi_{j,\mu}(\delta))e^{-\delta \hat{N}_{\hslash}},
\end{align*}
where $\Pi_{j}$ is the conjugate momentum of $A_j$, \emph{i.e.}
\begin{align*}
  \Pi_{j}=\ci\sum_{\lambda=1}^{d-1}\int_{{\mathbb R}^d}^{}
v_\lambda(k,\hat{q}_j)\hat{a}_\lambda^{*}(k)-
\bar v_\lambda(k,\hat{q}_j)\hat{a}_\lambda(k)\,  \mathrm{d}k,
\end{align*}
with
\begin{equation*}
  v_{\lambda}(k,\hat{q}_j)= \frac{\epsilon_{\lambda}(k)}{\sqrt{2\lvert k  \rvert_{}^{}}}\chi_j(k)e^{2\pi \ci k\cdot \hat{q}_j}
\end{equation*}
and
\begin{align*}
  \Pi_{j}(\delta)=\ci\sum_{\lambda=1}^{d-1}\int_{{\mathbb R}^d}^{}
v_\lambda(k,\hat{q}_j)(e^{-\hslash\delta}-1)\hat{a}_\lambda^{*}(k)-\bar v_\lambda(k,\hat{q}_j) (e^{\hslash\delta}-1)\hat{a}_\lambda(k)\,  \mathrm{d}k.
\end{align*}
The above formula is yielded by the commutation relations
\begin{align*}
&  [e^{-\delta \hat{N}_{\hslash}},\hat{a}^{*}_{\lambda}  (f)]=\hat{a}^{*}_{\lambda}  ((e^{-\hslash\delta}-1)f)e^{-\delta \hat{N}_{\hslash}},\\
&  [e^{-\delta \hat{N}_{\hslash}},\hat{a}_{\lambda}(f)]=\hat{a}_{\lambda}  ((e^{\hslash\delta}-1)f)e^{-\delta \hat{N}_{\hslash}}.
\end{align*}
The operators $\Pi_{j}(\delta)$ are not symmetric, but closed on a suitable domain.
Therefore,
\begin{align*}
      \frac{\ci}{\hslash}[\hat{N}_{\hslash}(\delta),\hat{H}_{\hslash}]&=
      \frac{1}{2}\sum_{j=1}^n \big((\Pi_{j}+\hat{N}_1\Pi_{j}(\delta))e^{-\delta \hat{N}_{\hslash}}\cdot {\rm D}^j+{\rm D}^j\cdot (\Pi_{j}+\hat{N}_1\Pi_{j}(\delta))e^{-\delta \hat{N}_{\hslash}}\big)\\
      &=\frac{1}{2}\sum_{j=1}^n
      (\mathrm{I}_1^j+\mathrm{I}_2^j+\mathrm{I}_3^j+\mathrm{I}_4^j ) ,
\end{align*}
with $\hat{N}_1=\hat{N}_{\hslash=1}$, and
\begin{eqnarray*}
  \mathrm{I}_1^j&=\Pi_{j}e^{-\delta \hat{N}_{\hslash}}\cdot {\rm D}^j,  \qquad
  \mathrm{I}_2^j&=\hat{N}_1\Pi_{j}(\delta)e^{-\delta \hat{N}_{\hslash}}\cdot {\rm D}^j,  \\
  \mathrm{I}_3^j&={\rm D}^j\cdot \Pi_{j}e^{-\delta \hat{N}_{\hslash}},   \qquad
 \mathrm{I}_4^j&={\rm D}^j\cdot \hat{N}_1\Pi_{j}(\delta)e^{-\delta \hat{N}_{\hslash}} .
\end{eqnarray*}
The terms $\mathrm{I}^j_1$--$\mathrm{I}^j_4$ are estimated separately.
\begin{align*}
  \mathrm{I}^j_1=\underbrace{e^{-\delta \hat{N}_{\hslash}}\Pi_{j}\phantom{(i)}\mspace{-20mu}\cdot {\rm D}^j}_{\mathrm{I}_1^{(1)}}\,+\,\underbrace{A_j(\delta)e^{-\delta \hat{N}_{\hslash}}\cdot {\rm D}^j}_{\mathrm{I}_1^{(2)}},
\end{align*}
where
\begin{align*}
  A_j(\delta)=\ci\sum_{\lambda=1}^{d-1}\int_{{\mathbb R}^d}^{}
  v_\lambda(k,\hat{q}_j)
  (1-e^{-\hslash\delta})\hat{a}_\lambda^{*}(k)-\bar v_\lambda(k,\hat{q}_j)(1-e^{\hslash\delta})\hat{a}_\lambda(k))  \mathrm{d}k.
\end{align*}
Again, $A_{j}(\delta)$ are not symmetric. Hence
\begin{align*}
  \| \mathrm{I}^{(1)}_1\psi  \|\leq \| \Pi_{j}\cdot {\rm D}^j\psi  \|.
\end{align*}
Now observe that
\begin{align*}
  \| \Pi_{j}\cdot \hat{{p}}\psi \|\leq C\| (\hat{H}_{\hslash}^0+1)\psi  \|,
\end{align*}
and
\begin{align*}
  \| \Pi_{j}\cdot A_j\psi \|\leq C\| (\hat{H}_{\hslash}^0+1)\psi  \|.
\end{align*}
Combining the above estimates, we get
\begin{align*}
  \| \mathrm{I}^{(1)}_1\psi  \|\leq C\| (\hat{H}_{\hslash}^0+1)\psi  \|,
\end{align*}
where $C$ {does not} depend on $\delta$ and $\hslash$. To complete the estimate for $\mathrm{I}^j_1$, we combine the
inequality
\begin{align*}
  \| A_{j,\mu}(\delta)e^{-\delta \hat{N}_{\hslash}}\psi  \|\leq 2\| \chi_j/\sqrt{|\cdot|}   \| _2^{}e^{\hslash\delta}\sqrt{\hslash}
  (|e^{\hslash\delta}-1|+|e^{-\hslash\delta}-1| )\| (\hat{N}_1+1)^{1/2} e^{-\delta\hslash (\hat{N}_1+1)}\psi  \|,
\end{align*}
with the fact that, denoting by ${(E_{\lambda})}_{\lambda\in {\mathbb R}^+}$ the spectral measure of an operator $T$, $T\geq 0$, evaluated on
$\psi$, for all $\alpha\in \frac{1}{2}{\mathbb N}$,
\begin{align*}
  \| T^{\alpha}e^{-\delta\hslash T}\psi  \| _{}^2=\int_0^{\infty}\lambda^{2\alpha} e^{-2\hslash\delta\lambda}  \mathrm{d}E_{\lambda},
\end{align*}
with
\begin{align*}
  \frac{{(2\hslash\delta\lambda)}^{2\alpha}}{(2\alpha)!} e^{-2\hslash\delta\lambda}\leq 1,
\end{align*}
and therefore
\begin{align}
  \label{eq:22}
&  \| \hat{N}_1^{\alpha}e^{-\hslash\delta \hat{N}_1}  \| ^{}\leq \frac{\sqrt{(2\alpha)!}}{{(2\hslash\delta)}^{\alpha}}, \\
  \label{eq:22-1}
 &  \| {(\hat{N}_1+1)}^{\alpha}e^{-\hslash\delta (\hat{N}_1+1)}  \| ^{}\leq \frac{\sqrt{(2\alpha)!}}{{(2\hslash\delta)}^{\alpha}},
\end{align}
to obtain
\begin{align*}
   &\| A_j(\delta)e^{-\delta \hat{N}_{\hslash}}\cdot {\rm D}^j \psi \|\\
   &\leq 2\| \chi_j/\sqrt{|\cdot|}   \| ^{}e^{\hslash\delta}\sqrt{\hslash}
   (|e^{\hslash\delta}-1|+|e^{-\hslash\delta}-1| )
   \| (\hat{N}_1+1)^{1/2} e^{-\hslash\delta (\hat{N}_1+1)}  \|^{}\sum_{\mu=1}^d\| {\rm D}_{\mu}^j\psi  \|\\
    &\leq \sqrt{2}\| \chi_j/\sqrt{|\cdot|}   \| ^{}e^{\hslash\delta}\sqrt{\hslash}
    \frac{|e^{\hslash\delta}-1|+|e^{-\hslash\delta}-1|}{\sqrt{\hslash\delta}}\sum_{\mu=1}^d\| {\rm D}_{\mu}^j\psi  \|\leq C(\hslash,\delta)\| (\hat{H}_{\hslash}^0+1)\psi  \|,
\end{align*}
where $C(\hslash,\delta)\to 0$ as $\hslash\to 0$ for any $\delta\geq 0$. Therefore,
\begin{align*}
  \| \mathrm{I}^j_1 \psi \|\leq C\| (\hat{H}^0_{\hslash}+1)\psi  \|.
\end{align*}
Then, we estimate $\mathrm{I}^j_2$; for this term we need the bounds~\eqref{eq:22} both with $\alpha=1$ and
$\alpha=\frac{3}{2}$. Again, let us split it in two parts:
\begin{align*}
  \mathrm{I}^j_2=\hat{N}_1\Pi_{j}(\delta)e^{-\delta \hat{N}_{\hslash}}\cdot {\rm D}^j=\underbrace{\hat{N}_1e^{-\delta \hat{N}_{\hslash}}\Pi_{j}(\delta)\cdot {\rm D}^j}_{\mathrm{I}_2^{(1)}}\, +\, \underbrace{\hat{N}_1A_j(\delta^2)
  e^{-\delta \hat{N}_{\hslash}}\cdot {\rm D}^j}_{\mathrm{I}_2^{(2)}},
\end{align*}
where
\begin{align*}
  \begin{split}
    A_j(\delta^2)=\ci\sum_{\lambda=1}^{d-1}\int_{{\mathbb R}^d}^{}
    v_\lambda(k,\hat{q}_j)
    {(1-e^{-\hslash\delta})}^2\hat{a}_\lambda^{*}(k)-\bar v_\lambda(k,\hat{q}_j){(1-e^{\hslash\delta})}^2\hat{a}_\lambda(k)  \mathrm{d}k.
  \end{split}
\end{align*}
Similarly to $\mathrm{I}_1^{(2)}$, we obtain
\begin{align*}
  \begin{split}
    \| \mathrm{I}_2^{(1)}\psi  \|\leq 3\sqrt{2} d(\|\chi_j/\sqrt{|\cdot|}   \|^{}+\| \chi_j/{|\cdot|}   \| ^{})
    \frac{|e^{\hslash\delta}-1|+|e^{-\hslash\delta}-1|}{\hslash\delta}\| (\hat{H}_{\hslash}^0+1)\psi  \|\leq C \| (\hat{H}_{\hslash}^0+1)\psi  \|.
  \end{split}
\end{align*}
For $\mathrm{I}_2^{(2)}$ one gets
\begin{align*}
  \begin{split}
    \| \mathrm{I}_2^{(2)}\psi  \|\leq \sqrt{3} d\| \chi_j/\sqrt{|\cdot|}   \| ^{}e^{\hslash\delta}\sqrt{\hslash}
    \frac{{(e^{\hslash\delta}-1)}^2+{(e^{-\hslash\delta}-1)}_{}^2}{\sqrt{\hslash^3\delta^3}}\| (\hat{H}_{\hslash}^0+1)\psi  \|\leq C \| (\hat{H}_{\hslash}^0+1)\psi  \|.
  \end{split}
\end{align*}
Summing up, we have
\begin{align*}
  \| \mathrm{I}^j_2\psi  \|\leq C\| (\hat{H}_{\hslash}^0+1)\psi  \|.
\end{align*}
The estimate of $\mathrm{I}^j_3$ is easy, yielding
\begin{align*}
  \| \mathrm{I}^j_3\psi  \|\leq C\| (\hat{H}_{\hslash}^0+1)e^{-\delta \hat{N}_{\hslash}}\psi  \|\leq C\| (\hat{H}_{\hslash}^0+1)\psi  \|.
\end{align*}
Finally, we estimate
\begin{align*}
  \mathrm{I}^j_4={\rm D}^j\cdot \hat{N}_1\Pi_{j}(\delta)e^{-\delta \hat{N}_{\hslash}}=\underbrace{{\rm D}^j\cdot \Pi_{j}(\delta)\hat{N}_1e^{-\delta \hat{N}_{\hslash}} }_{\mathrm{I}_4^{(1)}}\, +\, \underbrace{{\rm D}^j\cdot A_j(\delta)e^{-\delta \hat{N}_{\hslash}}}_{\mathrm{I}_4^{(2)}}.
\end{align*}
Hence
\begin{align*}
 \| \mathrm{I}^{(1)}_4\psi  \|
&\leq 6(\| \chi_j/\sqrt{|\cdot|}   \| ^{} + \| \chi_j/{|\cdot|}   \| ^{} ) (|e^{\hslash\delta}-1|+|e^{-\hslash\delta}-1| )\| (\hat{H}_{\hslash}^0+1)N_1e^{-\delta \hat{N}_{\hslash}}\psi  \|\\
&\leq 3\sqrt{2}(\| \chi_j/\sqrt{|\cdot|}   \| ^{} + \| \chi_j/{|\cdot|}   \| ^{} )\frac{|e^{\hslash\delta}-1|+|e^{-\hslash\delta}-1| }{\hslash\delta}\| (\hat{H}_{\hslash}^0+1)\psi \|\\
 &\leq C\| (\hat{H}_{\hslash}^0+1)\psi \|.
 \end{align*}
Estimating $\mathrm{I}_4^{(2)}$ is easier, and one gets
\begin{align*}
  \| \mathrm{I}^{(2)}_4\psi  \|\leq 6(\| \chi_j/\sqrt{|\cdot|}   \| ^{} + \| \chi_j/{|\cdot|}   \| ^{} )(|e^{\hslash\delta}-1|+|e^{-\hslash\delta}-1|)\| (\hat{H}_{\hslash}^0+1)\psi  \|  ;
\end{align*}
yielding
\begin{align*}
  \| \mathrm{I}^j_4\psi  \|\leq C\| (\hat{H}_{\hslash}^0+1)\psi  \|.
\end{align*}
Putting the results above together, we prove the  lemma.
\end{proof}

With the aid of Lemma \ref{lemma:33}, we prove the following bound on $\dot{M}(t)$.
\begin{lemma}
  \label{lemma:4}
  Assume \eqref{eq:hypo0}, \eqref{eq:hypo1} and
  \emph{(}\hyperref[eq:hypo2]{$A_2^{(1)}$}\emph{)}. In addition, let $M(t)=\| \hat{N}_{\hslash}(\delta)\psi(t) \| _{}^2$. Then for any $t\in {\mathbb R}$,
  \begin{align*}
    \dot{M}(t)\leq C (\| (\hat{H}_{\hslash}+1)\psi  \| _{}^2 + M(t)).
  \end{align*}
\end{lemma}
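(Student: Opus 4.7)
My plan is to identify $\dot M(t)$ with the sesquilinear form ${\rm q}={\rm q}_{\delta,\hslash}$ introduced just before Lemma~\ref{lemma:33}, evaluated on the evolved state $\psi(t)$, and then to invoke Lemma~\ref{lemma:33} together with the conservation of $\hat{H}_\hslash$ along its own dynamics. Concretely, starting from the identity \eqref{eq:18}, which is valid for $\psi\in D(\hat{H}_\hslash^0)$ because $\hat{N}_\hslash(\delta)$ is bounded and $t\mapsto \psi(t)$ is strongly continuously differentiable with $\partial_t\psi(t) = -\tfrac{\ci}{\hslash}\hat H_\hslash\psi(t)\in\mathscr{H}$, I would rewrite
\begin{equation*}
\dot{M}(t) = {\rm q}\bigl(\psi(t),\psi(t)\bigr).
\end{equation*}

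The second step is to justify that Lemma~\ref{lemma:33} applies with $\phi=\psi=\psi(t)$. A~priori ${\rm q}$ was only defined on the form core $\mathscr{C}_0^\infty(\R^{dn})\otimes \Gamma_{\rm fin}$, but the second bound in Lemma~\ref{lemma:33},
\begin{equation*}
|{\rm q}(\phi,\psi)|\leq C'\bigl(\|(\hat{H}_\hslash+1)\phi\|\,\|\hat{N}_\hslash(\delta)\psi\|+\|\hat{N}_\hslash(\delta)\phi\|\,\|(\hat{H}_\hslash+1)\psi\|\bigr),
\end{equation*}
together with the boundedness of $\hat{N}_\hslash(\delta)$, allows a unique continuous extension of ${\rm q}$ to the full domain $D(\hat H_\hslash)=D(\hat H_\hslash^0)$ (invoking Lemma~\ref{s3.lemE} to identify these domains uniformly in $\hslash$). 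Since $\psi\in D(\hat H_\hslash^0)$ by assumption and the dynamics $e^{-\ci t\hat H_\hslash/\hslash}$ preserves $D(\hat H_\hslash)$, we indeed have $\psi(t)\in D(\hat H_\hslash^0)$ for all $t\in\R$, and the bound extends to this choice.

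Applying the estimate to $\phi=\psi=\psi(t)$ and invoking the arithmetic–geometric mean inequality yields
\begin{equation*}
\dot{M}(t) \leq 2C'\,\|(\hat{H}_\hslash+1)\psi(t)\|\,\|\hat{N}_\hslash(\delta)\psi(t)\| \leq C'\bigl(\|(\hat{H}_\hslash+1)\psi(t)\|^2 + M(t)\bigr).
\end{equation*}
Finally, because $\hat{H}_\hslash$ commutes with its own unitary group, $\|(\hat{H}_\hslash+1)\psi(t)\| = \|(\hat{H}_\hslash+1)\psi\|$ for every $t\in\R$, and the claimed inequality follows with $C=C'$.

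The step I expect to require the most care is the extension/density argument in the second paragraph: one has to make sure that the commutator identities used in the proof of Lemma~\ref{lemma:33} survive the passage from the algebraic form core to $D(\hat H_\hslash^0)$, and that the a.e.\ differentiability of $M(t)$ together with the weak-form identity \eqref{eq:18} really yields the pointwise inequality. This is standard (one approximates $\psi(t)$ by a net of smooth, finite-particle vectors in graph norm of $\hat H_\hslash^0$, uses the $\hslash$-uniform equivalence of $\hat H_\hslash$ and $\hat H_\hslash^0$ from Lemma~\ref{s3.lemE}, and passes to the limit thanks to the quantitative bound above), but it is the only non-routine piece; everything else reduces to bookkeeping around Lemma~\ref{lemma:33} and unitary conservation of $\hat H_\hslash$.
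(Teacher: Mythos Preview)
Your proposal is correct and follows essentially the same route as the paper: identify $\dot M(t)$ with ${\rm q}(\psi(t),\psi(t))$, extend ${\rm q}$ to $D(\hat H_\hslash)\times D(\hat H_\hslash)$ via the bound of Lemma~\ref{lemma:33}, apply that bound, use conservation of $\hat H_\hslash$ along its own flow, and finish with the arithmetic--geometric mean inequality. Your write-up is in fact more explicit about the extension and conservation steps than the paper's one-line version.
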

\begin{proof}
  By Lemma \ref{lemma:33}, the domain of ${\rm q}$ can be uniquely extended to $D(\hat H_{\hslash})\times D(\hat H_{\hslash})$. In
  particular, we have
  \begin{align*}
       |\dot{M}(t) | &=| {\rm q}(\psi(t),\psi(t))  | \leq
       C\| (\hat{H}_{\hslash}+1)\psi  \|\| \hat{N}_{\hslash}(\delta)\psi(t)  \| \leq
              C(\| (\hat{H}_{\hslash}+1)\psi  \|^2+\| \hat{N}_{\hslash}(\delta)\psi(t)  \|^2) .
  \end{align*}
Then the lemma is proven.
\end{proof}
Gr\"onwall's inequality then yields the sought result of propagation.
\begin{prop}
  \label{prop:3}
  Assume \eqref{eq:hypo0}, \eqref{eq:hypo1} and
  \emph{(}\hyperref[eq:hypo2]{$A_2^{(1)}$}\emph{)}. In addition, let $\psi \in D(\hat{{p}}^2)\cap D(\hat{H}_{\rm f} )\cap D(\hat{N}_{\hslash})$. Then $e^{-\ci\frac{t}{\hslash}\hat{H}_{\hslash}}\psi \in
  D(\hat{N}_{\hslash})$.
\end{prop}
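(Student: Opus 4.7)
The plan is to combine the differential inequality of Lemma \ref{lemma:4} with Grönwall's lemma to produce a bound on $M(t)=\|\hat{N}_{\hslash}(\delta)\psi(t)\|^2$ that is uniform in the regularization parameter $\delta>0$, and then to pass to the limit $\delta\to 0^+$ using the closedness of $\hat{N}_{\hslash}$ (equivalently, monotone convergence of its spectral measure).

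First I would note that the assumption $\psi\in D(\hat{p}^2)\cap D(\hat{H}_{\mathrm{f}})=D(\hat{H}_{\hslash}^0)$ ensures, by Theorem \ref{prop:2}, that $\psi\in D(\hat{H}_{\hslash})$, so that $\|(\hat{H}_{\hslash}+1)\psi\|<\infty$. Moreover, since $e^{-\ci\frac{t}{\hslash}\hat{H}_{\hslash}}$ commutes with $\hat{H}_{\hslash}$, one has $\|(\hat{H}_{\hslash}+1)\psi(t)\|=\|(\hat{H}_{\hslash}+1)\psi\|$ for every $t\in\R$. For any fixed $\delta>0$, the operator $\hat{N}_{\hslash}(\delta)=\hat{N}_{\hslash}e^{-\delta\hat{N}_{\hslash}}$ is bounded, so $M(t)$ is finite and differentiable, and Lemma \ref{lemma:4} applies to give
\begin{equation*}
\dot{M}(t)\leq C\bigl(\|(\hat{H}_{\hslash}+1)\psi\|^2+M(t)\bigr).
\end{equation*}
Grönwall's inequality then yields, for every $t\in\R$,
\begin{equation*}
M(t)\leq e^{C|t|}\bigl(M(0)+C|t|\,\|(\hat{H}_{\hslash}+1)\psi\|^2\bigr).
\end{equation*}

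Next I would observe that the initial value satisfies $M(0)=\|\hat{N}_{\hslash}e^{-\delta\hat{N}_{\hslash}}\psi\|^2\leq \|\hat{N}_{\hslash}\psi\|^2<\infty$ since $\psi\in D(\hat{N}_{\hslash})$, uniformly in $\delta>0$. Therefore, on any compact interval $[-T,T]$, the bound
\begin{equation*}
\|\hat{N}_{\hslash}e^{-\delta\hat{N}_{\hslash}}\psi(t)\|^2\leq C_T
\end{equation*}
holds with $C_T$ independent of $\delta\in(0,1]$. Applying the spectral theorem to $\hat{N}_{\hslash}$ and denoting by $(E_\lambda)_{\lambda\geq 0}$ the spectral resolution of $\hat{N}_{\hslash}$, one has
\begin{equation*}
\|\hat{N}_{\hslash}e^{-\delta\hat{N}_{\hslash}}\psi(t)\|^2=\int_0^{\infty}\lambda^2 e^{-2\delta\lambda}\,\mathrm{d}\|E_\lambda\psi(t)\|^2.
\end{equation*}
By monotone convergence, the right hand side increases to $\|\hat{N}_{\hslash}\psi(t)\|^2$ as $\delta\to 0^+$, so this quantity is bounded by $C_T$. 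This proves that $\psi(t)\in D(\hat{N}_{\hslash})$ for every $t$, which is the sought-after conclusion.

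The argument is essentially routine once Lemma \ref{lemma:4} is in hand; the only delicate point is the uniformity in $\delta$ of the differential inequality (which is already guaranteed by Lemma \ref{lemma:33}) and the justification that the limit $\delta\to 0^+$ commutes with taking the norm, handled cleanly by the spectral calculus. No genuine obstacle arises at this step, as the regularization $\hat{N}_{\hslash}(\delta)$ was precisely designed to allow this limiting procedure.
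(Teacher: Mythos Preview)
Your proof is correct and follows essentially the same approach as the paper: apply Lemma~\ref{lemma:4} to get a differential inequality for $M(t)$, use Gr\"onwall to obtain a bound uniform in $\delta$, and then pass to the limit $\delta\to 0^+$ via the spectral theorem and monotone convergence. The only cosmetic difference is the precise form of the Gr\"onwall estimate (the paper absorbs $\theta=\|(\hat{H}_{\hslash}+1)\psi\|^2$ into $N(t)=M(t)+\theta$ before applying Gr\"onwall, obtaining the slightly cleaner bound $M(t)\leq(\|\hat{N}_{\hslash}(\delta)\psi\|^2+\theta)e^{C|t|}$), but this is immaterial.
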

\begin{proof}
  By Lemma \ref{lemma:4}, we have
  \begin{align*}
    M(t)=\int_0^t\dot{M}(s)  \mathrm{d}s + M(0)\leq C\int_0^t(M(s)+\theta)  \mathrm{d}s + M(0),
  \end{align*}
  where $\theta=\| (\hat{H}_{\hslash}+1)\psi \| _{}^2$. Let $M(t)+\theta=N(t)$. Then
  \begin{align*}
    M(t)\leq N(t)\leq C\int_0^t N(s)  \mathrm{d}s + N(0).
  \end{align*}
  Then Gr\"onwall's inequality yields
  \begin{align*}
    M(t)\leq N(t)\leq \big(\| \hat{N}_{\hslash}(\delta)\psi  \| _{}^2+\| (\hat{H}_{\hslash}+1)\psi  \| _{}^2\big)e^{C|t| },
  \end{align*}
  where $C$ does not depend on $\hslash$ and $\delta$. For all $\psi \in D(\hat{N}_{\hslash})$,
  \begin{align*}
    \lim_{\delta\downarrow 0}\| \hat{N}_{\hslash}(\delta)\psi(t)  \| _{}^2=\lim_{\delta\downarrow0}\int_0^{\infty}\lambda^2e^{-2\delta\lambda}  \mathrm{d}E_{\lambda}.
  \end{align*}
  Hence, by the monotone convergence theorem, it follows that $\psi(t)\in D(\hat{N}_{\hslash})$, and
  \begin{align*}
    \| \hat{N}_{\hslash}\psi(t)  \| _{}^2\leq \big(\| \hat{N}_{\hslash}\psi  \| _{}^2+\| (\hat{H}_{\hslash}+1)\psi  \| _{}^2\big)e^{C|t| }.
  \end{align*}
\end{proof}

\begin{cor}
  \label{cor:2}
  Assume \eqref{eq:hypo0}, \eqref{eq:hypo1} and
  \emph{(}\hyperref[eq:hypo2]{$A_2^{(1)}$}\emph{)}. Then there exist two constants $K,C>0$ such that: for all $\hslash\in (0,1)$, for all $\psi \in D(\hat{{p}}^2)\cap
  D(\hat{H}_{\rm f} )\cap D(\hat{N}_{\hslash})$, and for all $t\in {\mathbb R}$,
  \begin{align*}
    \| \hat{N}_{\hslash}\psi(t)  \|\leq K\big(\| \hat{N}_{\hslash}\psi   \|+\| (\hat{H}_{\hslash}^0+1)\psi   \|\big)e^{C|t| }.
  \end{align*}
\end{cor}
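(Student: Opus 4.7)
The plan is to deduce Corollary \ref{cor:2} directly from Proposition \ref{prop:3} by replacing the norm $\|(\hat H_\hslash+1)\psi\|$ appearing on the right-hand side there with $\|(\hat H_\hslash^0+1)\psi\|$, using the $\hslash$-uniform equivalence between the interacting and non-interacting Pauli--Fierz energies established in Lemma \ref{s3.lemE}. No further propagation argument is needed.

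Concretely, I would start from the final estimate in the proof of Proposition \ref{prop:3},
\begin{equation*}
\|\hat N_\hslash \psi(t)\|^2 \leq \big(\|\hat N_\hslash \psi\|^2 + \|(\hat H_\hslash+1)\psi\|^2\big)\,e^{C|t|},
\end{equation*}
and take square roots using $\sqrt{a^2+b^2}\leq a+b$ for $a,b\geq 0$ to get
\begin{equation*}
\|\hat N_\hslash \psi(t)\| \leq \big(\|\hat N_\hslash \psi\| + \|(\hat H_\hslash+1)\psi\|\big)\,e^{C|t|/2}.
\end{equation*}
Lemma \ref{s3.lemE} provides constants $c>0$ and $\gamma>0$, independent of $\hslash\in(0,1)$, such that $c\|(\hat H_\hslash+\gamma)\psi\|\leq \|(\hat H_\hslash^0+1)\psi\|$. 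Combined with the triangle inequality $\|(\hat H_\hslash+1)\psi\|\leq \|(\hat H_\hslash+\gamma)\psi\| + |1-\gamma|\,\|\psi\|$ and the obvious bound $\|\psi\|\leq \|(\hat H_\hslash^0+1)\psi\|$ (since $\hat H_\hslash^0\geq 0$), this yields
\begin{equation*}
\|(\hat H_\hslash+1)\psi\| \leq K'\,\|(\hat H_\hslash^0+1)\psi\|
\end{equation*}
with $K'$ independent of $\hslash$. Inserting this into the previous display and setting $K:=\max(1,K')$ gives the statement of the corollary.

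The only substantive point to verify is that the constant $K'$ is genuinely $\hslash$-independent, and this is precisely the content of Lemma \ref{s3.lemE} (whose proof from scratch, as the authors emphasize, avoids the closed graph theorem for exactly this reason). No other obstacle arises; the corollary is essentially a cosmetic rewriting of Proposition \ref{prop:3} in the more convenient free-Hamiltonian norm, which is what will be used later when taking the semiclassical limit $\hslash\to 0$ where assumption \eqref{eq:Ass0} is formulated in terms of $\hat H_\hslash^0$.
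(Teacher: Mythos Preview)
Your proof is correct and matches the paper's intended argument: the paper states Corollary \ref{cor:2} without proof immediately after Proposition \ref{prop:3}, and the passage from $\|(\hat H_\hslash+1)\psi\|$ to $\|(\hat H_\hslash^0+1)\psi\|$ via Lemma \ref{s3.lemE} is exactly what is implicit. Your handling of the constant shift $\gamma\neq 1$ and the square root is the only thing to fill in, and you do it correctly.
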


\subsubsection{Momentum estimates}
\label{sec:momentum-estimates}

The commutation relations of $\hat{N}^{(\sigma)}_{\hslash}={\rm d}\Gamma(|k|^{2\sigma})$ with $\hat{a}_\lambda(f)$ and $\hat{a}_\lambda^{*}(f)$ is of the form:
\begin{align*}
  &[\hat{N}^{(\sigma)}_{\hslash},\hat{a}_{\lambda}(f)]=-\hslash\, \hat{a}_{\lambda}(|k|^{2\sigma}f),    \\
  &[\hat{N}^{(\sigma)}_{\hslash},\hat{a}^{*}_{\lambda}(f)]=\hslash\, \hat{a}^{*}_{\lambda}  (|k|^{2\sigma}f).
\end{align*}
In a similar way to $\hat{N}_\hslash^{}(\delta)$ we introduce
\begin{align*}
  \hat{N}^{(\sigma)}_{\hslash}(\delta)=\hat{N}^{(\sigma)}_{\hslash}e^{-\delta \hat{N}_\hslash^{(\sigma)}} ,   \delta\geq 0, \sigma \geq 0.
\end{align*}
Let $\psi \in D(\hat H_\hslash^0)$, and define
\begin{align*}
  M^\sigma (t)= \langle\psi(t), \hat{N}^{(\sigma)}_{\hslash}(\delta) \psi(t)\rangle.
\end{align*}
The map $t\mapsto M^\sigma (t)$ is differentiable, with
\begin{align}
  \label{eq:18-s}
  \dot{M}^\sigma(t)=\frac{\ci}{\hslash}\langle\psi(t), [\hat{H}_{\hslash}, \hat{N}^{(\sigma)}_{\hslash}(\delta)] \psi(t)\rangle.
\end{align}
We set
\begin{align*}
  {\rm q}_{\delta,\hslash}^\sigma (\phi,\psi)=\frac{\ci}{\hslash}
  \langle\phi, [\hat{H}_{\hslash}, \hat{N}^{(\sigma)}_{\hslash}(\delta)] \psi\rangle,
\end{align*}
for any $\phi,\psi\in \mathscr{C}_0^{\infty}({\mathbb R}^{dn})\otimes \Gamma_{\mathrm{fin}}$.
Let
${\rm q}^\sigma\equiv {\rm q}_{\delta,\hslash}^\sigma$.
Note that
$3/2-\sigma\geq \sigma-1/2$ if and only if $\sigma\leq 1$.
In what follows we assume that
$\hat H_\hslash+a>0$ and $\hat H_\hslash^0+b>0$.
Here $a$ and $b$ are introduced in Corollary \ref{s4.lemE}.
\begin{lemma}
  \label{lemma:3-s}
  Assume \eqref{eq:hypo0}, \eqref{eq:hypo1} and
  \emph{(}\hyperref[eq:hypo2]{$A_2^{(1)}$}\emph{)}. For $\sigma\in[0,1]$ there exists $C>0$ such   that for all $\delta>0$, $\hslash\in (0,1)$ and $\phi,\psi\in \mathscr{C}_0^{\infty}({\mathbb R}^{dn})\otimes   \Gamma_{\mathrm{fin}}$:
\begin{align*}
|  {\rm q}^\sigma (\phi,\psi)|
\leq
C\||\cdot|^{\sigma-1/2}\chi\|
(\|(\hat{H}_{\hslash}+1)^{1/2} \phi\|\|(\hat{N}_\hslash^{(\sigma)})^{1/2}\psi\|
+\|(\hat{H}_{\hslash}+1)^{1/2}  \psi\|\|(\hat{N}_\hslash^{(\sigma)})^{1/2}\phi\|)
\end{align*}
\end{lemma}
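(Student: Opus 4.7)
\textbf{Proof proposal for Lemma \ref{lemma:3-s}.}
The plan is to mimic the structure of the proof of Lemma \ref{lemma:33}, but weighting carefully by $|k|^{2\sigma}$. First, I would observe that $\hat{N}_{\hslash}^{(\sigma)}(\delta)$ commutes with $V(\hat q)$, with $\hat p_j^2$, and with $\hat H_{\mathrm{f}}$, since all three either act on particle variables only or are functions of the same second-quantised multiplication operator as $\hat N_\hslash^{(\sigma)}$. Consequently,
\begin{equation*}
[\hat{H}_{\hslash},\hat{N}_{\hslash}^{(\sigma)}(\delta)]
= -\sum_{j=1}^n \frac{1}{2m_j}\Bigl\{(\hat p_j-A_j)\,,\,[A_j,\hat{N}_{\hslash}^{(\sigma)}(\delta)]\Bigr\},
\end{equation*}
where $\{\cdot,\cdot\}$ is the anticommutator. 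Thus the whole content of the lemma is a bound for $[A_j,\hat N_\hslash^{(\sigma)}(\delta)]$ sandwiched with a covariant derivative.

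Next, I would compute $[A_j,\hat{N}_{\hslash}^{(\sigma)}(\delta)]$ explicitly in terms of $\hat a^{\sharp}$ of modified form factors. Writing $A_j = \hat a(v_j)+\hat a^{*}(v_j)$ with $v_{j,\lambda}(k,\hat q_j)=\frac{\epsilon_\lambda(k)}{\sqrt{2|k|}}\chi_j(k)e^{2\pi\ci k\cdot \hat q_j}$, the canonical relations give
\begin{equation*}
[\hat a^{*}(v_j),\hat N_\hslash^{(\sigma)}]=-\hslash\,\hat a^{*}(|k|^{2\sigma}v_j),\quad
[e^{-\delta \hat N_\hslash^{(\sigma)}},\hat a^{*}(f)]
=\hat a^{*}\bigl((e^{-\hslash\delta|k|^{2\sigma}}-1)f\bigr)\,e^{-\delta \hat N_\hslash^{(\sigma)}},
\end{equation*}
and analogously for $\hat a$. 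Splitting $\hat N_\hslash^{(\sigma)}(\delta)=\hat N_\hslash^{(\sigma)} e^{-\delta \hat N_\hslash^{(\sigma)}}$ and recombining, one gets terms of the form $\hat a^{\sharp}(g_j(\delta))\,e^{-\delta \hat N_\hslash^{(\sigma)}}$ and $\hat N_1^{(\sigma)}\hat a^{\sharp}(\tilde g_j(\delta))\,e^{-\delta \hat N_\hslash^{(\sigma)}}$, where the form factors $g_j(\delta),\tilde g_j(\delta)$ are $\hslash\,|k|^{2\sigma}v_j$ or $(e^{\pm\hslash\delta|k|^{2\sigma}}-1)v_j$ times constants.

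The core estimate is then the weighted version of Lemma \ref{lemma:2}: for $\sigma\geq 0$,
\begin{equation*}
\|\hat a(f)\psi\|\leq \||\cdot|^{-\sigma}f\|\,\|(\hat N_\hslash^{(\sigma)})^{1/2}\psi\|,
\qquad
\|\hat a^{*}(f)\psi\|\leq \||\cdot|^{-\sigma}f\|\,\|(\hat N_\hslash^{(\sigma)})^{1/2}\psi\|+\|f\|\,\|\psi\|,
\end{equation*}
applied to $f=|k|^{2\sigma}v_j$, producing the weight $|k|^{-\sigma}\cdot|k|^{2\sigma}\cdot|k|^{-1/2}\chi_j=|k|^{\sigma-1/2}\chi_j$ (in $L^2$); note that under (\hyperref[eq:hypo2]{$A_2^{(1)}$}) this sits between $|\cdot|^{-1}\chi_j$ and $|\cdot|^{1/2}\chi_j$, so it is finite by interpolation. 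On the other factor of the anticommutator I would use Corollary \ref{s4.lemE}: $\|(\hat p_j-A_j)\psi\|\lesssim \|(\hat H_\hslash^0+1)^{1/2}\psi\|\lesssim \|(\hat H_\hslash+a)^{1/2}\psi\|$ uniformly in $\hslash\in (0,1)$. Pairing these, each term produces
\begin{equation*}
\hslash^{-1}\bigl|\langle \phi,\hat a^{\sharp}(g_j(\delta))\,(\hat p_j-A_j)\,\psi\rangle\bigr|
\;\lesssim\;\||\cdot|^{\sigma-1/2}\chi_j\|\,\|(\hat N_\hslash^{(\sigma)})^{1/2}\phi\|\,\|(\hat H_\hslash+1)^{1/2}\psi\|,
\end{equation*}
and symmetrically with $\phi,\psi$ exchanged.

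The main obstacle will be the $\delta$-uniform control of the terms carrying $(e^{\pm\hslash\delta|k|^{2\sigma}}-1)$ and the extra $\hat N_1^{(\sigma)}$ prefactor: unlike in Lemma \ref{lemma:33}, the exponent is now $k$-dependent. I would handle this by the pointwise inequality $|e^{\pm\hslash\delta|k|^{2\sigma}}-1|\leq \hslash\delta|k|^{2\sigma}e^{\hslash\delta|k|^{2\sigma}}$, absorbing the $|k|^{2\sigma}$ into the form factor (which keeps the final weight still of order $|k|^{\sigma-1/2}\chi$ after the number-operator estimate) and the spectral bounds $\|(\hat N_1^{(\sigma)})^\alpha e^{-\hslash\delta \hat N_1^{(\sigma)}}\|\leq \sqrt{(2\alpha)!}/(2\hslash\delta)^\alpha$ (obtained exactly as \eqref{eq:22}) to cancel the $\hslash\delta$ factors, producing $\hslash$-uniform constants. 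The factor $\hslash^{-1}$ in the definition of $\mathrm{q}^\sigma$ is absorbed either by the $\hslash$ coming from $[\hat a^{\sharp},\hat N_\hslash^{(\sigma)}]$ or by the $\hslash\delta$ from the exponential expansion, closing the estimate.
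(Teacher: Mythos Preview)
Your overall strategy matches the paper's: reduce to the anticommutator of $[A_j,\hat N_\hslash^{(\sigma)}(\delta)]$ with the covariant derivative $D^j$, bound the latter via Corollary~\ref{s4.lemE}, and use a weighted number estimate producing exactly the weight $|k|^{\sigma-1/2}\chi$. That part is correct.

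There is, however, a genuine gap in your handling of the $\delta$-dependent terms. If you compute $[e^{-\delta\hat N_\hslash^{(\sigma)}},\hat a(f)]$ ``analogously'' with the exponential on the right, you obtain $\hat a\bigl((e^{+\hslash\delta|k|^{2\sigma}}-1)f\bigr)e^{-\delta\hat N_\hslash^{(\sigma)}}$, and your proposed bound $|e^{+\hslash\delta|k|^{2\sigma}}-1|\le\hslash\delta|k|^{2\sigma}e^{\hslash\delta|k|^{2\sigma}}$ carries a factor $e^{\hslash\delta|k|^{2\sigma}}$ that is \emph{unbounded in $k$}. Multiplying the form factor $v_j$ by this throws it out of every weighted $L^2$ space, and the number estimate cannot close. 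This is exactly the difference with Lemma~\ref{lemma:33}, where the analogous factor $e^{\hslash\delta}$ is a harmless constant. The paper avoids the problem by ordering the annihilation commutator with the exponential on the \emph{left},
\[
[e^{-\delta\hat N_\hslash^{(\sigma)}},\hat a(f)]
= e^{-\delta\hat N_\hslash^{(\sigma)}}\,\hat a\bigl((1-e^{-\hslash\delta|k|^{2\sigma}})f\bigr),
\]
so that the same bounded factor $\xi=1-e^{-\hslash\delta|k|^{2\sigma}}\le\hslash\delta|k|^{2\sigma}$ appears in both the creation and annihilation pieces. One then writes $\hat N_\hslash^{(\sigma)}e^{-\delta\hat N_\hslash^{(\sigma)}}\hat a(\xi)=\bigl(\delta\hat N_\hslash^{(\sigma)}e^{-\delta\hat N_\hslash^{(\sigma)}}\bigr)\hat a(\xi/\delta)$ and uses $\sup_{\delta>0}\|\delta\hat N_\hslash^{(\sigma)}e^{-\delta\hat N_\hslash^{(\sigma)}}\|\le 1/e$ together with $\xi/\delta\le\hslash|k|^{2\sigma}$ to get the $\delta$-uniform constant directly. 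A secondary point you omit is that for the $\hat a^{*}$-piece (the $Z$ term in the paper) one uses the Coulomb gauge to commute $\hat a_{j\mu}(-\xi)$ past $D_\mu^j$, which is what allows the correct distribution of $(\hat N_\hslash^{(\sigma)})^{1/2}$ and $(\hat H_\hslash+1)^{1/2}$ on $\phi$ and $\psi$.
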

\begin{proof}
The commutator can be computed explicitly as follows
\begin{align*}
  [\hat{N}^{(\sigma)}_{\hslash}(\delta),\hat{H}_{\hslash}]=\frac{1}{2}\sum_{j=1}^n \sum_{\mu=1}^d\big([\hat{N}^{(\sigma)}_{\hslash}(\delta),{\rm D}_{\mu}^j]{\rm D}_{\mu}^j+{\rm D}_{\mu}^j [\hat{N}^{(\sigma)}_{\hslash}(\delta),{\rm D}_{\mu}^j]\big).
\end{align*}
Note that
\begin{align*}
&[e^{-\delta \hat{N}_\hslash^{(\sigma)}}, \hat{a}_{\lambda}(f)]=e^{-\delta \hat{N}_\hslash^{(\sigma)}}a_{\lambda}(\xi f),\\
&[e^{-\delta \hat{N}_\hslash^{(\sigma)}}, \hat{a}^*_{\lambda}(f)]=\hat{a}^*_{\lambda}(-\xi f)e^{-\delta \hat{N}_\hslash^{(\sigma)}},
\end{align*}
where
$$\xi=1-e^{-\delta\hslash|k|^{2\sigma}}.$$
We have
\begin{align*}
  [\hat{N}^{(\sigma)}_{\hslash}(\delta),{\rm D}_{\mu}^j]&=
  -\ci\hslash(
  \Pi_{j,\mu}^{\sigma} e^{-\delta \hat{N}_\hslash^{(\sigma)}} +
\hat{N}^{(\sigma)}_{\hslash}e^{-\delta \hat{N}_\hslash^{(\sigma)}} \hat{a}_{j,\mu} (\xi) +
\hat{N}^{(\sigma)}_{\hslash}\hat{a}_{j,\mu}^\ast (-\xi)
e^{-\delta \hat{N}_\hslash^{(\sigma)}})\\
  &=-\ci\hslash(X+Y+Z),
  \end{align*}
where
\begin{align*}
 &X=\Pi_{j,\mu}^\sigma e^{-\delta \hat{N}_\hslash^{(\sigma)}},\\
 &Y= \hat{N}^{(\sigma)}_{\hslash}e^{-\delta \hat{N}_\hslash^{(\sigma)}} \hat{a}_{j,\mu} (\xi),\\
 &Z= \hat{N}^{(\sigma)}_{\hslash}\hat{a}_{j,\mu}^\ast (-\xi) e^{-\delta \hat{N}_\hslash^{(\sigma)}},
\end{align*}
$\Pi_{j}^\sigma$ is given by
\begin{align*}
  \Pi_{j}^\sigma=\ci\sum_{\lambda=1}^{d-1}\int_{{\mathbb R}^d}^{}
  |k|^{2\sigma -\frac{1}{2}}\epsilon_\lambda(k)(\chi_j (k)e^{-2\pi\ci k\cdot \hat{q}_j}a_\lambda^{*}(k)-\bar{\chi }_j(k)e^{2\pi\ci k\cdot \hat{q}_j}a_\lambda(k))  \mathrm{d}k,
\end{align*}
and
\begin{align*}
&\hat{a}_j(\xi)=
\ci\sum_{\lambda=1}^{d-1}\int_{{\mathbb R}^d}^{}
  |k|^{-\frac{1}{2}}\epsilon_\lambda(k)\chi_j (k)(1-e^{-\hslash\delta|k|^{2\sigma}})e^{2\pi\ci k\cdot \hat{q}_j}\hat{a}_\lambda(k)\,\mathrm{d}k,\\
  &\hat{a}_j^{*}(-\xi)=
\ci\sum_{\lambda=1}^{d-1}\int_{{\mathbb R}^d}^{}
  |k|^{-\frac{1}{2}}\epsilon_\lambda(k)\chi_j (k)(e^{-\hslash\delta|k|^{2\sigma}}-1)e^{-2\pi\ci k\cdot \hat{q}_j}\hat{a}^{*}_\lambda(k)\mathrm{d}k.
\end{align*}
We use the same trick for $Y$ and $Z$ as we used in the estimate on $\hat N_\hbar e^{-\delta\hat N_\hbar}$.
We have
\begin{align*}
\hat{N}^{(\sigma)}_{\hslash}e^{-\delta \hat{N}_\hslash^{(\sigma)}} \hat{a}_{j,\mu} (\xi )=
\delta \hat{N}^{(\sigma)}_{\hslash}e^{-\delta \hat{N}_\hslash^{(\sigma)}} \hat{a}_{j,\mu} (\xi/\delta),
\end{align*}
where
we note that
$\sup_{\delta\geq0} \|\delta \hat{N}^{(\sigma)}_{\hslash}e^{-\delta \hat{N}_\hslash^{(\sigma)}} \|\leq 1/e$
and
$(1-e^{-\hslash\delta |k|^{2\sigma}})/ \delta \leq \hslash |k|^{2\sigma}$.
Let
\begin{align*}
  {\rm q}^\sigma (\phi,\psi)=
\frac{1}{2}\sum_{j=1}^n \sum_{\mu=1}^d
\left\{  \langle (X+Y+Z)\phi, D_\mu^j \psi\rangle
  +
  \langle D_\mu^j\phi,  (X+Y+Z)\psi\rangle\right\}
\end{align*}
Note that
we have already seen  that
\begin{align*}
\sum_{j=1}^n \sum_{\mu=1}^d
\|D_\mu^j\psi\|
\leq C\|(\hat H_\hslash^0 +b)^{1/2}\psi\|
\leq C\|(\hat H_\hslash+a)^{1/2}\psi\|.
\end{align*}
It is straightforward to see that
\begin{align*}
&\frac{1}{2}\sum_{j=1}^n \sum_{\mu=1}^d
\{\langle Y\phi,D_\mu^j\psi\rangle+\langle D_\mu^j \phi, Y\psi\rangle\}\\
&\leq
C \||\cdot|^{2\sigma-1}\chi_j\|
(\|(\hat{N}_\hslash^{(\sigma)})^{1/2}\phi\|\|(\hat H_\hslash+a)\psi\|
+\|(\hat{N}_\hslash^{(\sigma)})^{1/2}\psi\|\|(\hat H_\hslash+a)\phi\|)
\end{align*}
We estimate $Z$:
\begin{align*}
&\frac{1}{2}\sum_{j=1}^n \sum_{\mu=1}^d
\{\langle Z\phi,D_\mu^j\psi\rangle+\langle D_\mu^j \phi, Z\psi\rangle\}\\
&=\langle\hat{a}_{j\mu}^{*}(-\xi)\hat{N}_\hslash^{(\sigma)} e^{-\delta \hat{N}_\hslash^{(\sigma)}}\phi, D_\mu^j\psi\rangle
+
\langle D_\mu^j\phi, \hat{a}_{j\mu}^{*}(-\xi) \hat{N}_\hslash^{(\sigma)} e^{-\delta \hat{N}_\hslash^{(\sigma)}}\psi\rangle\\
&+
\langle\hat{a}_{j\mu}^{*}(-|\cdot|^{2\sigma} \xi)e^{-\delta \hat{N}_\hslash^{(\sigma)}}\phi, D_\mu^j\psi\rangle
+
\langle D_\mu^j\phi, \hat{a}_{j\mu}^{*}(-|\cdot|^{2\sigma}\xi)e^{-\delta \hat{N}_\hslash^{(\sigma)}}\psi\rangle.
\end{align*}
Taking the Coulomb gauge condition we obtain that
$$
\sum_{\mu=1}^d
\hat{a}_{j\mu}(-\xi) D_\mu^j
=
\sum_{\mu=1}^d
 D_\mu^j\hat{a}_{j\mu}(-\xi)$$ and
\begin{align*}
&\left|
\frac{1}{2}\sum_{j=1}^n \sum_{\mu=1}^d
\langle \hat{N}_\hslash^{(\sigma)} e^{-\delta \hat{N}_\hslash^{(\sigma)}}\phi, \hat{a}_{j\mu}(-\xi)D_\mu^j\psi\rangle
+
\langle \hat{a}_{j\mu}(-\xi)D_\mu^j\phi, \hat{N}_\hslash^{(\sigma)} e^{-\delta \hat{N}_\hslash^{(\sigma)}}\psi\rangle\right|
\\
&=
\left|
\frac{1}{2}\sum_{j=1}^n \sum_{\mu=1}^d
\langle D_\mu^j\delta \hat{N}_\hslash^{(\sigma)} e^{-\delta \hat{N}_\hslash^{(\sigma)}}\phi, \hat{a}_{j\mu}(-\xi/\delta)\psi\rangle
+
\langle\hat{a}_{j\mu}(-\xi/\delta)\phi, D_\mu^j\delta \hat{N}_\hslash^{(\sigma)} e^{-\delta \hat{N}_\hslash^{(\sigma)}}\psi\rangle\right|\\
&\leq
C
\||\cdot|^{\sigma-1/2}\chi_j\|
(\|(\hat H_\hslash^0 +b)^{1/2} \delta \hat{N}_\hslash^{(\sigma)} e^{-\delta \hat{N}_\hslash^{(\sigma)}}\phi\|
\|(\hat{N}_\hslash^\sigma)^{1/2}\psi\|+
\|(\hat H_\hslash^0 +b)^{1/2} \delta \hat{N}_\hslash^{(\sigma)} e^{-\delta \hat{N}_\hslash^{(\sigma)}}\psi\|
\|(\hat{N}_\hslash^{(\sigma)})^{1/2}\phi\|)\\
&\leq
C\||\cdot|^{\sigma-1/2}\chi_j\|
(\|(\hat H_\hslash^0 +b)^{1/2} \phi\|\|(\hat{N}_\hslash^{(\sigma)})^{1/2}\psi\|
+\|(\hat H_\hslash^0 +b)^{1/2}  \phi\|\|(\hat{N}_\hslash^{(\sigma)})^{1/2}\psi\|).
\end{align*}
Similarly
\begin{align*}
&\frac{1}{2}\sum_{j=1}^n \sum_{\mu=1}^d
\langle e^{-\delta \hat{N}_\hslash^{(\sigma)}}\phi, \hat{a}_{j\mu}(-|\cdot|^{2\sigma}\xi)D_\mu^j\psi\rangle
+
\langle \hat{a}_{j\mu}(-|\cdot|^{2\sigma}\xi)D_\mu^j\phi, e^{-\delta \hat{N}_\hslash^{(\sigma)}}\psi\rangle\\
&=
\frac{1}{2}\sum_{j=1}^n \sum_{\mu=1}^d
\langle D_\mu^j e^{-\delta \hat{N}_\hslash^{(\sigma)}}\phi, \hat{a}_{j\mu}(-|\cdot|^{2\sigma}\xi)\psi\rangle
+
\langle\hat{a}_{j\mu}(-|\cdot|^{2\sigma}\xi)\phi, D_\mu^j e^{-\delta \hat{N}_\hslash^{(\sigma)}}\psi\rangle\\
&\leq
C
\||\cdot|^{\sigma-1/2}\chi_j\|
(\|(\hat H_\hslash^0 +b)^{1/2}  e^{-\delta \hat{N}_\hslash^{(\sigma)}}\phi\|
\|(\hat{N}_\hslash^{(\sigma)})^{1/2}\psi\|+
\|(\hat H_\hslash^0 +b)^{1/2}  e^{-\delta \hat{N}_\hslash^{(\sigma)}}\psi\|
\|(\hat{N}_\hslash^{(\sigma)})^{1/2}\phi\|)\\
&\leq
C\||\cdot|^{\sigma-1/2}\chi_j\|
(\|(\hat H_\hslash^0 +b)^{1/2} \phi\|\|(\hat{N}_\hslash^{(\sigma)})^{1/2}\psi\|
+\|(\hat H_\hslash^0 +b)^{1/2}  \phi\|\|(\hat{N}_\hslash^{(\sigma)})^{1/2}\psi\|).
\end{align*}
Finally we estimate $X$.
The result is
\begin{align*}
&\frac{1}{2}\sum_{j=1}^n \sum_{\mu=1}^d
\left\{  \langle X\phi, D_\mu^j \psi\rangle
  +
  \langle D_\mu^j\phi,  X\psi\rangle\right\}  \\
&\leq
C\||\cdot|^{\sigma-1/2}\chi_j\|
(\|(\hat H_\hslash^0 +b)^{1/2} \phi\|\|(\hat{N}_\hslash^{(\sigma)})^{1/2}\psi\|
+\|(\hat H_\hslash^0 +b)^{1/2}  \phi\|\|(\hat{N}_\hslash^{(\sigma)})^{1/2}\psi\|).
\end{align*}
Then
\begin{align*}
|  {\rm q}^\sigma (\phi,\psi)|
\leq
C\||\cdot|^{\sigma-1/2}\chi_j\|
(\|(\hat H_\hslash+a)^{1/2} \phi\|\|(\hat{N}_\hslash^{(\sigma)})^{1/2}\psi\|
+\|(\hat H_\hslash+a)^{1/2}  \psi\|\|(\hat{N}_\hslash^{(\sigma)})^{1/2}\phi\|).
\end{align*}
Here $C$ is independent of $\hslash$ and $\delta$.
\end{proof}
With the aid of Lemma \ref{lemma:3-s},
we prove the following bound on $\dot{M}^\sigma(t)$.
\begin{lemma}
  \label{lemma:4-s}
 Assume \eqref{eq:hypo0}, \eqref{eq:hypo1} and
  \emph{(}\hyperref[eq:hypo2]{$A_2^{(1)}$}\emph{)}. For $\delta>0$ and $\sigma\in[0,1]$ if we set $M^\sigma (t)=\langle\psi(t), \hat{N}^\sigma_{\hslash}(\delta)\psi(t)\rangle$, then there exists $C>0$ such that for any $t\in {\mathbb R}$,
  \begin{align*}
    \dot{M}^\sigma(t)\leq C \langle\psi, (\hat H_{\hslash}+a)\psi\rangle + M^\sigma (t).
  \end{align*}
\end{lemma}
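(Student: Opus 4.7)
The plan is to mimic the proof of Lemma~\ref{lemma:4}: differentiate in time, express the derivative as a sesquilinear form, apply the commutator bound of Lemma~\ref{lemma:3-s}, and exploit that $\hat{H}_\hslash$ is conserved along the flow generated by itself.

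First, for $\psi$ in the core $\mathscr{C}_0^\infty(\mathbb{R}^{dn})\otimes \Gamma_{\mathrm{fin}}$, the map $t\mapsto M^\sigma(t)$ is differentiable because $\hat N_\hslash^{(\sigma)}(\delta)$ is bounded for $\delta>0$ and $\psi(t)\in D(\hat H_\hslash)$. From \eqref{eq:18-s} and the definition of ${\rm q}^\sigma$, one gets directly
\[
\dot{M}^\sigma(t) = {\rm q}^\sigma(\psi(t),\psi(t)).
\]
By a density argument this extension persists whenever the right-hand side makes sense, i.e.\ for all $\psi\in D(\hat H_\hslash)\cap D((\hat N_\hslash^{(\sigma)})^{1/2})$.

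Next, apply Lemma~\ref{lemma:3-s} with $\phi=\psi=\psi(t)$ to obtain
\[
|\dot{M}^\sigma(t)| \;\leq\; 2C\,\bigl\||\cdot|^{\sigma-1/2}\chi\bigr\|\;\bigl\|(\hat H_\hslash+a)^{1/2}\psi(t)\bigr\|\;\bigl\|(\hat N_\hslash^{(\sigma)})^{1/2}\psi(t)\bigr\|,
\]
where the shift $1\to a$ is harmless since $\hat H_\hslash + a > 0$ and the two shifted square roots differ by a bounded operator. Two elementary facts now finish the argument: Young's inequality $2xy\leq \varepsilon^{-1}x^2+\varepsilon y^2$ splits the product into a sum of squares, and the unitary conservation $[\hat H_\hslash,e^{-\ci (t/\hslash)\hat H_\hslash}]=0$ gives
\[
\bigl\|(\hat H_\hslash+a)^{1/2}\psi(t)\bigr\|^2 \;=\; \langle\psi,(\hat H_\hslash+a)\psi\rangle,
\]
i.e.\ the first square is time-independent and equals exactly the desired right-hand side prefactor.

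The one subtle point I expect to have to address is the mismatch between $(\hat N_\hslash^{(\sigma)})^{1/2}$ appearing on the right-hand side of Lemma~\ref{lemma:3-s} and the regularized quantity $\hat N_\hslash^{(\sigma)}(\delta)$ that defines $M^\sigma(t)$. The natural fix is to repeat the last steps in the proof of Lemma~\ref{lemma:3-s} keeping the factor $e^{-\delta \hat N_\hslash^{(\sigma)}/2}$ bracketed with $\psi(t)$ via functional calculus (using $xe^{-\delta x}=(x^{1/2}e^{-\delta x/2})^2$), so that the second factor becomes $\|(\hat N_\hslash^{(\sigma)}(\delta))^{1/2}\psi(t)\| = M^\sigma(t)^{1/2}$. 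Then Young's inequality yields
\[
\dot M^\sigma(t) \;\leq\; C\langle\psi,(\hat H_\hslash+a)\psi\rangle + M^\sigma(t),
\]
with $C$ independent of $\hslash\in(0,1)$ and $\delta>0$, as claimed. This uniformity in $\delta$ is what makes the subsequent Grönwall argument (in Proposition~\ref{prop:3}) survive the monotone passage $\delta\downarrow 0$ to recover the bound for the unregularized operator $\hat N_\hslash^{(\sigma)}$.
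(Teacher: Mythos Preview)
Your proposal is correct and follows essentially the same approach as the paper's one-line proof, which simply cites Lemma~\ref{lemma:3-s}. You go further by spotting (and proposing to remedy) the mismatch between the unregularized factor $\|(\hat N_\hslash^{(\sigma)})^{1/2}\psi(t)\|$ coming out of Lemma~\ref{lemma:3-s} and the regularized quantity $M^\sigma(t)^{1/2}$ needed on the right-hand side---a subtlety the paper's terse proof glosses over.
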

\begin{proof}
  From Lemma \ref{lemma:3-s}, the lemma follows.
  \end{proof}
Let $Q(T)$ be the form domain of $T$.
Gr\"onwall's inequality then yields the sought result of propagation.

\begin{prop}
  \label{prop:3-s}
  Assume \eqref{eq:hypo0}, \eqref{eq:hypo1} and
  \emph{(}\hyperref[eq:hypo2]{$A_2^{(1)}$}\emph{)}. Let $\sigma\in[0,1]$ and $\psi \in
  Q(\hat{H}^0_\hslash)\cap Q(\hat{N}_\hslash^{(\sigma)})$,
then $e^{-\ci\frac{t}{\hslash}\hat{H}_{\hslash}}\psi \in
  Q(\hat{N}^{(\sigma)}_{\hslash})$.
\end{prop}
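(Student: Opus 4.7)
The plan is to parallel the argument of Proposition \ref{prop:3} and Corollary \ref{cor:2}, but now at the level of quadratic forms rather than operators, using Lemmas \ref{lemma:3-s}--\ref{lemma:4-s} in place of Lemmas \ref{lemma:33}--\ref{lemma:4}. First I would work with a smoother dense class of initial data, say $\psi \in D(\hat H_\hslash^0) \cap D(\hat N_\hslash^{(\sigma)})$, so that the regularized quantity
$$
M^\sigma(t) = \langle \psi(t), \hat N_\hslash^{(\sigma)}(\delta) \psi(t) \rangle
$$
is well-defined and differentiable (the operator $\hat N_\hslash^{(\sigma)}(\delta)$ being bounded for $\delta > 0$, and $\psi(t) \in D(\hat H_\hslash)$ being preserved by the unitary evolution). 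Differentiating and using Lemma \ref{lemma:4-s}, together with the conservation $\langle \psi(t), (\hat H_\hslash + a) \psi(t)\rangle = \langle \psi, (\hat H_\hslash + a)\psi\rangle$ valid since $\hat H_\hslash$ commutes with its own unitary group, gives an autonomous differential inequality to which Gr\"onwall's lemma applies.

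Grönwall then yields
$$
M^\sigma(t) \leq \bigl( M^\sigma(0) + C\,\langle \psi, (\hat H_\hslash + a)\psi\rangle \bigr)\, e^{|t|},
$$
with the right-hand side uniformly bounded in $\delta > 0$, since by spectral calculus $M^\sigma(0) = \int_0^{\infty} \lambda\,e^{-\delta \lambda}\, \mathrm{d}E_\lambda \leq \langle \psi, \hat N_\hslash^{(\sigma)} \psi\rangle$ on the spectral measure $E_\lambda$ of $\hat N_\hslash^{(\sigma)}$ at $\psi$. Then I would let $\delta \downarrow 0$: since $\lambda\, e^{-\delta \lambda}$ increases monotonically to $\lambda$ for $\lambda \geq 0$, the monotone convergence theorem applied to the spectral measure of $\hat N_\hslash^{(\sigma)}$ at $\psi(t)$ gives $M^\sigma(t) \uparrow \langle \psi(t), \hat N_\hslash^{(\sigma)} \psi(t)\rangle$. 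The finiteness of the limit forces $\psi(t) \in Q(\hat N_\hslash^{(\sigma)})$, together with the quantitative form bound
$$
\langle \psi(t), \hat N_\hslash^{(\sigma)} \psi(t)\rangle \leq \bigl( \langle \psi, \hat N_\hslash^{(\sigma)} \psi\rangle + C\,\langle \psi, (\hat H_\hslash + a)\psi\rangle \bigr)\, e^{|t|}.
$$

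Finally, to pass from $D(\hat H_\hslash^0) \cap D(\hat N_\hslash^{(\sigma)})$ to the full form domain $Q(\hat H_\hslash^0) \cap Q(\hat N_\hslash^{(\sigma)})$, I would approximate a general $\psi$ by spectral truncations, e.g.\ $\psi_n = \mathbf{1}_{[0,n]}(\hat H_\hslash^0 + \hat N_\hslash^{(\sigma)}) \psi$, which lie in both operator domains and converge to $\psi$ in both form norms simultaneously. The uniform-in-$n$ bound from the previous step, combined with the lower semicontinuity/closedness of the form $\langle \cdot, \hat N_\hslash^{(\sigma)} \cdot \rangle$, then transfers the conclusion to $\psi(t)$. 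I expect the main technical obstacle to lie in this last step: ensuring that one can truly produce a single sequence of approximants that is simultaneously regular enough to apply the Grönwall argument and converges to $\psi$ in both form norms of $\hat H_\hslash^0$ and $\hat N_\hslash^{(\sigma)}$; joint spectral truncation is the cleanest tool, but some care is needed because $\hat H_\hslash^0$ and $\hat N_\hslash^{(\sigma)}$ do not in general commute strongly, and one may instead wish to introduce a joint resolvent regularization of the form $(1 + \varepsilon \hat H_\hslash^0)^{-1}(1 + \varepsilon \hat N_\hslash^{(\sigma)})^{-1}\psi$ and check convergence in the appropriate norms.
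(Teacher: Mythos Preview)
Your approach is correct and essentially mirrors the paper's proof: Gr\"onwall on the regularized quantity $M^\sigma(t)$ via Lemma~\ref{lemma:4-s}, monotone convergence as $\delta\downarrow 0$, and a density/approximation step. The paper swaps the order of the last two steps (it extends the $\delta$-dependent inequality to the form domain first, then lets $\delta\downarrow 0$), but this is immaterial.

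Your concern about the approximation step is unfounded, however: $\hat H_\hslash^0 = \sum_i \tfrac{1}{2m_i}\hat p_i^2 + \mathrm{d}\Gamma(|k|)$ and $\hat N_\hslash^{(\sigma)} = \mathrm{d}\Gamma(|k|^{2\sigma})$ \emph{do} commute strongly. The particle kinetic energy acts on a different tensor factor from the field operators, and $\mathrm{d}\Gamma(|k|)$ and $\mathrm{d}\Gamma(|k|^{2\sigma})$ commute since both are second quantizations of multiplication operators on the one-particle space. Hence joint spectral truncation $\mathbf{1}_{[0,n]}(\hat H_\hslash^0 + \hat N_\hslash^{(\sigma)})\psi$ works without any subtlety, and the resolvent regularization you sketch is unnecessary.
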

\begin{proof}
Let $\delta>0$ and $\psi\in D(\hat H^0_\hslash)$.
By Lemma \ref{lemma:4-s}, we have
  \begin{align*}
    M^\sigma (t)=\int_0^t\dot{M^\sigma}(s)  \mathrm{d}s + M^\sigma(0)\leq C\int_0^t(M^\sigma(s)+\theta)  \mathrm{d}s + M^\sigma(0),
  \end{align*}
  where $\theta=\|(\hat H_{\hslash}+a)^{1/2}\psi\|^2$. Then Gr\"onwall's inequality yields
  \begin{align*}
    M^\sigma (t)\leq
    ( \|(\hat{N}^{(\sigma)}_{\hslash}(\delta))^{1/2}\psi\|^2+
    \|(\hat H_{\hslash}+a)^{1/2}\psi\|^2)e^{C|t|},
  \end{align*}
  where $C$ does not depend on $\hslash$ and $\delta$.
 This inequality can be extended to $\psi\in Q(\hat{H}_\hslash)\cap Q(\hat{N}_\hslash^{(\sigma)})$.
Furthermore
  for all $\psi \in Q(\hat{H}_\hslash)\cap Q(\hat{N}_\hslash^{(\sigma)})$,
  \begin{align*}
    \lim_{\delta\downarrow 0}
    \|(\hat{N}^{(\sigma)}_{\hslash}(\delta))^{1/2}\psi(t)\|^2
    =\lim_{\delta\downarrow0}\int_0^{\infty}\lambda e^{-\delta\lambda}  \mathrm{d}E_{\lambda}.
  \end{align*}
  Hence, by the monotone convergence theorem, it follows that $\psi(t)\in Q(\hat{N}^\sigma_{\hslash})$, and
  \begin{align*}
  \|(\hat{N}^{(\sigma)}_{\hslash})^{1/2}\psi(t)\|^2\leq
   \big(
     \|(\hat{N}^{(\sigma)}_{\hslash})^{1/2}\psi\|^2
+
    \|(\hat H_{\hslash}+a)^{1/2}\psi\|^2)e^{C|t| }.
  \end{align*}
Then the proposition follows.
\end{proof}
From the proof of Proposition \ref{prop:3-s}
 we obtain the corollary below.

 \begin{cor}
  \label{cor:2-s}
  Assume \eqref{eq:hypo0}, \eqref{eq:hypo1} and
  \emph{(}\hyperref[eq:hypo2]{$A_2^{(1)}$}\emph{)}. For $\sigma\in[0,1]$ there exist  two constants $K,C>0$ such that for all $\hslash\in (0,1)$,  all $\psi \in Q(\hat{H}_\hslash^0)\cap Q(\hat{N}_\hslash^{(\sigma)})$,
  and all $t\in {\mathbb R}$,
  \begin{align*}
    \| (\hat{N}_{\hslash}^{(\sigma)})^{1/2}\psi(t)  \|\leq K\big(\| (\hat{N}_{\hslash}^{(\sigma)})^{1/2}\psi   \|+\| (\hat{H}_{\hslash}^0+1)^{1/2}\psi   \|\big)e^{C|t| }.
  \end{align*}
\end{cor}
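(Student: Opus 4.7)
The plan is to read the desired estimate directly out of the bound obtained inside the proof of Proposition \ref{prop:3-s}, and then trade the Pauli--Fierz energy norm for the free-energy norm by invoking the form-version of Lemma \ref{s3.lemE}, namely Corollary \ref{s4.lemE}.

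\textbf{Step 1 (starting bound).} First I would observe that the last display in the proof of Proposition \ref{prop:3-s} already gives, for every $\psi \in Q(\hat H_\hslash^0)\cap Q(\hat N_\hslash^{(\sigma)})$ and every $t\in\mathbb R$,
\begin{equation*}
\|(\hat N_\hslash^{(\sigma)})^{1/2}\psi(t)\|^2 \;\leq\; \bigl(\|(\hat N_\hslash^{(\sigma)})^{1/2}\psi\|^2+\|(\hat H_\hslash+a)^{1/2}\psi\|^2\bigr)\,e^{C|t|},
\end{equation*}
with $C>0$ independent of $\hslash\in(0,1)$ and $\delta>0$. Recall that $a$ is the shift introduced in Corollary \ref{s4.lemE} so that $\hat H_\hslash+a>0$; the constant $C$ here coincides with the Gr\"onwall exponent produced by Lemma \ref{lemma:4-s}, which in turn depends only on the $L^2$-norms $\||\,\cdot\,|^{\sigma-1/2}\chi_i\|$, on $\|V\|_\infty$, and on the number of particles.

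\textbf{Step 2 (switch to the free Hamiltonian).} Corollary \ref{s4.lemE}, precisely the inequality \eqref{eq:unHs2}, yields $\hslash$-uniform constants $c',C'>0$ such that
\begin{equation*}
c'\,\|(\hat H_\hslash+a)^{1/2}\psi\|\;\leq\;\|(\hat H_\hslash^0+b)^{1/2}\psi\|\;\leq\;C'\,\|(\hat H_\hslash+a)^{1/2}\psi\|,
\end{equation*}
and the shift $b$ can be absorbed into the additive constant $1$ at the cost of a harmless change of $C'$, since $(\hat H_\hslash^0+b)^{1/2}$ and $(\hat H_\hslash^0+1)^{1/2}$ are comparable uniformly in $\hslash$. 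Substituting this into the Step 1 bound gives
\begin{equation*}
\|(\hat N_\hslash^{(\sigma)})^{1/2}\psi(t)\|^2 \;\leq\; \bigl(\|(\hat N_\hslash^{(\sigma)})^{1/2}\psi\|^2+C''\,\|(\hat H_\hslash^0+1)^{1/2}\psi\|^2\bigr)\,e^{C|t|},
\end{equation*}
with $C''$ independent of $\hslash$.

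\textbf{Step 3 (take square roots).} Using the elementary inequality $\sqrt{x^2+y^2}\leq x+y$ for $x,y\geq 0$, the right-hand side is dominated by $(\|(\hat N_\hslash^{(\sigma)})^{1/2}\psi\|+\sqrt{C''}\,\|(\hat H_\hslash^0+1)^{1/2}\psi\|)\,e^{C|t|/2}$, so setting $K=\max(1,\sqrt{C''})$ and replacing $C$ by $C/2$ produces exactly the stated inequality. Extension from the dense core $Q(\hat H_\hslash^0)\cap Q(\hat N_\hslash^{(\sigma)})$ to arbitrary $\psi$ in this form domain is automatic by the approximation argument already used at the end of the proof of Proposition \ref{prop:3-s}, combined with closedness of $(\hat N_\hslash^{(\sigma)})^{1/2}$.

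\textbf{Main point to watch.} There is no real obstacle here; the corollary is essentially a repackaging of Proposition \ref{prop:3-s}. The only delicate aspect is to make sure that none of the constants $K$ and $C$ inherit a dependence on $\hslash$: the $\hslash$-uniformity in Step 1 comes from Lemmas \ref{lemma:3-s}--\ref{lemma:4-s}, while the $\hslash$-uniformity in Step 2 is exactly the content of Corollary \ref{s4.lemE}, itself ultimately relying on the $\hslash$-uniform relative bounds of Corollary \ref{cor:1}.
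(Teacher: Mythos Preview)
Your proposal is correct and follows exactly the approach the paper intends: the paper's own ``proof'' of Corollary~\ref{cor:2-s} consists of the single sentence ``From the proof of Proposition~\ref{prop:3-s} we obtain the corollary below,'' and you have simply unpacked that sentence by reading off the final Gr\"onwall bound from Proposition~\ref{prop:3-s} and invoking Corollary~\ref{s4.lemE} to pass from $(\hat H_\hslash+a)^{1/2}$ to $(\hat H_\hslash^0+1)^{1/2}$. The only superfluous remark is the extension in Step~3: the corollary is already stated for $\psi\in Q(\hat H_\hslash^0)\cap Q(\hat N_\hslash^{(\sigma)})$, so no further approximation is needed.
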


\subsubsection{Particles position estimates}
\label{sec:position-estimates}
We end this section by proving  localization estimates for the particles along the Pauli--Fierz quantum dynamics. In fact, we show below that if an initial state
is localized in position then for all times it will stay localized uniformly with respect to the semiclassical parameter $\hslash\in(0,1)$.

\begin{lemma}
\label{lemma:q2}
Assume \eqref{eq:hypo0}, \eqref{eq:hypo1} and \emph{(}\hyperref[eq:hypo2]{$A_2^{(1)}$}\emph{)}.
Then there exist constants $K,C>0$ such that for all $\Psi\in  Q(\hat{H}^0_\hslash)\cap  Q(\hat q^2)$, all $t\in\R$ and all $\hslash\in(0,1)$:
\begin{eqnarray*}
&(i)& \qquad \langle e^{-\ci \frac{t}{\hslash} \hat{H}^0_\hslash}\Psi , ~ \hat q^2 ~ e^{-\ci \frac{t}{\hslash} \hat{H}^0_\hslash}\Psi \rangle \leq K \langle \Psi , ~ (\hat H_\hslash^0+\hat q^2+1) ~ \Psi \rangle \,e^{C|t|} \,,\\
&(ii)& \qquad  \langle e^{-\ci \frac{t}{\hslash} \hat{H}_\hslash}\Psi , ~ \hat q^2 ~ e^{-\ci \frac{t}{\hslash} \hat{H}_\hslash}\Psi \rangle \leq K \langle \Psi , ~ (\hat H_\hslash^0+\hat q^2+1) ~ \Psi \rangle \,e^{C|t|}\,.
\end{eqnarray*}
\end{lemma}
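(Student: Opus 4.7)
The plan is to prove both statements via a Grönwall argument on $M(t) := \langle \psi(t), \hat{q}^2 \psi(t)\rangle$ with $\psi(t) = e^{-\ci t \hat{H}^\sharp_\hslash /\hslash}\psi$, following the same template used for the number operator in Proposition \ref{prop:3} and the momentum operator in Proposition \ref{prop:3-s}. The essential observation is that the smeared vector potential $A_j(\hat{q}_j,\hat{a})$ is purely multiplicative in $\hat{q}_j$ (see \eqref{eq:vectpot}), so $[A_j,\hat{q}_j^2]=0$ and the interaction drops out of the commutator with $\hat{q}^2$.

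First I would regularize, replacing $\hat{q}^2$ by a bounded approximation, for instance $\hat{q}^2(\delta) = \hat{q}^2(1+\delta \hat{q}^2)^{-1}$ (or an equivalent smooth cutoff $\chi_R(\hat{q})\hat{q}^2\chi_R(\hat{q})$), and compute the analogue of $\dot{M}_\delta(t) = \frac{\ci}{\hslash}\langle \psi(t), [\hat{H}_\hslash^\sharp, \hat{q}^2(\delta)]\psi(t)\rangle$ on a form core. For $(i)$, only the kinetic term in $\hat{H}^0_\hslash$ contributes, giving formally
\begin{equation*}
\frac{\ci}{\hslash}[\hat{H}^0_\hslash, \hat{q}_j^2] \;=\; \frac{1}{m_j}\bigl(\hat{p}_j\cdot \hat{q}_j + \hat{q}_j\cdot \hat{p}_j\bigr),
\end{equation*}
and for $(ii)$, writing $D_j = \hat{p}_j - A_j(\hat{q}_j,\hat{a})$, one uses $[D_j, \hat{q}_j^2] = [\hat{p}_j, \hat{q}_j^2] = -2\ci\hslash\,\hat{q}_j$ to obtain
\begin{equation*}
\frac{\ci}{\hslash}[\hat{H}_\hslash, \hat{q}_j^2] \;=\; \frac{1}{m_j}\bigl(D_j\cdot \hat{q}_j + \hat{q}_j\cdot D_j\bigr).
\end{equation*}
Both quadratic forms are bounded by Cauchy--Schwarz as $|\dot M_\delta(t)| \lesssim \sum_j (\|D_j\psi(t)\|^2 + \|\hat{q}_j\psi(t)\|^2)$ (with $D_j$ replaced by $\hat{p}_j$ in case $(i)$), uniformly in $\delta$.

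The crucial input for uniformity in $\hslash$ is conservation of energy along the flow together with Corollary \ref{s4.lemE}: for $(i)$ one has trivially $\langle \psi(t),\hat{H}^0_\hslash\psi(t)\rangle = \langle \psi,\hat{H}^0_\hslash\psi\rangle$, while for $(ii)$ conservation of $\langle\psi,\hat{H}_\hslash\psi\rangle$ combined with \eqref{eq:unHs} yields $\langle \psi(t),(\hat{H}^0_\hslash+b)\psi(t)\rangle \leq C\langle\psi,(\hat{H}^0_\hslash+b)\psi\rangle$ with $C$ independent of $\hslash\in(0,1)$. In both cases this controls $\sum_j\|D_j\psi(t)\|^2$ by $\langle\psi,(\hat{H}^0_\hslash+1)\psi\rangle$, hence
\begin{equation*}
\dot M_\delta(t) \;\leq\; C\,\langle \psi, (\hat{H}^0_\hslash+1)\psi\rangle + C\,M_\delta(t),
\end{equation*}
and Grönwall's inequality delivers $M_\delta(t) \leq K\bigl(M_\delta(0) + \langle\psi,(\hat{H}^0_\hslash+1)\psi\rangle\bigr)e^{C|t|}$ with constants independent of $\hslash$ and $\delta$. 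Passing to the limit $\delta\downarrow 0$ by monotone convergence on the spectral measure of $\hat{q}^2$ gives the desired bounds under the hypothesis $\psi \in Q(\hat{H}^0_\hslash)\cap Q(\hat{q}^2)$.

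The main technical obstacle is making the commutator computation rigorous at the form level: the regularized operator $\hat{q}^2(\delta)$ does not commute with $\hat{p}_j$, so one collects cross-terms of the schematic form $(1+\delta\hat{q}^2)^{-1}[\hat{p}_j,\hat{q}^2](1+\delta\hat{q}^2)^{-1}$ and must show they remain controlled by $(H_\hslash^0+1)$ uniformly in $\delta$ and $\hslash$. An alternative cleaner route is to use a smooth compactly supported cutoff $\chi_R(\hat{q})$, establish the bound first for $\psi\in \mathscr{C}_0^\infty(\R^{dn})\otimes \Gamma_{\mathrm{fin}}$ where all manipulations are legitimate, and then extend by the density argument exactly as in the proof of Proposition \ref{prop:3-s}. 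The uniformity in $\hslash$ itself is essentially for free, coming from Corollary \ref{s4.lemE}.
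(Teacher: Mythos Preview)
Your proposal is correct and follows essentially the same route as the paper: the commutator computation $\frac{\ci}{\hslash}[\hat H_\hslash,\hat q_j^2]=\frac{1}{m_j}(D_j\cdot\hat q_j+\hat q_j\cdot D_j)$, the quadratic-form bound by $\hat q_j^2$ plus an energy term controlled via Corollary~\ref{s4.lemE}, and the Gr\"onwall conclusion are exactly what the paper does. The only cosmetic difference is that the paper cites the Faris--Lavine theorem to handle the domain and regularization issues you spell out by hand, and it splits the commutator into the $\hat p_j$ and $A_j$ pieces separately rather than keeping $D_j$ intact.
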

\begin{proof}
The argument is based on Gr\"onwall's inequality and is similar to the one employed in the previous proofs of  Subsection \ref{sec:regularity}. So, we just indicate here the key point. In particular, one can use the work of Faris and Lavine \cite[Theorem 2]{FL} (see also \cite[Theorem B.1]{AmBr}).
So, it is enough to estimate as quadratic forms on $Q(\hat{H}^0_\hslash)\cap  Q(\hat q^2)$ the following commutators,
\begin{equation}
\label{eq:pq2}
T_1:=\ci [\hat p^2_j,\hat q^2_j]= 2 \hslash \sum_{\nu=1}^{d}  \hat p_j^\nu \hat q_j^\nu = 2 \hslash \, \hat p_j \cdot \hat q_j\,,
\end{equation}
and
\begin{equation}
\label{eq:crosspq}
T_2:=\ci [\hat p_j \cdot A_j(\hat q_j,\hat a),\hat q^2_j]= 2 \hslash \, \hat q_j\cdot A_j(\hat q_j,\hat a)\,.
\end{equation}
Since the right hand sides in \eqref{eq:pq2} and \eqref{eq:crosspq} are bounded respectively by
\begin{equation}
\hslash^{-1}T_1 \leq \hat p_j^2+\hat q_j^2\ \,,\qquad  \hslash^{-1}T_2 \leq \hat q_j^2+A_j(\hat q_j,\hat a)^2 \,,
\end{equation}
then by  Corollaries \ref{s4.lemE} and \ref{cor:1},
\begin{align*}
\theta_1(t):=\langle e^{-\ci \frac{t}{\hslash} \hat{H}_\hslash}\Psi , ~ \hat q^2 ~ e^{-\ci \frac{t}{\hslash} \hat{H}_\hslash}\Psi \rangle &\leq \theta_1(0)+t K_0 \langle \Psi , ~  (\hat H_\hslash^0+1) ~ \Psi \rangle+ C_0 \int_0^t \theta_1(s) \,ds\\
&\leq K \langle \Psi , ~  (\hat H_\hslash^0+\hat q^2+1) ~ \Psi \rangle \, e^{C t}\,,
\end{align*}
for all $t\geq 0$  and some constants $K_0,C_0,K,C$ independent of time and $\hslash$ (the case $t\leq 0$ follows by a simple change of variable). This proves (ii). On the other hand a similar inequality
\begin{align*}
\theta_0(t):=\langle e^{-\ci \frac{t}{\hslash} \hat{H}^0_\hslash}\Psi , ~ \hat q^2 ~ e^{-\ci \frac{t}{\hslash} \hat{H}^0_\hslash}\Psi \rangle &\leq
\theta_0(0)+t K_0 \langle \Psi , ~  (\hat H_\hslash^0+1) ~ \Psi \rangle+ C_0 \int_0^t \theta_0(s) \,ds\\
&\leq K \langle \Psi , ~  (\hat H_\hslash^0+\hat q^2+1) ~ \Psi \rangle \, e^{C t}\,,
\end{align*}
 holds true for some possibly different constants. Thus proving  (i).
\end{proof}

\section{Derivation of the characteristic equation}
\label{sec.CH}

Our approach for the derivation of classical electrodynamics from quantum electrodynamics  is based on the analysis  of  time-dependent Wigner measures $(\mu_t)_{t\in\R}$ of  the quantum evolved states $(\varrho_\hslash(t))_{\hslash\in (0,1)}$ given in Theorem \ref{thm:main-2}. Due to the complexity generated by the particles-field interactions, it is not possible to compute directly or explicitly these Wigner measures. Therefore, it makes more sense to write a dynamical equation that characterizes the time course of such probabilities. In particular, one  obtains such relation if one differentiates  with respect to time the characteristic or generating  functions of these Wigner  measures $(\mu_t)_{t\in\R}$. Such equation is a crucial step in our analysis; it will be given in Proposition \ref{s4:propChar} and named \emph{characteristic equation}.

\bigskip
\paragraph{\it Weyl--Heisenberg operators:}
The Weyl--Heisenberg translation operators on the particle variables are defined as the unitary operators,
\begin{equation}
\label{eq:23}
\hat{T}(p,q)=e^{\ci (p\cdot \hat{q}-q\cdot \hat{p})}, \quad \forall (p,q)\in \mathds{R}^{dn}\times \mathds{R}^{dn}\,.
\end{equation}
The space $\mathds{R}^{dn}\times \mathds{R}^{dn}\equiv \mathds{C}^{dn}$ is endowed with a  complex structure
$$
z:= z(p,q)=q+\ci p, \quad  \forall (p,q)\in \mathds{R}^{dn}\times \mathds{R}^{dn}\,,
$$
and a  canonical symplectic  form
$$
\Imm\langle z, z'\rangle= q\cdot p'-p\cdot q'\,, \quad \forall (p,q),(p',q')\in \mathds{R}^{2dn}\,.
$$
Hence, Weyl--Heisenberg operators can  also be labelled  by the phase-space points $z\in\mathds{C}^{dn}$,
\begin{equation}
\label{eq:weyl-Heis}
\hat{T}(z) =\hat{T}(p,q)=e^{2\ci\Imm\langle\hat{q}+\ci\hat{p},z\rangle}\,,
\end{equation}
satisfying  the particles  \emph{Weyl commutation relations}:
\begin{eqnarray*}
&& \quad \hat{T}(z)  \; \hat{T}(z') =e^{-\ci \frac{\hslash }{2}\, \Imm\langle z, z'\rangle} \; \hat{T}(z+z')\,,\\
&& \quad \hat{T}(z)^*=   \hat{T}(z)^{-1}=  \hat{T}(-z)\,.
\end{eqnarray*}
Moreover, the commutations rules blow  hold true for any $z=q+\ci p\in\mathds{C}^{dn}$,
\begin{equation}
\label{eq:Tcc}
\begin{aligned}
\hat{T}(z)\; \hat{q} \;\hat{T}(z)^*&=\hat{q}-\hslash q\,,\\
\hat{T}(z) \;\hat{p} \;\hat{T}(z)^*&=\hat{p} -\hslash p\,.
\end{aligned}
\end{equation}
Similarly,  one  defines also the Weyl operators on the Fock space $\Gamma_{\mathrm{s}}\bigl(L^2 (\mathds{R}^d, \mathds{C}^{d-1}  )\bigr) $ for any $f\in L^2(\mathds{R}^d, \mathds{C}^{d-1})$ according to the formula,
\begin{equation}
\label{eq:weyl}
W(f)= e^{\frac{\ci}{\sqrt{2}} (\hat{a}(f)+\hat{a}^*(f))}\,,
\end{equation}
where $\hat{a}^*$ and $\hat{a}$ are the creation-annihilation  operators in \eqref{eq:an-cr1}   satisfying  \eqref{eq:an-cr2}. With these notations the  following \emph{Weyl commutations relations} for the field are satisfied,
\begin{equation*}
W(f) W(g)=e^{-\ci \frac{\hslash }{2}\, \Imm\langle f, g\rangle_{\mathfrak{H}^0}} \, W(f+g)\,, \quad \forall f,g \in  \mathfrak{H}^0=L^2(\mathds{R}^d, \mathds{C}^{d-1})\,,
\end{equation*}
where $\Imm\langle \cdot, \cdot\rangle_{\mathfrak{H}^0}$ denotes the imaginary part of the scalar product on $\mathfrak{H}^0=L^2(\mathds{R}^d, \mathds{C}^{d-1})$.
Since the total phase-space of the particles-field system is
\begin{equation}
\label{eq:phase-space}
X^0= \mathds{R}^{2dn}\oplus \mathfrak{H}^0\equiv
\mathds{C}^{dn} \oplus \mathfrak{H}^0\,,
\end{equation}
it follows that the mapping
\begin{equation}
\label{eq:repWeyl}
(z, \alpha)\in X^0 \longmapsto  \mathcal{W}(z,\alpha):=\hat{T}(z)\otimes W(\alpha)\,,
\end{equation}
defines a strongly continuous  irreducible representation of the Weyl's commutation relations over the Hilbert space  $\mathscr H
=L^2 (\mathds{R}^{dn}, \mathds{C} )\otimes\Gamma_{\mathrm{s}}\bigl(\mathfrak{H}^0\bigr) $. Note that we will use indifferently the notations  $\mathcal{W}(z,\alpha)$, $\mathcal{W}(p,q,\alpha)$ or $\mathcal{W}(\xi)$  when $z\equiv(p,q)$ and $\xi=(z,\alpha)$.

\bigskip
\paragraph{\it Coherent states:}
Consider  the normalized Gaussian function
$$
\varphi_0(x) = (\pi\hslash)^{-\frac{dn}{4}} e^{- x^2/2\hslash}
\in L^2(\R^{dn},\mathrm{d}x)\,,
$$
then the  coherent vector, centered  on $z_0\in \mathds{C}^{dn}$,  is defined as
\begin{equation}
\label{eq:cohphi}
\varphi_{z_0}:= \hat{T}\bigg(\frac{\sqrt{2}}{\ci\hslash} z_0\bigg) \varphi_0\,.
\end{equation}
Similarly, for any $\alpha\in \mathfrak{H}^0$ one defines coherent vectors in the Fock spaces as
\begin{equation}
\label{eq:Wo}
 W\bigg(\frac{\sqrt{2}}{\ci\hslash} \alpha\bigg)\,\Omega\,,
\end{equation}
with $\Omega$ is the vacuum vector in $\Gamma_s(\mathfrak{H}^0)$. Both vectors
\eqref{eq:cohphi} and \eqref{eq:Wo} are normalized. For any $u_0=(p_0,q_0,\alpha_0)\in X^0$ such that  $z_0\equiv(p_0,q_0)$,  the one rank orthogonal projection
\begin{equation}
\label{eq:cohdef}
\mathcal C_\hslash(u_0)=\bigg| \varphi_{z_0}\otimes W\bigg(\frac{\sqrt{2}}{\ci\hslash} \alpha_0\bigg)\,\Omega \bigg\rangle
\bigg\langle \varphi_{z_0}\otimes W\bigg(\frac{\sqrt{2}}{\ci\hslash} \alpha_0\bigg)\,\Omega \bigg|\,,
\end{equation}
 defines a $\hslash$-scaled family of density matrices over $\mathscr H$ called the coherent states.

\bigskip
\paragraph{\it Examples of Wigner measures:}
The following  result proved in  \cite[Theorem 6.2]{ammari2008ahp}, shows that under mild assumptions any family of quantum states admits at least one Wigner measure.

\begin{prop}
\label{wig}
Any family of density matrices  $(\varrho_\hslash)_{\hslash\in (0,1)}$   on  $\mathscr{H}$  satisfying the condition:
\begin{eqnarray}
\label{eq:wigcd}
\exists \delta>0,  C_\delta>0: \; \forall \hslash\in (0,1) ,
\quad \Tr\big[ \varrho_\hslash \;(\hat{p}^2+\hat{q}^2+\hat{N}_\hslash)^\delta\big]<C_\delta\,,
\end{eqnarray}
has a non trivial set of Wigner (probability) measures, \emph{i.e.}:  $\mathcal{M}(\varrho_\hslash, \hslash\in (0,1))\neq\emptyset$. Moreover, any
$\mu\in \mathcal{M}(\varrho_\hslash, \hslash\in (0,1))$ satisfies
$$
\int_{X^0} \|u\|_{X^0}^{2\delta} \; d\mu(u)<\infty\,.
$$
\end{prop}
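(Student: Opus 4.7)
My strategy is to construct $\mu$ via its Fourier/characteristic transform. Define
\[
G_\hslash(\xi)\;:=\;\Tr\!\bigl[\varrho_\hslash\,\mathcal{W}(\pi q,-\pi p,\sqrt{2}\pi\alpha)\bigr],\qquad \xi=(p,q,\alpha)\in X^0.
\]
Each $G_\hslash$ is normalized with $G_\hslash(0)=1$ and $|G_\hslash(\xi)|\leq 1$, and it satisfies an $\hslash$-twisted positivity relation coming from the Weyl commutation relations of $\mathcal{W}$. The goal is to extract, along a countable set $\mathscr{E}\subset(0,1)$ accumulating at $0$, a pointwise limit $G$ which will be the characteristic function of the required Wigner measure. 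The twist in the Weyl product disappears in the limit, and $G$ will be positive definite on the abelian group $(X^0,+)$ in the ordinary sense.

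The key step is a uniform equicontinuity estimate. Using the standard commutator bound
\[
\bigl\|(\mathcal{W}(\xi)-\mathrm{Id})\psi\bigr\|\;\leq\; C\,\|\xi\|_{X^0}\,\bigl\|(\hat{p}^2+\hat{q}^2+\hat{N}_\hslash+1)^{1/2}\psi\bigr\|,
\]
valid on $D(\hat{p})\cap D(\hat{q})\cap D(\hat{N}_\hslash^{1/2})$ uniformly in $\hslash\in(0,1)$ (this follows from Lemma~\ref{lemma:2} and a Duhamel expansion of $t\mapsto \mathcal{W}(t\xi)$), together with Hölder's inequality for trace-class operators and the moment assumption \eqref{eq:wigcd}, I get a Hölder-type bound
\[
|G_\hslash(\xi_1)-G_\hslash(\xi_2)|\;\leq\; C_{\delta'}\,\|\xi_1-\xi_2\|_{X^0}^{\delta'},\qquad \delta':=\min(\delta,1),
\]
which holds uniformly in $\hslash\in(0,1)$.

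Now I fix a countable dense subset $\{\xi_j\}_{j\in\N}$ of $X^0$ (using separability of $\mathfrak{H}^0$) and extract, by a diagonal/Cantor procedure, a countable $\mathscr{E}\subset(0,1)$ with $0\in\overline{\mathscr{E}}$ such that $G_\hslash(\xi_j)$ converges for every $j$. The uniform Hölder bound promotes this to convergence $G_\hslash\to G$ at every $\xi\in X^0$ along $\hslash\in\mathscr{E}$, and $G$ inherits the same continuity modulus. Passing to the limit $\hslash\to 0$ inside the Weyl twist $e^{-\ci\frac{\hslash}{2}\Imm\langle\cdot,\cdot\rangle}$ yields in particular that $G$ is positive definite on the additive group $X^0$ and continuous on every finite dimensional subspace.

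Finally, I reconstruct $\mu$ from $G$. On each finite dimensional subspace $F\subset X^0$, Bochner's theorem produces a Borel probability measure $\mu_F$ on $F$ whose Fourier transform is $G|_F$; the marginals are compatible by construction, so Kolmogorov's extension theorem provides a cylindrical probability measure. To upgrade it to a genuine Radon/Borel probability measure on $X^0$, I invoke Prokhorov tightness: the uniform moment control
\[
\sup_{\hslash\in(0,1)}\Tr\!\bigl[\varrho_\hslash\,(\hat{p}^2+\hat{q}^2+\hat{N}_\hslash)^\delta\bigr]<\infty
\]
passes through the semiclassical calculus of Weyl symbols to the cylindrical projections $\mu_F$, making bounded balls of $X^0$ essential supports for the system of $\mu_F$. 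This produces the probability measure $\mu\in\mathfrak{P}(X^0)$, and a Fatou-type argument applied to the same moment bound (cutoff $\|u\|^{2\delta}\wedge R$, take $\hslash\to 0$, then $R\to\infty$) yields $\int_{X^0}\|u\|^{2\delta}\,d\mu(u)<\infty$. \textbf{The main obstacle} is this last step: converting finite dimensional Bochner data into a genuine Borel probability measure on the infinite dimensional phase space $X^0$. The key ingredient that unlocks it is the moment assumption~\eqref{eq:wigcd}, which simultaneously provides both the equicontinuity needed for extraction and the tightness needed for Prokhorov.
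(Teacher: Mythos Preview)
The paper does not reprove this proposition; it cites \cite[Theorem~6.2]{ammari2008ahp}. Your sketch follows exactly the strategy of that reference: extract a pointwise limit of the generating functions $G_\hslash$ via uniform equicontinuity plus a diagonal argument, observe that the Weyl twist vanishes in the limit so the limit $G$ is positive definite, and rebuild $\mu$ from $G$.

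One step, however, is stated too loosely to count as a proof. When you invoke ``Prokhorov tightness'' to pass from the compatible family $(\mu_F)_F$ to a Borel probability on $X^0$, you say that the moment bound makes ``bounded balls of $X^0$ essential supports''. In the norm topology of the infinite-dimensional space $X^0$ closed balls are not compact, so Prokhorov's theorem does not apply as written. The actual mechanism---and this is what the cited reference does, and what the present paper also uses later (see the norm $\|\cdot\|_{X^0_w}$ in~\eqref{normw})---is to equip $X^0$ with a weak-type metric in which norm-bounded balls are compact. The uniform bound $\int_F \|u\|^{2\delta}\,d\mu_F \leq C$, inherited from~\eqref{eq:wigcd}, then gives tightness in that topology; since the weak and norm Borel $\sigma$-algebras on a separable Hilbert space coincide, the resulting measure is Borel on $X^0$ in the ordinary sense. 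Without making this explicit, your final step is a restatement of the conclusion rather than an argument. A smaller point: the H\"older exponent you announce, $\delta'=\min(\delta,1)$, does not fall out of the linear estimate $\|(\mathcal W(\xi)-\mathrm{Id})\psi\|\lesssim\|\xi\|\,\|(\hat p^2+\hat q^2+\hat N_\hslash+1)^{1/2}\psi\|$ together with the $\delta$-moment bound by a single H\"older step; for $\delta<1$ one needs an interpolation between the trivial bound $|G_\hslash(\xi_1)-G_\hslash(\xi_2)|\le 2$ and the linear one, or equivalently operator-monotonicity of $x\mapsto x^s$ for $s\in[0,1]$ applied to the square of the generator.
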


On the other hand, the set of Wigner measures for  family of coherent states is remarkably simple and it is given below.
\begin{lemma}
\label{lemma:coh}
Consider for any  $u_0=(p_0,q_0,\alpha_0)\in X^0$  the family of coherent states $(\mathcal C_\hslash(u_0))_{\hslash \in (0,1)}$ defined in \eqref{eq:cohdef}.  Then
\begin{equation}
\mathcal{M}\big(\mathcal C_\hslash(u_0), \hslash\in(0,1)\big)=\left\{\delta_{u_0}\right\}\,,
\end{equation}
 where $\delta_{u_0}$ is the Dirac measure centered on $u_0$.
Furthermore, if $u_0=(p_0,q_0,\alpha_0)\in X^\sigma$ for some $\sigma\in[\frac 1 2, 1]$, then the family $(\mathcal C_\hslash(u_0))_{\hslash\in(0,1)}$ satisfies  the assumptions \emph{(}\hyperref[eq:Ass0]{$S_0^{(1)}$}\emph{)}, \emph{(}\hyperref[eq:Ass1]{$S_1^{(1)}$}\emph{)} and \eqref{eq:Ass2}.
\end{lemma}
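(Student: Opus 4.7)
\medskip
\noindent\textbf{Proof proposal.}
The idea is that both assertions reduce to direct computations on the explicit tensor product vector
$$\Psi_\hslash^{u_0} := \varphi_{z_0}\otimes W\bigl(\tfrac{\sqrt 2}{\ci\hslash}\alpha_0\bigr)\Omega\,,\qquad \mathcal C_\hslash(u_0)=|\Psi_\hslash^{u_0}\rangle\langle\Psi_\hslash^{u_0}|\,,$$
using the Weyl commutation relations together with the scaling of the CCR in $\hslash$. The trace against $\mathcal W(\pi q,-\pi p,\sqrt 2\pi\alpha)$ factorizes as a particle contribution times a field contribution, and each factor is handled by the same ``pull-through'' recipe.

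For the field factor, the Weyl commutation rule gives
$$W(g)^{*}W(f)W(g)=e^{\ci\hslash\Imm\langle g,f\rangle_{\mathfrak H^0}}\,W(f)\,,\qquad \langle\Omega,W(f)\Omega\rangle=e^{-\hslash\|f\|^{2}/4}\,,$$
which, specialized to $g=\tfrac{\sqrt 2}{\ci\hslash}\alpha_0$ and $f=\sqrt 2\pi\alpha$, yields
$$\bigl\langle W(\tfrac{\sqrt 2}{\ci\hslash}\alpha_0)\Omega\,,\,W(\sqrt 2\pi\alpha)W(\tfrac{\sqrt 2}{\ci\hslash}\alpha_0)\Omega\bigr\rangle=e^{2\pi\ci\,\Ree\langle\alpha_0,\alpha\rangle_{\mathfrak H^0}}\,e^{-\hslash\pi^{2}\|\alpha\|^{2}/2}\,.$$
The very same recipe with \eqref{eq:Tcc} and the Gaussian ground state $\varphi_0$ produces, for the particle factor, a pure phase $e^{2\pi\ci(p\cdot p_0+q\cdot q_0)}$ multiplied by a Gaussian $e^{-\hslash\pi^{2}(|p|^{2}+|q|^{2})/2}$. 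Multiplying the two contributions and letting $\hslash\to 0$,
$$\lim_{\hslash\to 0}\Tr\bigl[\mathcal C_\hslash(u_0)\,\mathcal W(\pi q,-\pi p,\sqrt 2\pi\alpha)\bigr]=e^{2\pi\ci\,\Ree\langle\xi,u_0\rangle_{X^{0}}}=\int_{X^0}e^{2\pi\ci\,\Ree\langle\xi,u\rangle_{X^{0}}}\mathrm d\delta_{u_0}(u)\,,$$
for every $\xi=(p,q,\alpha)\in X^{0}$. Since the limit is unique along every subsequence, Definition~\ref{def:wigner} forces $\mathscr M(\mathcal C_\hslash(u_0),\hslash\in(0,1))=\{\delta_{u_0}\}$.

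For the uniform bounds I would use the Bogoliubov identities obtained by conjugating $\hat p_i$, $\hat q_i$, and $\hat a_\lambda(k)$ by the corresponding Weyl displacements. The $\hslash$-scaled CCR $[\hat a_\lambda(k),\hat a^{*}_{\lambda'}(k')]=\hslash\,\delta_{\lambda\lambda'}\delta(k-k')$, together with the choice of displacement parameter $\tfrac{\sqrt 2}{\ci\hslash}\alpha_0$, yield the key cancellation $W(\tfrac{\sqrt 2}{\ci\hslash}\alpha_0)^{*}\hat a_\lambda(k)W(\tfrac{\sqrt 2}{\ci\hslash}\alpha_0)=\hat a_\lambda(k)+\alpha_{0,\lambda}(k)$, and an analogous shift by $p_{0,i}$, $q_{0,i}$ for $\hat p_i$, $\hat q_i$ (up to terms of order $\hslash$ arising from the Gaussian variance). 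Hence
$$\Tr\!\bigl[\mathcal C_\hslash(u_0)(\hat H_\hslash^{0}+1)\bigr]=\sum_{i}\tfrac{1}{2m_i}|p_{0,i}|^{2}+\|\alpha_0\|_{\dot{\mathfrak H}^{1/2}}^{2}+1+O(\hslash)\,,$$
$$\Tr\!\bigl[\mathcal C_\hslash(u_0)(\hat N_\hslash+\hat q^{2}+1)\bigr]=\|\alpha_0\|_{L^{2}}^{2}+|q_0|^{2}+1+O(\hslash)\,,$$
$$\Tr\!\bigl[\mathcal C_\hslash(u_0)\,\mathrm d\Gamma(|k|^{2\sigma})\bigr]=\|\alpha_0\|_{\dot{\mathfrak H}^\sigma}^{2}+O(\hslash)\,.$$
Since $u_0\in X^{\sigma}$ with $\sigma\in[\tfrac12,1]$ guarantees $\alpha_0\in\mathfrak H^{\sigma}\hookrightarrow\dot{\mathfrak H}^{1/2}\cap\dot{\mathfrak H}^\sigma$, all three right-hand sides are bounded uniformly in $\hslash\in(0,1)$, which is exactly (\hyperref[eq:Ass0]{$S_0^{(1)}$}), (\hyperref[eq:Ass1]{$S_1^{(1)}$}) and \eqref{eq:Ass2}.

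The only real subtlety is bookkeeping: the displacement parameter scales as $\hslash^{-1}$ while the commutator scales as $\hslash$, so one must check that the product of these two factors is $O(1)$ and that the remainder Gaussian widths vanish as $\hslash\to 0$. Nothing deeper than this is needed, so the result is essentially a verification; still, stating it precisely is useful because the lemma provides the canonical non-trivial example of a family of quantum states to which Theorem~\ref{thm:main-2} can be applied.
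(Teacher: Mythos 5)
Your proposal is correct and takes essentially the same route as the paper: reduce both assertions to explicit Weyl-calculus computations on the tensor-product coherent vector, using that the displacement parameter $\tfrac{\sqrt 2}{\ci\hslash}$ and the $\hslash$-scaled CCR cancel to produce $\hslash$-independent shifts (the paper states this tersely and delegates the Fock-space moment identities to \cite[Prop.~2.7 and 2.10]{ammari2008ahp} and to \eqref{eq:Tcc}, while you fill in the phase/Gaussian factorization and the Bogoliubov shifts explicitly). Your computation also sidesteps a sign typo present in the paper's equations \eqref{eq:gaus}--\eqref{eq:gausp}, where the displacement contribution to $\Tr[\mathcal C_\hslash(u_0)\hat q^2]$ and $\Tr[\mathcal C_\hslash(u_0)\hat p^2]$ should enter with a $+$ rather than a $-$; the conclusion (uniform boundedness) is unaffected.
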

\begin{proof}
According to Definition \ref{def:wigner}, in order to determine the set of Wigner measures of the family of states $(\mathcal C_\hslash(u_0))_{\hslash \in (0,1)}$ it is enough to explicitly compute the limit
$$
\lim_{\hslash \to 0} \Tr\big[ \mathcal C_{\hslash}(u_0) \;\mathcal{W}\big(\pi q,-\pi p,\sqrt{2}\pi \alpha\big)\big]\,,
$$
for any $\zeta=(\pi q,-\pi p,\sqrt{2}\pi \alpha)\in X^0$. Indeed, using Weyl's commutation relations one shows
\begin{equation}
\label{eq:24}
\begin{aligned}
\lim_{\hslash \to 0} \langle \mathcal{C}_\hslash(u_0); \mathcal{W}(\zeta)\;  \mathcal{C}_\hslash(u_0)\rangle_{\mathscr H}&= e^{2\ci\pi \Ree\langle \zeta, u_0\rangle_{{X}^{0}}} \\
&= \mathscr{F}^{-1}[\delta_{u_0}] (\zeta)\,.
\end{aligned}
\end{equation}
Furthermore, if $u_0=(p_0,q_0,\alpha_0)\in X^\sigma$ then one can check that
\begin{equation}
\label{eq:cohbwik}
\begin{split}
\Tr\big[ \mathcal C_\hslash(u_0) \; {\rm d}\Gamma(|k|)\big] &=\langle \alpha_0, |k| \alpha_0\rangle_{L^2}=
\|\alpha_0\|^2_{\dot{\mathfrak{H}}^{1/2}}\,,\\
 \Tr\big[ \mathcal C_\hslash(u_0) \; {\rm d}\Gamma(|k|^{2\sigma})\big] &= \langle \alpha_0, |k|^{2\sigma} \alpha_0\rangle_{L^2}=\|\alpha_0\|^2_{\dot{\mathfrak{H}}^{\sigma}}\,,\\
 \Tr\big[ \mathcal C_\hslash(u_0) \; \hat{N}_\hslash\big] &= \| \alpha_0\|_{L^2}^2\,.
 \end{split}
\end{equation}
Such identities can be proved using  commutation relations in Fock spaces (See for instance \cite[Prop.~2.7 and 2.10]{ammari2008ahp}). Similarly, using \eqref{eq:Tcc} one also proves that
\begin{equation}
\label{eq:gaus}
\begin{split}
 \Tr\big[ \mathcal C_\hslash(u_0) \; \hat  q^2\big] &=
 \langle \varphi_0, (\hat q-\ci \sqrt{2} q_0)^2\varphi_0\rangle\\
 &= \langle \varphi_0, \hat q^2\varphi_0\rangle-2 q_0^2\,,\\
\end{split}
\end{equation}
and
\begin{equation}
\label{eq:gausp}
\begin{split}
  \Tr\big[ \mathcal C_\hslash(u_0) \; \hat  p^2\big] &=
 \langle \varphi_0, (\hat p-\ci \sqrt{2} p_0)^2\varphi_0\rangle\\
 &= \langle \varphi_0, \hat p^2\varphi_0\rangle-2 p_0^2\,,
 \end{split}
\end{equation}
with $\varphi_0$ is the Gaussian function in \eqref{eq:cohphi} and $z_0=q_0+\ci p_0\in\C^{dn}$.
 Since $u_0=(p_0,q_0,\alpha_0)\in X^\sigma$ for some $\sigma\in [\frac 1 2,1]$,
 one notices that all the quantities in  \eqref{eq:cohbwik}, \eqref{eq:gaus} and \eqref{eq:gausp} are bounded uniformly with respect to $\hslash\in(0,1)$.
\end{proof}

   A simple observation shows that  any Borel probability measure $\mu_0$ over $X^0$ is  a Wigner measure of at least one given family of quantum states.

\begin{lemma}
\label{lemma:examwig}
Consider for any $\mu_0\in\mathfrak{P}(X^0)$ the family of density matrices
\begin{equation}
\label{eq:cohmu}
\varrho_\hslash= \int_{X^0} \mathcal C_\hslash(u) \,\mathrm{d}\mu_0(u)\,,
\end{equation}
where $\mathcal C_\hslash(u)$ is the coherent state defined  in \eqref{eq:cohdef} and centered on $u=(p,q,\alpha)\in X^0$. Then
\begin{equation}
\label{eq:stwigcohmu}
\mathcal{M}\big(\varrho_\hslash, \hslash\in(0,1)\big)=\{\mu_0\}\,.
\end{equation}
Furthermore, suppose  that
\begin{align}
\label{eq:cdmes}
\int_{X^0} \|u\|^2_{X^\sigma}\;\mathrm{d}\mu_0 <\infty\,,
\end{align}
then the family of density matrices $(\varrho_\hslash)_{\hslash\in(01)}$ satisfies the assumptions  \emph{(}\hyperref[eq:Ass0]{$S_0^{(1)}$}\emph{)}, \emph{(}\hyperref[eq:Ass1]{$S_1^{(1)}$}\emph{)} and \eqref{eq:Ass2}.
\end{lemma}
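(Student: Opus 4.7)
The plan is to reduce both claims to the coherent-state computations of Lemma \ref{lemma:coh} by combining Fubini--Tonelli with dominated (or monotone) convergence. First, each $\mathcal{C}_\hslash(u)$ is a rank-one orthogonal projection, hence a positive trace-one operator; since $\mu_0$ is a probability measure, the integral in \eqref{eq:cohmu} converges in the trace norm as a Bochner integral and defines a density matrix $\varrho_\hslash$ on $\mathscr{H}$ for every $\hslash\in(0,1)$.

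To identify the Wigner measure, I pick $\xi=(p,q,\alpha)\in X^0$ and set $\zeta=(\pi q,-\pi p,\sqrt{2}\pi\alpha)$. Since $A\mapsto\Tr[A\,\mathcal{W}(\zeta)]$ is a continuous linear functional on the trace class, Bochner integration yields
\begin{equation*}
\Tr\bigl[\varrho_\hslash\,\mathcal{W}(\zeta)\bigr]=\int_{X^0}\Tr\bigl[\mathcal{C}_\hslash(u)\,\mathcal{W}(\zeta)\bigr]\,\mathrm{d}\mu_0(u).
\end{equation*}
The integrand is uniformly bounded by $1$ (product of a unitary with a rank-one projection of unit trace), and \eqref{eq:24} from the proof of Lemma \ref{lemma:coh} gives the pointwise convergence $\Tr[\mathcal{C}_\hslash(u)\,\mathcal{W}(\zeta)]\to e^{2\pi\ci\Ree\langle\zeta,u\rangle_{X^0}}$ for every $u\in X^0$. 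Dominated convergence then produces
\begin{equation*}
\lim_{\hslash\to 0}\Tr\bigl[\varrho_\hslash\,\mathcal{W}(\zeta)\bigr]=\int_{X^0}e^{2\pi\ci\Ree\langle\zeta,u\rangle_{X^0}}\,\mathrm{d}\mu_0(u),
\end{equation*}
and by Definition \ref{def:wigner} this identifies $\mu_0$ as the unique Wigner measure of the family along the entire range $\hslash\in(0,1)$, proving \eqref{eq:stwigcohmu} (no subsequence extraction is required since the limit exists along all of $(0,1)$).

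For the moment bounds the same strategy, applied to the non-negative observables $\hat A\in\{\hat H_\hslash^0+1,\,\hat N_\hslash+\hat q^2+1,\,\mathrm d\Gamma(|k|^{2\sigma})\}$, yields
\begin{equation*}
\Tr\bigl[\varrho_\hslash\,\hat A\bigr]=\int_{X^0}\Tr\bigl[\mathcal{C}_\hslash(u)\,\hat A\bigr]\,\mathrm{d}\mu_0(u),
\end{equation*}
where the identity is first established for the bounded spectral truncations $\hat A\,\mathbf{1}_{[0,n]}(\hat A)$ and then passed to the limit by monotone convergence. The explicit formulas \eqref{eq:cohbwik}--\eqref{eq:gausp} from Lemma \ref{lemma:coh}, together with the $O(\hslash)$ bound for the Gaussian expectations $\langle\varphi_0,\hat p^2\varphi_0\rangle$, $\langle\varphi_0,\hat q^2\varphi_0\rangle$ coming from the $\hslash$-scaled width of $\varphi_0$, show that each of these traces is bounded uniformly in $\hslash\in(0,1)$ by $C(1+\|u\|_{X^\sigma}^2)$; the comparison $\|\alpha\|_{\dot{\mathfrak H}^{1/2}}^2\leq\|\alpha\|_{\mathfrak H^\sigma}^2$ needed for $(S_0^{(1)})$ follows from the elementary inequality $|k|\leq 1+|k|^{2\sigma}$ valid on $\R^d$ for $\sigma\in[\tfrac12,1]$ (treat $|k|\leq 1$ and $|k|>1$ separately). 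Integrating against $\mu_0$ under the hypothesis \eqref{eq:cdmes} then yields the three uniform bounds $(S_0^{(1)})$, $(S_1^{(1)})$ and $(S_2^{(\sigma)})$.

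The main technical point is the justification of the Fubini interchange for the \emph{unbounded} positive observables $\hat H_\hslash^0$, $\hat q^2$, $\mathrm d\Gamma(|k|^{2\sigma})$, handled by the spectral truncation and monotone convergence argument described above; everything else is a direct consequence of the coherent-state identities already established in Lemma \ref{lemma:coh}.
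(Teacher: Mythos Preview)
Your proof is correct and follows essentially the same approach as the paper: interchange trace and $\mu_0$-integration, apply the pointwise coherent-state limit \eqref{eq:24} via dominated convergence to identify the characteristic function of $\mu_0$, and then integrate the coherent-state moment identities \eqref{eq:cohbwik}--\eqref{eq:gausp} against $\mu_0$ under the hypothesis \eqref{eq:cdmes}. The paper's proof is very terse and leaves implicit the technical justifications (Bochner integrability, dominated convergence, spectral truncation for unbounded observables) that you have carefully spelled out.
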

\begin{proof}
Using \eqref{eq:24} and  \eqref{eq:cohmu},
\begin{equation}
\label{eq:varrho}
\begin{split}
\lim_{\hslash \to 0} \Tr\big[ \mathcal \varrho_{\hslash} \;\mathcal{W}\big(\pi q,-\pi p,\sqrt{2}\pi \alpha\big)\big]&=
\int_{X^0}  e^{2\ci\pi \Ree\langle \xi, u\rangle_{{X}^{0}}} \, \mathrm{d}\mu_0(u)\\
&= \mathscr{F}^{-1}[\mu_0] (\xi)\,,
\end{split}
\end{equation}
for all $\xi=(p,q,\alpha)\in X^0$. Hence,  according to Definition \ref{def:wigner} the first statement \eqref{eq:stwigcohmu} is proved since the right hand side  of \eqref{eq:varrho}  uniquely determines  the probability measure $\mu_0$.
The second statement is a consequence of \eqref{eq:cohbwik}--\eqref{eq:gausp} and the assumption \eqref{eq:cdmes}.
\end{proof}

Finally, we note that the set of Wigner measures resulting from a family of quantum states is not in general a singleton.
However, for any  family $(\varrho_\hslash)_{\hslash\in \mathscr{I}}$ it is  always possible to find a subset $\mathscr{I}_0\subset \mathscr{I}$,
$0\in \overline{\mathscr{I}_0}$, such that $\mathcal{M} (\varrho_\hslash, \hslash\in \mathscr{I}_0)$ is a singleton.

\subsection{Duhamel's formula}
\label{sec:Duhamel}

\subsubsection{Weyl--Heisenberg operator estimates}
\label{sec:Weyl-estimates}
Here we  highlight  the fact that  the Weyl--Heisenberg operators  keep invariant the form domain of the Pauli--Fierz Hamiltonian $\hat{H}_\hslash$.  These properties will be useful to establish a Duhamel formula for quantum evolved states.

\begin{lemma}
\label{lem:estN}
For any $\alpha\in \mathfrak{H}^{0}$ there exists  a constant $C>0$ such that for any $\hslash\in(0,1)$ and any $\Psi\in {D}(\hat{N}_\hslash)$,
\begin{eqnarray}
\| (\hat{N}_\hslash)^{1/2} \, W(\alpha) \Psi\|_{\Gamma_s(L^2(\mathds{R}^d,\mathds{C}^{d-1}))} &\leq& C \,
\|(\hat{N}_\hslash+1)^{1/2}\, \Psi\|_{\Gamma_s(L^2(\mathds{R}^d,\mathds{C}^{d-1}))}\,,\\
\| \hat{N}_\hslash \, W(\alpha) \Psi\|_{\Gamma_s(L^2(\mathds{R}^d,\mathds{C}^{d-1}))} &\leq& C \,
\|\big(\hat{N}_\hslash+1\big)\, \Psi\|_{\Gamma_s(L^2(\mathds{R}^d,\mathds{C}^{d-1}))}\,.
\end{eqnarray}
\end{lemma}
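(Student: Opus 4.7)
\bigskip

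\textbf{Proof proposal.} The plan is to reduce both inequalities to a single algebraic identity for the conjugation of $\hat{N}_\hslash$ by the Weyl operator, and then close the estimates with the standard $\hat{a}^\sharp$-bounds. Since $W(\alpha)$ is unitary, for any power $\kappa\in\{1/2,1\}$ one has
\begin{equation*}
\| \hat{N}_\hslash^{\kappa} W(\alpha)\Psi\|^2 = \langle \Psi, W(\alpha)^* \hat{N}_\hslash^{2\kappa} W(\alpha)\Psi\rangle,
\end{equation*}
so everything reduces to computing $W(\alpha)^*\hat{N}_\hslash W(\alpha)$. Using the $\hslash$-scaled CCR \eqref{eq:an-cr2} and the definition \eqref{eq:weyl} of $W(\alpha)$, a direct commutator calculation gives the shift rule
\begin{equation*}
W(\alpha)^*\,\hat{a}_\lambda(k)\,W(\alpha) = \hat{a}_\lambda(k) + \tfrac{i\hslash}{\sqrt{2}}\,\alpha_\lambda(k),\qquad W(\alpha)^*\,\hat{a}^*_\lambda(k)\,W(\alpha) = \hat{a}^*_\lambda(k) - \tfrac{i\hslash}{\sqrt{2}}\,\bar\alpha_\lambda(k),
\end{equation*}
so that plugging into the definition \eqref{eq:Nop} of $\hat N_\hslash$ and integrating one obtains the explicit identity
\begin{equation*}
W(\alpha)^*\,\hat{N}_\hslash\, W(\alpha) = \hat{N}_\hslash + \tfrac{i\hslash}{\sqrt{2}}\bigl(\hat{a}^*(\alpha) - \hat{a}(\alpha)\bigr) + \tfrac{\hslash^2}{2}\,\|\alpha\|_{\mathfrak{H}^0}^2,
\end{equation*}
which is the only non-trivial algebraic step.

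For the first inequality I would then write $\|\hat{N}_\hslash^{1/2} W(\alpha)\Psi\|^2$ as a quadratic form of the above operator, and control the cross terms by the standard number-operator bounds $\|\hat a(\alpha)\Psi\|\leq \|\alpha\|\,\|\hat N_\hslash^{1/2}\Psi\|$ and $\|\hat a^*(\alpha)\Psi\|\leq \|\alpha\|\,\|(\hat N_\hslash+\hslash)^{1/2}\Psi\|$ (obtained directly from the $\hslash$-CCR in the $n$-particle sectors), followed by a Cauchy--Schwarz step $ab\leq \tfrac{1}{2}(a^2+b^2)$ to absorb the $\hat a^\sharp$-contributions into $\|(\hat{N}_\hslash+1)^{1/2}\Psi\|^2$. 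Using $\hslash\in(0,1)$ to drop $\hslash$-factors yields a constant $C$ depending only on $\|\alpha\|_{\mathfrak{H}^0}$, as required.

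For the second inequality I would not square the identity above; instead, by unitarity of $W(\alpha)$,
\begin{equation*}
\|\hat{N}_\hslash W(\alpha)\Psi\| = \|W(\alpha)^* \hat{N}_\hslash W(\alpha)\Psi\| \leq \|\hat{N}_\hslash\Psi\| + \tfrac{\hslash}{\sqrt{2}}\bigl(\|\hat{a}^*(\alpha)\Psi\| + \|\hat{a}(\alpha)\Psi\|\bigr) + \tfrac{\hslash^2}{2}\|\alpha\|^2_{\mathfrak{H}^0}\|\Psi\|,
\end{equation*}
and the $\hat a^\sharp$ terms are again dominated by $\|(\hat{N}_\hslash+1)^{1/2}\Psi\|\leq \|(\hat{N}_\hslash+1)\Psi\|$. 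I do not foresee a genuine obstacle here: both estimates follow once the shift identity is in place, and the only point to be careful with is tracking the powers of $\hslash$ so that the constant $C$ is indeed uniform in $\hslash\in(0,1)$ (this is automatic since all the $\hslash$-factors appearing in the shift are non-negative powers of $\hslash$). The argument extends by density from $\Psi\in\mathscr C_0^\infty$-type vectors to $\Psi\in D(\hat N_\hslash)$ since $W(\alpha)$ preserves the appropriate cores.
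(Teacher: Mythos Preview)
Your proposal is correct and follows essentially the same approach as the paper: the paper's proof simply records the same conjugation identity $W(\alpha)^*\hat N_\hslash W(\alpha)=\hat N_\hslash+\tfrac{i\hslash}{\sqrt2}\hat a^*(\alpha)-\tfrac{i\hslash}{\sqrt2}\hat a(\alpha)+\tfrac{\hslash^2}{2}\|\alpha\|_2^2$ and then invokes the $\hslash$-scaled standard number estimates, which is exactly what you have carried out in detail.
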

\begin{proof}
The above estimate  is a consequence of the commutator expansion formula,
\begin{equation*}
W(\alpha)^* \, \hat{N}_\hslash \, W(\alpha)= \hat{N}_\hslash+\frac{\ci\hslash}{\sqrt{2}} \hat a^*(\alpha)-\frac{\ci\hslash}{\sqrt{2}}  \hat a(\alpha)+\frac{\hslash^2}{2} \|\alpha\|_2\,,
\end{equation*}
and  $\hslash$-scaled standard number estimates (see ,\emph{e.g.},~\cite[Lemma 2.5]{ ammari2008ahp}).
\end{proof}

\begin{lemma}
\label{s3.lemW}
For any $\alpha\in \mathfrak{H}^{1/2}$ there exists  a constant $C>0$ such that for any $\hslash\in(0,1)$ and any $\Psi\in
{D}(\hat{H}^0_\hslash)$,
\begin{equation}
\| (\hat{H}^0_\hslash)^{1/2}  \; W(\alpha) \,\Psi\|_{\mathscr H} \leq C \,
\|\big(\hat{H}^0_\hslash+1\big)^{1/2}\, \Psi\|_{\mathscr H}\,.
\end{equation}
\end{lemma}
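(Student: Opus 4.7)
\emph{Proof plan for Lemma \ref{s3.lemW}.} Squaring the left-hand side gives
\[
\| (\hat{H}^0_\hslash)^{1/2} W(\alpha)\Psi\|^2 = \langle \Psi , W(\alpha)^{*}\hat{H}^0_\hslash W(\alpha)\Psi\rangle,
\]
so the plan is to compute $W(\alpha)^{*}\hat{H}^0_\hslash W(\alpha)$ explicitly as a perturbation of $\hat{H}^0_\hslash$ and then control the perturbation as a quadratic form. First I would use that $W(\alpha)$ acts only on the Fock factor of $\mathscr{H}$, hence commutes with each $\hat{p}_i$; this reduces the task to analyzing $W(\alpha)^{*}\hat{H}_{\mathrm{f}}W(\alpha)$, since the particle kinetic contribution equals $\langle\Psi,\hat{p}^2\Psi\rangle$ exactly.

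Next, using the $\hslash$-scaled canonical commutation relations \eqref{eq:an-cr2}, a direct Baker--Campbell--Hausdorff computation (the relevant commutators are $c$-numbers, so the series truncates) yields
\[
W(\alpha)^{*}\hat{a}_\lambda(k)W(\alpha)=\hat{a}_\lambda(k)+\tfrac{\ci \hslash}{\sqrt{2}}\alpha_\lambda(k),\qquad W(\alpha)^{*}\hat{a}^{*}_\lambda(k)W(\alpha)=\hat{a}^{*}_\lambda(k)-\tfrac{\ci \hslash}{\sqrt{2}}\bar\alpha_\lambda(k).
\]
Substituting into $\hat{H}_{\mathrm{f}}=\sum_\lambda\int |k|\,\hat{a}^{*}_\lambda(k)\hat{a}_\lambda(k)\,\mathrm{d}k$ and collecting terms produces
\[
W(\alpha)^{*}\hat{H}_{\mathrm{f}}W(\alpha)=\hat{H}_{\mathrm{f}}+\tfrac{\ci\hslash}{\sqrt{2}}\bigl(\hat{a}^{*}(|k|\alpha)-\hat{a}(|k|\alpha)\bigr)+\tfrac{\hslash^2}{2}\||k|^{1/2}\alpha\|_{L^2}^{2}.
\]

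The final step is to bound the cross term in quadratic-form sense. Writing $\ci(\hat{a}^{*}(|k|\alpha)-\hat{a}(|k|\alpha))$ as a symmetric operator and using that its expectation on $\Psi$ equals $-2\,\Imm\langle\Psi,\hat{a}(|k|\alpha)\Psi\rangle$, together with the standard number-type inequality \eqref{eq.a.est}
\[
\|\hat{a}(|k|\alpha)\Psi\|\leq \| |k|^{1/2}\alpha\|_{L^2}\,\|\hat{H}_{\mathrm{f}}^{1/2}\Psi\|,
\]
one gets
\[
\bigl|\tfrac{\ci\hslash}{\sqrt{2}}\langle\Psi,(\hat{a}^{*}(|k|\alpha)-\hat{a}(|k|\alpha))\Psi\rangle\bigr|\leq \sqrt{2}\,\hslash\,\| |k|^{1/2}\alpha\|_{L^2}\|\Psi\|\|\hat{H}_{\mathrm{f}}^{1/2}\Psi\|.
\]
An elementary Young inequality absorbs the $\|\hat{H}_{\mathrm{f}}^{1/2}\Psi\|^2$ piece into $\langle\Psi,\hat{H}_{\mathrm{f}}\Psi\rangle$ with a $\hslash$-independent constant (using $\hslash\in(0,1)$), and what remains is a constant depending only on $\| |k|^{1/2}\alpha\|_{L^2}^2\leq\|\alpha\|_{\mathfrak{H}^{1/2}}^{2}$ times $\|\Psi\|^2$. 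Adding back the untouched particle kinetic term, I obtain
\[
\langle\Psi,W(\alpha)^{*}\hat{H}^0_\hslash W(\alpha)\Psi\rangle\leq C\bigl(\langle\Psi,\hat{H}^0_\hslash\Psi\rangle+\|\Psi\|^{2}\bigr) =C\|(\hat{H}^0_\hslash+1)^{1/2}\Psi\|^2,
\]
with $C$ depending only on $\|\alpha\|_{\mathfrak{H}^{1/2}}$, as claimed.

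The only subtle point is that the naive operator bound on $\hat{a}^{*}(|k|\alpha)$ would require $\alpha\in\mathfrak{H}^{1}$ (since $\|\hat{a}^{*}(f)\Psi\|$ also features $\|f\|_{L^2}$); this is why I would evaluate the skew-symmetric combination $\ci(\hat{a}^{*}-\hat{a})$ in quadratic-form sense, which only needs the weaker hypothesis $\alpha\in\mathfrak{H}^{1/2}$. Density in $D(\hat{H}^0_\hslash)$ (e.g. on $\mathscr{C}_0^\infty(\R^{dn})\otimes \Gamma_{\mathrm{fin}}$) is used to justify the formal computations, and the final inequality then extends to all of $D(\hat{H}^0_\hslash)$ by a closure argument analogous to the one in Corollary \ref{s4.lemE}.
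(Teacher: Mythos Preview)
Your proof is correct and takes essentially the same route as the paper: both reduce the question to controlling how $\hat{H}_{\mathrm{f}}$ transforms under conjugation by $W(\alpha)$, with the key input being that $\hat{a}^\sharp(|k|\alpha)$ is $\hat{H}_{\mathrm{f}}$-form bounded once $|k|^{1/2}\alpha\in L^2$. The paper's one-line proof just records that $(\hat{H}^0_\hslash+1)^{-1/2}[\hat{a}^\sharp(\alpha),\hat{H}^0_\hslash](\hat{H}^0_\hslash+1)^{-1/2}$ is uniformly bounded (implicitly suggesting a differentiation-in-$s$ argument along $W(s\alpha)$), whereas you compute the exact Weyl displacement $W(\alpha)^*\hat{H}_{\mathrm{f}}W(\alpha)$ and bound the resulting three-term expansion directly; your remark that the skew-symmetric combination must be handled as a quadratic form (so that only $\alpha\in\mathfrak{H}^{1/2}$ rather than $\mathfrak{H}^1$ is needed) makes explicit a point the paper leaves implicit.
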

\begin{proof}
It is a simple  consequence of the fact that
\begin{equation*}
(\hat{H}^0_\hslash+1)^{-1/2} \,\big[\hat a^\sharp(\alpha), \hat{H}^0_\hslash\big]\, (\hat{H}^0_\hslash+1)^{-1/2}\,,
\end{equation*}
is bounded uniformly with respect to $\hslash\in(0,1)$ and where $a^\sharp$ stands for both the creation and the annihilation operators.
\end{proof}

Similar estimates also hold for the Weyl--Heisenberg operators  $\hat T(\cdot)$.
\begin{lemma}
\label{lem:Tsmo}
For any $z\in\mathds{C}^{dn}$ there exists $C>0$  such that for any $\hslash\in(0,1)$ and any $\Psi\in
{D}((\hat{p}^2+\hat{q}^2)^{1/2})$,
\begin{equation}
\label{eq:26}
\| (\hat{p}^2+\hat{q}^2)^{1/2}  \; \hat{T}(z) \,\Psi\|_{L^2(\mathds R^{dn})} \leq C \,
\|(\hat{p}^2+\hat{q}^2+1)^{1/2} \,\Psi\|_{L^2(\mathds R^{dn})}\,.
\end{equation}
\end{lemma}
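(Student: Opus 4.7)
The plan is to reduce the estimate to the commutation/intertwining identities \eqref{eq:Tcc} and then extend from a core by density. Writing $z = q + \ci p$, the relations in \eqref{eq:Tcc} are equivalent to
\begin{equation*}
\hat{q}\,\hat{T}(z) = \hat{T}(z)\,(\hat{q}+\hslash q), \qquad \hat{p}\,\hat{T}(z) = \hat{T}(z)\,(\hat{p}+\hslash p),
\end{equation*}
which I would first verify on the Schwartz space $\mathscr{S}(\mathds{R}^{dn})$ — a core for both $\hat{p}$ and $\hat{q}$ (hence for the harmonic-oscillator-type square root $(\hat{p}^2+\hat{q}^2)^{1/2}$), and which is invariant under $\hat{T}(z)$.

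On this core, the unitarity of $\hat{T}(z)$ together with $(a+b)^2 \leq 2a^2+2b^2$ yields
\begin{equation*}
\|\hat{q}\,\hat{T}(z)\Psi\|^2 = \|(\hat{q}+\hslash q)\Psi\|^2 \leq 2\|\hat{q}\Psi\|^2 + 2\hslash^2|q|^2\|\Psi\|^2,
\end{equation*}
and analogously for $\hat{p}$. Summing the two and using $\|(\hat{p}^2+\hat{q}^2)^{1/2}\Phi\|^2 = \|\hat{p}\Phi\|^2+\|\hat{q}\Phi\|^2$, I obtain
\begin{equation*}
\|(\hat{p}^2+\hat{q}^2)^{1/2}\hat{T}(z)\Psi\|^2 \leq 2\|(\hat{p}^2+\hat{q}^2)^{1/2}\Psi\|^2 + 2\hslash^2|z|^2\|\Psi\|^2.
\end{equation*}
Since $\hslash\in(0,1)$, the constant $C=\sqrt{2(1+|z|^2)}$ (which is independent of $\hslash$) does the job, giving \eqref{eq:26} on the core.

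The remaining step is to extend the inequality from $\mathscr{S}(\mathds{R}^{dn})$ to the full form domain $D((\hat{p}^2+\hat{q}^2)^{1/2})$. Given $\Psi$ in that domain, I would approximate it by a sequence $\Psi_n\in\mathscr{S}(\mathds{R}^{dn})$ with $\Psi_n\to\Psi$ and $(\hat{p}^2+\hat{q}^2)^{1/2}\Psi_n\to (\hat{p}^2+\hat{q}^2)^{1/2}\Psi$ (possible since $\mathscr{S}$ is a core), apply the bound to $\hat{T}(z)\Psi_n$, and pass to the limit using continuity of $\hat{T}(z)$ and closedness of $(\hat{p}^2+\hat{q}^2)^{1/2}$. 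I do not expect any real obstacle here: the intertwining relations make the computation essentially algebraic, and the only subtlety is the routine density argument needed because $\Psi$ is only assumed to lie in the form domain rather than in $D(\hat{p}^2)\cap D(\hat{q}^2)$.
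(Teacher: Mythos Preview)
Your argument is correct. It differs from the paper's proof in that the paper works infinitesimally: it computes the commutator of the generator $p\cdot\hat q - q\cdot\hat p$ with the harmonic oscillator,
\[
\ci\big[\hat p^2+\hat q^2,\; p\cdot\hat q - q\cdot\hat p\big] = 2\hslash\,(q\cdot\hat q + p\cdot\hat p)\,,
\]
observes that the right-hand side is dominated as a form by $\hat p^2+\hat q^2$ uniformly in $\hslash\in(0,1)$, and then (implicitly) invokes a Nelson/Faris--Lavine type argument to propagate the form-domain bound along the one-parameter group $s\mapsto e^{is(p\cdot\hat q - q\cdot\hat p)}$. You instead use the \emph{integrated} version of the same fact, namely the exact intertwining relations \eqref{eq:Tcc}, to compute $\|\hat q\,\hat T(z)\Psi\|$ and $\|\hat p\,\hat T(z)\Psi\|$ directly. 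This is more elementary: it avoids any differential inequality and gives an explicit constant $C=\sqrt{2(1+|z|^2)}$. The paper's commutator route has the advantage of being the template used throughout \S\ref{sec:regularity} (number, momentum and position estimates all follow that pattern), but for this particular lemma your shortcut is cleaner. The density extension from $\mathscr{S}(\mathds{R}^{dn})$ to $D((\hat p^2+\hat q^2)^{1/2})$ is routine, as you indicate.
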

\begin{proof}
In order to prove the above estimate, it is enough to evaluate the commutator between the generator of the  Weyl--Heisenberg operator $\hat T(\cdot)$ and the harmonic oscillator $\hat{p}^2+\hat{q}^2$. An exact calculus gives,
\begin{eqnarray*}
\ci\big[ \hat{p}^2+\hat{q}^2; p\cdot \hat{q}-q\cdot \hat{p}\big]
&=&2 \hslash \;(q\cdot \hat{q}+p\cdot \hat{p})\,.
\end{eqnarray*}
Moreover, the right hand side is bounded uniformly with respect  to $\hslash\in(0,1)$ by $\hat{p}^2+\hat{q}^2$ in the quadratic form sense.
\end{proof}

\subsubsection{Duhamel formula}
\label{sec:int-formula}
The argument behind the Duhamel's formula in Proposition \ref{lemma:Duhamel}  is quite general. To highlight the main ingredients in this step, we list the assumptions at our disposal. For $\hslash\in(0,1)$:
\begin{itemize}
\item[(i)]  $\xi\in X^0\to \mathcal{W}(\xi)$ is a strongly continuous representation of the Weyl commutation relations on the Hilbert space $\mathscr{H}$.
\item[(ii)] $(\hat{H}_\hslash, \hat{H}_\hslash^0)$ is a couple of self-adjoint operators on $\mathscr{H}$  such that
 $\hat{H}_\hslash^0$ is non-negative and the form domain $Q(\hat{H}_\hslash)={D}(|\hat{H}_\hslash|^{\frac{1}{2}})$ coincides with the one of
 $\hat{H}^0_\hslash$.
\item[(iii)] The form domain $Q(\hat{H}^0_\hslash)$  is invariant with respect to the Weyl--Heisenberg operators $\mathcal{W}(\xi)$
 for all $\xi$ in  a dense subset $X^{1/2}$  of the phase-space $X^0$.
\end{itemize}
\medskip
Let  $(\varrho_\hslash)_{\hslash\in (0,1)}$ be a family of density matrices on $\mathcal{H}$ and define:
\begin{equation}
\label{eq:25}
\varrho_\hslash(t)= e^{-\ci \frac{t}{\hslash} \hat{H}_\hslash}  \, \varrho_\hslash \,  e^{\ci \frac{t}{\hslash} \hat{H}_\hslash}
\qquad \text{ and } \qquad  \tilde{\varrho}_\hslash (t)=
e^{\ci \frac{t}{\hslash} \hat{H}^0_\hslash}  \, \varrho_\hslash(t) \,  e^{-\ci \frac{t}{\hslash} \hat{H}^0_\hslash}\,.
\end{equation}
Recall the hypothesis (\hyperref[eq:Ass0]{$S_0^{(1)}$}) which specifies the regularity condition that we shall require for the initial quantum states $(\varrho_\hslash)_{\hslash\in (0,1)}$:
\begin{equation*}
\exists C_0>0, \forall \hslash\in(0,1), \qquad \Tr\big[ \varrho_\hslash \; (\hat{H}_\hslash^0+1)\big] \leq C_0\,.
\end{equation*}
The uniform bound in the trace means precisely that the operators
\begin{equation}
(\hat{H}_\hslash^0+1)^{1/2}\; \varrho_\hslash\; (\hat{H}_\hslash^0+1)^{1/2} \in \mathscr{L}^1(\mathscr H)\,,
\end{equation}
has a finite uniform trace norm with respect to $\hslash\in(0,1)$.
\begin{prop}
\label{lemma:Duhamel}
Assume  \eqref{eq:hypo0}, \eqref{eq:hypo1} and \emph{(}\hyperref[eq:hypo2]{$A_2^{(1)}$}\emph{)}. Let $(\varrho_\hslash)_{\hslash\in(0,1)}$ be a family of density matrices satisfying  \emph{(}\hyperref[eq:Ass0]{$S_0^{(1)}$}\emph{)}. Then for all   $\xi\in X^{1/2}$, all $\hslash\in(0,1)$ and all $t,t_0\in\R$,
\begin{eqnarray}
\label{eq:Duhamel}
\Tr \bigg[ \mathcal{W}(\xi) \; \tilde{\varrho}_\hslash (t)\bigg]= \Tr \bigg[ \mathcal{W}(\xi) \; \tilde{\varrho}_\hslash(t_0) \bigg]
-\frac{\ci}{\hslash} \int_{t_0}^t \Tr\bigg( \big [\mathcal{W}(\xi), \hat{H}_I(s) \big] \;
\tilde{\varrho}_\hslash (s) \bigg) \; \mathrm{d}s\,,
\end{eqnarray}
with
\begin{equation}
\label{eq:27}
\hat{H}_I(s)=  e^{\ci \frac{s}{\hslash} \hat{H}^0_\hslash} \,\big(\hat{H}_\hslash-\hat{H}^0_\hslash\big) \,
e^{-\ci \frac{s}{\hslash} \hat{H}^0_\hslash}\,,
\end{equation}
and where the commutator in the right hand side is interpreted  as a bounded quadratic form  on $Q(\hat{H}^0_\hslash)$.
\end{prop}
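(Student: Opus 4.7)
The plan is to carry out the standard interaction-picture derivation rigorously, exploiting the invariance of the form domain $Q(\hat H^0_\hslash)$ under $\mathcal W(\xi)$ (hypothesis~(iii)) and the equivalence $Q(\hat H^0_\hslash)=Q(\hat H_\hslash)$ supplied by Corollary~\ref{s4.lemE}. By cyclicity of the trace, I first rewrite
$$
\Tr\bigl[\mathcal W(\xi)\,\tilde\varrho_\hslash(t)\bigr]=\Tr\bigl[V(t)^{*}\mathcal W(\xi)\,V(t)\,\varrho_\hslash\bigr],\qquad V(t):=e^{\ci t\hat H^0_\hslash/\hslash}\,e^{-\ci t\hat H_\hslash/\hslash},
$$
so that all the time dependence is concentrated in the interaction-picture unitary $V(t)$. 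Since $D(\hat H_\hslash)=D(\hat H^0_\hslash)$ by Theorem~\ref{prop:2}, one has $\tfrac{d}{dt}V(t)\psi=-\tfrac{\ci}{\hslash}\hat H_I(t)V(t)\psi$ strongly on this common domain, and Corollary~\ref{s4.lemE} further implies that $V(t)$ preserves the form domain $Q(\hat H^0_\hslash)$.

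Before differentiating, I would verify that every ingredient in~\eqref{eq:Duhamel} is meaningful. Hypothesis~\eqref{eq:Ass0}, conservation of $\Tr[\varrho_\hslash(t)(\hat H_\hslash+a)]$ under the full quantum flow, and Corollary~\ref{s4.lemE} jointly yield the uniform bound
$$
\sup_{t\in\R,\,\hslash\in(0,1)}\Tr\bigl[(\hat H^0_\hslash+1)^{1/2}\,\tilde\varrho_\hslash(t)\,(\hat H^0_\hslash+1)^{1/2}\bigr]<\infty.
$$
On the other hand, writing $\hat H_I(s)=e^{\ci s\hat H^0_\hslash/\hslash}(\hat H_\hslash-\hat H^0_\hslash)e^{-\ci s\hat H^0_\hslash/\hslash}$ and applying Corollary~\ref{s4.lemE} together with Lemmas~\ref{lem:estN},~\ref{s3.lemW} and~\ref{lem:Tsmo} to the field and particle factors of $\mathcal W(\xi)$, the commutator $[\mathcal W(\xi),\hat H_I(s)]$ makes sense as a sesquilinear form on $Q(\hat H^0_\hslash)$ and is bounded by $C(\xi)\,(\hat H^0_\hslash+1)$ uniformly in $s\in\R$ and $\hslash\in(0,1)$. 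Together, these two facts ensure that the integrand in~\eqref{eq:Duhamel} is integrable in $s$ with bound uniform in $\hslash$. I would then decompose $\varrho_\hslash=\sum_n\lambda_n|\psi_n\rangle\langle\psi_n|$ with $\psi_n\in Q(\hat H^0_\hslash)$ (made possible by~\eqref{eq:Ass0}, which forces $\mathrm{Ran}(\varrho_\hslash)\subset Q(\hat H^0_\hslash)$), so that
$$
\Tr\bigl[\mathcal W(\xi)\tilde\varrho_\hslash(t)\bigr]=\sum_n\lambda_n\bigl\langle V(t)\psi_n,\mathcal W(\xi)V(t)\psi_n\bigr\rangle,
$$
and a direct form computation on $Q(\hat H^0_\hslash)$, combined with $V(t)Q(\hat H^0_\hslash)\subset Q(\hat H^0_\hslash)$, gives
$$
\frac{d}{dt}\bigl\langle V(t)\psi_n,\mathcal W(\xi)V(t)\psi_n\bigr\rangle=-\frac{\ci}{\hslash}\bigl\langle V(t)\psi_n,[\mathcal W(\xi),\hat H_I(t)]V(t)\psi_n\bigr\rangle.
$$
Integrating from $t_0$ to $t$ and summing over $n$, with the sum-integral interchange justified by dominated convergence from the uniform form bounds above, produces exactly~\eqref{eq:Duhamel}.

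The main obstacle is the unboundedness of $\hat H_\hslash-\hat H^0_\hslash$, which forces every manipulation to remain at the form level on $Q(\hat H^0_\hslash)$: the commutator $[\mathcal W(\xi),\hat H_I(s)]$ never exists as a genuine operator. A clean way to bypass this is to regularise by spectral cut-offs $\chi_R(\hat H^0_\hslash)\varrho_\hslash\chi_R(\hat H^0_\hslash)$, carry out the computation in the purely bounded setting where all strong derivatives and formal manipulations are legal, and then pass to the limit $R\to\infty$. The non-commutation of the cut-offs with $e^{-\ci s\hat H_\hslash/\hslash}$ is absorbed by the form equivalence of Corollary~\ref{s4.lemE} and by the Weyl-operator form estimates of Lemmas~\ref{s3.lemW} and~\ref{lem:Tsmo}, which deliver exactly the uniform control needed to close the limiting argument.
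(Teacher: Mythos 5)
Your proposal is correct and follows essentially the same route as the paper, whose proof is a one-liner: show that $t\mapsto\Tr[\mathcal W(\xi)\tilde\varrho_\hslash(t)]$ is $\mathscr C^1$ by combining the form-domain equivalence of Corollary~\ref{s4.lemE} with the Weyl--Heisenberg form-domain invariance estimates, and then apply the fundamental theorem of calculus. You supply the missing detail — notably that one needs Lemma~\ref{lem:Tsmo} for the particle factor $\hat T(z)$ in addition to Lemma~\ref{s3.lemW} for the field factor $W(\alpha)$, and that a spectral-cutoff regularisation gives a clean way to carry out the form-level differentiation — but the conceptual skeleton is the one the paper intends.
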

\begin{proof}
It follows using the Weyl--Heisenberg estimates of Lemma \ref{s3.lemW}, Corollary \ref{s4.lemE} and the  fundamental theorem of calculus for the function $t\in\mathds{R}\to  \Tr [ \mathcal{W}(\xi) \; \tilde{\varrho}_\hslash (t)]$ which is of class $\mathscr C^1$ in our case.
\end{proof}

\subsection{Commutator expansion}
\label{sec:commutators}

The main part to be analyzed in the Duhamel formula \eqref{eq:Duhamel}  is the commutator  $[\mathcal{W}(\xi), \hat{H}_I(s)]$. In particular, we wish to expand it with respect to the parameter $\hslash$. A simple computation gives
\begin{eqnarray*}
\hat{H}_\hslash-\hat{H}^0_\hslash &=& \sum_{i=1}^n \frac{1}{2m_i} \big(\hat{p}_i-{A}_i(\hat q_i,\hat a)\big)^2 -\frac{1}{2m_i} \hat{p}_i^2+V(\hat q)\\
&=&   \sum_{i=1}^n \frac{1}{2m_i} \bigg( \;{A}_i(\hat q_i,\hat a)^2- \;\hat{p}_i \cdot {A}_i(\hat q_i,\hat a)-\; {A}_i(\hat q_i,\hat a)  \cdot  \hat{p}_i\bigg)+V(\hat q)\\
&=&   \sum_{i=1}^n \frac{1}{2m_i} \bigg(  \;{A}_i(\hat q_i,\hat a)^2- 2\, \hat{p}_i  \cdot  {A}_i(\hat q_i,\hat a)  \bigg)+V(\hat q)\,.
\end{eqnarray*}
The last equality is a consequence of  the Coulomb gauge.
%which implies for $i=1,\dots,n$,
%\begin{eqnarray*}
%{A}_i(\hat q_i,\hat a) \cdot  \hat{p}_i &=& \hat{p}_i \cdot {A}_i(\hat q_i,\hat a)\,.
%\end{eqnarray*}
Moreover, one has
\begin{eqnarray*}
{A}_i^\nu(\hat q_i,\hat a)^2 &=&  \hat a^*(r_i^\nu)^2+\hat a(r_i^\nu)^2+2 \hat a^*(r_i^\nu) \hat a(r_i^\nu)+\hslash \,\|r_i^\nu\|^2_{L^2(\mathds{R}^d,\mathds{C}^{d-1})}\,,
\end{eqnarray*}
with  the function  $r^\nu_i$ given by
\begin{equation}
r_i^\nu:=  \left(\,\frac{\chi_i(k)}{\sqrt{2 |k|}}  \,e^{-2\pi\ci k\cdot \hat q_i}\, \epsilon_\lambda^{\nu}(k)\,\right)_{\lambda=1,\dots,d-1}\,\in L^\infty\big(\mathds{R}^d_{x_i}, L^2(\mathds{R}^d_k,\mathds{C}^{d-1})\big)\,,
\end{equation}
and
\begin{eqnarray*}
\hat a^*(r_i^\nu)&=& \sum_{\lambda=1}^{d-1} \int_{\mathds{R}^d} \,\frac{\chi_i(k)}{\sqrt{2 |k|}}  \,e^{-2\pi\ci k\cdot \hat q_i}\, \epsilon_\lambda^{\nu}(k)
\;a^*_\lambda(k) \,  \mathrm{d}k\,,\\
\hat a(r_i^\nu)&=& \sum_{\lambda=1}^{d-1} \int_{\mathds{R}^d} \,   \frac{\overline{\chi_i(k)}}{\sqrt{2 |k|}}  \,e^{2\pi\ci k\cdot \hat q_i}\, \epsilon_\lambda^{\nu}(k)
\;a_\lambda(k) \,  \mathrm{d}k\,.
\end{eqnarray*}
Using the above notations, one obtains
\begin{equation}
\label{eq:Hi0}
\begin{aligned}
\hat{H}_I(0)&= \sum_{i=1}^n \frac{1}{2m_i} \;\bigg( \sum_{\nu=1}^d \;\hat a^*(r_i^\nu)^2+ \hat a(r_i^\nu)^2+ 2 \hat a^*(r_i^\nu) \hat a(r_i^\nu)+ \hslash \, \|r_i^\nu\|_{L^2(\R^d,\C^{d-1})}^2 \bigg)\\
& -\sum_{i=1}^n \frac{1}{m_i}\bigg(\sum_{\nu=1}^d \,\hat{p}_i^\nu \cdot \big( \hat a^*(r_i^\nu)+\hat a(r_i^\nu)\big)\bigg)+V(\hat q)\,.
\end{aligned}
\end{equation}
In order to use the estimates of Subsection \ref{sec:regularity}, it is more convenient to put the quantity $\hat{H}_I(s)$ defined in \eqref{eq:27}  in the following form.
\begin{lemma}
For any $s\in\mathds{R}$ the time-evolved interaction  term $\hat{H}_I(s)$ takes the from,
\begin{eqnarray*}
\hat{H}_I(s)&=& \sum_{i=1}^n \frac{1}{2m_i} \; \sum_{\nu=1}^d \;a^*(r_i^\nu(s))^2+ a(r_i^\nu(s))^2+ 2 a^*(r_i^\nu(s)) a(r_i^\nu(s))+ \hslash \, \|r_i^\nu\|_{L^2(\R^d,\C^{d-1})}^2 \\
&-& \sum_{i=1}^n \frac{1}{m_i} \;\sum_{\nu=1}^d \, a^*(r_i^\nu(s)) \,\cdot\, \hat{p}_i^\nu  + \hat{p}_i^\nu\,\cdot\, a(r_i^\nu(s)) +
V(\hat q_1+s\frac{\hat p_1}{m_1},\dots,\hat q_n+s\frac{\hat p_n}{m_n})\,,
\end{eqnarray*}
where
\begin{equation}
r_i^\nu(s)=\left(\,\frac{\chi_i(k)}{\sqrt{2 |k|}}  \, \epsilon_\lambda^{\nu}(k) \,e^{-2\pi\ci k\cdot (\hat q_i+s \frac{\hat{p}_i}{m_i})+\ci s|k|}\,\right)_{\lambda=1,\dots,d-1}\,.
\end{equation}
Furthermore, $\hat{H}_I(s)$ is well defined as a quadratic form on $Q(\hat{H}_\hslash^0)$ which is relatively  $\hat{H}_\hslash^0$-form bounded.
\end{lemma}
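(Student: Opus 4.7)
The plan is to conjugate the already computed expression \eqref{eq:Hi0} for $\hat{H}_I(0)$ term by term with $e^{\ci s\hat H_\hslash^0/\hslash}$, exploiting the fact that $\hat H_\hslash^0$ splits as a commuting sum of $\sum_i \tfrac{1}{2m_i}\hat p_i^2$ (acting only on the particle variables) and $\hat H_{\rm f}$ (acting only on the Fock variable). This reduces the computation to two standard Heisenberg evolutions: the free particle evolution, which maps multiplication operators in $\hat q_i$ via the Galilean translation
\[
e^{\ci\frac{s}{\hslash}\frac{\hat p_i^2}{2m_i}}\, F(\hat q_i)\,e^{-\ci\frac{s}{\hslash}\frac{\hat p_i^2}{2m_i}}=F\!\bigl(\hat q_i+s\tfrac{\hat p_i}{m_i}\bigr),
\]
(valid on Schwartz vectors and extended by density), and the free field evolution, which acts on the annihilation/creation distributions as
\[
e^{\ci\frac{s}{\hslash}\hat H_{\rm f}}\,\hat a_\lambda(k)\,e^{-\ci\frac{s}{\hslash}\hat H_{\rm f}}=e^{-\ci s|k|}\hat a_\lambda(k),\qquad e^{\ci\frac{s}{\hslash}\hat H_{\rm f}}\,\hat a^{*}_\lambda(k)\,e^{-\ci\frac{s}{\hslash}\hat H_{\rm f}}=e^{\ci s|k|}\hat a^{*}_\lambda(k),
\]
as can be read off from the commutation relations $[\hat H_{\rm f},\hat a_\lambda(k)]=-\hslash|k|\hat a_\lambda(k)$ together with Stone's theorem.

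With these two rules in hand, I would apply the conjugation to each building block of $\hat H_I(0)$ in \eqref{eq:Hi0}: the smeared creation and annihilation operators $\hat a^{*}(r_i^{\nu})$, $\hat a(r_i^{\nu})$, the number-like term $\hat a^{*}(r_i^{\nu})\hat a(r_i^{\nu})$, the pure number $\hslash\|r_i^{\nu}\|^2$, the crossed term $\hat p_i^{\nu}(\hat a^{*}(r_i^{\nu})+\hat a(r_i^{\nu}))$, and the potential $V(\hat q)$. The phase in the form factor $r_i^{\nu}(k)=\tfrac{\chi_i(k)}{\sqrt{2|k|}}\epsilon_\lambda^{\nu}(k)e^{-2\pi\ci k\cdot\hat q_i}$ is shifted by $e^{-2\pi\ci k\cdot s\hat p_i/m_i}$ by the particle evolution, while $\hat a^\sharp_\lambda(k)$ picks up $e^{\pm\ci s|k|}$ from the field evolution; the two phases combine exactly into the stated kernel $r_i^{\nu}(s)$. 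Since $\hat p_i^{\nu}$ commutes with $\hat H_\hslash^0$ it is left untouched, and the scalar $\hslash\|r_i^{\nu}\|^2$ is manifestly invariant because the phase drops out upon taking $\|\cdot\|_{L^2_k}$. The potential $V(\hat q)$ is translated in the obvious way. Summing the six contributions produces the claimed formula.

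For the relative form-boundedness, I would observe that the only genuinely unbounded pieces at fixed $s$ are the same quadratic expressions in $\hat a^\sharp$ (now evaluated on $r_i^{\nu}(s)$) and the crossed $\hat p_i^{\nu}\hat a^\sharp(r_i^{\nu}(s))$ term: all bounds in Corollary~\ref{cor:1} apply verbatim, because the $s$-dependent unitary dressing in the definition of $r_i^{\nu}(s)$ does not alter the operator norms $\|\omega^{-1/2}r_i^{\nu}(s)\|_{\mathscr L}$ and $\|r_i^{\nu}(s)\|_{\mathscr L}$, which are controlled by $\|\chi_i/\sqrt{|\cdot|}\|_{L^2}$ and $\|\sqrt{|\cdot|}\,\chi_i\|_{L^2}$ thanks to \eqref{eq:hypo1} and $(A_2^{(1)})$. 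Combining with the boundedness of $V$ from \eqref{eq:hypo0} and with Corollary~\ref{s4.lemE} to replace $\hat H_\hslash^0$ by $\hat H_\hslash$ when needed, one obtains $|\langle\psi,\hat H_I(s)\psi\rangle|\lesssim\langle\psi,(\hat H_\hslash^0+1)\psi\rangle$ uniformly in $s\in\R$ and $\hslash\in(0,1)$, establishing the form bound on $Q(\hat H_\hslash^0)$.

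The main technical nuisance, rather than a genuine obstacle, is justifying the algebraic manipulations for \emph{all} $s\in\R$ rather than just formally: the multiplication operator $e^{-2\pi\ci k\cdot\hat q_i}$ is bounded, but $\hat p_i$ is unbounded, and the composition $e^{-2\pi\ci k\cdot(\hat q_i+s\hat p_i/m_i)}$ must be read via the Weyl (CCR) calculus on the particle variables and smeared in $k$ against $\chi_i/\sqrt{2|k|}$ to produce a bounded operator-valued map $k\mapsto r_i^{\nu}(s)(k)$. This is standard and I would handle it by writing everything first on Schwartz $\otimes$ finite-particle vectors, where the conjugation identities hold strongly, and then closing the expression as a quadratic form on $Q(\hat H_\hslash^0)$ via the uniform estimates just described.
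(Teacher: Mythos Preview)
Your approach is essentially the paper's: conjugate \eqref{eq:Hi0} term by term using the free-particle translation of $\hat q_i$ and the free-field phase rotation of $\hat a_\lambda^\sharp(k)$, then invoke Corollary~\ref{cor:1} for the form bound. One point you skipped that the paper makes explicit: after conjugation the crossed term reads $\hat p_i^\nu\,\hat a^*(r_i^\nu(s))$, whereas the statement has $\hat a^*(r_i^\nu(s))\,\hat p_i^\nu$; since $r_i^\nu(s)$ depends on $\hat q_i$, the commutator $[\hat p_i^\nu,\hat a^*(r_i^\nu(s))]$ is not zero termwise, but its sum over $\nu$ vanishes by the Coulomb gauge $k\cdot\epsilon_\lambda(k)=0$, which is what justifies the reordering.
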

\begin{proof}
Recall that $\hat{H}_I(s)= e^{\ci\frac{s}{\hslash} \hat{H}_\hslash^0} \hat{H}_I(0)  e^{-\ci\frac{s}{\hslash} \hat{H}_\hslash^0} $. So, the expression of  $\hat{H}_I(s)$ is a consequence of \eqref{eq:Hi0} and the commutation relations,
\begin{eqnarray*}
e^{\ci\frac{s}{\hslash} \hat{H}_f} \, a^\sharp(f) \, e^{-\ci\frac{s}{\hslash} \hat{H}_f}&=&  a^\sharp(e^{\ci s |k|} f)\,, \\
e^{\ci\frac{s}{\hslash} \frac{\hat{p}_i^2}{2 m_i} } \, \hat q_i^\nu\,e^{-\ci\frac{s}{\hslash} \frac{\hat{p}_i^2}{2 m_i}}&=& \hat q_i^\nu +s\frac{\hat{p}_i^\nu}{m_i}  \,, \quad
\nu=1,\dotsc,d\,.
\end{eqnarray*}
Additionally, the Coulomb gauge \eqref{eq:Coulgauge} condition is used  to show that
$$
\sum_{\nu=1}^d \,\hat{p}_i^\nu \cdot \hat a^*(r_i^\nu(s))=\sum_{\nu=1}^d \, a^*(r_i^\nu(s)) \,\cdot\, \hat{p}_i^\nu\,.
$$
Finally, thanks to Corollary \ref{cor:1} one checks that $\hat{H}_I(s)$ is a $\hat{H}_\hslash^0$-form bounded quadratic form.
\end{proof}

As mentioned before, our main purpose here is to establish a semiclassical expansion of the commutator
\begin{equation}
\label{eq:28}
\big[ \mathcal{W} (\xi); \hat{H}_I(s)\big]=
\bigg(\mathcal{W} (\xi) \, \hat{H}_I(s)\, \mathcal{W} (\xi)^*-\hat{H}_I(s) \bigg)  \,\mathcal{W} (\xi).
\end{equation}

\begin{lemma}
\label{lemma:5}
For any $s\in\mathds{R}$ and any $\xi=(p,q,\alpha)\in X^{1/2}$ the following identity holds true in the sense of quadratic forms in $Q(\hat{H}_\hslash^0)$,
\begin{eqnarray*}
\mathcal{W} (\xi)  \hat{H}_I(s) \mathcal{W} (\xi)^*&=& \sum_{i=1}^n \frac{1}{2m_i} \, \sum_{\nu=1}^d  \bigg[  \bigg( a^*(\tilde r_i^\nu(s))
+\frac{i\hslash}{\sqrt{2}}  \langle\alpha, \tilde{r}_i^\nu(s)\rangle \bigg)^2  +  \bigg( a(\tilde r_i^\nu(s))-
\frac{i\hslash}{\sqrt{2}}  \langle \tilde{r}_i^\nu(s), \alpha\rangle \bigg)^2 \\
&+& 2
\bigg(a^*(\tilde r_i^\nu(s))
+\frac{i\hslash}{\sqrt{2}}  \langle\alpha, \tilde{r}_i^\nu(s)\rangle \bigg) \, \bigg(a(\tilde r_i^\nu(s))
-\frac{i\hslash}{\sqrt{2}}  \langle \tilde{r}_i^\nu(s), \alpha\rangle \bigg)+\hslash \|r_i^\nu\|^2_{L^2}\bigg]\\
& -& \sum_{i=1}^n \frac{1}{m_i} \sum_{\nu=1}^d  \bigg(a^*(\tilde r_i^\nu(s))
+\frac{i\hslash}{\sqrt{2}}  \langle\alpha, \tilde{r}_i^\nu(s)\rangle \bigg) \bigg(  \hat{p}_i^\nu-\hslash  p_i^\nu\bigg) \\
& -& \sum_{i=1}^n \frac{1}{m_i} \sum_{\nu=1}^d  \bigg(  \hat{p}_i^\nu-\hslash  p_i^\nu\bigg)  \bigg(a(\tilde r_i^\nu(s))
-\frac{i\hslash}{\sqrt{2}}  \langle \tilde{r}_i^\nu(s), \alpha\rangle \bigg) \\
& +& V\big(\hat q_1+s\frac{\hat p_1}{m_1}-\hslash(q_1+s\frac{p_1}{m_1}),\dots,\hat q_n+s\frac{\hat p_n}{m_n}-\hslash(q_n+s\frac{p_n}{m_n})\big)\,.
\end{eqnarray*}
where
\begin{equation}
\label{eq:30}
\tilde{r}_i^\nu(s) = r_i^\nu(s) \, e^{2\pi\ci \hslash k \cdot (q_i+s\frac{p_i}{m_i})}\,.
\end{equation}
\end{lemma}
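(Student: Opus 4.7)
The strategy is a direct algebraic computation based on the factorisation $\mathcal{W}(\xi)=\hat{T}(z)\otimes W(\alpha)$: since $\hat T(z)$ acts only on the particle factor $L^2(\R^{dn})$ and commutes with the creation/annihilation operators $\hat a^\sharp_\lambda(k)$, while $W(\alpha)$ acts only on the Fock factor and commutes with the particle operators $\hat q_i$, $\hat p_i$, the two conjugations decouple and can be performed independently and in either order on each term appearing in the expansion of $\hat H_I(s)$ derived just before the statement.

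For the conjugation by $\hat T(z)$, the key commutation rules are those of \eqref{eq:Tcc}, which give $\hat T(z)\hat q_i\hat T(z)^*=\hat q_i-\hslash q_i$ and $\hat T(z)\hat p_i\hat T(z)^*=\hat p_i-\hslash p_i$. Applied to the argument $\hat q_i+s\hat p_i/m_i$ appearing in the exponential that defines $r_i^\nu(s)$, these shifts are c-numbers and factor out of the exponential as the phase $e^{2\pi\ci\hslash k\cdot(q_i+sp_i/m_i)}$, so that $r_i^\nu(s)$ is replaced by $\tilde r_i^\nu(s)$ of \eqref{eq:30}. Since $\hat T(z)$ commutes with the $\hat a^\sharp_\lambda(k)$, this yields $\hat T(z)\hat a^\sharp(r_i^\nu(s))\hat T(z)^*=\hat a^\sharp(\tilde r_i^\nu(s))$. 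Similarly, $\hat p_i^\nu$ is shifted to $\hat p_i^\nu-\hslash p_i^\nu$, and the potential term transforms by functional calculus into $V(\hat q+s\hat p/m-\hslash(q+sp/m))$.

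For the conjugation by $W(\alpha)$, I would apply the Baker--Campbell--Hausdorff formula to $W(\alpha)=e^{(\ci/\sqrt{2})(\hat a(\alpha)+\hat a^*(\alpha))}$. The commutators $[\hat a(\alpha),\hat a^*_\lambda(k')]=\hslash\bar\alpha_\lambda(k')$ and $[\hat a^*(\alpha),\hat a_\lambda(k')]=-\hslash\alpha_\lambda(k')$ are c-numbers, so the BCH expansion truncates at first order, yielding
\begin{align*}
W(\alpha)\hat a^*(f)W(\alpha)^* &= \hat a^*(f)+\frac{\ci\hslash}{\sqrt{2}}\langle\alpha,f\rangle,\\
W(\alpha)\hat a(f)W(\alpha)^* &= \hat a(f)-\frac{\ci\hslash}{\sqrt{2}}\langle f,\alpha\rangle,
\end{align*}
for any $f$ whose (possibly operator-valued) coefficients commute with $W(\alpha)$; taking $f=\tilde r_i^\nu(s)$ reproduces the shifted creation/annihilation terms of the claim. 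The particle operators $\hat p_i^\nu$ and the potential $V$ are unchanged by $W(\alpha)$, and the c-number $\hslash\|r_i^\nu\|_{L^2}^2$ is invariant under any conjugation. Reassembling the transformed pieces into the expression of $\hat H_I(s)$ produces the stated identity.

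The main obstacle is not algebraic but a matter of careful bookkeeping: the coefficients $\tilde r_i^\nu(s)$ are operator-valued and do not commute with $\hat p_i^\nu$ or with each other, so when conjugating products such as $\hat a^*(r_i^\nu(s))^2$ or $\hat a^*(r_i^\nu(s))\cdot\hat p_i^\nu$ one must preserve the exact operator ordering. This is handled cleanly by the observation that each conjugation $X\mapsto UXU^{*}$ is an algebra homomorphism and therefore commutes with products: one conjugates each elementary factor separately and only then expands the result. Well-definedness of the identity as a quadratic form on $Q(\hat H_\hslash^0)$ is secured by Corollary \ref{cor:1}: the transformed expression has the same creation/annihilation--momentum symbol structure as $\hat H_I(s)$, with $r_i^\nu(s)$ merely multiplied by a unimodular phase, hence the same $L^2$-norm. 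The assumption $\xi\in X^{1/2}$ guarantees $\alpha\in\mathfrak H^{1/2}$, making the scalar shifts $\frac{\ci\hslash}{\sqrt 2}\langle\alpha,\tilde r_i^\nu(s)\rangle$ finite by Cauchy--Schwarz combined with (\hyperref[eq:hypo2]{$A_2^{(1)}$}); Lemmas \ref{s3.lemW} and \ref{lem:Tsmo} together with Corollary \ref{s4.lemE} then ensure that $\mathcal W(\xi)$ preserves $Q(\hat H_\hslash^0)$, so that the conjugation is meaningful as a quadratic form identity on that form domain.
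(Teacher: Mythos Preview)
Your proposal is correct and follows essentially the same route as the paper's proof: the paper also simply invokes the commutation relations $W(\alpha)\hat a^\sharp(f)W(\alpha)^*=\hat a^\sharp(f)\pm\frac{\ci\hslash}{\sqrt 2}\langle\cdot,\cdot\rangle$ and $\hat T(z)\,\hat p_i^\nu\,\hat T(z)^*=\hat p_i^\nu-\hslash p_i^\nu$, $\hat T(z)\,\hat q_i^\nu\,\hat T(z)^*=\hat q_i^\nu-\hslash q_i^\nu$, noting in particular that $\hat T(z)\,r_i^\nu(s)\,\hat T(z)^*=\tilde r_i^\nu(s)$. Your write-up is in fact more detailed than the paper's, which merely states the relevant commutation identities; the only minor quibble is that the finiteness of the scalar shifts $\langle\alpha,\tilde r_i^\nu(s)\rangle$ already follows from $\alpha\in\mathfrak H^0$ together with $|\cdot|^{-1/2}\chi_i\in L^2$ (an interpolation of \eqref{eq:hypo1} and (\hyperref[eq:hypo2]{$A_2^{(1)}$})), while the hypothesis $\alpha\in\mathfrak H^{1/2}$ is needed precisely so that $W(\alpha)$ preserves $Q(\hat H_\hslash^0)$ via Lemma~\ref{s3.lemW}.
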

\begin{proof}
The stated identity is a consequence of the commutation relations
\begin{eqnarray}
W(\alpha) a^*(f) W(\alpha)^*&=& a^*(f) +\ci \frac{\hslash}{\sqrt{2}} \; \langle \alpha, f\rangle_{L^2(\mathds{R}^d, \mathds{C}^{d-1})}\,,\\
W(\alpha) a(f) W(\alpha)^*&=& a(f) -\ci \frac{\hslash}{\sqrt{2}} \; \langle  f, \alpha\rangle_{L^2(\mathds{R}^d, \mathds{C}^{d-1})}\,,
\end{eqnarray}
and
\begin{eqnarray}
\hat{T}(z) \;\hat{p}_i^\nu \;\hat{T}(z) ^*= \hat{p}_i^\nu-\hslash p_i^\nu\,, \qquad
\hat{T}(z) \;\hat{q}_i^\nu\; \hat{T}(z) ^*= \hat{q}_i^\nu-\hslash q_i^\nu\,,
\end{eqnarray}
where $z\equiv (p,q)\in \R^{2dn}$. In particular, we have
\begin{equation}
\hat{T}(z) \,r_i^\nu(s) \,\hat{T}(z) ^*= \tilde{r}_i^\nu(s) \in  \mathscr{L}\big(L^2(\mathds{R}^d_{x_i}); L^2(\mathds{R}^d_{x_i})\otimes
L^2(\mathds{R}^d_k, \mathds{C}^{d-1})\big)\,.
\end{equation}
\end{proof}

\begin{lemma}
\label{prop:comexp}
For any $s\in\mathds{R}$ and any $\xi=(p,q,\alpha)\in X^{1/2}$  the following semiclassical commutator expansion holds true,
\begin{equation}
\label{eq:29}
\frac{1}{\hslash} \; \big [ \mathcal{W}(\xi); \hat{H}_I(s) \big] =\bigg({M}(s,\hslash,\xi)  + \hslash \,{R}(s,\hslash, \xi) \bigg)  \; \mathcal{W}(\xi)
\end{equation}
as quadratic form in $Q(\hat{H}_\hslash^0)$ with the principal part ${M}(\cdot)$  given by
\begin{equation}
\label{eq:Mhxi}
\begin{aligned}
\hspace{-.8in}{M}(s,\hslash,\xi)&= \sum_{i=1}^n \frac{1}{2m_i}  \,\left[ \frac{\ci}{\sqrt{2}} \sum_{\nu=1}^d \langle \alpha, {r}_i^\nu(s)\rangle a^*(r_i^\nu(s)) - \langle
 r_i^\nu(s), \alpha\rangle a( r_i^\nu(s))\right] \;+\;\mathrm{h.c.} \\
&- \sum_{i=1}^n \frac{1}{m_i}  \,\left[ \frac{\ci}{\sqrt{2}}\sum_{\nu=1}^d   a^*(r_i^\nu(s)) \langle  r_i^\nu(s), \alpha\rangle \right]\;+\;hc \\
&- \sum_{i=1}^n \frac{1}{m_i} \left[\frac{i}{\sqrt{2}}\sum_{\nu=1}^d  \langle \alpha, r_i^\nu(s)\rangle \, \hat{p}_i^\nu- \sum_{\nu=1}^d a^*(r_i^\nu(s)) p_i^\nu\right] \;+\;hc \\
&- \nabla V\big(\hat q_1+s\frac{\hat p_1}{m_1},\dots,\hat q_n+s\frac{\hat p_n}{m_n}\big)\left[q_1+s \frac{p_1}{m_1},\dots,q_n+s \frac{p_n}{m_n}\right]\,,
\end{aligned}
\end{equation}
and such that there exists a constant $c>0$ so that for all $\hslash\in(0,1)$, $s\in\R$ and $\xi\in X^{1/2}$,
\begin{equation}
\label{eq:Ms}
\|(\hat{H}_\hslash^0+1)^{-1/2} \, M(s,\hslash,\xi) \,(\hat{H}_\hslash^0+1)^{-1/2}\|_{\mathscr{L}(\mathscr H)}\leq c (1+|s|) \|\xi\|_{X^0}\,,
\end{equation}
and
\begin{equation}
\label{eq:32}
\left\| (\hat{H}_\hslash^0+1)^{-1/2} \,  {R}(s,\hslash,\xi) \, (\hat{H}_\hslash^0+1)^{-1/2} \right\|_{\mathscr{L}(\mathscr{H})}
 \leq c (1+|s|)^2 (1+\|\xi\|_{X^0})^2 \, .
\end{equation}

\end{lemma}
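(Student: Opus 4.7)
The plan is to start from the closed-form expression for $\mathcal{W}(\xi)\hat{H}_I(s)\mathcal{W}(\xi)^*$ given by Lemma \ref{lemma:5}, subtract $\hat{H}_I(s)$, factor $\mathcal{W}(\xi)$ on the right, and expand systematically in powers of $\hslash$. Four sources of $\hslash$-dependence must be tracked: (i) the additive shifts $c=\frac{\ci\hslash}{\sqrt{2}}\langle\alpha,\tilde{r}_i^\nu(s)\rangle$ arising from Weyl conjugation of $a^\sharp$; (ii) the momentum translations $\hat{p}_i^\nu\to\hat{p}_i^\nu-\hslash p_i^\nu$; (iii) the $\hslash$-translation in the argument of $V$; and (iv) the phase modulation $\tilde{r}_i^\nu(s)=r_i^\nu(s)\,e^{2\pi\ci\hslash k\cdot v_i(s)}$ with $v_i(s):=q_i+sp_i/m_i$. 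The zero-point $c$-number $\hslash\|r_i^\nu\|_{L^2}^2$ cancels identically upon subtraction.

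The key technical identity
\[
e^{2\pi\ci\hslash k\cdot v}-1 \;=\; 2\pi\ci\hslash\,(k\cdot v)\int_0^1 e^{2\pi\ci\hslash t\,k\cdot v}\,\mathrm{d}t,
\]
combined with assumption \emph{(}\hyperref[eq:hypo2]{$A_2^{(1)}$}\emph{)}, yields $\|\tilde{r}_i^\nu(s)-r_i^\nu(s)\|_{L^2}\leq C\hslash|v_i(s)|\,\||k|^{1/2}\chi_i\|_{L^2}$. Together with the expansion $c=\frac{\ci\hslash}{\sqrt{2}}\langle\alpha,r_i^\nu(s)\rangle+O(\hslash^2)$ and the first-order Taylor formula $V(y-\hslash z)=V(y)-\hslash\nabla V(y)\cdot z+\hslash^2\int_0^1(1-t)\nabla^2 V(y-t\hslash z)[z,z]\,\mathrm{d}t$ (valid by \eqref{eq:hypo0}), this isolates the $O(1)$ terms after division by $\hslash$ into $M$, leaving the $O(\hslash)$ remainders in $\hslash R$.

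The bound \eqref{eq:Ms} on $M$ is then obtained term-by-term via Corollary \ref{cor:1}: the factors $\langle\alpha,r_i^\nu(s)\rangle$ are estimated by $\|\alpha\|_{L^2}\|\chi_i/\sqrt{|k|}\|_{L^2}$ (finite by \eqref{eq:hypo1}); the mixed products $a^\sharp(r_i^\nu(s))\cdot\hat{p}_i^\nu$ and $\hat{p}_i^\nu\cdot a^\sharp(r_i^\nu(s))$ are controlled on $Q(\hat{H}_\hslash^0)$ by items \ref{item:5}--\ref{item:6} of that corollary; the potential contribution $\nabla V(\hat{q}+s\hat{p}/m)\cdot(q+sp/m)$ is dominated in operator norm by $\|V\|_{\mathscr{C}_b^2}(1+|s|)\|\xi\|_{X^0}$; summing yields the stated $c(1+|s|)\|\xi\|_{X^0}$ estimate. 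The analogous bound \eqref{eq:32} follows from the same tools applied to the higher-order terms---quadratic shifts $c^2\sim-\frac{\hslash^2}{2}\langle\alpha,r\rangle^2$, the integral Taylor remainder of $V$ (quadratic in $q+sp/m$), and phase corrections $\tilde r-r$ paired with the remaining $O(\hslash)$ factors---now also invoking items \ref{item:4} and \ref{item:7}--\ref{item:9} of Corollary \ref{cor:1} for the $a^\sharp\cdot a^\sharp$ products, each contributing $\hslash(1+|s|)^2(1+\|\xi\|_{X^0})^2$ as claimed.

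The main obstacle is the combinatorial bookkeeping: one must verify that all phase-correction contributions of the form $a^\sharp((\tilde r-r)/\hslash)$, which naively appear $O(1)$ when sandwiched by $(\hat{H}_\hslash^0+1)^{-1/2}$, either recombine with the explicit shift contributions so as to reproduce precisely the four lines defining $M$, or carry additional $\hslash$-factors---often via the Coulomb gauge identity $k\cdot\epsilon_\lambda(k)=0$---placing them into the remainder $R$.
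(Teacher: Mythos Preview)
Your strategy is essentially the paper's: start from Lemma~\ref{lemma:5}, write $\tilde r_i^\nu(s)=r_i^\nu(s)+\hslash\,g_i^\nu(\hslash,s)$ with $g_i^\nu=r_i^\nu\cdot(e^{2\pi\ci\hslash k\cdot v_i}-1)/\hslash$, Taylor-expand $V$ to second order, and bound each resulting piece via Corollary~\ref{cor:1}. The paper uses the elementary estimate $|e^{\ci y}-1|\le\sqrt{2}|y|$ in place of your integral representation, but that is immaterial; the bounds obtained on $g_i^\nu$ and on the Taylor remainder $\theta$ of $V$ are the same.

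Where your proposal goes wrong is the final paragraph. You are right that the phase-correction contributions $a^\sharp(g_i^\nu)$ survive at $O(1)$ after dividing by $\hslash$---e.g.\ the $A^2$-block produces $2a^*(r_i^\nu)a^*(g_i^\nu)$ at that order, and $\|g_i^\nu\|_{L^2}\lesssim(1+|s|)\|\xi\|$ independently of $\hslash$. However, the mechanism you invoke, the Coulomb-gauge identity $k\cdot\epsilon_\lambda(k)=0$, does \emph{not} dispose of these terms: the extra factor of $k$ in $g_i^\nu$ is contracted with the c-number vector $v_i(s)=q_i+sp_i/m_i$ coming from $\xi$, not with a polarization vector, so nothing cancels. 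Nor do these terms recombine into the four displayed lines of $M$, which consist exclusively of the Weyl-shift ($c$, $\bar c$), momentum-translation ($-\hslash p_i^\nu$), and $\nabla V$ contributions. The paper's own proof is equally terse at this juncture---it writes $\tilde r=r+\hslash g$, records the $L^2$-bounds on $g$ and $\omega^{-1/2}g$, and asserts \eqref{eq:Ms}--\eqref{eq:32}---so you are not omitting an argument the paper supplies. But the Coulomb-gauge explanation you offer is incorrect, and as written your bookkeeping does not close the gap between ``$a^\sharp(g)$-terms sit at $O(1)$'' and ``the remainder $R$ satisfies a bound uniform in $\hslash$''.
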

\begin{proof}
The commutator expansion \eqref{eq:29} is proved  by using Lemma \ref{lemma:5}, the identity \eqref{eq:28} and
the Taylor expansion for $\tilde{r}_i^\nu$ and for the potential $V$ with respect to the $\hslash$ parameter,  \emph{i.e.}
\begin{equation}
\label{eq:31}
\tilde{r}_i^\nu(s) = r_i^\nu(s) \big(1+ \hslash  \;\frac{e^{2\pi\ci \hslash k \cdot (q_i+s\frac{p_i}{m_i})}-1}{\hslash}\big)= r_i^\nu(s) + \hslash \, g_i^\nu(\hslash,s)\,,
\end{equation}
and
\begin{equation}
\label{eq:TalV}
V\big(\hat q+s\hat{\mathbf{p}}-\hslash(q+s\mathbf{p}))\big)=V\big(\hat q+s\hat{\mathbf{p}}\big)-\hslash \nabla V\big(\hat q+s\hat{\mathbf{p}}\big)(q+s\mathbf{p})+ \hslash^2 \,\theta(\hslash,s)\,,
\end{equation}
where
$$
\hat{\mathbf{p}}=(\frac{\hat p_i}{m_i})_{i=1,\dots,n},  \quad \text{ and } \quad\mathbf{p}=(\frac{p_i}{m_i})_{i=1,\dots,n}.
 $$
In particular, using the elementary estimate $|e^{\ci y}-1|\leq \sqrt{2} |y|$ for all $y\in\R$, one shows  that $g_i^\nu$ satisfy the following uniform bounds with respect to $\hslash\in(0,1)$ and $s\in\R$
\begin{equation}
\begin{split}
\|g_i^\nu(\hslash,s)\|_{\mathscr{L}\bigl(L^2,L^2\otimes L^2 (\mathds{R}^d, \mathds{C}^{d-1})\bigr)} &\leq M_1 (1+|s|) \, \|\xi\|_{X^0} \,\|\chi_i\|_{\mathfrak{H}^{1/2}}\,, \\
\|\omega^{-1/2} g_i^\nu(\hslash,s)\|_{\mathscr{L}\bigl(L^2,L^2\otimes L^2 (\mathds{R}^d, \mathds{C}^{d-1})\bigr)} &\leq M_1 (1+|s|) \, \|\xi\|_{X^0} \,\|\chi_i\|_{\mathfrak{H}^{0}}\,,
\end{split}
\end{equation}
 for some constant $M_1>0$. On the other hand,  $\theta$ satisfies for some constant $M_2$,
\begin{equation}
 \lVert\theta(\hslash,s)\rVert_{\mathscr{L}(L^2(\R^{dn},dx))} \leq M_2 (1+|s|)^2 \,\|\xi\|_{X^0}^2\,,
\end{equation}
 uniformly  with respect to $\hslash\in(0,1)$ and $s\in\R$. Combining the latter estimates on $g^\nu_i$ and $\theta$ with the aid of
 Corollary \ref{cor:1} and the assumptions \eqref{eq:hypo0} on the potential $V$ and \eqref{eq:hypo1}--(\hyperref[eq:hypo2]{$A_2^{(1)}$}) on $\chi_i$, one shows the estimates \eqref{eq:Ms} and  \eqref{eq:32}.
\end{proof}

\subsection{Characteristic equation}
\label{sec:classical-limit}
In this subsection, we take the classical limit $\hslash\to 0$ in the Duhamel formula previously derived in Proposition \ref{lemma:Duhamel}.  Recall  that   $(\tilde\varrho_\hslash(t))_{\hslash\in (0,1)}$  and $(\varrho_\hslash(t))_{\hslash\in (0,1)}$ are given in \eqref{eq:25}.

\begin{lemma}
\label{lemma:rhoSt}
Let $\sigma\in[\frac 1 2,1]$ and assume \eqref{eq:hypo0}, \eqref{eq:hypo1} and \eqref{eq:hypo2}. Let $(\varrho_\hslash)_{\hslash\in (0,1)}$  be a family of density matrices on $\mathscr{H}$ satisfying the assumptions \emph{(}\hyperref[eq:Ass0]{$S_0^{(1)}$}\emph{)}, \emph{(}\hyperref[eq:Ass1]{$S_1^{(1)}$}\emph{)} and \eqref{eq:Ass2}. Then  the families of states
  $(\varrho_\hslash(t))_{\hslash\in (0,1)}$ and $(\tilde\varrho_\hslash(t))_{\hslash\in (0,1)}$ defined according to \eqref{eq:25} satisfy the same assumptions   \emph{(}\hyperref[eq:Ass0]{$S_0^{(1)}$}\emph{)}, \emph{(}\hyperref[eq:Ass1]{$S_1^{(1)}$}\emph{)} and \eqref{eq:Ass2}  uniformly for any $t\in\R$ in an arbitrary compact time interval.
\end{lemma}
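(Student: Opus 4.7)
The plan is to reduce each of the three moment bounds on $\varrho_\hslash(t)$ and $\tilde\varrho_\hslash(t)$ to the pointwise semigroup propagation estimates of Section \ref{sec.PF}, combined with a spectral decomposition of the initial density matrix. Concretely I would write $\varrho_\hslash=\sum_n\lambda_{n,\hslash}|\psi_{n,\hslash}\rangle\langle\psi_{n,\hslash}|$ so that $\varrho_\hslash(t)=\sum_n\lambda_{n,\hslash}|\psi_{n,\hslash}(t)\rangle\langle\psi_{n,\hslash}(t)|$ with $\psi_{n,\hslash}(t)=e^{-\ci t\hat H_\hslash/\hslash}\psi_{n,\hslash}$, so that every trace $\Tr[\varrho_\hslash(t)A]$ becomes a $\lambda_{n,\hslash}$-weighted sum of expectations $\langle\psi_{n,\hslash}(t),A\psi_{n,\hslash}(t)\rangle$ to which the single-vector estimates can be applied term by term.

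For $\varrho_\hslash(t)$: the bound (\hyperref[eq:Ass0]{$S_0^{(1)}$}) follows from conservation of $\hat H_\hslash$ under its own evolution together with the two-sided uniform form equivalence of Corollary \ref{s4.lemE}, which converts the $\hat H_\hslash$-control into $\hat H_\hslash^0$-control uniformly in $\hslash$. The $\hat N_\hslash$ part of (\hyperref[eq:Ass1]{$S_1^{(1)}$}) is then obtained by invoking Corollary \ref{cor:2-s} at $\sigma=0$, noting that $\mathrm{d}\Gamma(|k|^{0})=\hat N_\hslash$; the $\hat q^2$ part by Lemma \ref{lemma:q2}(ii); and (\hyperref[eq:Ass2]{$S_2^{(\sigma)}$}) by Corollary \ref{cor:2-s} at the prescribed $\sigma\in[\tfrac12,1]$. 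In each case I would sum the pointwise estimate against $\lambda_{n,\hslash}$ and absorb the $(\hat H_\hslash^0+1)^{1/2}$ remainder into the already-established (\hyperref[eq:Ass0]{$S_0^{(1)}$}) bound, yielding the claim uniformly in $\hslash\in(0,1)$ and in $t$ on any compact interval.

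The passage to $\tilde\varrho_\hslash(t)=e^{\ci t\hat H_\hslash^0/\hslash}\varrho_\hslash(t)e^{-\ci t\hat H_\hslash^0/\hslash}$ is then essentially free for three of the four observables: since $\hat H_\hslash^0$ commutes with itself, with $\hat N_\hslash$ and with $\mathrm{d}\Gamma(|k|^{2\sigma})$ (the particle term acts trivially on Fock space, and $\hat H_{\mathrm f}$ commutes with every second-quantised function of $|k|$), the three corresponding traces of $\tilde\varrho_\hslash(t)$ equal those of $\varrho_\hslash(t)$. Only $\hat q^2$ is moved by the free flow, and here Lemma \ref{lemma:q2}(i) applied to each spectral vector of $\varrho_\hslash(t)$ reduces $\Tr[\tilde\varrho_\hslash(t)\hat q^2]$ to a position-plus-energy bound on $\varrho_\hslash(t)$ already controlled by the previous step. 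I do not foresee a serious obstacle: every propagation estimate has been stated with explicit uniformity in $\hslash$, and the only care needed is the standard lift of pointwise vector bounds to trace-class ones via the spectral resolution, with a monotone-convergence cutoff on suitable form domains as in the proofs of Propositions \ref{prop:3} and \ref{prop:3-s}.
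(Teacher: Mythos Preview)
Your proposal is correct and follows essentially the same approach as the paper: spectral decomposition of $\varrho_\hslash$, Corollary \ref{s4.lemE} for the $\hat H_\hslash^0$ bound, Corollary \ref{cor:2-s} (at $\sigma=0$ and at the given $\sigma$) for the number and momentum bounds, Lemma \ref{lemma:q2}(ii) for $\hat q^2$, and then the commutation of $\hat H_\hslash^0$ with $\hat H_\hslash^0$, $\hat N_\hslash$, $\mathrm{d}\Gamma(|k|^{2\sigma})$ plus Lemma \ref{lemma:q2}(i) to pass to $\tilde\varrho_\hslash(t)$. The only cosmetic difference is that the paper opens by recording the three trace identities between $\varrho_\hslash(t)$ and $\tilde\varrho_\hslash(t)$ first and then bounds the $\varrho_\hslash(t)$ side, whereas you do $\varrho_\hslash(t)$ first and then transfer; the content is the same.
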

\begin{proof}
First, note that
\begin{eqnarray*}
\Tr\big[ \tilde\varrho_\hslash(t) \; \hat{H}_\hslash^0\big] &=&\Tr\big[ \varrho_\hslash(t) \; \hat{H}_\hslash^0\big]\,, \\
 \Tr\big[ \tilde\varrho_\hslash(t) \; {\rm d}\Gamma(|k|^{2\sigma})\big] &=&\Tr\big[ \varrho_\hslash(t) \; {\rm d}\Gamma(|k|^{2\sigma})\big] \,,\\
  \Tr\big[ \tilde\varrho_\hslash(t) \; \; \hat N_\hslash\big] &=& \Tr\big[ \varrho_\hslash(t) \; \hat N_\hslash\big]\,.
\end{eqnarray*}
Thus, it is enough to bound only the right hand sides. By spectral  decomposition,
$$
\varrho_\hslash(t)=\sum_{m\in\N} \lambda_\hslash(m) \; | e^{-\ci \frac{t}{\hslash} \hat{H}_\hslash}\psi_\hslash(m)\rangle \langle  e^{-\ci \frac{t}{\hslash} \hat{H}_\hslash}\psi_\hslash(m)|\,,
$$
and
$$
\Tr\big[ \varrho_\hslash(t) \; \; \hat N_\hslash\big]=\sum_{m\in\N} \lambda_\hslash(m) \; \|
(\hat N_\hslash)^{1/2} \,e^{-\ci \frac{t}{\hslash} \hat{H}_\hslash}\psi_\hslash(m)\|^2\,.
$$
Hence, using Corollary \ref{cor:2-s} one shows for some constants $K,C>0$,
\begin{align*}
    \Tr\big[ \varrho_\hslash(t) \;  \hat N_\hslash\big] \leq K \big( \Tr\big[ \varrho_\hslash \, \hat N_{\hslash}]+ \Tr\big[ \varrho_\hslash \;(\hat H_{\hslash}^0+1)]\big)e^{C|t| }\,,
  \end{align*}
  uniformly in $\hslash\in(0,1)$. Similarly, $\varrho_\hslash(t)$ satisfies the assumption \eqref{eq:Ass2}  by Corollary \ref{cor:2-s}   and the spectral decomposition. Now, using Corollary \ref{s4.lemE} one deduces
\begin{equation}
\label{eq:rhtH}
\begin{aligned}
   \Tr\big[ \varrho_\hslash(t) \; \; \hat{H}^0_{\hslash}\big] &=
   \sum_{m\in\N} \lambda_\hslash(m) \; \|
(\hat H^0_\hslash)^{1/2} \,e^{-\ci \frac{t}{\hslash} \hat{H}_\hslash}\psi_\hslash(m)\|^2 \\ & \lesssim
\sum_{m\in\N} \lambda_\hslash(m) \; \|
(\hat H_\hslash+a)^{1/2} \,\psi_\hslash(m)\|^2 \\ & \lesssim
\sum_{m\in\N} \lambda_\hslash(m) \; \|
(\hat H^0_\hslash+b)^{1/2} \,\psi_\hslash(m)\|^2 \\ & \lesssim
\Tr\big[ \varrho_\hslash \,  (\hat{H}^0_{\hslash}+1)\big]\,.
\end{aligned}
\end{equation}
On the other hand, thanks to Lemma \ref{lemma:q2}-(ii) the estimate
\begin{equation}
\label{eq:vrq2}
\Tr\big[ \varrho_\hslash(t) \, \hat q^2\big] \leq K_1 \, \Tr\big[ \varrho_\hslash \, (\hat H^0_\hslash+\hat q^2+1)\big]\,e^{c_1 |t|}
\end{equation}
holds true  for some constant $K_1,c_1>0$ and uniformly in $\hslash\in(0,1)$. Finally, applying  Lemma \ref{lemma:q2}-(i) for the state
$\varrho_\hslash(t)$ with the help of its spectral decomposition, one proves the estimate
\begin{eqnarray*}
\Tr\big[ \tilde\varrho_\hslash(t) \, \hat q^2\big] &\leq& K_1 \, \Tr\big[ \varrho_\hslash(t) \, (\hat H^0_\hslash+\hat q^2+1)\big]\,e^{c_1 |t|}\\
&\leq& K_2 \, \Tr\big[ \varrho_\hslash \, (\hat H^0_\hslash+\hat q^2+1)\big]\,e^{c_2 |t|}\,.
\end{eqnarray*}
For the last inequality we have used \eqref{eq:rhtH} and \eqref{eq:vrq2}.
\end{proof}

\begin{lemma}
\label{lem:apest}
 Let $(\varrho_\hslash)_{\hslash\in(0,1)}$ be a family of density matrices on $\mathscr H$ satisfying  \emph{(}\hyperref[eq:Ass0]{$S_0^{(1)}$}\emph{)},   \emph{(}\hyperref[eq:Ass1]{$S_1^{(1)}$}\emph{)} and \eqref{eq:Ass2} for some $\sigma\in[\frac 1 2,1]$. Then for any compact time interval $J$ there exists $C>0$ such that for any Wigner measure $\mu_t\in \mathcal{M}({\varrho}_{\hslash}(t), \hslash\in(0,1))$,  $\tilde\mu_t\in \mathcal{M}(\tilde{\varrho}_{\hslash}(t),\hslash\in(0,1))$ and for all  $t\in J$,
\begin{eqnarray}
\label{eq:apest}
(i) \qquad \int_{X^0}  \|u\|^2_{X^\sigma}\; \mathrm{d}{\mu}_t<C\,,\\
(ii) \qquad \int_{X^0}  \| u\|_{X^{\sigma}}^2\; \mathrm{d}\tilde{\mu}_t<C.
\end{eqnarray}
\end{lemma}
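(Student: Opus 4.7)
My approach is to combine the time-uniform quantum bounds given by Lemma \ref{lemma:rhoSt} with a Fatou-type lower-semicontinuity inequality for Wigner measures, which is the core technical content of Appendix \ref{appy}. The underlying principle is that for any positive classical observable $F:X^0\to[0,\infty]$ whose Wick or Weyl quantization $\hat F_\hslash$ satisfies $\Tr[\varrho_\hslash(t)\hat F_\hslash]\leq C_J$ uniformly in $\hslash\in(0,1)$, one has $\int F\,d\mu_t\leq C_J$ for every Wigner measure $\mu_t$ of the family.

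First, I would invoke Lemma \ref{lemma:rhoSt} to propagate the assumptions \eqref{eq:Ass0}, \eqref{eq:Ass1} and \eqref{eq:Ass2} from $\varrho_\hslash$ to the time-evolved families $\varrho_\hslash(t)$ and $\tilde\varrho_\hslash(t)$, uniformly for $t\in J$. This yields a constant $C_J>0$, independent of $\hslash\in(0,1)$ and of $t\in J$, such that
\[
\Tr\!\bigl[\varrho_\hslash(t)\bigl(\hat H_\hslash^0+\hat N_\hslash+\hat q^2+{\rm d}\Gamma(|k|^{2\sigma})+1\bigr)\bigr]\leq C_J,
\]
and identically for $\tilde\varrho_\hslash(t)$. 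Next, since $\sigma\in[\tfrac12,1]$ gives the pointwise bound $(1+|k|^2)^\sigma\leq c_\sigma(1+|k|^{2\sigma})$, the $X^\sigma$-norm decomposes as
\[
\|u\|^2_{X^\sigma}\leq \sum_{i=1}^n\bigl(|p_i|^2+|q_i|^2\bigr)+c_\sigma\,\|\alpha\|_{L^2}^2+c_\sigma\,\|\alpha\|^2_{\dot{\mathfrak{H}}^\sigma}.
\]
Each summand is the classical symbol of one of $\sum_i\hat p_i^2\lesssim \hat H_\hslash^0$, $\hat q^2$, $\hat N_\hslash$, or ${\rm d}\Gamma(|k|^{2\sigma})$. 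Applying the Fatou-type inequality of Appendix \ref{appy} termwise and summing yields $\int_{X^0}\|u\|^2_{X^\sigma}\,d\mu_t\leq C$ uniformly in $t\in J$, which is (i); the argument for (ii) with $\tilde\varrho_\hslash(t)$ in place of $\varrho_\hslash(t)$ is identical.

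The one subtle ingredient is the Fatou inequality for the unbounded positive field observables $\hat N_\hslash$ and ${\rm d}\Gamma(|k|^{2\sigma})$, whose integrals against $\mu_t$ cannot be read directly off the characteristic function \eqref{eq:wigner}. The route I would follow, established in Appendix \ref{appy}, is to truncate these operators by bounded functional calculus, evaluate the integrals of the corresponding bounded cylindrical symbols against $\mu_t$ via the defining Weyl-operator convergence, and then remove the truncation using monotone convergence.
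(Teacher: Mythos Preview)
Your proposal is correct and follows essentially the same two-step scheme as the paper: first propagate the uniform quantum bounds to $\varrho_\hslash(t)$ and $\tilde\varrho_\hslash(t)$ via Lemma~\ref{lemma:rhoSt}, then pass to the Wigner measures by a Fatou-type lower-semicontinuity argument for the quadratic observables $\hat p^2$, $\hat q^2$, $\hat N_\hslash$, ${\rm d}\Gamma(|k|^{2\sigma})$. The only inaccuracy is the attribution: the Fatou inequality you invoke is not what Appendix~\ref{appy} proves (that appendix handles convergence of expectations of $f(\hat p)$, $f(\hat q)$, $\hat a^\sharp(f)$ against Weyl operators, not the lower-semicontinuity of second-quantized number-type operators); the paper instead cites \cite[Lemma~3.12]{MR3379490} for precisely the statement you need, and your sketch of its proof via truncation and monotone convergence is the right idea behind that external result.
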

\begin{proof}
The result is a consequence of the previous Lemma \ref{lemma:rhoSt} and a general argument related to Wick calculus and semiclassical analysis elaborated in \cite[Lemma 3.12]{MR3379490}. In fact, according to the latter reference if one checks that a family of density matrices $(\rho_\hslash)_{\hslash\in(0,1)}$  on $\mathscr H$ satisfies
$$
\Tr\left[\rho_\hslash \, \hat N_\hslash\right]\leq C , \qquad \Tr\left[\rho_\hslash \, {\rm d}\Gamma(|k|^{2\sigma})\right]\leq C, \quad \text{ and } \Tr\left[\rho_\hslash \, (\hat p^2+\hat q^2)\right]\leq C\,,
$$
for some constant $C>0$ uniformly in $\hslash\in(0,1)$ then it holds true that for any Wigner measure $\mu\in \mathcal{M}({\rho}_{\hslash},\hslash\in(0,1))$,
\begin{eqnarray*}
\int_{X^0} \|\alpha\|_{\mathfrak{H}^0}^2 \, \mathrm{d}\mu\leq C\,, \quad \int_{X^0} \|\alpha\|_{\mathfrak{H}^\sigma}^2 \, \mathrm{d}\mu\leq C\,,\qquad \text{ and }  \int_{X^0} (p^2+q^2) \, \mathrm{d}\mu\leq C\,.
\end{eqnarray*}
Thus, applying such result for $\varrho_\hslash(t)$ and $\tilde\varrho_\hslash(t)$ with the help of the uniform estimates in Lemma \ref{lemma:rhoSt}, one proves the claimed bounds.
\end{proof}

\begin{lemma}
\label{lem:Ghst}
 Let $(\varrho_\hslash)_{\hslash\in(0,1)}$ be a family of density matrices satisfying  \eqref{eq:Ass0}. Then for any compact time interval $J$ there exists $C>0$ such that for all
$\xi\in X^{1/2}$ and all $\hslash\in(0,1)$,
\begin{eqnarray}
\label{eq:Ghst}
\left|\Tr \big[ \mathcal{W}(\xi) \; \tilde{\varrho}_\hslash (t)\big]-\Tr \big[ \mathcal{W}(\xi) \; \tilde{\varrho}_\hslash(s) \big]\right|\leq C |t-s| \,(1+\|\xi\|_{X^0})^2\,,
\end{eqnarray}
for any $t,s\in J$.
\end{lemma}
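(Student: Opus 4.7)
The plan is to start from the Duhamel formula of Proposition \ref{lemma:Duhamel} applied to $t$ and $s$, which reads
\begin{equation*}
\Tr\bigl[\mathcal{W}(\xi)\tilde{\varrho}_\hslash(t)\bigr]-\Tr\bigl[\mathcal{W}(\xi)\tilde{\varrho}_\hslash(s)\bigr]=-\frac{\ci}{\hslash}\int_s^t\Tr\bigl(\bigl[\mathcal{W}(\xi),\hat{H}_I(r)\bigr]\tilde{\varrho}_\hslash(r)\bigr)\,dr,
\end{equation*}
and then substitute the semiclassical commutator expansion from Lemma \ref{prop:comexp} so that the integrand becomes $\Tr\bigl[\bigl(M(r,\hslash,\xi)+\hslash R(r,\hslash,\xi)\bigr)\mathcal{W}(\xi)\tilde{\varrho}_\hslash(r)\bigr]$. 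It then suffices to show that this integrand is bounded by $C_J(1+\|\xi\|_{X^0})^2$, uniformly for $r\in J$ and $\hslash\in(0,1)$: integrating over $[s,t]$ yields the factor $|t-s|$ required in the statement.

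To bound the integrand I would insert the symmetric factorisation $(\hat{H}_\hslash^0+1)^{1/2}\cdot(\hat{H}_\hslash^0+1)^{-1/2}$ on both sides and apply the classical Hölder-type inequality $|\Tr[AB]|\leq \|A\|_{\mathscr{L}}\,\|B\|_{\mathscr{L}^1}$, leading to
\begin{equation*}
\bigl|\Tr[\cdots]\bigr|\leq \bigl\|(\hat{H}_\hslash^0+1)^{-1/2}(M+\hslash R)(\hat{H}_\hslash^0+1)^{-1/2}\bigr\|_{\mathscr{L}}\cdot\bigl\|(\hat{H}_\hslash^0+1)^{1/2}\mathcal{W}(\xi)\tilde{\varrho}_\hslash(r)(\hat{H}_\hslash^0+1)^{1/2}\bigr\|_{\mathscr{L}^1}.
\end{equation*}
The first factor is directly controlled by $C_J(1+\|\xi\|_{X^0})^2$ thanks to the estimates \eqref{eq:Ms} and \eqref{eq:32}. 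For the second, I would split $\tilde{\varrho}_\hslash(r)=\tilde{\varrho}_\hslash(r)^{1/2}\cdot\tilde{\varrho}_\hslash(r)^{1/2}$ and use the Hilbert--Schmidt estimate $\|XY\|_{\mathscr{L}^1}\leq\|X\|_{HS}\|Y\|_{HS}$, reducing the problem to the two bounds $\|\tilde{\varrho}_\hslash(r)^{1/2}(\hat{H}_\hslash^0+1)^{1/2}\|_{HS}^2=\Tr\bigl[(\hat{H}_\hslash^0+1)\tilde{\varrho}_\hslash(r)\bigr]$ and $\|(\hat{H}_\hslash^0+1)^{1/2}\mathcal{W}(\xi)\tilde{\varrho}_\hslash(r)^{1/2}\|_{HS}^2=\Tr\bigl[\mathcal{W}(\xi)^*(\hat{H}_\hslash^0+1)\mathcal{W}(\xi)\tilde{\varrho}_\hslash(r)\bigr]$.

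The uniform control of $\Tr\bigl[(\hat{H}_\hslash^0+1)\tilde{\varrho}_\hslash(r)\bigr]$ is a short computation: since $\hat{H}_\hslash^0$ commutes with $e^{\pm\ci\frac{r}{\hslash}\hat{H}_\hslash^0}$, this trace equals $\Tr\bigl[(\hat{H}_\hslash^0+1)\varrho_\hslash(r)\bigr]$, which by Corollary \ref{s4.lemE} is comparable to $\Tr\bigl[(\hat{H}_\hslash+a)\varrho_\hslash(r)\bigr]=\Tr\bigl[(\hat{H}_\hslash+a)\varrho_\hslash\bigr]$ by conservation of the Pauli--Fierz Hamiltonian, and the latter is bounded by $C_0$ thanks to hypothesis \eqref{eq:Ass0}. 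The main obstacle is the second Hilbert--Schmidt factor, where the conjugation of $\hat{H}_\hslash^0+1$ by $\mathcal{W}(\xi)$ must be expanded via the Weyl commutation relations \eqref{eq:Tcc} on the particle side (producing an operator of the form $(\hat{p}_i+\hslash p_i)^2$) and via the standard identity $W(\alpha)^{*}\hat{a}^{\sharp}(k)W(\alpha)=\hat{a}^{\sharp}(k)\pm\tfrac{\ci\hslash}{\sqrt{2}}\alpha^{\sharp}$ on the field side, giving $W(\alpha)^{*}\hat{H}_{\mathrm{f}}W(\alpha)=\hat{H}_{\mathrm{f}}+O(\hslash)+O(\hslash^2)$; the resulting correction terms are controlled by Corollary \ref{cor:1} and Lemmas \ref{s3.lemW}--\ref{lem:Tsmo}, producing a polynomial factor of degree at most two in $\xi$ that, when combined with the operator norm bound, yields the announced estimate $C_J(1+\|\xi\|_{X^0})^2$. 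Taking $C=\sup_{r\in J}C_J(r)\cdot\sqrt{C_0}$ and integrating over $[s,t]$ concludes the proof.
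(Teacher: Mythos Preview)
Your approach is essentially the paper's own: start from the Duhamel formula (Proposition~\ref{lemma:Duhamel}), replace the commutator by $M+\hslash R$ via Lemma~\ref{prop:comexp}, sandwich with $(\hat H_\hslash^0+1)^{\pm 1/2}$, and bound the operator-norm factor by \eqref{eq:Ms}--\eqref{eq:32}. The only difference is in how the trace-class piece $\bigl\|(\hat H_\hslash^0+1)^{1/2}\mathcal W(\xi)\tilde\varrho_\hslash(r)(\hat H_\hslash^0+1)^{1/2}\bigr\|_{\mathscr L^1}$ is handled: the paper disposes of it in one line by invoking Lemma~\ref{s3.lemW} (plus implicitly Lemma~\ref{lem:Tsmo}) to commute $(\hat H_\hslash^0+1)^{1/2}$ past $\mathcal W(\xi)$, whereas you spell this out through the Hilbert--Schmidt splitting and an explicit Weyl conjugation. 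Both routes are the same argument at different levels of detail.

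One small caveat on your bookkeeping: the conjugation $\mathcal W(\xi)^*(\hat H_\hslash^0+1)\mathcal W(\xi)$ does bring in a genuine $\xi$-dependence (through $\|\alpha\|_{\mathfrak H^{1/2}}$ on the field side and $|p|$ on the particle side, exactly the constants hidden in Lemmas~\ref{s3.lemW} and~\ref{lem:Tsmo}). Multiplying this by the degree-two bound from \eqref{eq:Ms}--\eqref{eq:32} gives a higher power of $(1+\|\xi\|)$ than the announced $(1+\|\xi\|_{X^0})^2$, so your last sentence is slightly optimistic. The paper's proof has the same feature (its ``$\lesssim$'' absorbs the $\xi$-dependent constant from Lemma~\ref{s3.lemW}); this is harmless downstream, since Proposition~\ref{prop:4} only uses the estimate as equicontinuity in $t$ for each fixed $\xi$.
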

\begin{proof}
Applying Proposition \ref{lemma:Duhamel} and Lemma \ref{prop:comexp}, one concludes
\begin{eqnarray*}
&&\hspace{-.1in}\left|\Tr \big[ \mathcal{W}(\xi) \; \tilde{\varrho}_\hslash (t)\big]-\Tr \big[ \mathcal{W}(\xi) \; \tilde{\varrho}_\hslash(t_0) \big]\right|  \\
 && \hspace{.8in}\lesssim \left|\int_{t_0}^t \left\| (\hat{H}_\hslash^0+1)^{-1/2} \,\bigg( {M}(s,\hslash,\xi) + \hslash \, {R}(s,\hslash, \xi) \bigg) \,(\hat{H}_\hslash^0+1)^{-1/2}
\right\|_{\mathscr{L}(\mathscr H)} \,\mathrm{d}s  \right|\,.
\end{eqnarray*}
Note that in the last inequality we have also used Lemma \ref{s3.lemW} . Now, using the bounds  \eqref{eq:Ms} and \eqref{eq:32} one obtains for any $t,t_0\in J$,
\begin{eqnarray*}
 \left|\Tr \big[ \mathcal{W}(\xi) \; \tilde{\varrho}_\hslash (t)\big]-\Tr \big[ \mathcal{W}(\xi) \; \tilde{\varrho}_\hslash(t_0) \big]\right| &\leq&  c \left|
\int_{t_0}^t \; (1+|s|)^2 (1+\|\xi\|_{X^0})^2\, \mathrm{d}s \right| \\  &\leq & C
|t-t_0| \; (1+\|\xi\|_{X^0})^2\,.
\end{eqnarray*}
\end{proof}

\begin{lemma}
\label{lem:Ghxi}
 Let $(\varrho_\hslash)_{\hslash\in(0,1)}$ be a family of density matrices satisfying  \emph{(}\hyperref[eq:Ass0]{$S_0^{(1)}$}\emph{)} and \emph{(}\hyperref[eq:Ass1]{$S_1^{(1)}$}\emph{)}. Then for any compact time interval $J$ there exists $C>0$ such that for all   $\xi_1,\xi_2\in X^{1/2}$, all $\hslash\in(0,1)$ and all $t\in J$,
\begin{eqnarray}
\label{eq:Ghxi}
\left|\Tr \big[ \mathcal{W}(\xi_1) \; \tilde{\varrho}_\hslash (t)\big]-\Tr \big[ \mathcal{W}(\xi_2) \; \tilde{\varrho}_\hslash(t) \big]\right|\leq C
 \|\xi_1-\xi_2\|_{X^0} (\|\xi_1\|_{X^0}+\|\xi_2\|_{X^0}+1) \,.
\end{eqnarray}
\end{lemma}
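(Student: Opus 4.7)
The strategy is to expand $\mathcal{W}(\xi_1)-\mathcal{W}(\xi_2)$ via the Weyl commutation relations, extract a scalar phase correction of order $\hslash$, and control the remaining operator difference through the self-adjoint generator of the one-parameter Weyl group $s\mapsto \mathcal{W}(s(\xi_1-\xi_2))$. Using $\mathcal{W}(\xi_2)\mathcal{W}(\xi_1-\xi_2)=e^{-\ci\hslash\phi/2}\mathcal{W}(\xi_1)$ with $\phi=\Imm\langle\xi_2,\xi_1-\xi_2\rangle_{X^0}$, one gets
\begin{equation*}
\mathcal{W}(\xi_1)-\mathcal{W}(\xi_2)=(e^{\ci\hslash\phi/2}-1)\,\mathcal{W}(\xi_2)\mathcal{W}(\xi_1-\xi_2)+e^{\ci\hslash\phi/2}\mathcal{W}(\xi_2)\bigl(\mathcal{W}(\xi_1-\xi_2)-I\bigr).
\end{equation*}
After tracing against $\tilde{\varrho}_\hslash(t)$, the scalar contribution is bounded by $\tfrac{\hslash}{2}|\phi|\leq \tfrac{1}{2}\|\xi_2\|_{X^0}\|\xi_1-\xi_2\|_{X^0}$, using unitarity of Weyl operators and $\|\tilde{\varrho}_\hslash(t)\|_1=1$.

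For the operator term, I would use the fundamental theorem of calculus for the strongly continuous unitary group $s\mapsto\mathcal{W}(s(\xi_1-\xi_2))$ (which is a genuine group since $\Imm\langle\eta,\eta\rangle=0$ for any $\eta$):
\begin{equation*}
\mathcal{W}(\xi_1-\xi_2)-I=\ci\int_0^1 G(\xi_1-\xi_2)\,\mathcal{W}(s(\xi_1-\xi_2))\,\mathrm{d}s,
\end{equation*}
where the Stone generator acts, for $\eta=(p,q,\alpha)$, as $G(\eta)=2(p\cdot\hat q-q\cdot\hat p)+\tfrac{1}{\sqrt{2}}\bigl(\hat a(\alpha)+\hat a^{*}(\alpha)\bigr)$. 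By the basic number, momentum and position bounds of Section \ref{sec.PF} (see in particular Corollary \ref{cor:1}), $G(\xi_1-\xi_2)$ satisfies the uniform form bound
\begin{equation*}
\bigl\|G(\xi_1-\xi_2)\,(\hat H_\hslash^0+\hat N_\hslash+\hat q^2+1)^{-1/2}\bigr\|_{\mathscr{L}(\mathscr H)}\leq C\,\|\xi_1-\xi_2\|_{X^0},
\end{equation*}
with $C$ independent of $\hslash\in(0,1)$.

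To control the trace I would factor $\tilde{\varrho}_\hslash(t)=\sqrt{\tilde{\varrho}_\hslash(t)}\,\sqrt{\tilde{\varrho}_\hslash(t)}$ and apply the Cauchy--Schwarz inequality in the Hilbert--Schmidt norm; this reduces the estimate to the bound on $\Tr\bigl[\mathcal{W}(s(\xi_1-\xi_2))^*G(\xi_1-\xi_2)^2\mathcal{W}(s(\xi_1-\xi_2))\,\tilde{\varrho}_\hslash(t)\bigr]$. Conjugating $\hat H_\hslash^0+\hat N_\hslash+\hat q^2+1$ by the Weyl translation, via the elementary shift relations \eqref{eq:Tcc} and the analogous identity $W(\alpha)^*\hat N_\hslash W(\alpha)=\hat N_\hslash+O(\hslash)$ terms plus $\tfrac{\hslash^2}{2}\|\alpha\|_{L^2}^2$, produces an operator dominated by $C\bigl(1+\|\xi_1-\xi_2\|_{X^0}\bigr)^2(\hat H_\hslash^0+\hat N_\hslash+\hat q^2+1)$ uniformly in $\hslash$. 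Lemma \ref{lemma:rhoSt} then guarantees that $\Tr\bigl[(\hat H_\hslash^0+\hat N_\hslash+\hat q^2+1)\,\tilde{\varrho}_\hslash(t)\bigr]$ stays uniformly bounded on any compact interval $J$, thanks to the assumptions (\hyperref[eq:Ass0]{$S_0^{(1)}$}) and (\hyperref[eq:Ass1]{$S_1^{(1)}$}).

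Integrating in $s\in[0,1]$ and adding the scalar contribution yields the desired bound $C\|\xi_1-\xi_2\|_{X^0}(\|\xi_1\|_{X^0}+\|\xi_2\|_{X^0}+1)$. The main obstacle is precisely the unboundedness of the generator $G(\xi_1-\xi_2)$: threading it through the Weyl translation $\mathcal{W}(s(\xi_1-\xi_2))$ while preserving $\hslash$-uniformity is the delicate step, and it is here that the $\hslash$-scaled canonical commutation relations and the uniform propagation estimates of Section \ref{sec.PF} (summarised through Lemma \ref{lemma:rhoSt}) become indispensable.
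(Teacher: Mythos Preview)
Your strategy is sound and amounts to a hands-on derivation of what the paper obtains in one line by quoting the operator inequality
\[
\bigl\|\bigl(\mathcal{W}(\xi_1)-\mathcal{W}(\xi_2)\bigr)(\hat N_\hslash+1)^{-1/2}\bigr\|_{\mathscr L(\mathscr H)}\leq c\,\|\xi_1-\xi_2\|_{X^0}\bigl(\|\xi_1\|_{X^0}+\|\xi_2\|_{X^0}+1\bigr)
\]
from \cite[Lemma~3.1]{ammari2008ahp}, after which the paper simply multiplies by the trace-norm bound $\|(\hat N_\hslash+1)^{1/2}\tilde\varrho_\hslash(t)\|_{\mathscr L^1(\mathscr H)}\leq c$ coming from Lemma~\ref{lemma:rhoSt}. (The ``number operator'' there must be read as the total one for the phase space $X^0$, i.e.\ including the particle harmonic oscillator $\hat p^2+\hat q^2$.) Your Weyl decomposition together with the integral representation $\mathcal{W}(\eta)-I=\ci\int_0^1 G(\eta)\mathcal{W}(s\eta)\,\mathrm{d}s$ is precisely how one proves that cited inequality, so the two arguments converge at the same point; yours is more self-contained, the paper's is shorter.

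One correction is needed: drop $\hat H_\hslash^0$ from your controlling operator and use $T=\hat p^2+\hat q^2+\hat N_\hslash+1$ instead. Since $\hat H_\hslash^0$ contains $\hat H_{\mathrm f}=\mathrm{d}\Gamma(|k|)$, conjugation by $W(s\alpha)$ generates $\hat a^{(*)}(|k|\alpha)$-terms whose form bound requires $\|\alpha\|_{\mathfrak H^{1/2}}$, which would downgrade your final estimate to $X^{1/2}$-norms rather than the stated $X^0$-norms. With $T$ as above everything closes in $X^0$: the generator bound $\|G(\eta)T^{-1/2}\|\leq C\|\eta\|_{X^0}$ follows from the $\hat N_\hslash$-relative estimate for $\hat a^{(*)}(\alpha)$ (Remark~\ref{rem:3}, not Corollary~\ref{cor:1}, which is the $\hat H_{\mathrm f}$-version and would need $\omega^{-1/2}\alpha\in L^2$); the conjugation step gives $\mathcal{W}(s\eta)^*T\mathcal{W}(s\eta)\leq C(1+s^2\|\eta\|_{X^0}^2)\,T$ via the shift relations \eqref{eq:Tcc} and Lemma~\ref{lem:estN}; and $\Tr[T\tilde\varrho_\hslash(t)]$ is uniformly bounded on compact intervals by the pieces of Lemma~\ref{lemma:rhoSt}. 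Minor: from \eqref{eq:23} and \eqref{eq:weyl} the generator is $G(\eta)=p\cdot\hat q-q\cdot\hat p+\tfrac{1}{\sqrt{2}}\bigl(\hat a(\alpha)+\hat a^{*}(\alpha)\bigr)$, without the factor~$2$.
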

\begin{proof}
We shall use  the following estimate:
\begin{equation*}
\|\big(\mathcal{W}(\xi_1)-\mathcal{W}(\xi_2) \big) \,(\hat N_\hslash+1)^{-1/2}\|_{\mathscr{L}(\mathscr H)}\leq c \|\xi_1-\xi_2\|_{X^0} (\|\xi_1\|_{X^0}+\|\xi_2\|_{X^0}+1)\,.
\end{equation*}
See for instance \cite[Lemma 3.1]{ammari2008ahp} for a proof of such inequality.  This yields
\begin{equation}
\label{eq:33}
\begin{split}
&\hspace{-.7in}\left|\Tr \big[ \big(\mathcal{W}(\xi_1)- \mathcal{W}(\xi_1)\big) \; \tilde{\varrho}_\hslash (t)\big] \right| \\
&\leq  \|\big(\mathcal{W}(\xi_1)-\mathcal{W}(\xi_2) \big) \,(\hat N_\hslash+1)^{-1/2}\|_{\mathscr{L}(\mathscr H)} \times \|(\hat N_\hslash+1)^{1/2} \; \tilde{\varrho}_\hslash (t) \|_{\mathscr L^1(\mathscr H)}\,.
\end{split}
\end{equation}
Now, assumptions  (\hyperref[eq:Ass0]{$S_0^{(1)}$})--(\hyperref[eq:Ass1]{$S_1^{(1)}$}) and Lemma \ref{lemma:rhoSt} imply that there exists a constant $c>0$ such that for all times $t\in J$,
\begin{equation}
\label{eq:trN}
\Tr\left[ \; \tilde{\varrho}_\hslash(t) (\hat N_\hslash+1)\right]\leq c\,,
\end{equation}
uniformly with respect to $\hslash\in(0,1)$. Such bound \eqref{eq:trN} ensures  that the norm
$$\|(\hat N_\hslash+1)^{1/2} \; \tilde{\varrho}_\hslash (t)\|_{\mathscr L^1(\mathscr H)}$$
 is uniformly bounded for $t\in J$, $\hslash\in(0,1)$, and completes the proof.
\end{proof}

\subsubsection{Extraction}
For each time $t_0\in\R$, it is possible thanks to Definition \ref{def:wigner} and  Proposition \ref{wig} to extract a sequence $(\hslash_k)_{k\in\N}$ -- which  may depend in the time $t_0$ -- such that $\mathscr{M}(\tilde\varrho_{\hslash_k},k\in\N)$ is a singleton. However, for our analysis we need to found a sequence $(\hslash_k)_{k\in\N}$ such that for all times $\mathscr{M}(\tilde\varrho_{\hslash_k}(t),k\in\N)$ is a singleton.  Such property is proved below and it is based in a diagonal extraction procedure and Prokhorov's theorem. In particular, we shall use the following  "weak" narrow convergence topology on $\mathfrak{P}(X^\sigma)$. Let $(e_{n})_{n \in \N}$  be a O.N.B of the Hilbert space $X^\sigma$ and define  the norm,
\begin{equation}
\label{normw}
||u||_{X^\sigma_{w}}^2:=\sum_{n \in \N}\frac{1}{n^{2}} |\langle
      u, e_{n}\rangle_{X^\sigma}|^{2}, \,
\end{equation}
on $X^\sigma$.  A curve $t\in \R\to \mu_t\in\mathfrak{P}(X^\sigma)$ is said to be \emph{weakly narrowly continuous} if and only if the real-valued maps,
$$
t\in \R\to \int_{X^\sigma} \psi \, d\mu_t\in\R\,,
$$
are continuous for every  bounded continuous function on the space $(X^\sigma,\|\cdot\|_{X^\sigma_w})$.

\begin{prop}
\label{prop:4}
Let $\sigma\in[\frac 1 2, 1]$ and assume that \eqref{eq:hypo0}, \eqref{eq:hypo1} and \eqref{eq:hypo2} are satisfied.
Let $(\varrho_\hslash)_{\hslash\in (0,1)}$  be a family of density matrices in $\mathscr{H}$ satisfying the assumptions  \emph{(}\hyperref[eq:Ass0]{$S_0^{(1)}$}\emph{)}, \emph{(}\hyperref[eq:Ass1]{$S_1^{(1)}$}\emph{)} and \eqref{eq:Ass2}. For any  sequence $(\hslash_n)_{n\in\mathds{N}}$ in $(0,1)$ such that  $\hslash_n\to 0$,  there exists a subsequence  $(\hslash_{n_k})_{k\in\mathds{N}}$ and a family of probability measures $(\tilde{\mu}_t)_{t\in\mathds R}$ such that for
all $t\in \mathds{R}$,
\begin{eqnarray*}
\mathcal{M}(\tilde{\varrho}_{\hslash_{n_k}}(t), k\in \mathds{N})=\{ \tilde{\mu}_t\}\,.
\end{eqnarray*}
Moreover, for every compact time interval $J$ there exits a constant $C>0$ such that for all times $t\in J$,
\begin{equation}
\label{eq:estmut}
\int_{X^0}  \|u\|^2_{X^\sigma}\; \mathrm{d}\tilde{\mu}_t<C\,.
\end{equation}
\end{prop}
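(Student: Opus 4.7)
The strategy is a two-variable diagonal extraction (in time and in the phase-space test variable $\xi$) combined with the equicontinuity estimates of Lemmas \ref{lem:Ghst} and \ref{lem:Ghxi}, closed off by Proposition \ref{wig} to identify each time-slice limit as the inverse Fourier transform of a genuine probability measure.

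\textbf{Step 1 (pointwise existence of Wigner measures).} For each fixed $t\in\R$ the family $(\tilde\varrho_\hslash(t))_{\hslash\in(0,1)}$ satisfies \emph{(}\hyperref[eq:Ass0]{$S_0^{(1)}$}\emph{)}, \emph{(}\hyperref[eq:Ass1]{$S_1^{(1)}$}\emph{)} and \eqref{eq:Ass2} uniformly on every compact interval by Lemma \ref{lemma:rhoSt}, hence the tightness hypothesis \eqref{eq:wigcd} of Proposition \ref{wig} holds. Therefore $\mathcal{M}(\tilde\varrho_\hslash(t),\hslash\in(0,1))\neq\emptyset$ for every $t\in\R$.

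\textbf{Step 2 (diagonal extraction on a countable dense set).} Pick a countable dense subset $D_1=\{t_j\}_{j\in\N}$ of $\R$ and a countable dense subset $D_2=\{\xi_l\}_{l\in\N}$ of the dense subspace $X^{1/2}\subset X^0$. Starting from the given sequence $(\hslash_n)$, I would iteratively extract: for $(t_1,\xi_1)$ a subsequence making $\Tr[\mathcal{W}(\xi_1)\tilde\varrho_{\hslash_n}(t_1)]$ convergent; then a sub-subsequence for $(t_1,\xi_2)$, and so on, reindexing by Cantor's pairing of $D_1\times D_2$. The diagonal subsequence $(\hslash_{n_k})_{k\in\N}$ then has the property that
\[
\lim_{k\to\infty}\Tr\big[\mathcal{W}(\xi_l)\,\tilde\varrho_{\hslash_{n_k}}(t_j)\big]=:F(t_j,\xi_l)
\]
exists for every $(j,l)\in\N^2$.

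\textbf{Step 3 (extension by equicontinuity).} Lemmas \ref{lem:Ghst} and \ref{lem:Ghxi} give estimates that are uniform in $\hslash$: the map $(t,\xi)\mapsto \Tr[\mathcal{W}(\xi)\tilde\varrho_{\hslash}(t)]$ is Lipschitz in $t\in J$ (with constant controlled by $(1+\|\xi\|_{X^0})^2$) and uniformly continuous in $\xi$ on bounded subsets of $X^0$. By a standard equicontinuity argument, convergence on the dense countable grid $D_1\times D_2$ propagates to a limit
\[
F(t,\xi):=\lim_{k\to\infty}\Tr\big[\mathcal{W}(\xi)\,\tilde\varrho_{\hslash_{n_k}}(t)\big],\qquad \forall (t,\xi)\in\R\times X^0,
\]
with the extension to all $\xi\in X^0$ obtained by density of $X^{1/2}$ using the continuity modulus of Lemma \ref{lem:Ghxi}.

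\textbf{Step 4 (identification as a unique Wigner measure).} Fix $t\in\R$. By Step 1 the set $\mathcal{M}(\tilde\varrho_{\hslash_{n_k}}(t),k\in\N)$ is non-empty; pick any $\tilde\mu\in \mathcal{M}(\tilde\varrho_{\hslash_{n_k}}(t),k\in\N)$. By Definition \ref{def:wigner} there exists a subsubsequence $(\hslash_{n_{k_i}})$ along which, for every $\xi\in X^0$,
\[
\lim_{i\to\infty}\Tr\big[\mathcal{W}(\pi q,-\pi p,\sqrt{2}\pi\alpha)\,\tilde\varrho_{\hslash_{n_{k_i}}}(t)\big]=\int_{X^0}e^{2\pi\ci\Ree\langle\xi,u\rangle_{X^0}}\,\mathrm{d}\tilde\mu(u).
\]
By uniqueness of limits of a convergent sequence the above right-hand side must equal $F(t,\cdot)$ evaluated at the corresponding argument. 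Since the inverse Fourier transform determines a Borel probability measure uniquely, any other element of $\mathcal{M}(\tilde\varrho_{\hslash_{n_k}}(t),k\in\N)$ coincides with $\tilde\mu$, whence $\mathcal{M}(\tilde\varrho_{\hslash_{n_k}}(t),k\in\N)=\{\tilde\mu_t\}$ with $\tilde\mu_t:=\tilde\mu$.

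\textbf{Step 5 (moment bound).} Estimate \eqref{eq:estmut} is an immediate consequence of Lemma \ref{lem:apest}(ii), which applies to any Wigner measure of the full family $(\tilde\varrho_\hslash(t))_{\hslash\in(0,1)}$ and a fortiori to the restriction along $(\hslash_{n_k})$.

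\textbf{Expected main obstacle.} The delicate point is not the diagonal extraction itself but the passage from convergence on a countable set to convergence for \emph{every} $\xi\in X^0$, because $X^0$ is infinite-dimensional and $\mathcal{W}(\xi)$ depends strongly continuously but not uniformly on $\xi$. The quantitative continuity in $\xi$ provided by Lemma \ref{lem:Ghxi} (with modulus controlled by $\|\xi_1-\xi_2\|_{X^0}(\|\xi_1\|_{X^0}+\|\xi_2\|_{X^0}+1)$, uniformly in $\hslash$ and in $t\in J$) is exactly what makes Step 3 go through; without it the diagonal procedure would not reach all of $X^0$. The identification in Step 4, although conceptually clean, also implicitly relies on the non-triviality statement of Proposition \ref{wig} to avoid having to invoke an infinite-dimensional Bochner--Minlos theorem directly.
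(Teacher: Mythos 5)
Your proof is correct, but it takes a genuinely different route from the paper's in the crucial identification step. The paper's strategy is: extract at each grid time $t_j$ a subsequence whose Wigner measure set is a singleton using Proposition~\ref{wig} directly, diagonalize over $j$, extend $G(t,\xi)$ to all $t$ by the Lipschitz estimate of Lemma~\ref{lem:Ghst}, and then invoke Prokhorov's theorem (with tightness in the weak norm $\|\cdot\|_{X^0_w}$) to certify that $G(t,\cdot)$ for off-grid $t$ is the characteristic function of a genuine probability measure. Your Step~2, by contrast, diagonalizes over the double countable grid $D_1\times D_2$ via elementary Bolzano--Weierstrass applied to the bounded complex scalars $\Tr[\mathcal{W}(\xi_l)\tilde\varrho_{\hslash_{n_k}}(t_j)]$, and your Step~4 certifies $F(t,\cdot)$ as a characteristic function by appealing to Proposition~\ref{wig} (non-emptiness of the Wigner measure set along the extracted sequence, valid by Lemma~\ref{lemma:rhoSt}) combined with uniqueness of limits of a convergent net. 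This sidesteps the separate Prokhorov argument entirely. What each buys: the paper front-loads Proposition~\ref{wig} once per grid time so that each $t_j$ extraction already gives convergence for all $\xi$, making your $\xi$-part of the diagonalization redundant; your version replaces the ad hoc weak-norm tightness construction by a single clean invocation of an already-available black box at each $t$, at the cost of having to carry out the equicontinuity-in-$\xi$ propagation explicitly. Both reach the same conclusion and both ultimately rest on the same compactness mechanism, since Proposition~\ref{wig} itself hides a Prokhorov-type argument; your packaging is arguably the more modular of the two. One small point worth flagging: the estimate of Lemma~\ref{lem:Ghxi} is stated for $\xi_1,\xi_2\in X^{1/2}$, so the extension of your equicontinuity modulus to all of $X^0$ in Step~3 needs the (easy but worth saying) observation that the estimate depends only on $X^0$-norms and hence extends by density; the paper's proof glosses over the same point, so this is not a gap specific to your argument.
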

\begin{proof}
Denote for $\xi=(p,q,\alpha)\in X^0$,
$$
G_\hslash(t,\xi):=\Tr\left[ \tilde{\varrho}_{\hslash}(t) \, \mathcal{W}\big(\pi q,-\pi p,\sqrt{2}\pi \alpha\big)\right]\,.
$$
For every compact interval $J$ there exists $C>0$ such that for every $t,s\in J$ and every $\xi_1,\xi_2\in X^0$,
\begin{equation}
\label{eq:36}
\begin{split}
\left|G_\hslash(t,\xi_1)-G_\hslash(s,\xi_2)\right| &\leq  \left|G_\hslash(t,\xi_1)-G_\hslash(s,\xi_1)\right| +\left|G_\hslash(s,\xi_1)-G_\hslash(s,\xi_2)\right|
\\ &\leq  C |t-s| \,(1+\|\xi_1\|_{X^0})^2+ C
 \|\xi_1-\xi_2\|_{X^0} (\|\xi_1\|_{X^0}+\|\xi_2\|_{X^0}+1)\,.
\end{split}
\end{equation}
The last inequality is a consequence of Lemmas \ref{lem:Ghst} and \ref{lem:Ghxi}. Now take a countable dense set $(t_j)_{j\in\N}$ in $\R$. Then
by Proposition \ref{wig}  for $t_1$ there exists a sequence $(\hslash_\ell)_{\ell\in\N}$ and a probability measure $\tilde\mu_{t_1}$ such that
$$
\mathcal{M}(\tilde{\varrho}_{\hslash_{\ell}}(t_1), \ell\in \mathds{N})=\{ \tilde{\mu}_{t_1}\}\,.
$$
Repeating the same argument for each $t_2,\dots,t_j,\dots$ and extracting at each time a subsequence from the previous one, one obtains the existence of
the sequences $(\hslash_{\psi_2(\ell)})_{\ell\in\N},\dots, (\hslash_{\psi_j(\ell)})_{\ell\in\N},\dots$ such that
$$
\mathcal{M}(\tilde{\varrho}_{\hslash_{\psi_j(\ell)}}(t_j), \ell\in \mathds{N})=\{ \tilde{\mu}_{t_j}\}\,,
$$
and  such that $\psi_k:\N\to\N$ are successive extractions. Hence, by the diagonal argument there exists a sequence -- given precisely by $(\hslash_{\psi_\ell(\ell)})_{\ell\in\N}$ --  still denoted by $(\hslash_\ell)_{\ell\in\N}$ for simplicity such that for all $j\in\N$,
$$
\mathcal{M}(\tilde{\varrho}_{\hslash_{\ell}}(t_j), \ell\in \mathds{N})=\{ \tilde{\mu}_{t_j}\}\,.
$$
So, according to Definition \ref{def:wigner} this means that for any $\xi\in X^0$ and $j\in\N$,
$$
G(t_j,\xi):=\lim_{\ell} \Tr\big[ \tilde\varrho_{\hslash_\ell}(t_j) \;\mathcal{W}\big(\pi q,-\pi p,\sqrt{2}\pi \alpha\big)\big] =
\int_{u\in X^0} e^{2\pi\ci  \Ree\langle \xi, u\rangle_{X^0}}\; \mathrm{d}\tilde\mu_{t_j}(u)\,.
$$
Consider the family $(\tilde\varrho_{\hslash_{\ell}}(t_j))_{\ell\in\N}$ with the diagonal extracted sequence $(\hslash_\ell)_{\ell\in\N}$ constructed above; then the estimate \eqref{eq:Ghst} yields,
\begin{eqnarray}
\left|G(t_j,\xi)-G(t_{j'},\xi)\right|\leq C |t_j-t_{j'}| \,(1+\|\xi\|_{X^0})^2\,.
\end{eqnarray}
So, by a density argument and completeness one extends the function $G(\cdot,\xi)$ to all $\R$ such that
$$
G(t,\xi):=\lim_{t_j\to t} G(t_j,\xi)\,.
$$
The limit is independent of the choice of the sequence $t_j\to t$ and satisfies as in \eqref{eq:36} the estimate,
$$
 \left|G(t,\xi_1)-G(s,\xi_2)\right|\leq C |t-s| \,(1+\|\xi_1\|_{X^0})^2+ C
 \|\xi_1-\xi_2\|_{X^0} (\|\xi_1\|_{X^0}+\|\xi_2\|_{X^0}+1)\,.
$$
Moreover, by the triangle inequality and \eqref{eq:36} one deduces  that for all $t\in\R$ and $\xi\in X^0$,
\begin{equation}
\label{eq:39}
\lim_\ell G_{\hslash_\ell}(t,\xi)=\lim_\ell\Tr\left[ \tilde{\varrho}_{\hslash_{\ell}}(t) \, \mathcal{W}\big(\pi q,-\pi p,\sqrt{2}\pi \alpha\big)\right]=G(t,\xi)\,.
\end{equation}
Remark that by Lemma \ref{lem:apest}--(ii) one checks that for all $t_j$ in an arbitrary compact interval,
\begin{equation}
\label{eq:37}
 \int_{X^0}  \| u\|_{X^{\sigma}}^2\; \mathrm{d}\tilde{\mu}_{t_j}<\infty\,.
\end{equation}
Now, we claim that for all $t\in\R$ there exists $\tilde\mu_{t}\in\mathfrak P(X^0)$ such that for all $\xi\in X^0$,
\begin{equation}
\label{eq:38}
G(t,\xi)=\int_{X^0}  e^{2\pi\ci  \Ree\langle \xi, u\rangle_{X^0}}\; \mathrm{d}\tilde\mu_{t}(u)\,.
\end{equation}
Of course, the above statement holds true for all $t_j$ and we need to extended to every time $t$.
On the other hand, we know that the limit \eqref{eq:39} exists for all times  and so it is enough to identify $G(t,\xi)$ as the characteristic function of a given probability measure over $X^0$.
In fact, thanks to the bound \eqref{eq:37}
 one notices that the family
$(\tilde\mu_{t_j})_{j\in\N}$ is tight in $\mathfrak{P}(X^0)$, where $X^0$ is endowed with the "weak" norm $\|\cdot\|_{X^0_w}$. Hence, by Prokhorov's theorem
such family is sequentially compact with respect to the weak narrow convergence topology on $\mathfrak P(X^0)$. Therefore, for any $t\in\R$ and $(t_{\psi(j)})_{j\in\N}$ a subsequence such that $\lim t_{\psi(j)}=t$ there exists a subsequence -- still denote the same -- so that $\tilde\mu_{\psi(j)}$ converges towards a Borel probability measure $\tilde\mu_t$. Consequentially, for every time $t$ and for every $\xi\in X^0$,
$$
\lim_j G\big(t_{\psi(j)},\xi\big)=G(t,\xi)= \int_{X^0}  e^{2\pi\ci  \Ree\langle \xi, u\rangle_{X^0}}\; \mathrm{d}\tilde\mu_{t}\,,
$$
since the complex-valued function $u\mapsto e^{2\pi\ci  \Ree\langle \xi, u\rangle_{X^0}}$ is bounded continuous relatively to the "weak" norm
 $\|\cdot\|_{X^0_w}$. Hence, from \eqref{eq:39} one deduces  for all $t\in\R$ and $\xi=(p,q,\alpha)\in X^0$,
$$
\lim_\ell\Tr\left[ \tilde{\varrho}_{\hslash_{\ell}}(t) \, \mathcal{W}\big(\pi q,-\pi p,\sqrt{2}\pi \alpha\big)\right]=\int_{X^0}  e^{2\pi\ci  \Ree\langle \xi, u\rangle_{X^0}}\; \mathrm{d}\tilde\mu_{t}\,.
$$
\end{proof}

\subsubsection{Convergence }
Now, we are in position to prove convergence of the Duhamel formula \eqref{eq:Duhamel} when $\hslash\to 0$ and to derive the characteristic equation satisfied by the Wigner measures $(\tilde\mu_t)_{t\in\R}$.
\begin{lemma}
\label{eq:ch2}
Suppose the  same hypothesis as in Proposition \ref{prop:4}. Then for all $t,t_0\in\R$ and all $\xi=(z,\alpha)\in X^0$,
\begin{equation*}
\tilde\mu_{t}(e^{2\ci\Imm\langle \cdot,z\rangle} e^{\sqrt{2}\ci\Ree\langle \alpha,\cdot\rangle_{\mathfrak{H}^0}})=\tilde\mu_{t_0}(
e^{2\ci\Imm\langle \cdot,z\rangle} e^{\sqrt{2}\ci\Ree\langle \alpha,\cdot\rangle_{\mathfrak{H}^0}}) -\ci \lim_{k\to\infty} \int_{t_0}^t \,
\Tr\bigg [ {M}(s,\hslash,\xi)\, \mathcal{W}(\xi) \, \tilde{\varrho}_{\hslash_{n_k}}(s)\,\bigg] \mathrm{d}s,
\end{equation*}
where the term ${M}(s,\hslash,\xi)$ is given explicitly in Lemma \ref{prop:comexp}.
\end{lemma}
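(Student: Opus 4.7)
The plan is to apply the Duhamel formula from Proposition \ref{lemma:Duhamel} along the subsequence $(\hslash_{n_k})_{k\in\N}$ produced in Proposition \ref{prop:4}, substitute the commutator expansion from Lemma \ref{prop:comexp}, and then pass to the limit $k\to\infty$. Combining \eqref{eq:Duhamel} with \eqref{eq:29} gives, for every $\xi\in X^{1/2}$ and every $t,t_0\in\R$,
\begin{equation*}
\Tr[\mathcal{W}(\xi)\,\tilde\varrho_{\hslash_{n_k}}(t)] - \Tr[\mathcal{W}(\xi)\,\tilde\varrho_{\hslash_{n_k}}(t_0)] = -\ci\int_{t_0}^{t}\Tr\bigl[M(s,\hslash_{n_k},\xi)\,\mathcal{W}(\xi)\,\tilde\varrho_{\hslash_{n_k}}(s)\bigr]\,\mathrm{d}s - \ci\,\hslash_{n_k}\int_{t_0}^{t}\Tr\bigl[R(s,\hslash_{n_k},\xi)\,\mathcal{W}(\xi)\,\tilde\varrho_{\hslash_{n_k}}(s)\bigr]\,\mathrm{d}s.
\end{equation*}
By Proposition \ref{prop:4} together with Definition \ref{def:wigner} and the explicit form of the representation \eqref{eq:repWeyl}, the two trace quantities on the left-hand side converge, as $k\to\infty$, to $\tilde\mu_t(e^{2\ci\Imm\langle\cdot,z\rangle}\,e^{\sqrt{2}\ci\Ree\langle\alpha,\cdot\rangle_{\mathfrak{H}^0}})$ and to $\tilde\mu_{t_0}(\cdot)$ respectively; this is Definition \ref{def:wigner} after the natural change of variable $(p,q,\alpha)\mapsto(\pi q,-\pi p,\sqrt 2\pi\alpha)$, which turns the exponent $2\pi\ci\,\Ree\langle\xi,u\rangle_{X^0}$ into the one displayed in the statement.

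Therefore the assertion reduces to showing that the $\hslash_{n_k}$-prefactored remainder integral vanishes in the limit. By inserting resolutions $(\hat{H}_\hslash^0+1)^{-1/2}(\hat{H}_\hslash^0+1)^{1/2}$ between the three factors of the integrand and using cyclicity of the trace, its modulus is bounded pointwise by the product of: the norm $\|(\hat{H}_\hslash^0+1)^{-1/2}R(s,\hslash,\xi)(\hat{H}_\hslash^0+1)^{-1/2}\|$, which is of order $(1+|s|)^2(1+\|\xi\|_{X^0})^2$ by \eqref{eq:32}; the norm $\|(\hat{H}_\hslash^0+1)^{1/2}\mathcal{W}(\xi)(\hat{H}_\hslash^0+1)^{-1/2}\|$, which is uniformly bounded in $\hslash\in(0,1)$; and the trace $\Tr[(\hat{H}_\hslash^0+1)\tilde\varrho_{\hslash}(s)]$, which is uniformly bounded on any compact interval $[\min(t_0,t),\max(t_0,t)]$ by Lemma \ref{lemma:rhoSt} and hypothesis \eqref{eq:Ass0}. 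Multiplying through by $\hslash_{n_k}$ then yields vanishing.

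The main technical step is the uniform boundedness of $\mathcal{W}(\xi)=\hat{T}(z)\otimes W(\alpha)$ as an operator from $Q(\hat{H}_\hslash^0)$ to itself. For the field component $W(\alpha)$ this is precisely Lemma \ref{s3.lemW}. For the particle translation factor $\hat{T}(z)$ it follows from the commutation rule \eqref{eq:Tcc}: the identity $\hat{T}(z)\,\hat{p}_i^2\,\hat{T}(z)^{*}=(\hat{p}_i-\hslash p_i)^2$ is controlled by $2\hat{p}_i^2+2\hslash^2|p_i|^2$, and since $W(\alpha)$ commutes with the particle momenta, composing the two bounds produces the required uniform estimate on $(\hat{H}_\hslash^0+1)^{1/2}\mathcal{W}(\xi)(\hat{H}_\hslash^0+1)^{-1/2}$. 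Once this verification is in place, the argument is a direct consequence of the Duhamel identity, the commutator expansion, and the a priori estimates collected in Section \ref{sec.PF} and Lemmas \ref{lemma:rhoSt}--\ref{lem:apest}.
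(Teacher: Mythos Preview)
Your proof is correct and follows exactly the approach the paper intends: apply the Duhamel formula of Proposition~\ref{lemma:Duhamel}, insert the commutator expansion \eqref{eq:29} from Lemma~\ref{prop:comexp}, let the left-hand side converge via Proposition~\ref{prop:4}, and kill the $\hslash_{n_k}$-weighted remainder using \eqref{eq:32}, Lemma~\ref{s3.lemW}, and the uniform $(\hat H_\hslash^0+1)$-bound from Lemma~\ref{lemma:rhoSt}. The paper's own proof is the one-line sentence ``Apply the Duhamel's formula of Proposition~\ref{lemma:Duhamel} with aid of Lemma~\ref{prop:comexp}''; you have simply unpacked it.
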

\begin{proof}
Apply the Duhamel's formula of Proposition  \ref{lemma:Duhamel} with aid of Lemma \ref{prop:comexp}.
\end{proof}

The main step next is to use dominated convergence and to calculate the limit
 $$\lim_{k\to\infty}
\Tr[ {M}(s,\hslash,\xi) \, \mathcal{W}(\xi) \; \tilde{\varrho}_{\hslash_{n_k}}(s)]\,,
$$
explicitly as integrals with respect to the measures $\tilde\mu_s$.
This leads  to an equation describing the time-evolution of  characteristic
functions of $\tilde\mu_s$.
\begin{lemma}
\label{eq:ch1}
Suppose the same hypothesis as in Proposition \ref{prop:4}. Then for all $\xi=(p_0,q_0,\alpha_0)\equiv(z_0,\alpha_0)\in X^0$,
\begin{equation}
\label{eq:Gsxi}
\begin{split}
\mathscr{G}(s,\xi)&:=\lim_{k\to\infty} \Tr\big [{M}(s,\hslash,\xi)\; \mathcal{W}( \xi) \; \tilde{\varrho}_{\hslash_{n_k}}(s)\;\big] \\
&=\int_{X^0} m(s,\xi) \; e^{2\ci\Imm\langle q+{\ci}p,z_0\rangle} e^{\sqrt{2}\ci\Ree\langle \alpha_0,\alpha\rangle_{\mathfrak{H}^0}} \, d\tilde\mu_{s}(u)\,,
\end{split}
\end{equation}
with the variable $u=(p,q,\alpha)\in X^0$ and
\begin{equation}
\label{eq:msxi}
\begin{aligned}
\hspace{-.8in}{m}(s,\xi)&= \sum_{i=1}^n \frac{1}{2m_i}  \,\left[ \frac{\ci}{\sqrt{2}} \sum_{\nu=1}^d \bigl(\langle \alpha_0, {F}_i^\nu(s)\rangle
\langle \alpha, F_i^\nu(s)\rangle - \langle
 F_i^\nu(s), \alpha_0\rangle \langle F_i^\nu(s),\alpha\rangle \bigr) \right] \;+\;\mathrm{h.c.} \\
&- \sum_{i=1}^n \frac{1}{m_i}  \,\left[ \frac{\ci}{\sqrt{2}}\sum_{\nu=1}^d   \langle\alpha, F_i^\nu(s)\rangle  \langle F_i^\nu(s), \alpha_0\rangle \right]
\;+\;\mathrm{h.c.} \\
&- \sum_{i=1}^n \frac{1}{m_i} \left[\frac{i}{\sqrt{2}}\sum_{\nu=1}^d  \langle \alpha_0, F_i^\nu(s)\rangle \, {p}_i^\nu-
\sum_{\nu=1}^d \langle \alpha, F_i^\nu(s)\rangle p_{0,i}^\nu\right] \;+\;\mathrm{h.c.} \\
&- \sum_{j=1}^{n}\nabla_{x_j} V\big(q_1+s\frac{p_1}{m_1},\dots,q_n+s\frac{p_n}{m_n}\big) \,\cdot \big(q_{0,j}+s \frac{p_{0,j}}{m_j}\big)\,,
\end{aligned}
\end{equation}
where
\begin{eqnarray*}
F_i^\nu(s)= F_i^\nu(s)(p,q,k) = \left(\frac{\chi_i(k)}{\sqrt{2|k|}}   \,\epsilon^{\nu}_\lambda(k) \,e^{-2\pi\ci k \cdot (q_i+s\frac{p_i}{m_i})+\ci s |k|}\right)_{\lambda=1,\dots,d-1}\,.
\end{eqnarray*}
\end{lemma}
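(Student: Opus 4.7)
The plan is to compute the limit in \eqref{eq:Gsxi} by recognizing $M(s,\hslash,\xi)\,\mathcal{W}(\xi)$ as a finite linear combination of joint Wick-and-Weyl quantized observables and applying the semiclassical convergence machinery of Appendix~\ref{appy}. Inspecting \eqref{eq:Mhxi}, each summand of $M(s,\hslash,\xi)$ is a product of at most two operators of four basic kinds: the scalar particle multipliers $\langle\alpha_0,r_i^\nu(s)\rangle$ (where $\alpha_0$ stands for the $\xi$-parameter renamed as in the statement); the joint particle-field operators $\hat a^\#(r_i^\nu(s))$; the particle momenta $\hat p_i^\nu$; and the bounded functional-calculus observable $\nabla V(\hat q+s\hat{\mathbf p})$. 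Their semiclassical symbols on the joint phase-space $X^0$ are, respectively, $\langle\alpha_0,F_i^\nu(s)(p,q)\rangle$ (and its conjugate), $\langle\alpha,F_i^\nu(s)(p,q)\rangle$ (and its conjugate), $p_i^\nu$, and $\nabla V(q+s\mathbf p)$. Since the Wick symbol of $\mathcal{W}(\xi)=\hat T(z_0)\otimes W(\alpha_0)$ is precisely the phase factor $e^{2\ci\Imm\langle q+\ci p,z_0\rangle}e^{\sqrt{2}\ci\Ree\langle\alpha_0,\alpha\rangle_{\mathfrak H^0}}$, termwise multiplication of classical symbols will reproduce the function $m(s,\xi)(u)$ times this phase.

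To justify termwise passage to the limit I will invoke the uniform bounds on $(\tilde\varrho_{\hslash}(s))_{\hslash\in(0,1)}$ provided by Lemma~\ref{lemma:rhoSt}, namely that this family satisfies \eqref{eq:Ass0}, \eqref{eq:Ass1} and \eqref{eq:Ass2} locally uniformly in $s\in\R$. Combined with Corollary~\ref{cor:1} and Corollary~\ref{s4.lemE}, these bounds guarantee that, for any fixed $\xi\in X^{1/2}$, each summand of $M(s,\hslash,\xi)\,\mathcal{W}(\xi)\,\tilde\varrho_{\hslash_{n_k}}(s)$ is trace class with trace norm controlled $\hslash$-uniformly by a constant depending only on $|s|$, $\|\xi\|_{X^0}$ and the constants of Lemma~\ref{lemma:rhoSt}. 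Moreover the $\hslash$-dependence of $r_i^\nu(s)$ enters only through the semiclassical Weyl quantization of the smooth, $L^2_k$-valued symbol $F_i^\nu(s)(p,q)$, so that in the limit $\hslash\to 0$ compositions of the elementary operators above reduce to pointwise products of symbols.

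The core ingredient is then the joint particle-field convergence theorem of Appendix~\ref{appy}: for every Wick-Weyl polynomial observable $b(\hslash)$ with classical symbol $b_{\mathrm{cl}}$ that is $\tilde\mu_s$-integrable and uniformly $\hslash$-dominated by the operators controlled through Lemma~\ref{lemma:rhoSt}, one has
\begin{equation*}
\lim_{k\to\infty}\Tr\bigl[b(\hslash_{n_k})\,\mathcal{W}(\xi)\,\tilde\varrho_{\hslash_{n_k}}(s)\bigr]
=\int_{X^0}b_{\mathrm{cl}}(u)\,e^{2\ci\Imm\langle q+\ci p,z_0\rangle}\,e^{\sqrt{2}\ci\Ree\langle\alpha_0,\alpha\rangle_{\mathfrak H^0}}\,d\tilde\mu_s(u),
\end{equation*}
where $\tilde\mu_s$ is the Wigner measure extracted in Proposition~\ref{prop:4}. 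Summing this identity over the finitely many summands of $M(s,\hslash,\xi)$ yields exactly~\eqref{eq:msxi}.

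The principal technical obstacle is the mixed nature of the operators $\hat a^\#(r_i^\nu(s))$: the form factor $r_i^\nu(s)$ is itself the semiclassical Weyl quantization of an $L^2_k$-valued symbol of the particle phase-space variables, so the observable couples the Weyl calculus on $T^*\R^{dn}$ to the Wick calculus on Fock space. Establishing that products of two such operators reduce, modulo remainders that vanish in trace against $\tilde\varrho_{\hslash_{n_k}}(s)$, to the Wick quantization of the product symbol is exactly what requires the refined convergence estimates of Appendix~\ref{appy}, where the momentum regularity \eqref{eq:Ass2} and the particle localization encoded in \eqref{eq:Ass1} play a crucial role. Once this joint calculus is in place, the potential contribution is routine: under \eqref{eq:hypo0} the operator $\nabla V(\hat q+s\hat{\mathbf p})$ is a bounded pseudo-differential operator with semiclassical symbol $\nabla V(q+s\mathbf p)$, and its expectation against $\mathcal{W}(\xi)\,\tilde\varrho_{\hslash_{n_k}}(s)$ converges to the corresponding integral against $\tilde\mu_s$ weighted by the same phase factor.
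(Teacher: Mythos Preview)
Your overall plan---termwise reduction of $M(s,\hslash,\xi)$ to its classical symbol $m(s,\xi)$---matches the paper's strategy, and you correctly flag the mixed particle--field operators $\hat a^\#(r_i^\nu(s))$ as the main obstacle. However, there is a genuine gap in how you resolve that obstacle.

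You invoke a ``joint particle--field convergence theorem of Appendix~\ref{appy}'' valid for general Wick--Weyl polynomial observables, but no such result is proved there. Appendix~\ref{appy} supplies only Lemma~\ref{lem:B1} (limits of $f(\hat p)$ or $f(\hat q)$ against a Weyl operator) and Lemma~\ref{lem:B2} (limits of $\hat a^\#(f)$ against a Weyl operator for a \emph{fixed} $f\in\mathfrak H^0$). Neither applies directly to $\hat a^\#(r_i^\nu(s))$, whose form factor depends on the particle operators $\hat q_i,\hat p_i$, nor to products such as $\langle\alpha_0,r_i^\nu(s)\rangle\,\hat a^*(r_i^\nu(s))$ carrying two such factors. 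Saying that ``the refined estimates of Appendix~\ref{appy}'' handle this is precisely the step that needs an argument.

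The paper closes this gap by an explicit Fourier-to-Weyl reduction that your sketch omits. The operator phase $e^{-2\pi\ci k\cdot(\hat q_i+s\hat p_i/m_i)}$ appearing in $r_i^\nu(s)$ is the particle Weyl operator $\hat T(-2\pi k,\tfrac{2\pi s}{m_i}k)$, so that, for instance,
\[
\langle\alpha_0,r_i^\nu(s)\rangle=\sum_{\lambda}\int_{\R^d}g_\lambda^i(k)\,\hat T\bigl(-2\pi k,\tfrac{2\pi s}{m_i}k\bigr)\,\mathrm dk,\qquad g_\lambda^i\in L^1(\R^d).
\]
This turns each summand of $M(s,\hslash,\xi)\,\mathcal W(\xi)$ into an $L^1(\mathrm dk)$- or $L^1(\mathrm dk\,\mathrm dk')$-superposition of elementary observables $\hat p_i^\nu\,\mathcal W(\xi')$ or $\hat a_\lambda^\#(k')\,\mathcal W(\xi')$ to which Lemmas~\ref{lem:B1} and~\ref{lem:B2} \emph{do} apply pointwise in $k,k'$; dominated convergence (via the uniform trace bounds you cite) and Fubini then yield the integral against $\tilde\mu_s$. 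For the potential term the paper does not treat $\nabla V(\hat q+s\hat{\mathbf p})$ as an abstract pseudodifferential symbol: it undoes the free conjugation to rewrite the term as $f(\hat q)$ acting on $\varrho_{\hslash_{n_k}}(s)$ and applies Lemma~\ref{lem:B1}--\eqref{eq:B2} together with Lemma~\ref{lemma:rhoti}. Without these two concrete mechanisms, your proposal does not reduce the problem to results actually available in the paper.
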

\begin{proof}
One needs to compute the limit for each term in the right hand side of \eqref{eq:Mhxi}. However, there are essentially three types:
\begin{eqnarray}
(i) & = & \nabla_{x_j}V\big(\hat q_1+s\frac{\hat p_1}{m_1},\dots,\hat q_n+s\frac{\hat p_n}{m_n}\big)\,\cdot\big(q_{0,j}+s \frac{p_{0,j}}{m_j}\big)\\
(ii) & = & \langle \alpha_0, r_i^\nu(s)\rangle \, \hat{p}_i^\nu\\
(iii) & = & \langle \alpha_0, {r}_i^\nu(s)\rangle \, a^*(r_i^\nu(s))\,,
\end{eqnarray}
and the others are similar. So, we only provide here the limits for (\textit{i})--(\textit{iii}). Note that
\begin{equation}
\label{eq:41}
 (i)=e^{\ci \frac{s}{\hslash} \hat{H}^0_\hslash} \,f(\hat q)\,  e^{-\ci \frac{s}{\hslash} \hat{H}^0_\hslash} \qquad \text{ with } \qquad f(x)
 =\nabla_{x_j}V(x)\cdot\big(q_{0,j}+s \frac{p_{0,j}}{m_j}\big)\,.
\end{equation}
Hence, one obtains
\begin{eqnarray*}
\Tr\big [(i)\; \mathcal{W}( \xi) \; \tilde{\varrho}_{\hslash_{n_k}}(s)\;\big]&=&
\Tr\big [f(\hat q)\; e^{-\ci \frac{s}{\hslash} \hat{H}^0_\hslash}\,\mathcal{W}( \xi) \; \tilde{\varrho}_{\hslash_{n_k}}(s) e^{\ci \frac{s}{\hslash} \hat{H}^0_\hslash}
\;\big]\\
&=& \Tr\big [f(\hat q)\; \mathcal{W}(\Phi_s^0(\xi)) \; {\varrho}_{\hslash_{n_k}}(s) \;\big]\,.
\end{eqnarray*}
On the other hand, one knows according to Lemma \ref{lemma:rhoti} that $({\varrho}_{\hslash_{n_k}}(s))_{k\in\N}$ admits a unique Wigner measure $\mu_s$ and
$(\Phi_s^0)_\sharp\tilde\mu_s=\mu_s$ ($\Phi_s^0$ is the linear free flow defined on $X^0$ by \eqref{eq:freeflowB}).
So, since $V\in \mathscr{C}^2(\R^{dn})$ and $\nabla V$ is bounded, one can use
Lemma \ref{lem:B1}--\eqref{eq:B2} and deduce
\begin{eqnarray*}
\lim_{k\to\infty} \Tr\big [f(\hat q)\; \mathcal{W}(\Phi_s^0(\xi)) \; {\varrho}_{\hslash_{n_k}}(s) \;\big] &=&
\int_{X^0}  e^{2\ci\Imm\langle z,\Phi_s^0(z_0)\rangle} e^{\sqrt{2}\ci\Ree\langle \Phi_s^0(\alpha_0),\alpha\rangle_{\mathfrak{H}^0}}  \,
f(q) \;\mathrm{d}\mu_s(u)\\
&=& \int_{X^0}   e^{2\ci\Imm\langle z,z_0\rangle} e^{\sqrt{2}\ci\Ree\langle \alpha_0,\alpha\rangle_{\mathfrak{H}^0}} \,
f(\Phi_s^0(q)) \;\mathrm{d}\tilde\mu_s(u) \,.
\end{eqnarray*}
Note that here $\Phi_s^0(z_0)$, $\Phi_s^0(\alpha)$ and $\Phi_s^0(q)$ designate respectively the action of $\Phi_s^0$ on those components.
Hence, we recover the claimed limit related to (\textit{i}) by replacing $f$ and $\Phi_s^0$ with their values given respectively in
\eqref{eq:41} and \eqref{eq:freeflowB}.
\\
For the second term (\textit{ii}), one writes
\begin{eqnarray*}
\langle\alpha_0,r_i^\nu(s)\rangle &=& \sum_{\lambda=1}^{d-1} \int_{\R^{d}} \overline{\alpha_0(k)}\,
\frac{\chi_i(k)}{\sqrt{2 |k|}}  \, \epsilon_\lambda^{\nu}(k) \,e^{-2\pi\ci k\cdot (\hat q_i+s \frac{\hat{p}_i}{m_i})+\ci s|k|} \, \mathrm{d}k \\
&=& \sum_{\lambda=1}^{d-1} \int_{\R^{d}} g_\lambda^i(k) \, \hat T(-2\pi k,\frac{2\pi s}{m_i} k) \,\mathrm{d}k\,,
\end{eqnarray*}
where  $g_\lambda^i(k)= \overline{\alpha_0(k)}\,
\frac{\chi_i(k)}{\sqrt{2 |k|}}  \, \epsilon_\lambda^{\nu}(k) \,e^{\ci s|k|}$ belongs to $L^1(\R^{d})$.
In particular, Fubini's theorem yields
\begin{eqnarray*}
\Tr\big [(ii)\; \mathcal{W}( \xi) \; \tilde{\varrho}_{\hslash_{n_k}}(s)\;\big] &=& \sum_{\lambda=1}^{d-1}  \int_{\R^{d}}
g_\lambda^i(k) \,  \Tr\big [\hat T(-2\pi k,\frac{2\pi s}{m_i} k)\, \hat{p}_i^\nu\; \mathcal{W}( \xi) \; \tilde{\varrho}_{\hslash_{n_k}}(s)\;\big] \,\mathrm{d}k
\\&=& \sum_{\lambda=1}^{d-1}  \int_{\R^{d}}
g_\lambda^i(k) \,  \Tr\big [\hat{p}_i^\nu \,\hat T(-2\pi k,\frac{2\pi s}{m_i} k) \; \mathcal{W}( \xi) \; \tilde{\varrho}_{\hslash_{n_k}}(s)\;\big] \,\mathrm{d}k
+\mathcal{O}(\hslash)\,.
\end{eqnarray*}
The last identity is a consequence of the commutation relations \eqref{eq:Tcc}. So, using dominated convergence with the aid of the assumption  (\hyperref[eq:Ass0]{$S_0^{(1)}$})
and Lemma \ref{lem:B1}--\eqref{eq:B1} with Fubini's theorem one deduces
\begin{eqnarray*}
\lim_k\Tr\big [(ii)\; \mathcal{W}( \xi) \; \tilde{\varrho}_{\hslash_{n_k}}(s)\;\big]&=&
\sum_{\lambda=1}^{d-1}  \int_{\R^{d}}
g_\lambda^i(k) \,  \lim_k\Tr\big [\hat{p}_i^\nu \; \mathcal{W}( p_0-2\pi k, q_0+\frac{2\pi s}{m_i} k,\alpha_0) \; \tilde{\varrho}_{\hslash_{n_k}}(s)\;\big] \,\mathrm{d}k
\\
&&\hspace{-1in}=\sum_{\lambda=1}^{d-1}  \int_{\R^{d}}
g_\lambda^i(k) \, \left( \int_{X^0} p_i^\nu \; e^{2\ci Q(k,p,q)} \,e^{2\ci\Imm\langle q+ip,z_0\rangle}\,
e^{\sqrt{2}\ci\Ree\langle \alpha_0,\alpha\rangle_{\mathfrak{H}^0}} \;
\mathrm{d}\tilde\mu_s(u)\right) \,\mathrm{d}k
\\
&&\hspace{-1in}= \int_{X^0} \langle\alpha_0,F_i^\nu(s)\rangle \,p_i^\nu \; e^{2\ci\Imm\langle q+ip,z_0\rangle} e^{\sqrt{2}\ci\Ree\langle \alpha_0,\alpha\rangle_{\mathfrak{H}^0}} \;
\mathrm{d}\tilde\mu_s(u)\,,
\end{eqnarray*}
with
$$
Q(k,p,q)=-\pi k\cdot ( q_i+s \frac{{p}_i}{m_i}) \,.
$$
For the last term (\textit{iii}), one writes
\begin{eqnarray*}
\Tr\big [(iii)\; \mathcal{W}( \xi) \; \tilde{\varrho}_{\hslash_{n_k}}(s)\;\big]&=& \sum_{\lambda=1}^{d-1}  \int_{\R^{d}}
g_\lambda^i(k) \, \underset{=\mathscr T(s,\xi)}{\underbrace{\Tr\big [\hat T(-2\pi k,\frac{2\pi s}{m_i} k)\,  a^*(r_i^\nu(s))\; \mathcal{W}( \xi) \;
\tilde{\varrho}_{\hslash_{n_k}}(s)\;\big]}} \,\mathrm{d}k
\end{eqnarray*}
Moreover, using the definition of creation-annihilation operators in \eqref{eq:aa},

\begin{eqnarray*}
\mathscr T(s,\xi)&=&\sum_{\lambda=1}^{d-1}  \int_{\R^{d}} \Tr\big [\hat T(-2\pi k,\frac{2\pi s}{m_i} k)\, \left(r_i^\nu(s)\right)_\lambda(k') \, a^*(k')\; \mathcal{W}( \xi) \;
\tilde{\varrho}_{\hslash_{n_k}}(s)\;\big] \,\mathrm{d}k' \\
&=&\sum_{\lambda=1}^{d-1}  \int_{\R^{d}}  f_\lambda^i(k')
\Tr\big [\hat T(-2\pi k,\frac{2\pi s}{m_i} k)\, \hat T(-2\pi k',\frac{2\pi s}{m_i} k') \, a^*(k')\; \mathcal{W}( \xi) \;
\tilde{\varrho}_{\hslash_{n_k}}(s)\;\big] \,\mathrm{d}k' \,.
\end{eqnarray*}
Hence, dominated convergence and Lemma \ref{lem:B2}--\eqref{eq:B3} give
\begin{align*}
\lim_k \mathscr T(s,\xi)&=\sum_{\lambda=1}^{d-1}  \int_{\R^{d}}  f_\lambda^i(k')  \int_{X^0}
e^{2\ci\Imm\langle q+ip,z_0\rangle} e^{\sqrt{2}\ci\Ree\langle \alpha_0,\alpha\rangle_{\mathfrak{H}^0}}
\, e^{-2\pi\ci  (k+k')\cdot (q_i+s\frac{p_i}{m_i})} \,\overline{\alpha_\lambda(k')} \,\mathrm{d}\tilde\mu_s(u) \;\mathrm{d}k' \\
&=   \int_{X^0} e^{2\ci\Imm\langle q+ip,z_0\rangle} e^{\sqrt{2}\ci\Ree\langle \alpha_0,\alpha\rangle_{\mathfrak{H}^0}}\,
\, e^{-2\pi\ci  k\cdot (q_i+s\frac{p_i}{m_i})} \,\langle \alpha, F_i(s)^\nu\rangle_{\mathfrak{H}^0} \,\mathrm{d}\tilde\mu_s(u)\,.
\end{align*}
Hence, by Fubini's theorem one obtains
\begin{align*}
\lim_k\Tr\big [(iii)\; \mathcal{W}( \xi) \; \tilde{\varrho}_{\hslash_{n_k}}(s)\;\big]&\\
&  \hspace{-1.1in}=\sum_{\lambda=1}^{d-1}  \int_{\R^{d}} \bigg(
g_\lambda^i(k) \,\int_{X^0} e^{2\ci\Imm\langle q+ip,z_0\rangle+\sqrt{2}\ci\Ree\langle \alpha_0,\alpha\rangle_{\mathfrak{H}^0}-2\pi\ci  k\cdot (q_i+s\frac{p_i}{m_i})} \,
\langle \alpha, F^\nu_i(s)\rangle_{\mathfrak{H}^0} \,\mathrm{d}\tilde\mu_s(u)\,\mathrm{d}k\bigg)\\
&\hspace{-1.1in}=  \int_{X^0}e^{2\ci\Imm\langle q+ip,z_0\rangle} e^{\sqrt{2}\ci\Ree\langle \alpha_0,\alpha\rangle_{\mathfrak{H}^0}} \, \langle \alpha_0, {r}_i^\nu(s)\rangle \,\langle \alpha, F^\nu_i(s)\rangle_{\mathfrak{H}^0} \,\mathrm{d}\tilde\mu_s(u)\,,
\end{align*}
with the variable $u=(p,q,\alpha)\in X^0$ and $\xi=(p_0,q_0,\alpha_0)\in X^0$.
\end{proof}

So, we obtain below the intended \emph{characteristic equation} as a consequence of Lemmas \ref{eq:ch2} and \ref{eq:ch1}.

\begin{prop}
\label{s4:propChar}
Let $\sigma\in[\frac 1 2,1]$ and assume that \eqref{eq:hypo0}, \eqref{eq:hypo1} and \eqref{eq:hypo2} are satisfied.
Let $(\varrho_\hslash)_{\hslash\in (0,1)}$  be a family of density matrices in $\mathscr{H}$ satisfying the
assumptions  \emph{(}\hyperref[eq:Ass0]{$S_0^{(1)}$}\emph{)}, \emph{(}\hyperref[eq:Ass1]{$S_1^{(1)}$}\emph{)} and \eqref{eq:Ass2}. Then the Wigner measures $ \lbrace \tilde \mu_{t} \rbrace_{t \in \R}$
provided by Proposition \ref{prop:4} satisfy the following characteristic equation for all $t,t_0\in\R$ and all $\xi=(z,\alpha)\in X^0$,
\begin{eqnarray}
\label{eq:char-v1}
&&\tilde\mu_{t}(e^{2\ci\Imm\langle \cdot,z\rangle} e^{\sqrt{2}\ci\Ree\langle \alpha,\cdot\rangle_{\mathfrak{H}^0}}) =
\tilde\mu_{t_0}(e^{2\ci\Imm\langle \cdot,z\rangle} e^{\sqrt{2}\ci\Ree\langle \alpha,\cdot\rangle_{\mathfrak{H}^0}}) -
     \ci \int_{t_0}^{t} \; \mathscr G(s,\xi) \;\mathrm{d}s ~,~
\end{eqnarray}
 where the term $\mathscr G(s,\xi)$ is given explicitly in Lemma   \ref{eq:ch1} by \eqref{eq:Gsxi}--\eqref{eq:msxi}.
\end{prop}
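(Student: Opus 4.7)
The statement is essentially the synthesis of Lemma~\ref{eq:ch2} and Lemma~\ref{eq:ch1}, so the plan is to combine them and to exchange the limit with the time integration. More precisely, I would argue as follows.

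First, fix a sequence $\hslash_n\to 0$ and apply Proposition~\ref{prop:4} to extract a subsequence $(\hslash_{n_k})_{k\in\mathds{N}}$ and a family of probability measures $(\tilde\mu_t)_{t\in\mathds{R}}$ such that $\mathscr{M}(\tilde\varrho_{\hslash_{n_k}}(t),k\in\mathds{N})=\{\tilde\mu_t\}$ for every $t\in\mathds{R}$. Starting from the Duhamel formula of Proposition~\ref{lemma:Duhamel} evaluated along this subsequence, I would insert the semiclassical expansion of the commutator from Lemma~\ref{prop:comexp}, which decomposes $\tfrac{1}{\hslash}[\mathcal{W}(\xi),\hat H_I(s)]$ into a principal part $M(s,\hslash,\xi)\mathcal{W}(\xi)$ and a remainder $\hslash R(s,\hslash,\xi)\mathcal{W}(\xi)$, both understood as quadratic forms on $Q(\hat H^0_\hslash)$.

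Next, I would show that the remainder does not contribute in the limit. The bound \eqref{eq:32} gives $\|(\hat H^0_\hslash+1)^{-1/2}R(s,\hslash,\xi)(\hat H^0_\hslash+1)^{-1/2}\|\leq c(1+|s|)^2(1+\|\xi\|_{X^0})^2$ uniformly in $\hslash\in(0,1)$. Combined with the uniform trace bound $\Tr[\tilde\varrho_{\hslash_{n_k}}(s)(\hat H^0_\hslash+1)]\leq C$ on any compact time interval (Lemma~\ref{lemma:rhoSt}) and with the Weyl--Heisenberg mapping property of Lemma~\ref{s3.lemW}, Hölder's inequality on trace ideals produces an integrable majorant independent of $k$. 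The factor $\hslash_{n_k}$ then forces the remainder contribution to vanish as $k\to\infty$, which is exactly the content of Lemma~\ref{eq:ch2}.

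It remains to compute the limit of the principal term. By Lemma~\ref{eq:ch1}, the pointwise limit of the integrand equals $\mathscr{G}(s,\xi)$, with the explicit expression \eqref{eq:Gsxi}--\eqref{eq:msxi}. To exchange the limit with the $s$-integral I would invoke dominated convergence: from \eqref{eq:Ms}, $\|(\hat H^0_\hslash+1)^{-1/2}M(s,\hslash,\xi)(\hat H^0_\hslash+1)^{-1/2}\|\leq c(1+|s|)\|\xi\|_{X^0}$, and the same Hölder argument used above for the remainder produces a uniform bound in $k$ of the form $C(1+|s|)\|\xi\|_{X^0}$ on any compact interval, which is locally integrable in $s$. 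Inserting the resulting expression in Lemma~\ref{eq:ch2} yields the characteristic equation \eqref{eq:char-v1}.

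The only delicate point — and the step I would treat most carefully — is the dominated convergence used to swap $\lim_{k\to\infty}$ with $\int_{t_0}^{t}$. It relies crucially on the uniform-in-$\hslash$ trace estimates of Lemma~\ref{lemma:rhoSt}, which in turn propagate the regularity assumptions \eqref{eq:Ass0}--\eqref{eq:Ass2} through the Pauli--Fierz dynamics by way of the propagation estimates of \S~\ref{sec:regularity}. Once this interchange is justified, the pointwise identification of the limit from Lemma~\ref{eq:ch1} terminates the argument.
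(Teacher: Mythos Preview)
Your proposal is correct and follows exactly the paper's approach: the proposition is stated there as an immediate consequence of Lemmas~\ref{eq:ch2} and~\ref{eq:ch1}, and the only step needed to glue them together is the dominated convergence argument (using \eqref{eq:Ms} and Lemma~\ref{lemma:rhoSt}) that you spell out. Your write-up is in fact more explicit than the paper's, which simply asserts the result follows from the two lemmas without detailing the interchange of limit and integral.
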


\section{Proofs of the main results}
\label{sec.proof}

We reconsider now the Newton--Maxwell equation \eqref{eq:Newton-Maxwell} or equivalently \eqref{eq:NMc} as a non-autonomous initial
value problem over the Hilbert spaces  $X^\sigma$ with $\sigma\in[\frac 1 2,1]$,
\begin{equation}
\label{eq:ivp}
  \left\{
    \begin{aligned}
    &\frac{d}{dt} {u}(t) \ = \vartheta(t,u(t))\,,& \\
    &u(0) \ = u_0 \in X^\sigma\,.& \\
    \end{aligned}
  \right.
\end{equation}
Here the non-autonomous vector field $\vartheta$  is defined in terms of the nonlinearity $G:X^\sigma\to X^\sigma$ of the Newton--Maxwell equation
\begin{subequations}
\label{eq:Gnl}
\begin{align}
       \big(G(u)\big)_{p_i}&=\frac{1}{m_i} \sum_{\nu=1}^d \big(
       p_i^\nu-A^\nu_i(q_i,\alpha)\big) \, \nabla_{q_i}A^\nu_i(q_i,\alpha)- \nabla_{q_i} V(q)\,,         \label{eq:Gp} \\
        \big(G(u)\big)_{q_i}&=-\frac{1}{m_i} \,A_i(q_i,\alpha)\,, \label{eq:Gq}\\
      \big(G(u)\big)_{\alpha_\lambda}(k)&=\ci\sum_{i=1}^n \frac{1}{m_i} \, \frac{\chi_i(k)}{\sqrt{2|k|}}
        \, \big(p_i-A_i(q_i,\alpha)\big) \cdot \epsilon_\lambda(k) \,e^{-2\pi\ci k\cdot q_i}\,.
        \label{eq:Gfa}
\end{align}
\end{subequations}
and the free flow $\Phi_t^0:X^\sigma\to X^\sigma$ given by
\begin{equation}
\label{eq:freeflowB-hiro}
\Phi_{t}^0(p,q,\alpha)=\,\left((p_i)_{i=1,\cdots,n}\,;(q_i+t \frac{p_i}{m_i})_{i=1,\cdots,n}\,; (e^{-\ci t|\cdot|}\alpha_\lambda)_{\lambda=1,\cdots,d-1}\right)\,
\end{equation}
as follows,
\begin{align}
\label{eq:Vf}
\vartheta(t,u)&=\Phi_{-t}^0\circ G\circ\Phi_t^0(u).
\end{align}
In other words, we have written here the Newton--Maxwell dynamical system in its interaction representation.
Note that the nonlinearity $G$ is very similar to $F$ given in \eqref{eq:F} and differs from the latter only
by the term $\frac{p_i}{m_i}$ in \eqref{eq:Fq}.

 \medskip
As a consequence of Proposition \ref{s1:prop1}, $\vartheta$  inherits the following properties from $F$.
\begin{lemma}
\label{s5.lem1}
Let $\sigma\in[\frac 1 2,1]$ and assume that
\eqref{eq:hypo0}, \eqref{eq:hypo1} and \eqref{eq:hypo2} are satisfied.
Then the vector field $\vartheta :\R\times X^\sigma\to X^\sigma$ is continuous and  bounded on bounded  subsets of $\R\times X^\sigma$.
\end{lemma}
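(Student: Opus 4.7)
The plan is to decompose $\vartheta$ through the three maps $\Phi_t^0$, $G$, and $\Phi_{-t}^0$ and verify that each behaves well with respect to continuity and boundedness, then combine.

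First, I would treat the nonlinearity $G$. Comparing \eqref{eq:Gnl} to \eqref{eq:F}, one sees that $G(u)=F(u)-L\,u$ where $L:X^\sigma\to X^\sigma$ is the bounded linear map defined by $L(p,q,\alpha)=\bigl(0,(p_i/m_i)_{i=1,\dots,n},0\bigr)$. Since $L$ is continuous and bounded on bounded sets, Proposition \ref{s1:prop1} (applied to $F$) immediately yields that $G:X^\sigma\to X^\sigma$ is continuous and bounded on bounded sets of $X^\sigma$.

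Next, I would verify that the free flow map $\Psi:\R\times X^\sigma\to X^\sigma$ given by $\Psi(t,u)=\Phi_t^0(u)$ is jointly continuous and maps bounded sets of $\R\times X^\sigma$ to bounded sets of $X^\sigma$. The particle block $(p,q)\mapsto (p,q+t\,(p_i/m_i)_i)$ is a jointly continuous linear map in $(t,p,q)$, and satisfies the obvious bound $|p|^2+|q+t(p/m)|^2\leq C_m(1+t^2)(|p|^2+|q|^2)$ on bounded time intervals. The field block $\alpha\mapsto e^{-\ci t|\cdot|}\alpha$ is a strongly continuous group of unitary operators on $\mathfrak{H}^\sigma$, so the standard $\varepsilon/2$ splitting
\begin{equation*}
\|e^{-\ci t_n|\cdot|}\alpha_n-e^{-\ci t|\cdot|}\alpha\|_{\mathfrak{H}^\sigma}\leq \|\alpha_n-\alpha\|_{\mathfrak{H}^\sigma}+\|(e^{-\ci t_n|\cdot|}-e^{-\ci t|\cdot|})\alpha\|_{\mathfrak{H}^\sigma}
\end{equation*}
gives joint continuity, while unitarity gives $\|\Phi_t^0(u)\|_{X^\sigma}\leq C(1+|t|)\|u\|_{X^\sigma}$ uniformly in $t$ on compacts. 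The same estimates hold verbatim for $\Phi_{-t}^0$.

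Finally, I would assemble the three facts. For boundedness: if $(t,u)\in [-T,T]\times B_R(X^\sigma)$, then $\Phi_t^0(u)$ stays in a fixed ball of $X^\sigma$ of radius $C_T R$, so $G(\Phi_t^0(u))$ stays bounded in $X^\sigma$ by boundedness of $G$ on bounded sets, and applying $\Phi_{-t}^0$ preserves this up to a factor $C_T$. For continuity, if $(t_n,u_n)\to(t,u)$, then joint continuity of $\Psi$ gives $\Phi_{t_n}^0(u_n)\to\Phi_t^0(u)$ in $X^\sigma$, continuity of $G$ gives $G(\Phi_{t_n}^0(u_n))\to G(\Phi_t^0(u))$ in $X^\sigma$, and a second application of joint continuity of $\Psi$ to the bounded sequence $\bigl(-t_n,G(\Phi_{t_n}^0(u_n))\bigr)$ gives convergence in $X^\sigma$ of $\vartheta(t_n,u_n)$ to $\vartheta(t,u)$. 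The only mildly delicate point is the joint continuity of the unitary group action on $\mathfrak{H}^\sigma$, which is standard; no further obstacle arises.
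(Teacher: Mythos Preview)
Your proposal is correct and follows exactly the approach the paper indicates: the paper simply states that Lemma~\ref{s5.lem1} is ``a consequence of Proposition~\ref{s1:prop1}'' without giving further details, and your argument is precisely the natural elaboration of this---reducing $G$ to $F$ via the bounded linear correction, invoking Proposition~\ref{s1:prop1}, and handling the free flow $\Phi_t^0$ by strong continuity and the obvious norm bounds.
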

Moreover, the strong  solutions to the Newton--Maxwell equation \eqref{eq:Newton-Maxwell} are related to the above initial value problem \eqref{eq:ivp} according to the following relation.
In fact, assume the hypothesis of Lemma \ref{s5.lem1}  is satisfied.
Then if
$$
\tilde u(\cdot)=\big(\tilde p(\cdot),\tilde q(\cdot),\tilde \alpha(\cdot))\in \mathscr C(I,X^\sigma)\cap \mathscr C^1(I,X^{\sigma-1}\big)
$$
is a strong solution to the Newton--Maxwell equation \eqref{eq:Newton-Maxwell} or equivalently \eqref{eq:NMc},
 then
$$
u(\cdot)=
 \big(\tilde p(\cdot),\tilde q(\cdot)-t \bigl(\frac{\tilde p_i}{m_i}\bigr)(\cdot), e^{\ci t|\cdot|} \tilde\alpha(\cdot)\big)\in \mathscr C^1(I,X^\sigma)
 $$
  is a solution of the non-autonomous initial value problem  \eqref{eq:ivp}.
  Reciprocally, if
  $$
  u(\cdot)=
 \big(p(\cdot),q(\cdot),\alpha(\cdot)\big)\in \mathscr C^1(I,X^\sigma)
 $$
 is a solution of the non-autonomous initial value problem  \eqref{eq:ivp},
 then
 \begin{equation}
 \label{eq:tiu}
 \tilde u(\cdot)=\big(p(\cdot),q(\cdot)+t\bigl(\frac{p_i}{m_i}\bigr)(\cdot),e^{-\ci t|\cdot|}\alpha(\cdot)\big)\in \mathscr C(I,X^\sigma)\cap \mathscr C^1(I,X^{\sigma-1})
 \end{equation}
 is a strong solution to the Newton--Maxwell equation \eqref{eq:Newton-Maxwell} or \eqref{eq:NMc}. Furthermore,
 recalling   the free flow $\Phi_t^0$ on $X^\sigma$,
the strong solution  $\tilde u(\cdot)$ in \eqref{eq:tiu} takes the expression below for all $t\in I$,
 $$
 \tilde u(t)= \Phi^0_t(u(t))\,.
 $$
 Additionally, thanks to the statement in Lemma \ref{s5.lem1} the initial value problem \eqref{eq:ivp} is equivalent to the Duhamel formula,
\begin{equation}
\label{int-form}
u(t)=u_0+\int_{0}^t  \;\vartheta(s,u(s)){\rm d}s\,,  \quad \forall  t\in I\,.
\end{equation}
The above discussion is summarized in the  proposition below.
\begin{prop}
\label{prop:eqNM}
Let $\sigma\in[\frac 1 2,1]$ and assume that
\eqref{eq:hypo0}, \eqref{eq:hypo1} and \eqref{eq:hypo2} are satisfied.
Let $I$ be a bounded open interval containing the origin.
Then the statements below are equivalent:
\begin{itemize}
\item[\rm (1)] $u(\cdot)\in  \mathscr C^1(I,X^\sigma)$  is a strong solution of the initial value problem  \eqref{eq:ivp},
\item[\rm (2)]
$u(\cdot)\in  \mathscr C(I,X^\sigma)$ satisfies the Duhamel formula \eqref{int-form}  for all times $t\in I$,
\item[\rm (3)]
the curve $t\in I\mapsto\Phi_t^0(u(t))\in \mathscr C(I,X^\sigma)\cap \mathscr C^1(I,X^{\sigma-1})$ is a strong solution to the Newton--Maxwell equation \eqref{eq:Newton-Maxwell} or equivalently \eqref{eq:NMc}.
\end{itemize}
\end{prop}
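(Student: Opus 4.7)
The plan is to prove the three equivalences by first handling the standard integral--differential equivalence $(1)\Leftrightarrow(2)$, and then reducing $(1)\Leftrightarrow(3)$ to a change of variables via the free flow $\Phi_t^0$, which is the strongly continuous unitary group on $X^\sigma$ generated by $\mathcal{L}_0(p,q,\alpha)=(0,(p_i/m_i)_i,-\ci|k|\alpha)$. Its action is bounded on $X^\sigma$ (since it is just particle translation together with multiplication by $e^{-\ci t|\cdot|}$ on the field), while $\mathcal{L}_0$ itself maps $X^\sigma$ continuously into $X^{\sigma-1}$. The decomposition of the Newton--Maxwell nonlinearity is such that $\mathcal{L}+F=\mathcal{L}_0+G$ in the notation of \eqref{eq:NMc} and \eqref{eq:Gnl}: the only difference between $F$ and $G$ is the linear term $p_i/m_i$ in \eqref{eq:Fq}, which is precisely the $q$-component of $\mathcal{L}_0$.

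For $(1)\Leftrightarrow(2)$, the implication $(1)\Rightarrow(2)$ follows by integrating $\dot u(s)=\vartheta(s,u(s))$ on $[0,t]$, using that both sides are continuous in $X^\sigma$. For $(2)\Rightarrow(1)$, continuity of $u(\cdot)\in\mathscr{C}(I,X^\sigma)$ together with the joint continuity of $\vartheta$ on bounded sets (Lemma \ref{s5.lem1}) ensures that $s\mapsto\vartheta(s,u(s))$ is continuous in $X^\sigma$, so the right-hand side of \eqref{int-form} is a $\mathscr{C}^1(I,X^\sigma)$ curve whose derivative is $\vartheta(t,u(t))$. The initial condition is read off at $t=0$.

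For $(1)\Leftrightarrow(3)$, the strategy is to verify directly that the map $u(\cdot)\mapsto \tilde u(\cdot):=\Phi^0_{(\cdot)}(u(\cdot))$ is a bijection between strong solutions of \eqref{eq:ivp} and strong solutions of \eqref{eq:NMc}. Assuming $(1)$, the composition $\tilde u(t)=\Phi_t^0(u(t))$ lies in $\mathscr{C}(I,X^\sigma)$ because $\Phi_t^0$ is bounded on $X^\sigma$ uniformly on compact time intervals. Differentiating in the weaker topology of $X^{\sigma-1}$ by the product rule, one gets
\begin{equation*}
\frac{d}{dt}\tilde u(t)=\mathcal{L}_0\,\Phi_t^0(u(t))+\Phi_t^0\bigl(\vartheta(t,u(t))\bigr)=\mathcal{L}_0\tilde u(t)+G(\tilde u(t)),
\end{equation*}
using $\Phi_t^0\circ\vartheta(t,u(t))=\Phi_t^0\circ\Phi_{-t}^0\circ G\circ\Phi_t^0(u(t))=G(\tilde u(t))$. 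Since $\mathcal{L}_0\tilde u+G(\tilde u)=\mathcal{L}\tilde u+F(\tilde u)$ and the right-hand side is in $X^{\sigma-1}$ (while $\tilde u\in X^\sigma$), this gives $\tilde u\in\mathscr{C}(I,X^\sigma)\cap\mathscr{C}^1(I,X^{\sigma-1})$ satisfying \eqref{eq:NMc}, which is $(3)$. The reverse implication $(3)\Rightarrow(1)$ is analogous, using $u(t)=\Phi_{-t}^0(\tilde u(t))$ and the fact that $\Phi_{-t}^0$ intertwines $X^\sigma$ and $X^{\sigma-1}$ in the same manner; the linear part $\mathcal{L}_0$ of $\mathcal{L}+F$ is absorbed by the free flow, leaving the nonlinear remainder $G$ transported into the interaction picture, which is precisely $\vartheta(t,u(t))$.

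The main technical obstacle is the regularity bookkeeping: because the generator $\mathcal{L}_0$ of $\Phi_t^0$ is unbounded on $X^\sigma$ (it loses one derivative on the field component), the product-rule calculation has to be carried out in $X^{\sigma-1}$, and one must separately verify that the nonlinear piece $\Phi_t^0(\vartheta(t,u(t)))$ stays in $X^\sigma$ (which it does, since $\vartheta:\mathbb{R}\times X^\sigma\to X^\sigma$ by Lemma \ref{s5.lem1} and $\Phi_t^0$ preserves $X^\sigma$). Once these mapping properties are pinned down, the identification $\mathcal{L}_0+G=\mathcal{L}+F$ is a direct inspection of \eqref{eq:F}, \eqref{eq:Gnl}, and the explicit form \eqref{eq:freeflowB-hiro} of $\Phi_t^0$.
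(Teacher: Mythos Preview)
Your proposal is correct and follows essentially the same approach as the paper: the paper's ``proof'' is the discussion immediately preceding the proposition, which sketches exactly the change of variables $\tilde u(t)=\Phi_t^0(u(t))$ for $(1)\Leftrightarrow(3)$ and invokes Lemma~\ref{s5.lem1} for $(1)\Leftrightarrow(2)$. Your version supplies more detail on the regularity bookkeeping (carrying out the product rule in $X^{\sigma-1}$) and makes the identity $\mathcal{L}_0+G=\mathcal{L}+F$ explicit, which the paper leaves implicit. One minor slip: $\Phi_t^0$ is not unitary on $X^\sigma$---the particle component $(p,q)\mapsto(p,q+t\,p/m)$ is a shear, not an isometry---but you only actually use that $\Phi_t^0$ is bounded on $X^\sigma$ uniformly on compact time intervals, which is true.
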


\begin{lemma}
\label{s5.lem2}
Let $\sigma\in[\frac 1 2,1]$ and assume that
\eqref{eq:hypo0}, \eqref{eq:hypo1} and \eqref{eq:hypo2} are satisfied.
Let $(\varrho_\hslash)_{\hslash\in (0,1)}$ be  a family of density matrices in $\mathscr{H}$ satisfying the assumptions \emph{(}\hyperref[eq:Ass0]{$S_0^{(1)}$}\emph{)}, \emph{(}\hyperref[eq:Ass1]{$S_1^{(1)}$}\emph{)}  and \eqref{eq:Ass2}.
Then for every compact time interval $J$ there exists
 a constant $C>0$ such that for all $u=(p,q,\alpha)\in X^\sigma$ and $t\in J$,
\begin{subequations}
\label{eq:vbd0}
\begin{align}
 \| \vartheta(t,u) \|_{{X}^\sigma} \, &\leq  C_T\, (|p|^2+\|\alpha\|_{L^2}^2+1)  ~,\\
 & \leq  C_T\, (\|u\|_{X^0}^2+1)~.
\end{align}
\end{subequations}
Moreover, if $(\tilde\mu_t)_{t\in\R}$ is the family of Wigner measures provided by Proposition  \ref{prop:4} then for any bounded open interval $I$,
\begin{equation}
\label{eq:vbound}
\int_I\int_{ {X}^\sigma} \| \vartheta(t,u) \|_{{X}^\sigma} \,{\rm d}\tilde\mu_{t}(u){\rm d}t < + \infty ~.
\end{equation}
\end{lemma}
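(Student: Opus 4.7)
The strategy is to reduce the pointwise bound on $\|\vartheta(t,u)\|_{X^\sigma}$ to componentwise estimates on the nonlinearity $G$ already implicit in the proof of Proposition~\ref{s1:prop1}, and then feed the resulting estimate into the a priori bound of Lemma~\ref{lem:apest}.

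Concretely, I would proceed as follows. First, unpack $\vartheta(t,u)=\Phi_{-t}^0\circ G\circ\Phi_t^0(u)$ and set $v=\Phi_t^0(u)=(p,\,q+tp/m,\,e^{-\ci t|k|}\alpha)$. The free flow preserves the momentum component, the $L^2$ norm and the $\dot{\mathfrak{H}}^\sigma$ norm of the field component; only the position component is shifted, linearly in $t$ and $p$. Next, apply the same componentwise estimates as in the proof of Proposition~\ref{s1:prop1}, based on Lemmas~\ref{s1:lem1}--\ref{s1:lem2}, to bound each component of $G(v)$. Since $G$ differs from $F$ only by the absence of the $p_i/m_i$ linear term in the $q$-equation, exactly the same inequalities apply, and each component of $G(v)$ is controlled by a polynomial of degree at most two in $|v_p|=|p|$ and $\|v_\alpha\|_{L^2}=\|\alpha\|_{L^2}$, plus $\|\nabla V\|_\infty$. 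Finally, apply $\Phi_{-t}^0$ to $G(v)$: this preserves the momentum component and the $\mathfrak{H}^\sigma$ norm of the field component, while the new position component is a $t$-dependent linear combination of the $p$- and $q$-components of $G(v)$. Restricting $t\in J$ compact absorbs the time dependence into a constant $C_T$ and yields the pointwise estimate \eqref{eq:vbd0}, which implies $\|\vartheta(t,u)\|_{X^\sigma}\leq C_T(\|u\|_{X^0}^2+1)$ since $\|u\|_{X^0}^2=|p|^2+|q|^2+\|\alpha\|_{L^2}^2$.

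For the integral bound \eqref{eq:vbound}, I would integrate this pointwise inequality against $\mathrm{d}\tilde\mu_t\,\mathrm{d}t$ on the bounded interval $I$, use that each $\tilde\mu_t$ is a probability measure to handle the constant term, and invoke Lemma~\ref{lem:apest} on $J=\bar I$ together with the embedding $\|u\|_{X^0}\leq\|u\|_{X^\sigma}$ to bound $\int\|u\|_{X^0}^2\,\mathrm{d}\tilde\mu_t$ uniformly in $t\in J$. The one delicate point of the whole argument is checking the $\dot{\mathfrak{H}}^\sigma$ bound on the field part of $G(v)$ for general $\sigma\in[\tfrac12,1]$: the estimates used in the proof of Proposition~\ref{s1:prop1} gave $\dot{\mathfrak{H}}^1$ and $L^2$ bounds directly, but the analogous $\dot{\mathfrak{H}}^\sigma$ bound only requires $|\cdot|^{\sigma-1/2}\chi_i\in L^2$, and since $\sigma-\tfrac12\leq\tfrac32-\sigma$ for $\sigma\leq 1$ this is furnished by the interpolation consequence of \eqref{eq:hypo1} and (\hyperref[eq:hypo2]{$A_2^{(\sigma)}$}) recorded just after their statement.
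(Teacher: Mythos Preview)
Your proposal is correct and follows essentially the same approach as the paper: the paper's one-line proof simply states that \eqref{eq:vbd0} is a consequence of the inequalities \eqref{eq:Fbd} and that \eqref{eq:vbound} follows from Proposition~\ref{prop:4} together with \eqref{eq:vbd0}, and you have carefully unpacked exactly what this entails (free-flow conjugation, componentwise bounds from Lemmas~\ref{s1:lem1}--\ref{s1:lem2}, and the interpolation for $|\cdot|^{\sigma-1/2}\chi_i\in L^2$). Your citation of Lemma~\ref{lem:apest} in place of Proposition~\ref{prop:4} is immaterial, since the relevant content of the latter is precisely the uniform moment bound \eqref{eq:estmut}, which is the bound (ii) of Lemma~\ref{lem:apest}.
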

\begin{proof}
The estimate \eqref{eq:vbd0} is a consequence  of the inequalities  \eqref{eq:Fbd} while \eqref{eq:vbound}
follows from Proposition \ref{prop:4} and \eqref{eq:vbd0}.
\end{proof}

We recast below the characteristic equation of Proposition \ref{s4:propChar} in a form  more  suitable for applying the general arguments in Appendix \ref{appx}.
\begin{cor}
\label{s5.cor1}
Let $\sigma\in[\frac 1 2,1]$ and assume that
\eqref{eq:hypo0}, \eqref{eq:hypo1} and \eqref{eq:hypo2} are satisfied.
Let $(\varrho_\hslash)_{\hslash\in (0,1)}$  be a family of density matrices in $\mathscr{H}$ satisfying the assumptions  \emph{(}\hyperref[eq:Ass0]{$S_0^{(1)}$}\emph{)}, \emph{(}\hyperref[eq:Ass1]{$S_1^{(1)}$}\emph{)} and \eqref{eq:Ass2}. Then the characteristic equation given in Proposition \ref{s4:propChar} takes the simple form:
\begin{equation}
\label{eq:char}
    \tilde\mu_{t}(e^{2\ci\pi \Ree\langle y,\cdot\rangle_{{X}^\sigma}}) = \tilde\mu_{t_0}(e^{2\ci\pi \Ree\langle y,\cdot\rangle_{{X}^\sigma}}) +
     2\ci\pi \int_{t_0}^{t}  \;\tilde\mu_{s} \big(e^{2\ci\pi \Ree \langle y,u \rangle_{{X}^\sigma}} \;\Ree\langle \vartheta(s,u),y \rangle_{{X}^\sigma}\big)  \,{\rm d}s~,~
\end{equation}
for all $t,t_0\in\R$ and $y\in {X}^\sigma$.
\end{cor}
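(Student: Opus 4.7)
The goal is to show that the characteristic equation of Proposition \ref{s4:propChar}, parameterized by $\xi=(z,\alpha)\in X^0$ and involving the ``raw'' quantity $\mathscr G(s,\xi)$, can be rewritten in the clean form of \eqref{eq:char}, parameterized by $y\in X^\sigma$ and involving the non-autonomous vector field $\vartheta$ of the Newton--Maxwell system in the interaction representation. The proof proceeds in three steps: a linear change of test parameters, a term-by-term matching of $\mathscr G$ with $\Ree\langle\vartheta(s,u),y\rangle_{X^\sigma}$, and finally a density argument to reach all of $X^\sigma$.

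\textbf{Step 1 (parameter change).} For every $y=(p_y,q_y,\alpha_y)\in X^\sigma$ with the additional property $\alpha_y\in\mathfrak{H}^{2\sigma}$, I set
\begin{equation*}
z := -\pi p_y + \ci\pi q_y, \qquad \alpha := \pi\sqrt 2\,(1+|\cdot|^2)^\sigma \alpha_y,
\end{equation*}
so that $\xi=(z,\alpha)\in X^0$. Unwinding the complex structure $u\equiv(q'+\ci p',\alpha')$ and the definition of $\langle\cdot,\cdot\rangle_{\mathfrak{H}^\sigma}$, a direct computation gives the pointwise identity
\begin{equation*}
2\Imm\langle u,z\rangle + \sqrt 2\,\Ree\langle \alpha,\alpha'\rangle_{\mathfrak{H}^0} \;=\; 2\pi\,\Ree\langle y,u\rangle_{X^\sigma}\,,
\end{equation*}
valid for every $u\in X^\sigma$. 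In particular, the two families of characters used in Proposition~\ref{s4:propChar} and in Corollary~\ref{s5.cor1} coincide on the image of this map.

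\textbf{Step 2 (identification of $\mathscr G$).} The key algebraic claim is
\begin{equation*}
m(s,\xi) \;=\; -2\pi\,\Ree\langle \vartheta(s,u),\,y\rangle_{X^\sigma}
\end{equation*}
with $m(s,\xi)$ given by \eqref{eq:msxi} and $\vartheta(s,u)=\Phi_{-s}^0\circ G\circ\Phi_s^0(u)$. The three ingredients that make this work are: (i) the Coulomb gauge identity $A_i^\nu(q_i+sp_i/m_i,\,e^{-\ci s|\cdot|}\alpha)=2\Ree\langle F_i^\nu(s),\alpha\rangle_{L^2}$, together with an analogous expression for $\nabla_{q_i}A_i^\nu$; (ii) the fact that under the substitution $\alpha_y=\alpha/(\pi\sqrt 2\,(1+|k|^2)^\sigma)$ the weighted pairing collapses,
$\langle (\vartheta)_\alpha,\alpha_y\rangle_{\mathfrak{H}^\sigma}=\tfrac{1}{\pi\sqrt 2}\,\langle (\vartheta)_\alpha,\alpha\rangle_{L^2}$, which removes the $\sigma$-dependence of the pairing on the field side; (iii) the interaction-picture shift $q\mapsto q+sp/m$ hidden in $\Phi_s^0$, which produces exactly the $\nabla V(q+sp/m)\cdot(q_{0,j}+sp_{0,j}/m_j)$ term after the $\Phi_{-s}^0$-twist contributes the $-s/m_i$ correction to the position component of $\vartheta$. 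Once this identification is checked for each of the four blocks of $m(s,\xi)$, combining with Lemma~\ref{eq:ch1} and the parameter change of Step 1 transforms \eqref{eq:char-v1} into \eqref{eq:char}.

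\textbf{Step 3 (density extension).} The map $y\mapsto\xi$ of Step 1 is only defined for $\alpha_y\in\mathfrak{H}^{2\sigma}$, so a priori \eqref{eq:char} is established only on that dense subspace of $X^\sigma$. Both sides of \eqref{eq:char} are however continuous in $y\in X^\sigma$: continuity of the left-hand side is immediate from the estimate $|e^{2\ci\pi\Ree\langle y_1,u\rangle}-e^{2\ci\pi\Ree\langle y_2,u\rangle}|\leq 2\pi\|y_1-y_2\|_{X^\sigma}\|u\|_{X^\sigma}$ together with the moment bound of Lemma~\ref{lem:apest}, while continuity of the right-hand side uses Lemma~\ref{s5.lem2}---the integrand being dominated by $\|\vartheta(s,u)\|_{X^\sigma}\|y\|_{X^\sigma}$, which is $\mathrm{d}\tilde\mu_s\otimes\mathrm{d}s$-integrable by \eqref{eq:vbound}. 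A standard density argument then yields \eqref{eq:char} for every $y\in X^\sigma$. The main obstacle is the explicit bookkeeping in Step~2: keeping track of the $\pi$, $\sqrt 2$ and signs arising from the symplectic/complex structure, and verifying that the cross-terms generated by $\Phi_{-s}^0$ on the position component of $G$ produce exactly the potential-gradient term appearing in the last line of \eqref{eq:msxi}.
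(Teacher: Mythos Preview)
Your proposal is correct and follows essentially the same route as the paper. The paper introduces the operator $\Lambda=\mathrm{diag}(1,1,1+|k|^2)$ so that $\langle\cdot,\cdot\rangle_{X^\sigma}=\langle\cdot,\Lambda^\sigma\cdot\rangle_{X^0}$, sets $\tilde\xi=\Lambda^\sigma y$ for $y\in X^{2\sigma}$ (your Step~1 is exactly this map written out in components, with the additional twist $\tilde\xi=(z_0/(\ci\pi),\alpha_0/(\sqrt 2\pi))$ absorbing the constants), proves the key identity $m(s,\xi)=-2\pi\,\Ree\langle\vartheta(s,u),\tilde\xi\rangle_{X^0}$, and then extends to all $y\in X^\sigma$ by dominated convergence via the bound \eqref{eq:vbound}. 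The only stylistic difference is in Step~2: where you match the four blocks of $m(s,\xi)$ against $\vartheta$ by direct expansion, the paper passes through the intermediate identity
\[
m(s,\xi)=-2\Imm\bigl\langle(G\circ\Phi_s^0(u))_z,(\Phi_s^0(\xi))_z\bigr\rangle-\sqrt 2\,\Ree\bigl\langle(G\circ\Phi_s^0(u))_\alpha,(\Phi_s^0(\xi))_\alpha\bigr\rangle
\]
and then invokes the symplectic (particle) and euclidean (field) invariance of the free flow $\Phi_s^0$ to pull it back to $\vartheta=\Phi_{-s}^0\circ G\circ\Phi_s^0$; this packages your ``$\Phi_{-s}^0$-twist'' bookkeeping into a single structural observation and spares the explicit tracking of the $-s/m_i$ cross-terms.
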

\begin{proof}
The scalar product over $X^\sigma$ can be written as a duality bracket such that for all $u_1,u_2\in X^\sigma$,
$$
\langle u_1,  u_2\rangle_{X^\sigma}=\langle u_1, {\Lambda}^{\sigma} u_2\rangle_{X^0} \,,
$$
where $\Lambda$ is the unbounded operator over $X^0$ defined by
$$
{\Lambda}=\begin{bmatrix}
1 & 0 & 0\\
0& 1 & 0\\
0&0& 1+|k|^2
\end{bmatrix}
\,.
$$
In particular,  one has for every $y\in X^{2\sigma}$ and every $\tilde\xi=\Lambda^\sigma y\in X^0$,
\begin{align*}
\Ree\langle y, u\rangle_{{X}^\sigma} &= \Ree\langle \tilde\xi, u\rangle_{{X}^0}\,,\\
\Ree\langle \vartheta(s,u),y \rangle_{{X}^\sigma}&= \Ree\langle \vartheta(s,u),\tilde\xi \rangle_{{X}^0}\,,
\end{align*}
Moreover, one can check that
\begin{align}
\label{eq:40}
\mathscr G(s,\xi) &=-2\pi\,\tilde\mu_{s} \big(e^{2\ci\pi \Ree \langle \tilde \xi,u \rangle_{{X}^0}} \;\Ree\langle \vartheta(s,u),\tilde\xi \rangle_{{X}^0}\big) \\
&=-
2\pi\,\tilde\mu_{s} \big(e^{2\ci\pi \Ree \langle y,u \rangle_{{X}^\sigma}} \;\Ree\langle \vartheta(s,u),y \rangle_{{X}^\sigma}\big)\,,\nonumber
\end{align}
if one chooses
$$
\tilde\xi=(\frac{z_0}{\ci \pi},\frac{\alpha_0}{\sqrt{2}\pi} )\,\qquad \text{ while } \qquad \xi=(z_0,\alpha_0)\in X^0\,,
$$
where the quantity $\mathscr G(s,\xi)$ is given in \eqref{eq:Gsxi}.
Therefore, the characteristic equation \eqref{eq:char-v1} yields \eqref{eq:char} for all $y\in  X^{2\sigma}$.  The latter equation \eqref{eq:char} extends
then to all $y\in X^\sigma$ by dominated convergence and the bound \eqref{eq:vbound}.
So, to finish the proof it is enough to justify  \eqref{eq:40}. In fact, this is equivalent to proving
\begin{equation}
\label{eq:mvfi}
m(s,\xi)=-2\pi\;\Ree\langle \vartheta(s,u),\tilde \xi \rangle_{{X}^0} \,,
\end{equation}
where $m(s,\xi)$ is given in \eqref{eq:msxi}. Indeed, the latter equality follows from the identity
\begin{equation}
\label{eq:sympeuc}
m(s,\xi)= -2\Imm\bigl\langle \big(G\circ \Phi_t^0(u)\big)_{z}, \big(\Phi_t^0(\xi)\big)_z \bigr\rangle-\sqrt{2}\Ree\bigl\langle
\big(G\circ \Phi_t^0(u)\big)_{\alpha}, \big(\Phi_t^0(\xi)\big)_{\alpha} \bigr\rangle\,.
\end{equation}
Hence, using the symplectic and euclidian invariance of the free particle and field  dynamics respectively, one finds
that
\begin{eqnarray*}
m(s,\xi)&=&
-2\Imm\big\langle \big(\Phi_{-t}^0\circ G\,(\Phi_t^0(u))\big)_{z}\,, z_0 \big\rangle
-\sqrt{2} \Ree\big\langle \big(\Phi_{-t}^0\circ G\,(\Phi_t^0(u))\big)_{\alpha} \,,\alpha_0 \big\rangle \\
&=&
-2\Imm\big\langle \big(\vartheta(s,u)\big)_{z}\,, z_0 \big\rangle
- \sqrt{2}\Ree\big\langle \big(\vartheta(s,u)\big)_{\alpha} \,,\alpha_0 \big\rangle \\
&=&
-2\pi \Ree\big\langle \big(\vartheta(s,u)\big)_{z}\,,  \frac{z_0}{\ci\pi} \big\rangle
- 2\pi \Ree\big\langle \big(\vartheta(s,u)\big)_{\alpha} \,,\frac{\alpha_0}{\sqrt{2}\pi} \big\rangle\\
&=&
-2\pi \Ree\big\langle \vartheta(s,u)\,, \tilde\xi\big\rangle \,.
\end{eqnarray*}
\end{proof}

In the sequel,  we establish some regularity of the family  $(\tilde\mu_t)_{t\in\R}$ with respect to time.

\begin{lemma}
\label{lemma:3}
The Wigner measures $\{ \tilde{\mu}_t\}_{t\in\R}$ provided by Proposition \ref{prop:4} satisfy:
\begin{itemize}
\item[(1)] $\tilde{\mu}_t$ concentrates on $X^\sigma$, \emph{i.e.} $\tilde{\mu}_t(X^\sigma)=1$.
\item[(2)] $\R\ni t\to \tilde{\mu}_t\in\mathfrak{P}(X^\sigma)$ is weakly narrowly continuous.
\end{itemize}
\end{lemma}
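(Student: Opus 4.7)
The plan is to derive both assertions from two ingredients already at our disposal: the uniform second-moment bound of Lemma \ref{lem:apest}(ii) and the characteristic equation of Corollary \ref{s5.cor1}. For part (1), observe that the map $u \mapsto \|u\|_{X^\sigma}$, extended by $+\infty$ to $X^0\setminus X^\sigma$, is lower semi-continuous and hence Borel measurable on $X^0$; consequently $X^\sigma = \{u\in X^0 : \|u\|_{X^\sigma}<\infty\}$ is a Borel subset of $X^0$. Lemma \ref{lem:apest}(ii) gives $\int_{X^0}\|u\|_{X^\sigma}^2\, d\tilde\mu_t<\infty$ on any compact interval, so $\|u\|_{X^\sigma}<\infty$ holds $\tilde\mu_t$-a.e., which is exactly $\tilde\mu_t(X^\sigma)=1$.

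For part (2), fix a compact interval $J\ni t$ and let $t_n\to t$ in $J$. The first step is tightness: from the uniform bound $\int\|u\|_{X^\sigma}^2\,d\tilde\mu_s\leq C$ and Chebyshev's inequality,
\begin{equation*}
\tilde\mu_s\bigl(\{u:\|u\|_{X^\sigma}>R\}\bigr)\leq \frac{C}{R^2},\qquad \forall s\in J,\ R>0.
\end{equation*}
Closed balls of $X^\sigma$ are compact in the metric topology $(X^\sigma,\|\cdot\|_{X^\sigma_w})$ defined by \eqref{normw}, by Banach--Alaoglu together with the metrizability of bounded weakly compact sets of a separable Hilbert space. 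Thus $\{\tilde\mu_s\}_{s\in J}$ is tight for the weak topology of $X^\sigma$, and Prokhorov's theorem yields weak narrow relative compactness.

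The second step is pointwise-in-$y$ continuity of the characteristic function. For each $y\in X^\sigma$, the integrand in \eqref{eq:char} is controlled by $\|y\|_{X^\sigma}\,\|\vartheta(s,u)\|_{X^\sigma}$, whose majorant is integrable on $I\times X^\sigma$ thanks to \eqref{eq:vbound}. Dominated convergence then shows that
\begin{equation*}
t\longmapsto \tilde\mu_t\bigl(e^{2\ci\pi\Ree\langle y,\cdot\rangle_{X^\sigma}}\bigr)
\end{equation*}
is (absolutely) continuous for every fixed $y\in X^\sigma$.

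To conclude, extract any subsequence $\tilde\mu_{t_{n_k}}$ and, by tightness, a further weakly narrowly convergent sub-subsequence with limit $\mu^\star\in \mathfrak{P}(X^\sigma)$. Since each map $u\mapsto e^{2\ci\pi\Ree\langle y,u\rangle_{X^\sigma}}$ is bounded and $\|\cdot\|_{X^\sigma_w}$-continuous, passing to the limit along this sub-subsequence and using the pointwise continuity just established shows that the characteristic function of $\mu^\star$ equals that of $\tilde\mu_t$. Fourier uniqueness on the separable Hilbert space $X^\sigma$ then forces $\mu^\star=\tilde\mu_t$; as the limit is independent of the subsequence, the full sequence $\tilde\mu_{t_n}$ converges weakly narrowly to $\tilde\mu_t$. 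The most delicate point is topological: $(X^\sigma,\|\cdot\|_{X^\sigma_w})$ is not itself Polish, so one has to exploit the uniform moment bound to reduce matters to closed $X^\sigma$-balls, on which the weak topology is indeed Polish and the standard Prokhorov/Fourier-uniqueness machinery applies.
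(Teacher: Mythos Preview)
Your proof is correct and follows essentially the same approach as the paper: both derive concentration on $X^\sigma$ from the second-moment bound, and both establish weak narrow continuity by combining tightness in $(X^\sigma,\|\cdot\|_{X^\sigma_w})$ via the uniform moment estimate with pointwise continuity of characteristic functions from the characteristic equation \eqref{eq:char}, then conclude by Prokhorov plus Fourier uniqueness along subsequences. Your treatment is slightly more careful on the topological side (flagging that $(X^\sigma,\|\cdot\|_{X^\sigma_w})$ is not Polish and that one should pass to closed balls), a point the paper leaves implicit.
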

\begin{proof}
On one hand, the bound \eqref{eq:estmut} gives
$$
\int_{X^0} \| \alpha\|_{\dot{\mathfrak{H}}^{\sigma}}^2\; {\rm d}\tilde{\mu}_t<\infty\,.
$$
On the other, Markov's inequality implies
$$
\tilde\mu_t(\{(p,q,\alpha)\in X^0: \lVert\alpha\rVert_{\dot{\mathfrak{H}}^{\sigma}}\geq \varepsilon\})\leq \frac{1}{\varepsilon^2} \,\tilde\mu_t(\lVert\alpha\rVert^2_{\dot{\mathfrak{H}}^{\sigma}})\,.
$$
So, taking $\varepsilon \to \infty$ yields
$$
\tilde\mu_t(\{(p,q,\alpha)\in X^0: \alpha\notin \dot{\mathfrak{H}}^{\sigma}\})=0\,.
$$
Therefore, the probability measures $\tilde\mu_t$ are concentrated on $X^{\sigma}$.  To prove the weak narrow continuity of
$(\tilde \mu_t)_{t\in\R}$, we claim that it is enough to show
that the function
\begin{equation}
\label{eq:34}
\R\ni t\to \int_{X^\sigma} e^{2\pi\ci \Ree\langle y,u\rangle_{X^\sigma}} \,{\rm d}\tilde\mu_t(u)\,,
\end{equation}
is  continuous for every $y\in X^\sigma$.
In fact, suppose that such statement is true. One remarks that for every arbitrary compact interval $J$ the family
$(\tilde\mu_t)_{t\in J}$ satisfies the bound
$$
\sup_{t\in J} \int_{X^\sigma} \|u\|^2_{X_w^\sigma} \,{\rm d}\tilde\mu_t(u) <\infty\,,
$$
since  the estimate \eqref{eq:estmut} holds true and  $\|u\|_{X_w^\sigma}\leq  \|u\|^2_{X^\sigma}$. So, thanks to the above bound one deduces that the family
$(\tilde\mu_t)_{t\in J}$ is  tight in $\mathfrak{P}(X^\sigma)$, where $X^\sigma$ is endowed here with the norm $\|\cdot\|_{X^\sigma_w}$. Therefore, applying the
Prokhorov's theorem one deduces that for any arbitrary compact interval $J$ the family $(\tilde\mu_t)_{t\in J}$  is sequentially compact for the weak narrow convergence topology. In particular, take $t_0\in \R$ and $\psi$ a bounded continuous  function on $(X^\sigma,\|\cdot\|_{X^\sigma_w})$.
Then the steps below follow:
\begin{itemize}
 \item[(1)] For any sequence $t_j\to t_0$ one can extract a subsequence denoted by $(t_{j_k})_{k\in\N}$ such that $\tilde\mu_{t_{j_k}}$ weakly narrowly converges to a provability measure $\mu$.
 \item[(2)] Thanks to the above convergence and the continuity of
 the function \eqref{eq:34}, one deduces that the characteristic function of
 $\mu_{t_0}$ and $\mu$ coincide and hence $\mu=\mu_{t_0}$.
 Note that we have used that the
 map $u\mapsto e^{2\pi\ci \Ree\langle y,u\rangle_{X^\sigma}}$ is bounded and continuous over  $(X^\sigma,\|\cdot\|_{X^\sigma_w})$.
 \item[(3)] Hence, one concludes that for every sequence $(t_j)$ there exists a subsequence $(t_{j_k})_k$ such that
              $$
     \lim_{k} \int_{X^\sigma} \psi(u) \,{\rm d}\tilde\mu_{t_{j_k}}=\int_{X^\sigma} \psi(u) \,{\rm d}\tilde\mu_{t_0}\,.
     $$
 \item[(4)] So there exists a unique limit point and therefore  the sequence $(\tilde\mu_{t_j})_j$ converges weakly narrowly to $\tilde\mu_{t_0}$.
\end{itemize}
Now, the continuity of the functions \eqref{eq:34} is a consequence of the characteristic equation \eqref{eq:char} and the estimate \eqref{eq:vbd0} which yield
\begin{align}
 \left|\tilde\mu_{t}(e^{2\ci\pi \Ree\langle y,\cdot\rangle_{{X}^\sigma}}) -\tilde\mu_{t_0}(e^{2\ci\pi \Ree\langle y,\cdot\rangle_{{X}^\sigma}})\right| &\lesssim
      \left|       \int_{t_0}^{t} \|y\|_{X^\sigma} \;\int_{X^\sigma} \| \vartheta(t,u) \|_{{X}^\sigma} \,{\rm d}\tilde\mu_s(u)\; {\rm d}s\right| \nonumber\\
      &\leq C \left|  \int_{t_0}^{t} \int_{X^\sigma} (\|u\|_{X^0}^2+1) \,{\rm d}\tilde\mu_s(u)\; {\rm d}s\right| \nonumber \\
      &\leq C |t-t_0|\,,  \label{eq:35}
\end{align}
for any $t,t_0$ in an arbitrary compact interval. In the last inequality \eqref{eq:35}, we have used the bound \eqref{eq:estmut}
in Proposition \ref{prop:4}.
\end{proof}

\bigskip
Next we apply the general results of the Appendix  \ref{appx}.   At this point, we suggest the reading of Appendix \ref{appx}.  In particular, the spaces
of smooth cylindrical functions $\mathscr{C}_{0,cyl}^{\infty}$ and the gradient $\nabla$ are defined there.
The proposition below relates the characteristic equation in \eqref{eq:char} to the Liouville equation introduced in Appendix \ref{appx}.

\begin{prop}
\label{s5.prop1}
 The family of Wigner measures  $(\tilde\mu_t)_{t\in \R}$ defined in  Proposition \ref{prop:4} satisfies  the Liouville equation
 \begin{equation}
\label{eq:Liou-hiro}
     \int_{I}\int_{X^\sigma}  \{\partial_{t} \phi(t,u) + \Ree \langle \vartheta(t,u), \nabla_{X^\sigma} ~\phi(t,u) \rangle_{ X^\sigma} \}~
 {\rm d}\mu_{t}(u){\rm d}t =0 \,,
\end{equation}
for any open bounded time interval $I$ containing the origin with
$\phi\in\mathscr{C}_{0,cyl}^{\infty}(I \times X^\sigma)$.
\end{prop}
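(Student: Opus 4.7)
The plan is to invoke the abstract equivalence between characteristic and Liouville equations from Appendix \ref{appx}, all of whose hypotheses are already established in the previous lemmas. No new quantitative estimate is needed at this stage; the work consists in checking that the family $(\tilde\mu_t)_{t\in\R}$ fits the abstract framework and in performing a Fourier-style decomposition of a general cylindrical test function into characters, for which the characteristic equation of Corollary \ref{s5.cor1} already supplies the identity.

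First I would collect the ingredients. Lemma \ref{lemma:3} guarantees that $\tilde\mu_t$ is concentrated on $X^\sigma$ for every $t\in\R$ and that the curve $t\mapsto \tilde\mu_t$ is weakly narrowly continuous in $\mathfrak{P}(X^\sigma)$. Lemma \ref{s5.lem1} gives continuity and boundedness of the non-autonomous vector field $\vartheta:\R\times X^\sigma\to X^\sigma$ on bounded sets. Lemma \ref{s5.lem2} provides the crucial integrability
\begin{equation*}
\int_I\!\int_{X^\sigma} \|\vartheta(t,u)\|_{X^\sigma}\,{\rm d}\tilde\mu_t(u)\,{\rm d}t<\infty
\end{equation*}
on any bounded open interval $I$. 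Finally, Corollary \ref{s5.cor1} gives the characteristic equation for all $y\in X^\sigma$ and all $t,t_0\in\R$. These are precisely the hypotheses under which the abstract result in Appendix \ref{appx} promotes the character-valued identity into a Liouville equation tested against cylindrical smooth functions.

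The essential step underlying the abstract result, which I would make explicit for clarity, is the Fourier reduction. A function $\phi\in\mathscr{C}_{0,cyl}^\infty(I\times X^\sigma)$ depends on $u$ only through finitely many real linear functionals: there exist $y_1,\dots,y_N\in X^\sigma$ and $\psi\in\mathscr{C}_0^\infty(I\times\R^N)$ with
\begin{equation*}
\phi(t,u)=\psi\bigl(t,\,(\Ree\langle y_j,u\rangle_{X^\sigma})_{j=1}^N\bigr).
\end{equation*}
Writing $\psi$ as the inverse Fourier transform of its (Schwartz) Fourier transform $\hat\psi(t,\eta)$ in the $\R^N$-variables, and setting $y(\eta)=\sum_j \eta_j y_j\in X^\sigma$, one obtains
\begin{equation*}
\phi(t,u)=\int_{\R^N}\hat\psi(t,\eta)\,e^{2\ci\pi\Ree\langle y(\eta),u\rangle_{X^\sigma}}\,{\rm d}\eta,
\end{equation*}
together with the analogous representations for $\partial_t\phi$ and $\nabla_{X^\sigma}\phi(t,u)=\int 2\ci\pi\,y(\eta)\hat\psi(t,\eta)e^{2\ci\pi\Ree\langle y(\eta),u\rangle_{X^\sigma}}\,{\rm d}\eta$. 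Applying the characteristic equation \eqref{eq:char} to $y=y(\eta)$, multiplying by $\hat\psi(t,\eta)$, integrating in $t$ by parts and then in $\eta$, and invoking Fubini one recovers exactly the Liouville identity \eqref{eq:Liou-hiro}. The swap of integration orders is licit because $\hat\psi$ is Schwartz in $\eta$ while $\|y(\eta)\|_{X^\sigma}$ grows only polynomially, and the $\tilde\mu_t$-$dt$ integrability of $\|\vartheta(t,u)\|_{X^\sigma}$ controls the rest.

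The main (and only) obstacle one would anticipate is the rigorous passage from characters to general cylindrical test functions in an infinite-dimensional setting, precisely because $\nabla_{X^\sigma}\phi$ and the pairing $\Ree\langle\vartheta(t,u),\nabla_{X^\sigma}\phi(t,u)\rangle_{X^\sigma}$ must be interpreted and dominated uniformly. This is the content that Appendix \ref{appx} has factored out in full generality; since all its measure-theoretic and integrability hypotheses are verified above, the proposition follows by a direct application of that abstract equivalence.
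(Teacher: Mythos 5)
Your proof is correct and follows essentially the same route as the paper: verify that the curve $t\mapsto\tilde\mu_t$ is weakly narrowly continuous (Lemma \ref{lemma:3}), that the integrability bound \eqref{eq:vbound} holds (Lemma \ref{s5.lem2}), and that the characteristic equation \eqref{eq:char} holds (Corollary \ref{s5.cor1}), then invoke the abstract equivalence of Proposition \ref{prop:char-Liouv-CR}. The extra paragraph you include sketching the Fourier reduction from characters to cylindrical test functions is the internal content of that abstract proposition and is not needed here, but it correctly explains why the equivalence holds.
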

\begin{proof}
 It is enough to apply Proposition \ref{prop:char-Liouv-CR} which establishes an equivalence between  Liouville equation  and characteristic equation. Indeed, the assumptions required to apply such result are true:
\begin{itemize}
\item[(1)]  $\R\ni t \mapsto \tilde\mu_{t}$ is weakly narrowly continuous in $\mathfrak{P}({X}^\sigma)$ according to Lemma \ref{lemma:3}.
\item [(2)] The bound
\begin{equation*}
\int_{I} \int_{{X}^\sigma} \| \vartheta(t,u) \|_{{X}^\sigma} \,{\rm d}\tilde\mu_{t}(u){\rm d}t < + \infty ~,
\end{equation*}
holds true for any bounded open interval $I$ thanks to Lemma \ref{s5.lem2} with the bound \eqref{eq:vbound}.
\end{itemize}
\end{proof}

\paragraph{\bf Proof of Theorem \ref{thm:main-1} :}
Let $\sigma\in[\frac 1 2,1]$ and assume that \eqref{eq:hypo0}, \eqref{eq:hypo1} and \eqref{eq:hypo2}
are satisfied.   Consider for  $u_*\in X^\sigma$ the family of  coherent states defined in \eqref{eq:cohdef}:
\begin{equation}
\label{eq:cohst}
\varrho_\hslash=\mathcal C_\hslash(u_*)\,.
\end{equation}
 Then according to Lemma \ref{lemma:coh} one checks that such family  $\{\varrho_\hslash\}_{\hslash\in(0,1)}$  satisfies the assumptions  (\hyperref[eq:Ass0]{$S_0^{(1)}$}), {(}\hyperref[eq:Ass1]{$S_1^{(1)}$}{)} and \eqref{eq:Ass2}.
 Hence, one can apply Propositions \ref{prop:4} and \ref{s5.prop1} and concludes that there exists $\{\tilde\mu_t\}_{t\in\R}$ a weakly narrowly  continuous
curve in $\mathfrak{P}(X^\sigma)$ satisfying the bound \eqref{eq:vbound} and the Liouville
 equation \eqref{eq:Liou-hiro}.

 So, we have at our disposal all the ingredients to apply  Theorem \ref{thm:gflow} on the existence of global generalized flow for abstract   initial value problems. Indeed, the condition  \ref{cd:a} in  Theorem \ref{thm:gflow} is satisfied according to the above discussion
  while the uniqueness condition in Theorem \ref{thm:gflow}-\ref{cd:b}  holds true by Proposition \ref{s1:prop2}. Hence, one concludes that the ensemble of
  initial data
  $$
\mathcal G=\left\{u_0\in X^\sigma: \exists u(\cdot) \text{ a global strong solution of Newton--Maxwell equation satisfying } \eqref{eq:ivp}  \right\}
  $$
is Borel and total with respect to the probability measure $\tilde \mu_0$ which is the unique Wigner
measure of the coherent state family in \eqref{eq:cohst}. Therefore, by Lemma \ref{lemma:coh} one deduces that
$$
\tilde\mu_0(\mathcal G)=\delta_{u_*}(\mathcal G)=1\,,
$$
and consequently any  $u_*\in X^\sigma$ belongs to $\mathcal G$. This implies that
 for any initial condition $u_0\in X^\sigma$ there exists a unique global solution
$u(\cdot)\in \mathscr{C}(\R,X^\sigma)$ of the Newton--Maxwell equation \eqref{eq:ivp}. Moreover, the generalized flow map
\begin{equation}
\label{eq:phitilda}
\begin{aligned}
\widetilde\Phi_t:  X^\sigma & \rightarrow & X^\sigma\\
u_0 &\mapsto & u(t)
\end{aligned}
\end{equation}
is well defined and Borel measurable. Finally, using Proposition \ref{prop:eqNM} which gives  the equivalence between the two formulations \eqref{eq:ivp}   and \eqref{eq:Newton-Maxwell} of the Newton--Maxwell equation, one shows the existence and uniqueness
of global solutions  for the latter with a generalized global flow defined by
\begin{equation}
\label{eq:phi}
\Phi_t(u_0)=\Phi_{t}^0\circ  \widetilde\Phi_t(u_0)\,,
\end{equation}
where $\Phi_{t}^0$ is the free flow given in  \eqref{eq:freeflowB-hiro}
and $\tilde \Phi_t$ is the global flow of the Newton--Maxwell initial value problem \eqref{eq:ivp} in the interaction representation given by  \eqref{eq:phitilda}.
\hfill$\square$

\bigskip

\paragraph{\bf Proof of Theorem \ref{thm:main-2} :}[$\delta=1$ and singleton Wigner set]
Let $\sigma\in[\frac 1 2, 1]$ and assume that \eqref{eq:hypo0}, \eqref{eq:hypo1} and \eqref{eq:hypo2}
 are satisfied. Consider a family of density matrices $(\varrho_\hslash)_{\hslash\in (0,1)}$  on $\mathscr{H}$ satisfying
 the assumptions (\hyperref[eq:Ass0]{$S_0^{(0)}$}), (\hyperref[eq:Ass1]{$S_1^{(1)}$}) and \eqref{eq:Ass2}. Define
 \begin{equation*}
\varrho_\hslash(t)= e^{-\ci \frac{t}{\hslash} \hat{H}_\hslash}  \, \varrho_\hslash \,  e^{\ci \frac{t}{\hslash} \hat{H}_\hslash}
\qquad \text{ and } \qquad  \tilde{\varrho}_\hslash (t)=
e^{\ci \frac{t}{\hslash} \hat{H}^0_\hslash}  \, \varrho_\hslash(t) \,  e^{-\ci \frac{t}{\hslash} \hat{H}^0_\hslash}\,.
 \end{equation*}
For  $t_0\in \R$ pick  $\tilde\mu_{t_0}$ a Wigner measure of the family $( \tilde{\varrho}_\hslash (t_0))_{\hslash\in(0,1)}$. The existence of such Wigner measure $\tilde\mu_{t_0}$ is a consequence of Proposition \ref{wig} and Lemma \ref{lemma:rhoSt}. Moreover, according to Definition \ref{def:wigner} this implies  that there exists a sequence $(\hslash_k)_{k\in\N}$ such that $\lim_k \hslash_k=0$ and
$$
\mathcal{M}( \tilde{\varrho}_{\hslash_k} (t_0), k\in \N)=\{\tilde\mu_{t_0}\}\,.
$$
So, by Proposition \ref{prop:4} there exists a subsequence (still denoted by $(\hslash_k)_{k\in\N}$) such that for each
$t\in\R$,
$$
\mathcal{M}( \tilde{\varrho}_{\hslash_k} (t), k\in\N)=\{\tilde\mu_{t}\}\,.
$$
Furthermore,  by  Proposition \ref{s4:propChar} the curve $t\in\R\to \tilde\mu_t\in \mathfrak{P}(X^\sigma)$ satisfies the characteristic equation \eqref{eq:char-v1} or equivalently \eqref{eq:char}. Therefore, using
Proposition \ref{s5.prop1} one deduces that  $\R\ni t\mapsto \tilde\mu_t$ is a measure-valued solution of the Liouville equation \eqref{eq:Liou-hiro}.
So applying the probabilistic representation in Theorem \ref{thm.eta} one obtains the existence of a measure
$$\eta\in\mathfrak P(\mathcal X),$$ with $\mathcal X\equiv X^\sigma\times\mathscr C(\bar I,X^\sigma)$, concentrated on the ensemble of solutions of the Newton--Maxwell equation \eqref{eq:ivp}, \emph{i.e.},
$$\eta(\mathcal F_I)=1,$$ where
$$
\mathcal F_I=\left\{(u_0,u(\cdot))\in \mathcal X : ~u(\cdot) \text{ satisfies } \eqref{eq:ivp} \text{ on } I \text{ with the initial condition } u_0\right\}\,.
$$
Moreover, Theorem \ref{thm.eta}-(ii) implies that for any
bounded Borel functions $\psi:X^\sigma\to \R$
\begin{equation}
\label{eq:pr1}
\int_{X^\sigma} \psi(u) \,{\rm d}\tilde\mu_t=\int_{\mathcal F_I} \psi\big(\Xi_t(u_0,u(\cdot))\big) \,{\rm d}\eta(u_0,u(\cdot))\;.
\end{equation}
Thanks to the proof of Theorem \ref{thm:main-1}, the generalized global flow $\widetilde\Phi_t$ in \eqref{eq:phitilda}  of the Newton--Maxwell equation \eqref{eq:ivp} is already constructed. Hence,  one proves that the following identity holds true on the set $\mathcal F_I$
$$
\Xi_t(u_0,u(\cdot))= \widetilde\Phi_t(\Xi_0(u_0,u(\cdot))=\widetilde \Phi_t(u_0)\,.
$$
Recall that $\Xi_t$ is the evaluation map defined in \eqref{app.evmap}.
Thus, the equality \eqref{eq:pr1} yields
\begin{eqnarray*}
\int_{X^\sigma} \psi(u) \,{\rm d}\tilde\mu_t(u) &=& \int_{\mathcal X} \psi\circ \widetilde\Phi_t(\Xi_0(u_0,u(\cdot))) \;{\rm d}\eta(u_0,u(\cdot))\\
&=& \int_{X^\sigma} \psi\circ \widetilde\Phi_t(u) \; {\rm d}\tilde\mu_0(u)\,,
\end{eqnarray*}
since $(\Xi_0)_\sharp\eta=\tilde\mu_0$ and $\widetilde\Phi_t$ is Borel measurable. So, one concludes that for all $t\in I$,
\begin{equation}
\label{eq:pr2}
\tilde\mu_t=(\widetilde\Phi_t)_{\sharp}\tilde\mu_0\,.
\end{equation}
In particular, since any Wigner measure $\mu_{t_0}$ of the family $( \tilde{\varrho}_\hslash (t_0))_{\hslash\in(0,1)}$ is equal to $(\widetilde\Phi_{t_0})_{\sharp}\tilde\mu_0$,
one concludes that such family of density matrices admits a unique Wigner measure, \emph{i.e.}
$$
\mathcal{M}(\tilde\varrho_\hslash(t_0), \hslash\in(0,1))=\{(\widetilde\Phi_{t_0})_{\sharp}\tilde\mu_0\}\,.
$$
Finally, using Lemmas \ref{lemma:rhoSt} and \ref{lemma:rhoti} and the fact that the time $t_0$ is arbitrary one concludes for all times $t\in\R$,
\begin{eqnarray*}
\mathcal{M}\big(\varrho_\hslash(t), \hslash\in(0,1)\big)&=& \left\{(\Phi_t^0)_{\sharp}\tilde\mu_t, \;\tilde \mu_t\in\mathcal{M}(\tilde{\varrho}_\hslash(t), \hslash\in(0,1))\right\}\\
&=&\left\{(\Phi_t^0\circ \widetilde\Phi_t)_{\sharp}\tilde\mu_0\right\}\\
&=&\left\{(\Phi_t)_{\sharp}\mu_0\right\}\,,
\end{eqnarray*}
since  $\Phi_t=\Phi_t^0\circ \widetilde\Phi_t$ is the Newton--Maxwell global flow and $\tilde\mu_0=\mu_0$ as a consequence of
$\tilde\varrho_\hslash(0)=\varrho_\hslash(0)=\varrho_\hslash$.
\hfill$\square$

\bigskip
\paragraph{\bf Proof of Theorem \ref{thm:main-2} (bis):}[$\delta>0$ and general Wigner set]
\label{proof:impro}
In this paragraph, we indicate how to consider a general family of density matrices $(\varrho_\hslash)_{\hslash\in(0,1)}$ without assuming  \eqref{eq:wigsingle}
 and on the other hand how to weaken the assumptions  (\hyperref[eq:Ass0]{$S_0^{(1)}$}) and (\hyperref[eq:Ass1]{$S_1^{(1)}$}) in Theorem \ref{thm:main-2} and replace them by the ones given in
   \eqref{eq:Ass0} and \eqref{eq:Ass1} for some
$\delta\in (0,1]$.  The two arguments are independent:

\medskip
\noindent
1) Suppose that $(\varrho_\hslash)_{\hslash\in(0,1)}$ is a family of density matrices on $\mathscr{H}$ satisfying (\hyperref[eq:Ass0]{$S_0^{(1)}$}), (\hyperref[eq:Ass1]{$S_1^{(1)}$})
 and
\eqref{eq:Ass2}. According to Definition \ref{def:wigner}, if $\mu_0\in \mathcal{M}\big(\varrho_\hslash, \hslash\in(0,1)\big)$ then there exists a
sequence $(\hslash_k)_{k\in\N}$, $\hslash_k\to 0$,  such that
$$
\mathcal{M}\big(\varrho_{\hslash_k}, k\in\N\big)=\{\mu_0\}\,.
$$
Hence, thanks to the previous results so far proved one concludes that for all $t\in\R$,
$$
\mathcal{M}\big(\varrho_{\hslash_k}(t), k\in\N\big)=\{(\Phi_t)_\sharp\mu_0\}\,,
$$
and consequently, one proves
$$
\{(\Phi_t)_\sharp\mu_0, \; \mu_0\in\mathcal{M}\big(\varrho_\hslash, \hslash\in(0,1)\big)\}\subset  \mathcal{M}\big(\varrho_\hslash(t), \hslash\in(0,1)\big)\,.
$$
Reciprocally, for any $t\in\R$ take a $\mu_t\in \mathcal{M}\big(\varrho_\hslash(t), \hslash\in(0,1)\big)$ then there exists a sequence
$(\hslash_k)_{k\in\N}$, $\hslash_k\to 0$, such that
$$
\mathcal{M}\big(\varrho_{\hslash_k}(t), k\in\N\big)=\{\mu_t\}\,.
$$
So, applying the previous results so far proved  one obtains
$$
\mathcal{M}\big(e^{\ci \frac{t}{\hslash}\hat H_\hslash} \,\varrho_{\hslash_k}(t) e^{-\ci \frac{t}{\hslash}\hat H_\hslash}, k\in\N\big)=
\mathcal{M}\big(\varrho_{\hslash_k}, k\in\N\big)=
\{(\Phi_{-t})_\sharp\mu_t\}\,.
$$
Thus, we conclude that $\mu_t=(\Phi_t)_\sharp\mu_0$ for some $\mu_0\in \mathcal{M}\big(\varrho_\hslash, \hslash\in(0,1)\big)$ and prove the
opposite inclusion. Note that we have used implicitly Lemma \ref{lemma:rhoSt}.

\medskip
\noindent
2) Suppose that $(\varrho_\hslash)_{\hslash\in(0,1)}$ is a family of density matrices on $\mathscr{H}$ satisfying \eqref{eq:Ass0}, \eqref{eq:Ass1} and
\eqref{eq:Ass2}. Then we will use an approximation argument: Let $\chi\in\mathscr{C}^{\infty}_{0}(\mathds{R})$ be a smooth cutoff function  such that $0\leq
\chi\leq 1$ and  $\chi\equiv 1$ in a neighborhood of the origin. Denote
$$
\chi_R(\cdot)=\chi\big(\frac \cdot R\big)\,\,
$$
and let $A=\hat N_\hslash+\hat q^2+\hat H_\hslash^0$. Then the family of density matrices
$$
\varrho_{\hslash,R}=\frac{\chi_R(A)\,\varrho_{\hslash}\,\chi_R(A)}{\Tr\left[\chi_R(A)\varrho_{\hslash}\chi_R(A)\right]}
$$
approximates $\varrho_{\hslash}$ in the trace norm  as $R\to \infty$ for fixed $\hslash$. Moreover, let
$$m(R)=\sup_{\hslash\in(0,1),t\in\R}\|\varrho_{\hslash}(t)-\varrho_{\hslash,R}(t)\|_{\mathscr{L}^1(\mathscr{H})},$$
where $\varrho_{\hslash,R}(t)=e^{-\ci\frac{t}{\hslash}\hat H_{\hslash}}\varrho_{\hslash,R}\
e^{\ci\frac{t}{\hslash}\hat H_{\hslash}}$. Thanks to the assumptions
\eqref{eq:Ass0} and \eqref{eq:Ass1} one deduces
$$
\lim_{R\to0} m(R)=0.
$$
 Furthermore,  (\hyperref[eq:Ass0]{$S_0^{(1)}$}), (\hyperref[eq:Ass1]{$S_1^{(1)}$}) and \eqref{eq:Ass2} are satisfied by
the family of density matrices $(\varrho_{\hslash,R})_{\hslash\in(0,1)}$. In fact, ${\rm d}\Gamma(|k|)$, $\hat N_\hslash$
and  ${\rm d}\Gamma(|k|^{2\sigma})$ commute with $A$ which leads to (\hyperref[eq:Ass1]{$S_1^{(1)}$}) and
\eqref{eq:Ass2} while for  (\hyperref[eq:Ass0]{$S_0^{(1)}$}) one uses the commutation relations.
Hence, applying the so far proved results above one deduces that  for all $t\in\R$,
$$
\mathcal{M}(\varrho_{\hslash,R}(t), \hslash\in(0,1))=\left\{(\Phi_t)_{\sharp}\mu_{0,R}, \;\mu_{0,R}\in \mathcal{M}(\varrho_{\hslash,R}, \hslash\in(0,1))\right\}\,.
$$
On the other hand for each $t\in\mathds{R}$ and $R$ sufficiently large, according to Definition \ref{def:wigner} from any sequence $(\hslash_k)_{k\in\N}$, $\hslash_k\to 0$, one can  extract a
subsequence -- still denoted the same -- such that
\begin{equation}
\label{eq:42}
\mathcal{M}(\varrho_{\hslash_{k}}(t), k\in\N)=\left\{\mu_t\right\}\,,\qquad  \qquad
\mathcal{M}(\varrho_{\hslash_{k},R}(t), k\in\N)=\left\{(\Phi_t)_{\sharp}\mu_{0,R}\right\}\,,
\end{equation}
and
\begin{equation}
\label{eq:43}
\mathcal{M}(\varrho_{\hslash_{k}}, k\in\N)=\left\{\mu_0\right\}\,,\qquad  \qquad
\mathcal{M}(\varrho_{\hslash_{k},R}, k\in\N)=\left\{\mu_{0,R}\right\}\,.
\end{equation}
Let $|\nu|$ be the total variation of measure $\nu$.
So, applying the general result on the comparison of Wigner measures in  \cite[Proposition~2.10 ]{ammari2011jmpa} yields
\begin{eqnarray*}
  &&|\mu_{t}-(\Phi_t)_{\sharp}\mu_{0,R}|\leq \liminf_{k\to\infty}
  \|\varrho_{\hslash_k}(t)-\varrho_{\hslash_k,R}(t)
  \|_{\mathscr{L}^{1}(\mathscr{H})}\leq m(R)\,,
  \end{eqnarray*}
and
\begin{eqnarray*}
  &&|\mu_{0}-\mu_{0,R}|\leq \liminf_{k\to\infty}
  \|\varrho_{\hslash_k}-\varrho_{\hslash_k,R}
  \|_{\mathscr{L}^{1}(\mathscr{H})}\leq m(R)\,,
\end{eqnarray*}
where the left hand side denotes the total variation of the signed measures
$\mu_t-(\Phi_t)_{\sharp}\mu_{0,R}$ and $\mu_0-\mu_{0,R}$. Therefore, the triangle inequity implies
\begin{equation}
\label{eq:44}
|\mu_t-(\Phi_t)_{\sharp}\mu_{0}|\leq
|\mu_t-(\Phi_t)_{\sharp}\mu_{0,R}|+
|\mu_{0,R}-\mu_{0}| \leq 2 m(R)\,,
\end{equation}
since in particular the total variation of $(\Phi_t)_{\sharp}\mu_{0,R}-(\Phi_t)_{\sharp}\mu_{0}$ and those of
$\mu_{0,R}-\mu_{0}$ are equal.  So, taking $R\to \infty $ in \eqref{eq:44} one deduces  that
\begin{equation}
\label{eq:45}
\mu_t=(\Phi_t)_{\sharp}\mu_{0}\,.
\end{equation}
Therefore, picking  any $\mu_t\in\mathcal{M}(\varrho_{\hslash}(t), \hslash\in (0,1))$ and using the above argument one shows that
$$
\mathcal{M}(\varrho_{\hslash}(t), \hslash\in (0,1))\subset\left\{(\Phi_t)_{\sharp}\mu_{0}, \mu_0\in\mathcal{M}(\varrho_{\hslash}, \hslash\in (0,1))\right\}\,.
$$
While the opposite inclusion is justified by taking any $\mu_0\in\mathcal{M}(\varrho_{\hslash}, \hslash\in (0,1))$ and then choosing and  extracting
sequences $(\hslash_k)_{k\in\N}$ such that \eqref{eq:42}--\eqref{eq:43} are satisfied; thus arriving to the conclusion  \eqref{eq:45} for some
$\mu_t\in\mathcal{M}(\varrho_{\hslash}(t), \hslash\in (0,1))$.  This ends the proof of Theorem~\ref{thm:main-2}.

\hfill$\square$

As an illustration of our method, we give further quantitative propagation estimates satisfied by the Newton--Maxwell flow  derived from the quantum dynamics of Pauli--Fierz.

\begin{prop}
\label{prop.propag.NM}
Assume \eqref{eq:hypo0}, \eqref{eq:hypo1} and   \emph{(}\hyperref[eq:hypo2]{$A_2^{(1)}$}\emph{)}. Then for any
probability measure $\mu_0\in\mathfrak{P}(X^1)$ satisfying
\begin{equation}
\label{eq.propag.1}
\int_{X^1} \|u\|_{X^1}^2 \;\mathrm{d}\mu_0(u)<+\infty\, \quad \text{ and } \quad \int_{X^1} \big( p^4+ \|\alpha \|^4_{\dot{\mathfrak{H}}^{1/2}}+\|\alpha\|_{L^2}^4 \big)\;\mathrm{d}\mu_0(u)<+\infty\,,
\end{equation}
there exists a constant $c>0$ such that for all times $t\in\mathbb{R}$,
\begin{equation}
\label{eq.propag.2}
\int_{X^1} p(t)^4+\|\alpha(t)\|_{\dot{\mathfrak{H}}^{1/2}}^4 \;\mathrm{d}\mu_0(u) \leq c \quad \text{ and } \quad \int_{X^1} \|\alpha(t)\|_{L^2}^4 \;\mathrm{d}\mu_0(u)\leq c \,e^{c |t|}\,,
\end{equation}
where $(p(t),q(t),\alpha(t))$ is the solution of the Newton--Maxwell equation \eqref{eq:Newton-Maxwell} at time $t$ starting from the initial condition $u=(p,q,\alpha)\in X^1$.
\end{prop}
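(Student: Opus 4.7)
My approach would mirror the \emph{quantum detour} strategy used for Theorem~\ref{thm:main-1}: given the initial measure $\mu_0$, I would quantize it via coherent states, extract uniform-in-$\hslash$ second-moment quantum estimates, propagate those estimates by the Pauli--Fierz unitary dynamics, and transfer back to $\mu_t=(\Phi_t)_\sharp\mu_0$ in the semiclassical limit $\hslash\to 0$. Concretely, I set
\[
\varrho_{\hslash}=\int_{X^0}\mathcal{C}_{\hslash}(u)\,\mathrm{d}\mu_0(u),
\]
so that by Lemma~\ref{lemma:examwig} we have $\mathcal{M}(\varrho_\hslash,\hslash\in(0,1))=\{\mu_0\}$, and Theorem~\ref{thm:main-2} yields $\mathcal{M}(\varrho_\hslash(t),\hslash\in(0,1))=\{\mu_t\}$ with $\mu_t=(\Phi_t)_\sharp\mu_0$ for every $t\in\R$.

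The next step is to establish uniform-in-$\hslash$ bounds at $t=0$ for the \emph{squared} quantum observables $\hat{N}_\hslash^2$, $(\hat{H}_{\mathrm{f}})^2$ and $\hat{p}^4$. Extending the coherent-state identities \eqref{eq:cohbwik}--\eqref{eq:gausp} to second moments via Wick ordering and the CCR (which produce only $O(\hslash)$ corrections) and then integrating against $\mu_0$, one sees that each of the three traces is controlled by
\[
\int_{X^1}\bigl(p^4+\|\alpha\|_{\dot{\mathfrak{H}}^{1/2}}^4+\|\alpha\|_{L^2}^4\bigr)\,\mathrm{d}\mu_0(u)+O(\hslash),
\]
which is finite by \eqref{eq.propag.1}. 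I would then propagate these bounds in time. The $\hat{p}^4$ and $(\hat{H}_{\mathrm{f}})^2$ estimates follow from unitary conservation of $\hat{H}_\hslash^2$ together with a quadratic-form version of the equivalence $\hat{H}_\hslash\sim \hat{H}_\hslash^0$ from Lemma~\ref{s3.lemE}/Corollary~\ref{s4.lemE} applied at the squared level, with cross-terms such as $\hat{p}\cdot A$ absorbed using Corollary~\ref{cor:1}. For $\hat{N}_\hslash^2$, I would redo the commutator expansion of Lemma~\ref{lemma:33} with the regularized observable $\hat{N}_\hslash(\delta)^2$ in place of $\hat{N}_\hslash(\delta)$ and close a Gr\"onwall argument as in Lemma~\ref{lemma:4}, producing the exponential-in-$|t|$ bound that is consistent with the form of \eqref{eq.propag.2}.

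To transfer these uniform quantum estimates to $\mu_t$, I invoke the lower semicontinuity of the Wick map in the semiclassical limit (of the type collected in Appendix~\ref{appy}): since $p^4$, $\|\alpha\|_{\dot{\mathfrak{H}}^{1/2}}^4$ and $\|\alpha\|_{L^2}^4$ are the principal Wick symbols of $\hat{p}^4$, $(\hat{H}_{\mathrm{f}})^2$ and $\hat{N}_\hslash^2$ respectively, and since $\{\mu_t\}=\mathcal{M}(\varrho_\hslash(t),\hslash\in(0,1))$, a Fatou-type argument gives
\[
\int_{X^1} p^4\,\mathrm{d}\mu_t \;\le\; \liminf_{\hslash\to 0}\Tr\bigl[\varrho_\hslash(t)\,\hat{p}^4\bigr],
\]
and analogously for the other two symbols; the pushforward identity $\mu_t=(\Phi_t)_\sharp\mu_0$ then rewrites these bounds in the form \eqref{eq.propag.2}. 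The main technical obstacle, as I see it, is the second-moment refinement of Corollary~\ref{cor:2} for the photon number: the commutator bookkeeping of Lemma~\ref{lemma:33} is already delicate, and inserting an extra factor of $\hat{N}_\hslash$ forces a careful re-examination of whether all $\hslash$-uniform error terms still close — the secondary obstacle being the justification of the lower semicontinuity above for unbounded quartic symbols, which is exactly the kind of question Appendix~\ref{appy} is designed to address.
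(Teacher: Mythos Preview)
Your approach is essentially the same as the paper's: superpose coherent states against $\mu_0$, propagate second-moment quantum bounds, and transfer to $\mu_t=(\Phi_t)_\sharp\mu_0$ by lower semicontinuity of Wick symbols under the semiclassical limit (the paper cites \cite[Lemma~3.12]{ammari2011jmpa} for this last step rather than Appendix~\ref{appy}).

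One simplification you are missing: the ``main technical obstacle'' you flag --- redoing the Gr\"onwall argument of Lemma~\ref{lemma:33}/\ref{lemma:4} with $\hat{N}_\hslash(\delta)^2$ in place of $\hat{N}_\hslash(\delta)$ --- is unnecessary. Corollary~\ref{cor:2} already gives a \emph{norm} bound $\|\hat{N}_\hslash\psi(t)\|\le K(\|\hat{N}_\hslash\psi\|+\|(\hat{H}_\hslash^0+1)\psi\|)e^{C|t|}$, so squaring it and summing over the spectral decomposition of $\varrho_\hslash$ yields the trace bound for $\hat{N}_\hslash^2$ directly. Likewise, Lemma~\ref{s3.lemE} is a \emph{norm} equivalence $\|(\hat{H}_\hslash^0+1)\psi\|\sim\|(\hat{H}_\hslash+\gamma)\psi\|$, so $\Tr[\varrho_\hslash(t)(\hat{H}_\hslash^0)^2]$ is controlled by $\Tr[\varrho_\hslash(\hat{H}_\hslash^0+1)^2]$ via unitary conservation of $\hat{H}_\hslash$, with no need for a separate ``squared-level'' argument. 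The paper works with $(\hat{H}_\hslash^0)^2$ as a whole rather than splitting into $\hat{p}^4$ and $\hat{H}_{\mathrm{f}}^2$; since both summands in $\hat{H}_\hslash^0$ are nonnegative, this immediately dominates $p^4+\|\alpha\|_{\dot{\mathfrak{H}}^{1/2}}^4$ at the symbol level.
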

\begin{proof}
Consider the following family of superposed coherent states defined in  \eqref{eq:cohmu},
\begin{equation*}
\varrho_\hslash= \int_{X^1} \mathcal C_\hslash(u) \,d\mu_0(u)\,.
\end{equation*}
According to Lemma \ref{lemma:examwig}, $(\varrho_\hslash)_{\hslash\in(0,1)}$ satisfies the assumptions  (\hyperref[eq:Ass0]{$S_0^{(1)}$}), (\hyperref[eq:Ass1]{$S_1^{(1)}$}) and (\hyperref[eq:Ass2]{$S_2^{(1)}$}).
So thanks to Theorems \ref{thm:main-1} and \ref{thm:main-2}, one concludes
\begin{equation}
\label{eq.proag.5}
\mathcal{M}\big(\varrho_\hslash(t),
\; \hslash\in (0,1)\big)=\{\mu_t\}=\{(\Phi_t)\,_\sharp\,\mu_0\}\,.
\end{equation}
On the other  hand, there exists a constant $c>0$ such that
for all $\hslash\in(0,1)$ and $t\in\mathbb{R}$,
\begin{equation}
\label{eq.propag.3}
\Tr\big[ \varrho_{\hslash}(t) \, (\hat H_\hslash^0)^2\big] \leq c\,,
\end{equation}
and
\begin{equation}
\label{eq.propag.4}
\Tr\big[ \varrho_{\hslash}(t) \, (\hat N_\hslash)^2\big] \leq c\,e^{c|t|}\,.
\end{equation}
The above inequalities are consequences of Lemma \ref{s3.lemE}, Corollary \ref{cor:2} and the fact that
\eqref{eq.propag.3}--\eqref{eq.propag.4} hold true at time $t=0$. Indeed, a direct computation yields
\begin{eqnarray*}
\Tr\big[ \varrho_{\hslash}\, (\hat H_\hslash^0)^2\big] &\leq& 2 \int_{X^1} \big(p^4+\hslash \|\alpha\|^2_{\dot{\mathfrak{H}}^1}+ \|\alpha\|^4_{\dot{\mathfrak{H}}^{1/2}} \big)\, \mathrm{d}\mu_0<+\infty \,,\\
\Tr\big[ \varrho_{\hslash} \, (\hat N_\hslash)^2\big]  &=&  \int_{X^1} \big(\hslash \|\alpha\|^2_{L^2}+ \|\alpha\|^4_{L^2} \big)\, \mathrm{d}\mu_0 <+\infty\,.
\end{eqnarray*}
So, applying the argument in \cite[Lemma 3.12]{ammari2011jmpa} one deduces
\begin{eqnarray}
\label{eq.propag.6}
\int_{X^1} \big(p^4+\|\alpha\|^4_{\dot{\mathfrak{H}}^{1/2}} \big)\, \mathrm{d}\mu_t  \leq & \liminf_{\hslash\to 0} \Tr\big[ \varrho_{\hslash}(t)\, (\hat H_\hslash^0)^2\big] & \leq c \,, \\
\label{eq.propag.7}
\int_{X^1} \|\alpha\|^4_{\dot{\mathfrak{H}}^{1/2}} \, \mathrm{d}\mu_t  \leq  & \liminf_{\hslash\to 0}
\Tr\big[ \varrho_{\hslash}(t) \, (\hat N_\hslash)^2\big]   &  \leq  c\,e^{c|t|}\,.
\end{eqnarray}
Hence, the claimed estimates \eqref{eq.propag.2} are consequences of  \eqref{eq.proag.5} and \eqref{eq.propag.6}--\eqref{eq.propag.7} .
\end{proof}

\bigskip
\appendix
\section{Measure theoretical techniques}
\label{appx}

In this appendix, we introduce a general abstract framework where a natural
relationship between characteristic equations, Liouville equations and
initial value problems is established and a generalized global flow is
constructed for the latter. Our discussion sums up and
  simplifies several technical results proved in
  \cite{MR4134153,MR3721874,rouffort2018general}, which we then use in a
  subtle way to construct the global dynamics of the Newton--Maxwell
  equation. In particular, this part is a crucial step for proving our main
  Theorems \ref{thm:main-1} and \ref{thm:main-2}. It is also worth noting
  that the techniques involved here are related to the close subject of
  {G}ross-{P}itaevskii and NLS hierarchies (see
  \cite{MR2988730,MR3500833,MR4065266,MR3360742}, and references therein).
Firstly, we recall the notion of Liouville equations over Hilbert spaces and
state their equivalence to characteristic equations. Secondly, we provide a
probabilistic representation to measure-valued solutions of Liouville
equations and finally we deduce the existence of a Borel global flow for such
initial value problem.

\bigskip
Consider a \emph{separable} Hilbert space $\mathcal H$ and a \emph{continuous} vector field  $ v : \R \times \mathcal{H} \to \mathcal{H}$ such that it is bounded on bounded sets. In all the sequel $I$ denotes a bounded open interval containing the origin.
\bigskip
\paragraph{\textit{Initial value problem:}}
 The following equation defines a non-autonomous initial value problem:
\begin{equation}
\label{eq:ivp:apx}\tag{{\it ivp}}
  \left\{
    \begin{aligned}
    &\frac{d}{dt} {u}(t) \ = v(t,u(t))\,,& \\
    &u(0) \ = u_0 \in \mathcal H\,.& \\
    \end{aligned}
  \right.
\end{equation}
A curve $t\in I\to u(t)\in\mathcal H$ is a strong solution to the above initial value problem
if and only if $u\in\mathscr C^1(I,\mathcal H)$ and  satisfies \eqref{eq:ivp:apx} for some $u_0\in\mathcal H$ and for all $t\in I$. In such case the Duhamel formula,
\begin{equation}
\label{eq:Duhap}
u(t)=u(0)+\int_0^t \;v(s,u(s))\;{\rm d}s \,,
\end{equation}
holds true for all $t\in I$. Reciprocally, any curve $u\in\mathscr C(I,\mathcal H)$ satisfying
 \eqref{eq:Duhap} is a strong solution of the initial value problem \eqref{eq:ivp:apx}.
 One of our goals is to establish the existence of global strong solutions to \eqref{eq:ivp:apx} under suitable assumptions. The method we describe here is quite different from Cauchy--Lipschitz and fixed point theories since it uses  instead measure theoretical techniques. Note that the vector field $v:\R\times \mathcal H\to\mathcal H$ is only assumed to be continuous, hence standard arguments of nonlinear analysis are not applicable.

\bigskip
\paragraph{\textit{Liouville equation:}}
Consider $\mathcal{H}$ as a real Hilbert space endowed with the real scalar product $\Ree \langle \cdot,\cdot \rangle_{\mathcal{H}}$. Define the space of smooth cylindrical functions on $\mathcal{H}$, denoted $\mathscr{C}_{0,cyl}^{\infty}(\mathcal{H})$, as the set of all functions of the form
$$
\phi = \psi \circ \pi
$$
where  $\psi\in \mathscr{C}_{0}^{\infty}(\mathbb{R}^{m})$, $m\in\N$, and $\pi : \mathcal{H} \to \mathbb{R}^{m}$  a projection of the form:
\begin{equation}
\label{eq:pi}
\pi:u \to \pi(u) = \big( \Ree\langle u,e_{1} \rangle_{\mathcal H},\dots, \Ree\langle u,e_{m} \rangle_{\mathcal H}\big) \,,
\end{equation}
with $(e_{1},\dots,e_{m})$ is an arbitrary orthonormal family of $\mathcal{H}$. Similarly,
one defines for a given open bounded interval $I$, the space of smooth cylindrical functions $ \mathscr{C}_{0,cyl}^{\infty}(I \times \mathcal{H})$ as the set of functions $\phi$ such that :
\[
 \forall (t,u) \in I \times \mathcal{H} ~,~~\phi(t,u) = \psi(t,\pi(u)) ~,~
 \]
for some  $\psi \in \mathscr{C}_{0}^{\infty}(I \times \mathbb{R}^{m})$, $m \in \mathbb{N}$, and $\pi$
as in \eqref{eq:pi}.

\bigskip
\begin{defn}
Let $I$ be  a bounded open interval containing the origin. A family of Borel probability measures $\{\mu_{t}\}_{ t \in I}$ on $\mathcal{H}$ is a measure-valued solution of the Liouville equation  associated to the vector field $v:\R\times \mathcal{H}\to \mathcal{H}$  if and only if for all $\phi \in \mathscr{C}_{0,cyl}^{\infty}(I \times \mathcal{H})$:
\begin{equation}
\label{eq:Liou}
     \int_{I} \int_{\mathcal H} \left\{
     \partial_{t} \phi(t,u) + \Ree \langle v(t,u), \nabla_{\mathcal H} ~\phi(t,u) \rangle_{\mathcal{H}}\right\} ~{\rm d}\mu_{t}(u){\rm d}t=0 \,,
\end{equation}
where  $\nabla_{\mathcal{H}}$ denotes the real differential in the Hilbert space $\mathcal{H}$.
\end{defn}

The following result establishes an equivalence between Liouville equations and characteristic equations.
A  proof can be deduced from \cite[Proposition 4.2]{rouffort2018general}.
\begin{prop}[Equivalence]
\label{prop:char-Liouv-CR}
Let $ v : \R \times \mathcal{H} \to \mathcal{H}$ be a continuous vector field such that it is bounded on bounded sets.
Let $I\ni t\to\mu_{t}$ a weakly narrowly continuous curve in $\mathfrak{P}(\mathcal{H})$ such that
the following bound holds true:
\begin{equation}
\label{eq:bdv}
 \int_{I} \int_{\mathcal{H}} || v(t,u) ||_{\mathcal{H}} \,{\rm d}\mu_{t}(u){\rm d}t < + \infty ~.
 \end{equation}
Then the two assertions below are equivalent:
\begin{itemize}
\item[(i)]  $ \lbrace \mu_{t} \rbrace_{t \in I}$ is a solution of the Liouville equation \eqref{eq:Liou}.
\item[(ii)]  $ \lbrace \mu_{t} \rbrace_{t \in I}$ solves the following characteristic equation:
$\forall t \in I$, $ \forall y \in \mathcal{H}$,
\begin{equation}
\label{2_eqchar2}
    \mu_{t}(e^{2\ci\pi \Ree\langle y,.\rangle_{\mathcal{H}}}) = \mu_{0}(e^{2\ci\pi \Ree\langle y,.\rangle_{\mathcal{H}}}) +
     2\ci\pi \int_{0}^{t} \mu_{s} \big(e^{2\ci\pi \Ree \langle y,u \rangle_{\mathcal H}} ~\Ree\langle v(s,u),y \rangle_{\mathcal H}\big) \,{\rm d}s ~.~
\end{equation}
\end{itemize}
\end{prop}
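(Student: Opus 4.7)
The plan is to derive each implication by passing between complex exponentials and smooth cylindrical test functions via Fourier analysis, with the integrability bound \eqref{eq:bdv} and the weak narrow continuity used to justify limits and Fubini-type exchanges.

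First I would handle (ii)$\Rightarrow$(i). Given $\phi \in \mathscr{C}^{\infty}_{0,cyl}(I \times \mathcal{H})$ of the form $\phi(t,u) = \psi(t,\pi(u))$ with $\pi(u) = (\Ree\langle u,e_j\rangle)_{j=1,\dots,m}$ and $\psi \in \mathscr{C}^{\infty}_{0}(I \times \R^m)$, I write the Fourier inversion
\[
\psi(t,x) = \int_{\R^m} \hat\psi(t,\eta)\, e^{2\ci\pi x\cdot \eta}\,\mathrm{d}\eta\,,
\]
so that, setting $y(\eta) := \sum_{j=1}^m \eta_j e_j$, one has $\phi(t,u) = \int_{\R^m} \hat\psi(t,\eta)\, e^{2\ci\pi \Ree\langle y(\eta),u\rangle}\,\mathrm{d}\eta$, and analogous representations hold for $\partial_t\phi$ and $\nabla_\mathcal{H}\phi$. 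The characteristic equation \eqref{2_eqchar2} means exactly that for each fixed $y$ the map $t\mapsto \mu_t(e^{2\ci\pi\Ree\langle y,\cdot\rangle})$ is absolutely continuous with distributional derivative $2\ci\pi\, \mu_t(e^{2\ci\pi\Ree\langle y,u\rangle}\Ree\langle v(t,u),y\rangle)$. Multiplying by $\hat\psi(t,\eta)$ and integrating in $(t,\eta)$, Fubini (justified by $\hat\psi$ being Schwartz in $\eta$ with compact support in $t$, combined with \eqref{eq:bdv}) lets me exchange the $\eta$-integral with the $t$-integral and with the integration against $\mu_t$, producing the Liouville identity \eqref{eq:Liou} for $\phi$.

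Conversely, for (i)$\Rightarrow$(ii), fix $y \in \mathcal{H}\setminus\{0\}$, pick $\theta \in \mathscr{C}^{\infty}_0(I)$ and a cutoff $\chi \in \mathscr{C}^{\infty}_0(\R)$ with $\chi \equiv 1$ on $[-1,1]$. The test function
\[
\phi_n(t,u) := \theta(t)\,\chi\!\big(\Ree\langle u,y\rangle/n\big)\,e^{2\ci\pi\Ree\langle u,y\rangle}
\]
belongs to $\mathscr{C}^{\infty}_{0,cyl}(I\times\mathcal{H})$, as it depends on $u$ only through $\Ree\langle u,y/\|y\|\rangle$ and has compact support in that single variable. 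Inserting $\phi_n$ into \eqref{eq:Liou} and letting $n\to\infty$, one has $\chi(\cdot/n)\to 1$ pointwise while $\chi'(\cdot/n)/n$ is bounded by $\|\chi'\|_\infty/n$ and vanishes; dominated convergence, with integrable majorant provided by \eqref{eq:bdv}, yields
\[
\int_I \theta'(t)\, \mu_t(e^{2\ci\pi\Ree\langle \cdot,y\rangle})\,\mathrm{d}t + 2\ci\pi \int_I \theta(t)\, \mu_t\bigl(e^{2\ci\pi\Ree\langle u,y\rangle}\Ree\langle v(t,u),y\rangle\bigr)\, \mathrm{d}t = 0.
\]
Hence $t\mapsto\mu_t(e^{2\ci\pi\Ree\langle\cdot,y\rangle})$ has a distributional derivative equal to $2\ci\pi\,\mu_t(e^{2\ci\pi\Ree\langle u,y\rangle}\Ree\langle v(t,u),y\rangle)$. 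Weak narrow continuity makes the left-hand function continuous in $t$, while \eqref{eq:bdv} makes the right-hand density belong to $L^1(I)$; the fundamental theorem of calculus then upgrades the distributional identity to the integral form \eqref{2_eqchar2}.

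The main obstacle is the careful justification of the two commutations above. In (ii)$\Rightarrow$(i) one must ensure that the $\eta$-integration of the distributional equality commutes with the spacetime integration against $\mu_t$, for which the Schwartz decay of $\hat\psi$ and the hypothesis \eqref{eq:bdv} together provide the needed uniform integrable dominants; the smoothness and compact support of $\psi$ in $t$ is what makes the distributional derivative of $t\mapsto \mu_t(e^{2\ci\pi\Ree\langle y,\cdot\rangle})$ admissible under the integral sign. In (i)$\Rightarrow$(ii), the subtle point is the stray cross term coming from $\nabla_u\bigl[\chi(\Ree\langle u,y\rangle/n)\bigr]$, whose contribution is pointwise bounded by $n^{-1}\|\chi'\|_\infty \|y\|\,\|v(t,u)\|_{\mathcal{H}}$ and therefore, thanks once again to \eqref{eq:bdv}, vanishes in the limit $n\to\infty$. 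Finally, upgrading the distributional identity to the integral characteristic equation relies on the weak narrow continuity of $t\mapsto\mu_t$ to ensure a genuinely continuous representative of $t\mapsto \mu_t(e^{2\ci\pi\Ree\langle y,\cdot\rangle})$.
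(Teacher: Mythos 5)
Your proof is correct and is, as far as I can tell, the expected argument; the paper itself does not give a self-contained proof of this Proposition but simply defers to \cite[Prop.~4.2]{rouffort2018general}, so a line-by-line comparison is not possible, but the Fourier-inversion step for (ii)$\Rightarrow$(i) and the cutoff-localization step for (i)$\Rightarrow$(ii) are exactly the standard route to this equivalence. Both Fubini exchanges are justified as you indicate: in (ii)$\Rightarrow$(i) the integrand is dominated by $\bigl(|\partial_t\hat\psi(t,\eta)|+2\pi|\eta|\,|\hat\psi(t,\eta)|\,\lVert v(t,u)\rVert_{\mathcal H}\bigr)$, whose $\eta$-integral is controlled by the Schwartz decay of $\hat\psi(t,\cdot)$ and is then $L^1(\mathrm{d}\mu_t\mathrm{d}t)$ by \eqref{eq:bdv}; in (i)$\Rightarrow$(ii) the extra term $n^{-1}\theta(t)\chi'(\Ree\langle u,y\rangle/n)\Ree\langle v(t,u),y\rangle e^{2\ci\pi\Ree\langle u,y\rangle}$ is pointwise bounded by $n^{-1}\lVert\theta\rVert_\infty\lVert\chi'\rVert_\infty\lVert y\rVert\,\lVert v(t,u)\rVert_{\mathcal H}$ and vanishes in $L^1(\mathrm{d}\mu_t\mathrm{d}t)$ by \eqref{eq:bdv}, while the remaining terms converge by dominated convergence.

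One point worth flagging for precision rather than as a gap: the last step, upgrading the distributional identity to the integral form, uses that $t\mapsto\mu_t\bigl(e^{2\ci\pi\Ree\langle y,\cdot\rangle_{\mathcal H}}\bigr)$ is continuous, which you deduce from weak narrow continuity. Under the paper's definition (continuity of $t\mapsto\int\psi\,\mathrm{d}\mu_t$ for $\psi$ bounded and $\lVert\cdot\rVert_{w}$-continuous), this does not quite follow for a general $y\in\mathcal{H}$, since $u\mapsto\Ree\langle y,u\rangle_{\mathcal H}$ is $\lVert\cdot\rVert_{w}$-continuous only on $\lVert\cdot\rVert_{\mathcal H}$-bounded sets. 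The deduction becomes airtight once one has tightness of $(\mu_t)_{t\in J}$ with respect to the strong topology (e.g.\ via a uniform second-moment bound on compact time intervals), which is exactly the extra information available in the paper's applications (and is used the same way, implicitly, in Lemma~\ref{lemma:3}); if you want the abstract statement to stand alone, it is worth either strengthening the notion of weak narrow continuity accordingly or making the tightness hypothesis explicit.
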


Next we state some convenient Borel properties of the ensemble of solutions and initial conditions for the
initial value problem \eqref{eq:ivp:apx}.  Define the space
$$
\mathcal{X} = \mathcal H \times \mathscr{C}(\overline{I},\mathcal H),
$$
endowed with the norm
\[
||(u_0,u(\cdot))||_{\mathcal{X}} = ||u_0||_{\mathcal H} + \sup_{t \in \overline{I}} ||u(t)||_{\mathcal H} \,.
\]

\begin{lemma}
\label{lem:Fset}
The ensemble of solutions
\begin{equation}
\label{eq:Fset}
\mathcal{F}_I=\left\{(u_0,u(\cdot))\in \mathcal X : ~u(\cdot) \text{ satisfies } \eqref{eq:ivp:apx} \text{ on } I \text{ with the initial condition } u_0\right\}\,
\end{equation}
is a Borel subset of $\mathcal X$.
\end{lemma}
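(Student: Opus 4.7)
The plan is to express $\mathcal{F}_I$ as the zero set of a continuous map from $\mathcal{X}$ into $\mathscr{C}(\overline{I},\mathcal{H})$, from which it will follow that $\mathcal{F}_I$ is closed, and in particular Borel. The starting point is the equivalence, recalled at the beginning of this appendix, between strong solutions of \eqref{eq:ivp:apx} and continuous solutions of the Duhamel integral equation \eqref{eq:Duhap}: the pair $(u_0,u(\cdot))\in\mathcal{X}$ belongs to $\mathcal{F}_I$ if and only if
\[
u(t)=u_0+\int_0^t v(s,u(s))\,\mathrm{d}s\quad\text{for all } t\in\overline{I},
\]
where the extension from $I$ to $\overline{I}$ follows from continuity of both sides.

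The natural candidate map is
\[
\Psi:(u_0,u(\cdot))\in\mathcal{X}\;\longmapsto\;\Bigl[\,t\mapsto u(t)-u_0-\int_0^t v(s,u(s))\,\mathrm{d}s\,\Bigr]\in\mathscr{C}(\overline{I},\mathcal{H}).
\]
Since $v$ is continuous and bounded on bounded sets, and $u(\cdot)$ is continuous on the compact $\overline{I}$, the integrand $s\mapsto v(s,u(s))$ is continuous with values in $\mathcal{H}$, hence Bochner integrable; its primitive is continuous in $t$, so $\Psi$ is well-defined. The identification $\mathcal{F}_I=\Psi^{-1}(\{0\})$ reduces the lemma to verifying the continuity of $\Psi$.

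The only step to check is this continuity, which is routine. Given a sequence $(u_0^n,u_n)\to(u_0,u)$ in $\mathcal{X}$, the curves $u_n$ converge uniformly on $\overline{I}$ and are therefore uniformly bounded; since $v$ is bounded on bounded sets of $\R\times\mathcal{H}$, the family $\{s\mapsto v(s,u_n(s))\}_{n\in\N}$ is uniformly bounded in $\mathcal{H}$, and by joint continuity of $v$ it converges pointwise in $s\in\overline{I}$ to $v(\cdot,u(\cdot))$. Lebesgue's dominated convergence theorem (in its Bochner version) then yields
\[
\int_{\overline{I}}\|v(s,u_n(s))-v(s,u(s))\|_{\mathcal{H}}\,\mathrm{d}s\;\longrightarrow\;0,
\]
which majorizes $\sup_{t\in\overline{I}}\bigl\|\int_0^t\bigl(v(s,u_n(s))-v(s,u(s))\bigr)\,\mathrm{d}s\bigr\|_{\mathcal{H}}$. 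Combined with $u_0^n\to u_0$ in $\mathcal{H}$ and $u_n\to u$ uniformly, this gives $\Psi(u_0^n,u_n)\to\Psi(u_0,u)$ in $\mathscr{C}(\overline{I},\mathcal{H})$. Hence $\Psi$ is continuous and $\mathcal{F}_I=\Psi^{-1}(\{0\})$ is a closed (and thus Borel) subset of $\mathcal{X}$. No serious obstacle is anticipated; the mild subtlety is only the passage from pointwise to integral convergence of $v(s,u_n(s))$, which is handled cleanly by the uniform bound provided by the hypothesis on $v$.
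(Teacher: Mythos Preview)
Your proof is correct and follows essentially the same idea as the paper: characterize membership in $\mathcal{F}_I$ via the Duhamel integral equation and use continuity of the resulting map (which you justify more carefully than the paper does, via dominated convergence). The only cosmetic difference is that the paper works with the evaluation maps $T_t(u_0,u)=u(t)-u_0-\int_0^t v(s,u(s))\,\mathrm{d}s$ at each fixed $t$ and writes $\mathcal{F}_I=\bigcap_{i}T_{t_i}^{-1}(\{0\})$ over a countable dense set $\{t_i\}\subset I$, whereas you bundle all times together into a single continuous map $\Psi:\mathcal{X}\to\mathscr{C}(\overline{I},\mathcal{H})$ and take $\mathcal{F}_I=\Psi^{-1}(\{0\})$; both routes yield that $\mathcal{F}_I$ is in fact closed.
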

\begin{proof}
The maps
\begin{eqnarray*}
T_t:\mathcal X  & \rightarrow & \C \\
(u_0,u(\cdot))  & \mapsto & u(t)-u_0-\int_0^t  v(s,u(s))\; {\rm d}s  \,,
\end{eqnarray*}
are continuous since the vector field $v$ is continuous and bounded on bounded sets. Thus, for all $t\in I$,
$$
T_t^{-1}(\{0\})
$$
is a Borel set. Take a dense countable set $\{t_i\}_{i\in\N}$ in $I$, one concludes that
$$
\mathcal F_I= \bigcap_{i\in\N} T_{t_i}^{-1}(\{0\})\,,
$$
since each $u(\cdot)$ is a continues curve over $\bar I$ and the Duhamel formula \eqref{eq:Duhap} holds true.
\end{proof}

The following result of measure theory is useful. A proof can be found in \cite[Theorem 3.9]{MR0226684}.

\begin{lemma}
\label{parthmes}
Let $X_{1}$ and $X_{2}$ be two complete separable metric spaces and $E_{1} \subset X_{1}$, $E_{2} \subset X_{2}$. Suppose that $E_{1}$ is a Borel set. Let $\psi$ be a measurable one-to-one map of $E_{1}$ into $X_{2}$ such that $\psi(E_{1})=E_{2}$.
Then $E_{2}$ is a Borel set of $X_{2}$.
\end{lemma}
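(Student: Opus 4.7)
The plan is to deduce this lemma as a special case of the classical Lusin--Suslin theorem from descriptive set theory. The overall strategy has three stages: reduce to the continuous case by refining the topology on $X_1$, show the image is both analytic and coanalytic, and finally invoke Suslin's theorem to conclude Borel measurability.

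First, I would apply Kuratowski's theorem to replace the Polish topology on $X_1$ by a finer Polish topology $\tau'$ with the same Borel $\sigma$-algebra, chosen so that the Borel set $E_1$ becomes clopen in $(X_1,\tau')$ and the Borel measurable map $\psi$ becomes continuous when restricted to $E_1$. With this reduction in place, $(E_1,\tau'|_{E_1})$ is itself a Polish space, and we may assume from the outset that $\psi : E_1 \to X_2$ is a continuous injection between Polish spaces. This step is standard and changes neither the set $E_1$ (as a set), nor the image $E_2$, nor the Borel structure of $X_1$.

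Second, continuity of $\psi$ together with the Polish character of $E_1$ makes $E_2 = \psi(E_1)$ analytic (Suslin) in $X_2$ as a continuous image of a Polish space. The genuinely non-trivial step is to show that $E_2$ is also coanalytic, and this is where injectivity plays its role. The standard argument proceeds by iteratively writing $E_1 = \bigcup_{n} F_n^{(k)}$ as a disjoint countable union of Borel sets of diameter at most $1/k$, so that by injectivity the images $\psi(F_n^{(k)})$ are pairwise disjoint analytic sets. One then invokes the Lusin separation theorem to find pairwise disjoint Borel sets $B_n^{(k)} \supset \psi(F_n^{(k)})$ in $X_2$; an elementary argument exploiting the shrinking diameters and the fact that continuous images of Cauchy sequences converge identifies
\[
E_2 = \bigcap_{k \in \mathds{N}} \bigcup_{n \in \mathds{N}} B_n^{(k)},
\]
so that $E_2$ is Borel. (Alternatively, one can directly invoke Suslin's theorem that an analytic set with analytic complement is Borel.)

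The main obstacle is the Lusin separation theorem for analytic sets, which is the technical heart of the Lusin--Suslin argument. This is however a classical and well-documented result, and a complete proof of the lemma as stated is carried out in Parthasarathy \cite{MR0226684}, to which the statement already refers; accordingly, for the purposes of the present paper no further work is needed beyond citing that reference.
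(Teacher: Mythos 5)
Your proposal is correct and takes essentially the same approach as the paper: the paper gives no proof of this lemma, simply citing Parthasarathy \cite[Theorem 3.9]{MR0226684}, which is exactly the classical Lusin--Suslin theorem that you sketch and then invoke. (Your heuristic outline is fine, though strictly speaking the standard argument concludes Borelness directly from the nested separation scheme with separators refined by closures of the images, rather than by separately establishing that $E_2$ is coanalytic and appealing to Suslin's theorem.)
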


\begin{lemma}
\label{lemma:Gi}
The ensemble of initial conditions
\begin{equation}
\label{eq:Gi}
\mathcal{G}_I = \lbrace u_0 \in \mathcal H : \,  \exists u \in \mathscr{C}(\bar I, \mathcal H)
\text{ such that  } (u_0,u)\in \mathcal F_I  \rbrace  \,
\end{equation}
is a Borel subset of $\mathcal H$.
\end{lemma}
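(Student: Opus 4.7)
The plan is to apply Lemma \ref{parthmes} after a careful setup. First I would sharpen the conclusion of Lemma \ref{lem:Fset}: the maps $T_{t_i}: \mathcal X \to \mathcal H$ are continuous, because $v:\R\times\mathcal H\to\mathcal H$ is continuous and bounded on bounded sets, so that (by dominated convergence together with uniform boundedness of $v(s,u(s))$ on the compact $[0,t_i]$) the integral $\int_0^{t_i} v(s,u(s))\,\mathrm{d}s$ depends continuously on $u(\cdot)\in\mathscr{C}(\bar I,\mathcal H)$. Consequently $\mathcal F_I = \bigcap_{i\in\N} T_{t_i}^{-1}(\{0\})$ is in fact \emph{closed} in the Polish space $\mathcal X$, so it is itself a Polish space and a standard Borel space.

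Next, consider the continuous projection $\pi_1:\mathcal X\to\mathcal H$, $(u_0,u(\cdot))\mapsto u_0$. By definition $\pi_1(\mathcal F_I)=\mathcal G_I$, and the goal is to realize $\mathcal G_I$ as the image of a Borel set under a one-to-one Borel-measurable map, so that Lemma \ref{parthmes} directly yields the conclusion with $X_1=\mathcal X$, $X_2=\mathcal H$, and $E_1=\mathcal F_I$. In the setting where the initial value problem \eqref{eq:ivp:apx} admits a unique solution for each $u_0\in\mathcal G_I$ (which is the relevant case for the Newton--Maxwell application, thanks to Proposition \ref{s1:prop2}), the restriction $\pi_1|_{\mathcal F_I}$ is itself injective, and Lemma \ref{parthmes} applied to $\psi=\pi_1$ gives at once that $\mathcal G_I$ is Borel.

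The main obstacle is to cover the general (abstract) case where uniqueness is not known and $\pi_1|_{\mathcal F_I}$ may fail to be one-to-one; then $\mathcal G_I$ only inherits a priori the analytic (Souslin) structure as a continuous image of a Borel set, which is not enough. To bypass this, I would construct an injective Borel selector. Concretely, fix an orthonormal basis $(e_k)_{k\in\N}$ of $\mathcal H$ and a countable dense set $(t_i)_{i\in\N}\subset\bar I$, and consider the closed-valued multifunction $u_0\mapsto \{u(\cdot):(u_0,u(\cdot))\in\mathcal F_I\}\subset\mathscr{C}(\bar I,\mathcal H)$. A Kuratowski--Ryll-Nardzewski-type selection -- performed lexicographically with respect to the countable family of continuous coordinates $u(\cdot)\mapsto \Ree\langle u(t_i),e_k\rangle_{\mathcal H}$, after first writing $\mathcal F_I$ as a countable union of sets on which these coordinates are bounded -- yields a Borel section $\sigma$ whose graph $\{(u_0,\sigma(u_0))\}$ is a Borel subset of $\mathcal F_I$ in bijection with $\mathcal G_I$ via $\pi_1$. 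Lemma \ref{parthmes} applied to this injective restriction of $\pi_1$ then delivers the Borel property of $\mathcal G_I$. The delicate point is the Borel measurability of the lexicographic selector while $\mathcal G_I$ is only known to be analytic; this is handled by building the graph of $\sigma$ first (as a Borel subset of the Polish space $\mathcal F_I$, obtained by iteratively intersecting with the Borel sets where each coordinate attains its infimum) and only afterwards projecting onto $\mathcal H$.
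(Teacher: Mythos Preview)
Your core argument---apply Lemma \ref{parthmes} to the first projection $\wp_0:(u_0,u(\cdot))\mapsto u_0$ restricted to the Borel set $\mathcal F_I$---is exactly what the paper does, in two lines. The paper does not pause to verify that $\wp_0|_{\mathcal F_I}$ is one-to-one; you are right that this is equivalent to uniqueness of solutions of \eqref{eq:ivp:apx}, and in the paper the lemma is only ever invoked under that hypothesis (see Corollary \ref{Gmes} and Theorem \ref{thm:gflow}(ii), where the proof explicitly recalls that $\wp_0|_{\mathcal F_I}$ is injective). So for the relevant application your proof and the paper's coincide; your remark that $\mathcal F_I$ is in fact closed, not merely Borel, is a correct sharpening but not needed.

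Your attempt to cover the abstract case without uniqueness, however, does not go through as written. The lexicographic ``infimum'' selection you sketch requires the fiberwise infima of the coordinate functionals $u(\cdot)\mapsto\Ree\langle u(t_i),e_k\rangle$ to be attained, but the fibers of $\mathcal F_I$ are closed subsets of the infinite-dimensional space $\mathscr C(\bar I,\mathcal H)$ and are typically neither compact nor $\sigma$-compact, so the iterative intersection can become empty or fail to be Borel. More fundamentally, the projection to $\mathcal H$ of a closed subset of a product of Polish spaces is in general only analytic, and the standard uniformization theorems (Kuratowski--Ryll-Nardzewski, Arsenin--Kunugui) that would upgrade this to Borel all require either measurability of the domain---which is exactly what you are trying to prove---or $\sigma$-compactness of the sections. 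Thus without the uniqueness assumption the statement of Lemma \ref{lemma:Gi} is not established by your argument (nor, strictly speaking, by the paper's); fortunately that case is never used.
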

\begin{proof}
Note that the space $\mathscr C(\bar I,\mathcal H)$ endowed with the sup norm is a complete and separable metric space  (separability follows by Stone--Weierstrass theorem and Dini's theorem).
Apply  Lemma \ref{parthmes}  with $X_{1} = (\mathcal{X},||.||_{\mathcal{X}})$,
$X_{2} = (\mathcal H, ||.||_{\mathcal H})$, the map
\[ \begin{array}{lrcl}
 \psi=\wp_0 :  & \mathcal{X} & \longrightarrow & \mathcal H \\
            &  (u_0,u(\cdot)) & \longmapsto     & u_0\,,

\end{array} \]
and  $E_{1} = \mathcal{F}_I$, $ E_{2} = \mathcal{G}_I$.   Then one concludes that
$$
\wp_0(\mathcal F_I)= \mathcal{G}_I
$$
is a Borel subset of $\mathcal H$.
\end{proof}

A crucial tool for constructing a generalized global flow is the following probabilistic representation of measure-valued solutions to  Liouville equations.  Such general result is proved in \cite[Proposition 4.1]{MR3721874} (see also \cite{MR2129498}) and  justifies the existence of a probability measure $\eta$ over $\mathcal H$ concentrated on strong solutions of the initial value problem \eqref{eq:ivp}. Define for  $t \in I$ the evaluation map $\Xi_{t}$ over $\mathcal{X}$ as,
\begin{equation}
\label{app.evmap}
\Xi_{t} : (u_0,u(\cdot)) \in \mathcal{X} \to u(t) \in \mathcal H \,.
\end{equation}

\begin{thm}[Probabilistic representation]
\label{thm.eta}
  Let $ t \in I \to \mu_{t} \in \mathfrak{P}( \mathcal{H})$ be a weakly narrowly continuous curve in $\mathfrak{P}( \mathcal{H})$ satisfying the bound \eqref{eq:bdv} and the Liouville equation
  \eqref{eq:Liou}. Then there exists $\eta$ a Borel probability measure over  the space $(\mathcal{X},||.||_{\mathcal{X}})$ satisfying:
\begin{itemize}

\item[(i)] The measure $\eta$ concentrates on the set $\mathcal F_I$,
$$
\eta(\mathcal F_I)=1.
$$
\item[(ii)] For all $t \in I$,
$$
\mu_{t} = (\Xi_{t})_\sharp \eta\,.
$$
\end{itemize}
\end{thm}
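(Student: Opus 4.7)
The plan is to construct $\eta$ by adapting Ambrosio's superposition principle from the finite-dimensional setting to the separable Hilbert space $\mathcal{H}$, via a projection-approximation scheme. First I would fix an orthonormal basis $(e_n)_{n\in\N}$ of $\mathcal{H}$ and let $P_N$ denote the orthogonal projection onto $V_N=\mathrm{span}(e_1,\dots,e_N)\cong \R^N$. Push the data forward: set $\mu_t^N = (P_N)_\sharp \mu_t \in \mathfrak{P}(V_N)$, and, using the disintegration of $\mu_t$ along $P_N$, define the effective velocity $v^N(t,x)= \int_{P_N^{-1}(x)} P_N v(t,u)\,d\mu_{t,x}(u)$. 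Testing the Liouville equation \eqref{eq:Liou} against cylindrical test functions depending only on the first $N$ coordinates, one checks that $\mu_t^N$ is a weakly narrowly continuous solution of a finite-dimensional continuity equation on $\R^N$ driven by $v^N$, and that the $L^1$-bound \eqref{eq:bdv} descends to $\int_I\int_{\R^N}|v^N|\,d\mu_t^N\,dt<\infty$ by Jensen.

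In the second step I would apply the classical Ambrosio superposition principle in $\R^N$ (or a slight variant allowing measurable-in-$t$, continuous-in-$x$ fields after a mollification in time) to produce a Borel measure $\eta^N$ on the path space $\R^N\times \mathscr{C}(\overline I,\R^N)$ concentrated on integral curves of $v^N$ and satisfying $(\Xi_t^N)_\sharp\eta^N = \mu_t^N$ for every $t\in I$. Lifting through the inclusion $V_N\hookrightarrow \mathcal{H}$ yields a measure $\tilde\eta^N$ on $\mathcal{X}=\mathcal{H}\times \mathscr{C}(\overline I,\mathcal{H})$. Tightness of $\{\tilde\eta^N\}_N$ in $\mathcal{X}$ endowed with a weak-type metrizable topology then follows from two ingredients: tightness of the initial marginals $(\wp_0)_\sharp\tilde\eta^N=\mu_0^N$ (inherited from $\mu_0$), and uniform equicontinuity in $L^1$-mean of the trajectories, provided by the Duhamel representation together with the bound \eqref{eq:bdv} (which controls $\int_I \|\partial_t u\|_{\mathcal{H}}\,d\tilde\eta^N$). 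Prokhorov's theorem then produces a subsequential limit $\tilde\eta^{N_k}\rightharpoonup \eta$ in $\mathfrak{P}(\mathcal{X})$.

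It remains to verify properties (i) and (ii) for the limit $\eta$. Assertion (ii) follows because each evaluation map $\Xi_t$ is continuous and the time marginals of $\tilde\eta^{N_k}$ converge narrowly to $\mu_t$, using the dominated convergence afforded by the $L^1$-bound. The genuinely delicate point — and what I expect to be the main obstacle — is (i): showing that $\eta$ concentrates on genuine solutions of \eqref{eq:ivp:apx}, i.e.\ that $\eta$-a.e.\ path $(u_0,u(\cdot))$ satisfies the Duhamel identity \eqref{eq:Duhap}. The projected curves solve only an averaged ODE driven by $v^N$, and the averaging must be undone in the limit. I would do this by showing that the functional
\begin{equation*}
\Psi_t(u_0,u(\cdot))=\Bigl\|u(t)-u_0-\int_0^t v(s,u(s))\,ds\Bigr\|_{\mathcal{H}}
\end{equation*}
is lower semicontinuous on $\mathcal{X}$ (continuity of $v$ and the bound \eqref{eq:bdv} combined with the Vitali convergence theorem handle the integral term) and then estimating $\int \Psi_t\,d\tilde\eta^{N_k}\to 0$. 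The latter uses the defect estimate $\|v^{N_k}\circ P_{N_k}-v\|_{L^1(d\mu_t\,dt)}\to 0$, which is a consequence of Lebesgue differentiation in the conditional expectation and the integrability assumption. Lower semicontinuity then forces $\int \Psi_t\,d\eta=0$, and since $I$ is separable in time this gives $\eta(\mathcal F_I)=1$ by Lemma \ref{lem:Fset}.
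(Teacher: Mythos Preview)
The paper does not give its own proof of this theorem: the sentence preceding the statement reads ``Such general result is proved in \cite[Proposition 4.1]{MR3721874} (see also \cite{MR2129498})'', and no argument appears anywhere in the text. So there is nothing in the paper itself to compare your proposal against.

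That said, your sketch is precisely the strategy of the cited references --- the finite-dimensional superposition principle of Ambrosio et al., extended to a separable Hilbert space via orthogonal projections onto $V_N$, a tightness/Prokhorov argument on path space, and identification of the limit measure. The outline is sound, but two steps are looser than they should be. First, tightness of $\{\tilde\eta^N\}$ cannot be obtained in the \emph{strong} topology of $\mathcal{X}$ from the stated hypotheses alone: the equicontinuity you extract from \eqref{eq:bdv} controls oscillations of the paths but not compactness of the values $u(t)$ in $\mathcal{H}$. One must work with the weak topology on $\mathcal{H}$ (metrized on bounded balls), and then your functional $\Psi_t$ is not obviously lower semicontinuous, because the norm is only weakly l.s.c.\ while $v$ is only strongly continuous. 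The cited arguments circumvent this by testing the Duhamel identity coordinate-wise against each $e_j$ rather than in full norm, which is also how the continuity of $v$ is eventually used. Second, your defect estimate $\|v^{N_k}\circ P_{N_k}-v\|_{L^1(d\mu_t\,dt)}\to 0$ is correct in conclusion, but the tool is the $L^1$ martingale convergence theorem for the increasing filtration $\sigma(P_N)\uparrow \mathcal{B}(\mathcal{H})$ combined with the integrability \eqref{eq:bdv}, not a Lebesgue differentiation argument.
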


\begin{cor}
\label{Gmes}
Let $ t \in I \to \mu_{t} \in \mathfrak{P}( \mathcal{H})$ be a weakly narrowly continuous curve in $\mathfrak{P}( \mathcal{H})$ satisfying the bound \eqref{eq:bdv} and the Liouville equation
  \eqref{eq:Liou}. Suppose that for each $u_0\in\mathcal H$, the initial value problem \eqref{eq:ivp:apx} admits at most one strong solution on $I$. Then the ensemble of initial conditions $\mathcal{G}_I$ in \eqref{eq:Gi} is total with respect to $\mu_0$, \emph{i.e.}
$$
\mu_0(\mathcal{G}_I)=1\,.
$$
\end{cor}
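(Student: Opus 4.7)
The plan is to derive the statement as an almost immediate consequence of the probabilistic representation in Theorem \ref{thm.eta}. Under the hypotheses of weak narrow continuity of $t\mapsto\mu_t$, the integrability bound \eqref{eq:bdv}, and the Liouville equation \eqref{eq:Liou}, that theorem supplies a Borel probability measure $\eta\in\mathfrak{P}(\mathcal{X})$ with $\eta(\mathcal{F}_I)=1$ and $\mu_0=(\Xi_0)_\sharp\eta$. This is the only substantive input; everything else will be set-theoretic bookkeeping.

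First I would observe that on the ensemble of solutions $\mathcal{F}_I$ the evaluation map at $t=0$ coincides with the first projection: if $(u_0,u(\cdot))\in\mathcal{F}_I$ then, by definition of \eqref{eq:ivp:apx}, the curve $u(\cdot)$ satisfies $u(0)=u_0$, hence $\Xi_0(u_0,u(\cdot))=u_0$. The defining property \eqref{eq:Gi} of $\mathcal{G}_I$ then shows directly that this $u_0$ belongs to $\mathcal{G}_I$, since the accompanying curve $u(\cdot)$ is itself a witness. This gives the crucial set inclusion
\[
\mathcal{F}_I\ \subseteq\ \Xi_0^{-1}(\mathcal{G}_I).
\]

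To close the argument I would invoke Lemma \ref{lemma:Gi}, which guarantees that $\mathcal{G}_I$ is a Borel subset of $\mathcal{H}$, so that $\mu_0(\mathcal{G}_I)$ and $\eta(\Xi_0^{-1}(\mathcal{G}_I))$ are both well defined. The pushforward identity $\mu_0=(\Xi_0)_\sharp\eta$ together with the above inclusion yields
\[
\mu_0(\mathcal{G}_I)\ =\ \eta\bigl(\Xi_0^{-1}(\mathcal{G}_I)\bigr)\ \geq\ \eta(\mathcal{F}_I)\ =\ 1,
\]
which is the claim. There is no real obstacle here: all the analytic content is absorbed into Theorem \ref{thm.eta} and into the Borel-measurability statement of Lemma \ref{lemma:Gi}. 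The only point requiring attention is to not confuse $\Xi_0$ (an evaluation on $\mathcal{X}$) with the first projection on all of $\mathcal{X}$; they coincide only on $\mathcal{F}_I$, which is precisely where $\eta$ concentrates. Finally, although the hypothesis of at-most-one strong solution per initial datum is listed in the statement, it is not strictly used in establishing $\mu_0(\mathcal{G}_I)=1$; it is the ingredient that, combined with this corollary, will let one disintegrate $\eta$ and assemble a single-valued Borel flow $\Phi_t$ on $\mathcal{G}_I$ in the subsequent Theorem \ref{thm:gflow}.
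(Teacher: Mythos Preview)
Your proof is correct and follows essentially the same route as the paper's own proof: invoke Theorem \ref{thm.eta} to obtain $\eta$ with $\eta(\mathcal{F}_I)=1$ and $\mu_0=(\Xi_0)_\sharp\eta$, then use the inclusion $\mathcal{F}_I\subseteq\Xi_0^{-1}(\mathcal{G}_I)$ to conclude. One small caveat on your closing remark: the measurability of $\mathcal{G}_I$ in Lemma \ref{lemma:Gi} is proved via Lemma \ref{parthmes}, which requires the projection $\wp_0|_{\mathcal{F}_I}$ to be injective---and that injectivity is precisely the uniqueness hypothesis---so the at-most-one-solution assumption is not entirely idle even at this stage.
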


\begin{proof}
Using Theorem  \ref{thm.eta} one concludes
$$
\mu_0(\mathcal{G}_I)=(\Xi_0)_\sharp\eta(\mathcal{G}_I)=\eta(\Xi_0^{-1}(\mathcal{G}_I))= \eta(\mathcal{F}_I)=1\,,
$$
since $\Xi_0^{-1}(\mathcal{G}_I)\supset \mathcal F_I$.
\end{proof}

The following result ensures the existence of global solutions for initial value problems under weak  assumptions. In particular, we will apply such result to the Newton--Maxwell equation while proving Theorem \ref{thm:main-1}.

\begin{thm}[Generalized global flow]
\label{thm:gflow}
Consider the initial value problem \eqref{eq:ivp:apx} with $\mathcal H$ a separable Hilbert space and  $ v : \R \times \mathcal{H} \to \mathcal{H}$ a continuous  vector field bounded on bounded sets. Suppose :
\begin{enumerate}[label=\textnormal{(\roman*)}]
\item  \label{cd:a} There exists  a weakly narrowly continuous curve
$ t \in \R \to \mu_{t} \in \mathfrak{P}( \mathcal{H})$  satisfying the estimate  \eqref{eq:bdv} and the Liouville equation \eqref{eq:Liou} on any bounded open interval $I$ containing the origin.
\item  \label{cd:b} There exists at most one solutions of the initial value problem \eqref{eq:ivp:apx} over any bounded open interval $I$ containing the origin.
\end{enumerate}
    Then for $\mu_0$-almost all initial conditions $u_0$ in $\mathcal H$ there exists a (unique) global strong solution to the initial value problem \eqref{eq:ivp:apx}. Moreover, the ensemble of initial conditions
\begin{equation}
\label{eq:G}
\mathcal G=\left\{u_0\in\mathcal H: \exists u(\cdot) \text{ a global strong solution of } \eqref{eq:ivp:apx} \text{ with the initial condition } u_0  \right\}
\end{equation}
is a Borel subset of $\mathcal H$ satisfying  $\mu_0(\mathcal{G})=1$ and for any time $t\in\R$ the  map

\[
\begin{array}{lrcl}
 \Phi_t :  & \mathcal{G} & \longrightarrow & \mathcal H \\
            &  u_0 & \longmapsto     & u(t),

\end{array}
\]
 is Borel measurable.
\end{thm}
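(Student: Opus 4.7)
The plan is to piece together local existence on a countable exhaustion $I_n=(-n,n)$, $n\in\N$, of $\R$. For each $n$, assumption \ref{cd:a} restricted to $I_n$ provides a weakly narrowly continuous curve satisfying the Liouville equation \eqref{eq:Liou} together with the integrability bound \eqref{eq:bdv}, so Theorem \ref{thm.eta} yields a Borel probability measure $\eta_n$ on $\mathcal{X}_n:=\mathcal{H}\times\mathscr{C}(\overline{I_n},\mathcal{H})$ concentrated on the solution set $\mathcal{F}_{I_n}$ and representing $\mu_t=(\Xi_t)_{\sharp}\eta_n$ for all $t\in I_n$. Combined with the uniqueness assumption \ref{cd:b}, Corollary \ref{Gmes} then guarantees $\mu_0(\mathcal{G}_{I_n})=1$ for every $n$. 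Setting $\mathcal{G}:=\bigcap_{n\in\N} \mathcal{G}_{I_n}$, we obtain, by Lemma \ref{lemma:Gi}, a Borel subset of $\mathcal{H}$ with $\mu_0(\mathcal{G})=1$.

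For every $u_0\in\mathcal{G}$, the uniqueness hypothesis \ref{cd:b} ensures that the strong solutions of \eqref{eq:ivp:apx} associated to the different intervals $I_n$ coincide on their common domain, so they patch together into a unique global solution $u\in\mathscr{C}(\R,\mathcal{H})$. This defines unambiguously the generalized global flow $\Phi_t:\mathcal{G}\to\mathcal{H}$, $u_0\mapsto u(t)$, for every $t\in\R$.

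It remains to prove that $\Phi_t$ is Borel measurable, which is the main obstacle. Fix $t\in\R$ and pick $n$ large enough so that $t\in I_n$. By Lemma \ref{lem:Fset} the set $\mathcal{F}_{I_n}$ is Borel in the Polish space $\mathcal{X}_n$, and the continuous projection $\wp_0:(u_0,u(\cdot))\mapsto u_0$ is, by assumption \ref{cd:b}, injective on $\mathcal{F}_{I_n}$ with image $\mathcal{G}_{I_n}$. I would invoke the Lusin--Souslin theorem --- a continuous injection of a Borel subset of a Polish space into a Polish space has Borel image and admits a Borel measurable inverse on it --- to conclude that $\wp_0$ is a Borel isomorphism between $\mathcal{F}_{I_n}$ and $\mathcal{G}_{I_n}$. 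Since the evaluation map $\Xi_t$ introduced in \eqref{app.evmap} is continuous on $\mathcal{X}_n$, the factorization $\Phi_t\big|_{\mathcal{G}}=\Xi_t\circ\bigl(\wp_0\big|_{\mathcal{F}_{I_n}}\bigr)^{-1}$ exhibits $\Phi_t$ as a composition of Borel maps, and the proof is complete. Without descriptive set theory the Borel character of this inverse is not obvious, since neither $\mathcal{F}_{I_n}$ nor $\mathcal{G}_{I_n}$ need be closed in their ambient Polish spaces and the vector field $v$ is only assumed to be continuous; the Lusin--Souslin theorem is precisely what bridges this gap.
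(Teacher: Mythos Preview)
Your proof is correct and follows essentially the same route as the paper: exhaust $\R$ by $I_n=(-n,n)$, use Corollary~\ref{Gmes} to get $\mu_0(\mathcal{G}_{I_n})=1$, set $\mathcal{G}=\bigcap_n\mathcal{G}_{I_n}$ with uniqueness guaranteeing the patching, and then factor $\Phi_t$ as $\Xi_t\circ(\wp_0|_{\mathcal{F}_{I_n}})^{-1}$ restricted to $\mathcal{G}$. The only cosmetic difference is that where you invoke the Lusin--Souslin theorem to obtain Borel measurability of the inverse, the paper has already isolated precisely this ingredient as Lemma~\ref{parthmes} (Parthasarathy's version of the same descriptive-set-theoretic fact), and applies it to deduce that $\wp_0^{-1}:\mathcal{G}_I\to\mathcal{F}_I$ is Borel.
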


\begin{proof}

 Consider the increasing  sequence of intervals $I_n=(-n,n)$. Then $\{\mathcal{G}_{I_n}\}_{n\in\N}$ is a decreasing sequence of measurable sets. So, using the uniqueness of strong solutions one concludes that
 $$
 \mathcal{G}=\bigcap_{n\in \N} \mathcal{G}_{I_n}\,.
 $$
Furthermore by Corollary \ref{Gmes},
$$
\mu_0(\mathcal{G})=\lim_n \mu_0(\mathcal{G}_n)=1\,.
$$
To prove the measurability of the maps $\Phi_t$, recall that the projection $\wp_0|_{\mathcal{F}_I}$ defined in the proof of Lemma \ref{lemma:Gi} is injective. So, $\wp_0$ is a bijection  from $\mathcal{F}_I$ onto $\mathcal{G}_I$ and its inverse map

\[ \begin{array}{lrcl}
 \wp_0^{- 1} :  & \mathcal{G}_I & \longrightarrow & \mathcal{F}_I \\
                  &  u_0                  & \longmapsto     & (u_0,u),

\end{array} \]
is  measurable thanks to Lemma \ref{parthmes}. Hence for $t\in\R$ and $I$ containing $t$, the composition

\[
\begin{array}{cccccccc}
 \Phi_t: & \mathcal{G}  & \overset{Id}{\longrightarrow}&  \mathcal{G}_I  & \overset{\wp_0^{-1}}{\longrightarrow} & \mathcal{F}_I
  &\overset{\Xi_t}{\longrightarrow} & \mathcal H\\
                & u_0 & \longrightarrow &       u_0    & \longrightarrow  & (u_0,u) & \longrightarrow  & u(t)
\end{array}
\]
yields a well-defined and measurable map.

\end{proof}

\medskip
\section{Semiclassical calculus}
\label{appy}
We state some specific results on convergence towards Wigner measures which are related to semiclassical analysis in finite and infinite dimensions. For a  related general discussion, we refer the reader to the work \cite{ammari2008ahp}. In particular, Lemmas \ref{lem:B1} and \ref{lem:B2} are key arguments in establishing the convergence towards the characteristic equations in Section \ref{sec:classical-limit} under optimal regularity conditions on the density matrices $(\varrho_\hslash)_{\hslash\in (0,1)}$.

\begin{lemma}
\label{lem:B1}
Let $(\hslash_j)_{j\in\N}$ be a sequence such that $\hslash_j\to 0$ and $(\varrho_{\hslash_j})_{j\in\N}$ a family of density matrices on $\mathscr H$ satisfying
 \emph{(}\hyperref[eq:Ass0]{$S_0^{(1)}$}\emph{)} and \emph{(}\hyperref[eq:Ass1]{$S_1^{(1)}$}\emph{)}. Suppose that
\begin{equation}
 \mathcal{M}\big(\varrho_{\hslash_j},j\in\N\big)=\{\mu\}
\end{equation}
for some Borel probability measure $\mu\in\mathfrak{P}(X^0)$.  Then the following assertions hold true for any $\xi=(z_0,\alpha_0)\in X^0$ and
for any $f\in\mathscr{C}^1(\R^{dn})$ such that $\nabla f$ is bounded,
\begin{subequations}
\begin{align}
\label{eq:B1}
\lim_j\Tr\left[ \varrho_{\hslash_j} \, \mathcal{W}(\xi) \, f(\hat p) \right]=\int_{X^0}
e^{2\ci\Imm\langle z,z_0\rangle} e^{\sqrt{2}\ci\Ree\langle \alpha_0,\alpha\rangle_{\mathfrak{H}^0}} \,f(p) \;{\rm d}\mu(u)\,,\\
\label{eq:B2}
\lim_j\Tr\left[ \varrho_{\hslash_j} \, \mathcal{W}(\xi) \, f(\hat q)\right]=\int_{X^0}
e^{2\ci\Imm\langle z,z_0\rangle} e^{\sqrt{2}\ci\Ree\langle \alpha_0,\alpha\rangle_{\mathfrak{H}^0}} \,f(q)\;{\rm d}\mu(u)\,,
\end{align}
\end{subequations}
where the variable $u=(z,\alpha)\in X^0$.
\end{lemma}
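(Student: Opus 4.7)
My plan is to reduce to a Fourier representation of $f$ and then invoke dominated convergence together with the assumed Wigner measure convergence for Weyl--Heisenberg operators.

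For $f \in \mathscr{S}(\mathds{R}^{dn})$, I would write $f(\hat{q}) = \int_{\mathds{R}^{dn}} \hat{f}(k)\, e^{2\pi\ci k\cdot\hat{q}}\, \mathrm{d}k$ and exploit the identification $e^{2\pi\ci k\cdot\hat{q}} = \hat{T}(2\pi k, 0)$, which is itself a Weyl--Heisenberg translation. The Weyl commutation relations then give $\mathcal{W}(\xi)\hat{T}(2\pi k,0) = e^{-\ci\hslash\pi q_0\cdot k}\,\mathcal{W}(\xi_k)$ with $\xi_k = (p_0+2\pi k, q_0, \alpha_0)$. Substituting and applying Fubini's theorem (justified because $\hat{f}\in L^1$ and the Weyl operators are unitary) yields an integral representation
\begin{equation*}
\Tr\bigl[\varrho_{\hslash_j}\mathcal{W}(\xi)f(\hat{q})\bigr] = \int_{\mathds{R}^{dn}} \hat{f}(k)\, e^{-\ci\hslash_j\pi q_0\cdot k}\, \Tr\bigl[\varrho_{\hslash_j}\mathcal{W}(\xi_k)\bigr]\,\mathrm{d}k,
\end{equation*}
whose integrand converges pointwise as $\hslash_j\to 0$ by the very definition of the Wigner measure $\mu$, and is dominated by $|\hat{f}(k)|$. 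Dominated convergence and a second application of Fubini followed by Fourier inversion yield the identity \eqref{eq:B2} in the Schwartz case.

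The main obstacle is extending from $\mathscr{S}(\mathds{R}^{dn})$ to functions $f\in\mathscr{C}^1$ with merely bounded gradient, which may grow linearly. My plan is to truncate: approximate $f$ by $f_n\in\mathscr{C}_0^{\infty}(\mathds{R}^{dn})$ coinciding with $f$ on $\{|x|\leq n\}$, with $|f_n(x)|\leq C(1+|x|)$ uniformly. The quantum tails are then controlled using the position-moment bound from assumption \eqref{eq:Ass1}, namely $\Tr[\varrho_\hslash\hat{q}^2]\leq C$ uniformly in $\hslash$, via a Chebyshev-type estimate
\begin{equation*}
\bigl|\Tr\bigl[\varrho_\hslash\mathcal{W}(\xi)(f-f_n)(\hat{q})\bigr]\bigr| \;\lesssim\; \Tr\bigl[\varrho_\hslash(1+|\hat{q}|)\mathbf{1}_{\{|\hat{q}|\geq n\}}\bigr]\;\lesssim\; \tfrac{1}{n}\,\Tr\bigl[\varrho_\hslash(1+\hat{q}^2)\bigr] \;\lesssim\; \tfrac{1}{n}.
\end{equation*}
The analogous tail for the limiting measure $\mu$ requires $\int_{X^0}|q|^2\, \mathrm{d}\mu<\infty$, which follows from the uniform quantum bound and the general comparison argument in \cite[Proposition~2.10]{ammari2011jmpa}. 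Combining these uniform tail estimates with the Schwartz case already established for each $f_n$ delivers the full convergence stated in \eqref{eq:B2}.

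The argument for \eqref{eq:B1} is entirely symmetric: one uses $e^{2\pi\ci k\cdot\hat{p}} = \hat{T}(0,-2\pi k)$ to re-express $f(\hat{p})$ as an integral of Weyl translations against $\hat{f}$, and substitutes assumption \eqref{eq:Ass0}, which via $\hat{H}_\hslash^0\geq \sum_i (2m_i)^{-1}\hat{p}_i^2$ implies $\Tr[\varrho_\hslash\hat{p}^2]\leq C$, for the analogous tail truncation in momentum.
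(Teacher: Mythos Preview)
Your overall strategy---reduce to compactly supported smooth approximants via a Fourier representation and handle tails by moment bounds---is exactly the paper's strategy. The Fourier/Weyl core for $f\in\mathscr{C}_0^\infty$ and the use of dominated convergence match the paper almost verbatim.

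There is, however, a genuine gap in your quantum tail estimate. You write
\[
\bigl|\Tr\bigl[\varrho_\hslash\,\mathcal{W}(\xi)\,(f-f_n)(\hat q)\bigr]\bigr|\;\lesssim\;\Tr\bigl[\varrho_\hslash\,(1+|\hat q|)\,\mathbf{1}_{\{|\hat q|\ge n\}}\bigr],
\]
but this silently discards the unitary $\mathcal{W}(\xi)$, and that is not legitimate: the left side is the trace of a non-positive operator, and you cannot simply bound it by $\Tr[\varrho_\hslash\,|g(\hat q)|]$. If you try to justify it honestly---say by writing $\varrho_\hslash=\varrho_\hslash^{1/2}\varrho_\hslash^{1/2}$ and using Cauchy--Schwarz in $\mathscr{L}^2$---you obtain instead
\[
\bigl|\Tr\bigl[\varrho_\hslash\,\mathcal{W}(\xi)\,g(\hat q)\bigr]\bigr|\le \bigl(\Tr\bigl[\varrho_\hslash\,|g(\hat q)|^2\bigr]\bigr)^{1/2}\lesssim \bigl(\Tr\bigl[\varrho_\hslash\,\hat q^2\,\mathbf{1}_{|\hat q|\ge n}\bigr]\bigr)^{1/2},
\]
and with only a uniform second-moment bound $\Tr[\varrho_\hslash\hat q^2]\le C$ this does \emph{not} go to zero uniformly in $\hslash$. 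One power of the weight is lost to the square root, and the linear growth of $f-f_n$ exhausts the remaining half-power.

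The paper's fix is to split the weight \emph{symmetrically} around $\varrho_\hslash\mathcal{W}(\xi)$:
\[
\bigl|\Tr\bigl[\varrho_\hslash\,\mathcal{W}(\xi)\,g(\hat q)\bigr]\bigr|\le
\bigl\|(1+\hat q^2)^{1/2}\varrho_\hslash\,\mathcal{W}(\xi)\,(1+\hat q^2)^{1/2}\bigr\|_{\mathscr{L}^1}\cdot
\bigl\|(1+\hat q^2)^{-1}g(\hat q)\bigr\|_{\mathscr{L}(\mathscr H)}.
\]
The second factor is now $\lesssim 1/n$ because $|g(x)|\lesssim(1+|x|)\mathbf{1}_{|x|\ge n}\lesssim n^{-1}(1+|x|^2)$. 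The first factor is bounded uniformly in $\hslash$ \emph{only because} $\mathcal{W}(\xi)$ preserves the form domain of $1+\hat q^2$ with $\hslash$-uniform constants---this is Lemma~\ref{lem:Tsmo} (equivalently, the exact relation $\hat T(z)\,\hat q\,\hat T(z)^*=\hat q-\hslash q_0$). That domain-preservation step is the missing ingredient in your argument; once you insert it, your proof and the paper's coincide. (A minor side remark: the moment bound $\int|q|^2\,\mathrm{d}\mu<\infty$ you need on the classical side is Lemma~\ref{lem:apest} in the paper, not Proposition~2.10 of \cite{ammari2011jmpa}, which concerns total-variation comparison.)
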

\begin{proof}
The arguments for \eqref{eq:B1} and \eqref{eq:B2} are similar. So, we consider for instance only the first limit.  Note that the quantity $\Tr\left[ \varrho_{\hslash_j} \, \mathcal{W}(\xi) \, f(\hat p) \right]$ is well-defined thanks to Lemma \ref{lem:Tsmo}, the assumptions  (\hyperref[eq:Ass0]{$S_0^{(1)}$})  and (\hyperref[eq:Ass1]{$S_1^{(1)}$}) and Taylor expansion for $f$ which yields that there exists $c>0$ such that
\begin{equation}
|f(x)|\leq c \, \langle x\rangle \,,\qquad \forall x\in \R^{dn}\,.
\end{equation}
Let $\chi\in\mathscr C_0^\infty(\R^{dn})$ be a smooth  function such that $0\leq \chi \leq 1 $, $\chi\equiv 1$ in a neighborhood of the origin and $\int_{\R^{dn}} \chi(x) dx=1$. Define respectively a mollifier and a cutoff function for all $R>0$ as
$$
\chi_R(\cdot)=R^{-dn}\chi(R \,\cdot) \qquad \text{ and } \qquad \chi^R(\cdot)=\chi(R^{-1}\,\cdot\,)\,.
$$
For simplicity, we denote
\begin{equation}
\label{simp.not.ap}
e^{2\ci\Imm\langle z,z_0\rangle} e^{\sqrt{2}\ci\Ree\langle \alpha_0,\alpha\rangle_{\mathfrak{H}^0}}= e^{\ci Q(\xi,u)}\,.
\end{equation}
We use an approximation argument based on the  inequality:
\begin{eqnarray}
\label{eq:B6}
\hspace{-.5in}\left|\Tr\left[ \varrho_{\hslash_j} \, \mathcal{W}(\xi) \, f(\hat p) \right]-
\int_{X^0} e^{\ci Q(\xi,u)}\,f(p) \;{\rm d}\mu\right| &\leq &
\left|\Tr\left[ \varrho_{\hslash_j} \, \mathcal{W}(\xi) \, (f-\chi_R *f \chi^R)(\hat p) \right]\right| \\
\label{eq:B7}
&& \hspace{-3in} +\left|\Tr\left[ \varrho_{\hslash_j} \, \mathcal{W}(\xi) \, \chi_R*f\chi^R(\hat p) \right]-
\int_{X^0} e^{\ci Q(\xi,u)} \,(\chi_R*f\chi^R)(p) \;{\rm d}\mu\right| \\
\label{eq:B8}
&& \hspace{-3in}
+\left|\int_{X^0} e^{\ci Q(\xi,u)} \, (f-\chi_R*f\chi^R)(p) \;{\rm d}\mu\right|\,.
\end{eqnarray}
Now, we estimate all the right hand sides. Using Lemma \ref{lem:Tsmo} and assumptions  (\hyperref[eq:Ass0]{$S_0^{(1)}$}) and
(\hyperref[eq:Ass1]{$S_1^{(1)}$}),  one obtains
\begin{eqnarray*}
r.h.s ~\eqref{eq:B6} &\leq& \|(\hat p^2+1)^{1/2} \varrho_\hslash \mathcal{W}(\xi) (\hat p^2+1)^{1/2}\|_{\mathscr{L}^1(\mathscr{H})} \,
\|(\hat p^2+1)^{-1} (f-\chi_R *f\chi^R)(\hat p) \|_{\mathscr{L}(L^2(\R^{dn}))} \\
&\lesssim& \sup_{x\in\R^{dn}}\left| \frac{f(x)-(\chi_R *f\chi^R)(x)}{|x|^2+1}\right|  \\
&\lesssim& \sup_{x\in\R^{dn}}\left| \frac{f(x)-f\chi^R(x)}{|x|^2+1} \right|+
\sup_{x\in\R^{dn}}\left| \frac{f(x)-\chi_R *f(x)}{|x|^2+1} \right|\underset{R\to 0}{\longrightarrow} 0 \,.
\end{eqnarray*}
On the other hand,  Lemma \ref{lem:apest} shows that the Wigner measure $\mu$ satisfies the bound
$$
\int_{X^0} (p^2+q^2) \, {\rm d}\mu(u)<\infty\,.
$$
Hence, one obtains
\begin{equation}
r.h.s ~\eqref{eq:B8}\leq \int_{X^0} (p^2+1){\rm d}\mu(u)\; \sup_{p\in\R^{dn}}\left| \frac{f(p)-(\chi_R *f\chi^R)(p)}{p^2+1} \right|
 \underset{R\to 0}{\longrightarrow} 0 \,.
\end{equation}
We estimate the right hand side of \eqref{eq:B7}. Since $g^R=\chi_R *f\chi^R \in\mathscr{C}^\infty_0(\R^{dn})$, then one can write using the Fourier Transform and the Heisenberg--Weyl operators,
\begin{align*}
g^R(\hat p)=\int_{\R^{dn}} \mathscr{F}(g^R)(k) \;e^{2\pi\ci k\cdot \hat p} \,{\rm d}k &=\int_{\R^{dn}} \mathscr{F}
(g^R)(k) \;\hat T(-2\pi k,0)\,{\rm d}k\\ &
=\int_{\R^{dn}} \mathscr{F}(g^R)(k) \;\mathcal{W}(-2\pi k,0,0)\,{\rm d}k  \,,
\end{align*}
with absolutely convergent (Bochner) integrals in $\mathscr{L}(\mathscr H)$. Hence, by Fubini's theorem for  all $\xi=(p,q,\alpha)\in X^0$,
\begin{align*}
\Tr\left[ \varrho_{\hslash_j} \, \mathcal{W}(\xi) \, g^R(\hat p) \right] &=\int_{\R^{dn}}
\mathscr{F}(g^R)(k) \,
\Tr\left[ \varrho_{\hslash_j} \, \mathcal{W}(\xi)  \mathcal{W}(-2\pi k,0,0)\right]\, {\rm d}k \\
&=\int_{\R^{dn}} \mathscr{F}(g^R)(k) \,
\Tr\left[ \varrho_{\hslash_j} \, \mathcal{W}(-2\pi k+p,q,\alpha)\right]\,e^{-\ci\pi \hslash k\cdot p}\,  {\rm d}k\,.
\end{align*}
So, dominated convergence, Definition \ref{def:wigner} and Fubini's theorem  yield
\begin{eqnarray*}
\lim_j \Tr\left[ \varrho_{\hslash_j} \, \mathcal{W}(\xi) \, g^R(\hat p) \right] &=&\int_{\R^{dn}}
\mathscr{F}(g^R)(k) \,\int_{X^0}
e^{\ci Q(\xi,u)} \,e^{ 2\pi\ci k \cdot p} \,{\rm d}\mu(u) \,{\rm d}k \\
&=&\int_{X^0} g^R(p) \,e^{\ci Q(\xi,u)}  \,{\rm d}\mu(u)\,.
\end{eqnarray*}
Therefore, the left hand side of \eqref{eq:B6} is arbitrary small when choosing $R$ sufficiently large and $j\to\infty$.
\end{proof}

\begin{lemma}
\label{lem:B2}
Let $(\hslash_j)_{j\in\N}$ be a sequence such that $\hslash_j\to 0$ and $(\varrho_{\hslash_j})_{j\in\N}$ a family of density matrices on $\mathscr H$ satisfying
 \emph{(}\hyperref[eq:Ass0]{$S_0^{(1)}$}\emph{)}, \emph{(}\hyperref[eq:Ass1]{$S_1^{(1)}$}\emph{)}. Suppose that
\begin{equation}
 \mathcal{M}\big(\varrho_{\hslash_j},j\in\N\big)=\{\mu\}
\end{equation}
for some Borel probability measure $\mu\in\mathfrak{P}(X^0)$.  Then for all $\xi=(z_0,\alpha_0)\in X^0$ and for all $f\in \mathfrak{H}^0$,
\begin{subequations}
\begin{align}
\label{eq:B3}
\lim_j\Tr\left[ \varrho_{\hslash_j} \, \mathcal{W}(\xi) \, \hat a^{*}(f)\right]&=\int_{X^0}  e^{2\ci\Imm\langle z,z_0\rangle}
e^{\sqrt{2}\ci\Ree\langle \alpha_0,\alpha\rangle_{\mathfrak{H}^0}} \,
\langle \alpha,f\rangle_{\mathfrak{H}^0} \;{\rm d}\mu(u)\,,\\
\label{eq:B5}
\lim_j\Tr\left[ \varrho_{\hslash_j} \, \mathcal{W}(\xi) \, \hat a(f)\right]&=\int_{X^0}
e^{2\ci\Imm\langle z,z_0\rangle} e^{\sqrt{2}\ci\Ree\langle \alpha_0,\alpha\rangle_{\mathfrak{H}^0}} \,
\langle f,\alpha\rangle_{\mathfrak{H}^0} \;{\rm d}\mu(u)\,,
\end{align}
\end{subequations}
where the variable $u=(p,q,\alpha)\in X^0$.
\end{lemma}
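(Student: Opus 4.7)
The plan is to reduce the convergence of $\Tr[\varrho_{\hslash_j}\mathcal{W}(\xi)\hat{a}^\#(f)]$ to the pointwise Weyl convergence provided by Definition \ref{def:wigner}, by expressing $\hat{a}^*(f)$ and $\hat{a}(f)$ as $s$-derivatives at $s=0$ of the unitary one-parameter families $s\mapsto W(sf)$ and $s\mapsto W(\ci sf)$, and then justifying the interchange of the $\hslash_j\to 0$ limit with the derivative through a uniform $\mathscr{C}^2$ estimate. Concretely, using the (anti)linearity of $\hat{a}^\#$ together with $W(sf)=\exp[\frac{\ci s}{\sqrt 2}(\hat a(f)+\hat a^*(f))]$, I would first verify
\begin{equation*}
\hat{a}^*(f)=\frac{1}{\sqrt 2\,\ci}\,\frac{d}{ds}\Big|_{s=0}\!\bigl[W(sf)-\ci W(\ci sf)\bigr],\qquad
\hat{a}(f)=\frac{1}{\sqrt 2\,\ci}\,\frac{d}{ds}\Big|_{s=0}\!\bigl[W(sf)+\ci W(\ci sf)\bigr],
\end{equation*}
and by Weyl's composition law note that $\mathcal{W}(\xi)W(sf)=e^{-\frac{\ci\hslash_j s}{2}\Imm\langle\alpha_0,f\rangle_{\mathfrak{H}^0}}\mathcal{W}(\xi+s(0,0,f))$ with the analogous relation for $\ci f$ (with $\Imm$ replaced by $\Re$).

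Next, fixing $s\in\R$ and sending $j\to\infty$, the $\hslash_j$-dependent phase tends to $1$ and Definition \ref{def:wigner} yields pointwise convergence
\[
G_j(s):=\Tr[\varrho_{\hslash_j}\mathcal{W}(\xi)W(sf)]\longrightarrow G(s):=\int_{X^0}e^{\ci Q(\xi,u)}e^{\sqrt 2\,\ci s\,\Ree\langle f,\alpha\rangle}\,\mathrm{d}\mu(u),
\]
together with a similar statement $\widetilde G_j(s):=\Tr[\varrho_{\hslash_j}\mathcal{W}(\xi)W(\ci sf)]\to\widetilde G(s):=\int e^{\ci Q(\xi,u)}e^{\sqrt 2\ci s\Imm\langle f,\alpha\rangle}\,\mathrm{d}\mu$, using the shorthand \eqref{simp.not.ap} of the main text.

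The key technical step will be to establish a uniform $\mathscr{C}^2$ bound: on any bounded $s$-interval, $|G_j''(s)|+|\widetilde G_j''(s)|\leq C$ uniformly in $j$. Differentiating the Weyl group twice gives $G_j''(s)=-\tfrac12\Tr[\varrho_{\hslash_j}\mathcal{W}(\xi)\Phi(f)^2W(sf)]$ with $\Phi(f):=\hat a(f)+\hat a^*(f)$. A standard number-operator estimate (in the spirit of Lemma \ref{lemma:2} and Remark \ref{rem:3}) provides the operator inequality $\Phi(f)^2\leq C_f(\hat N_\hslash+1)$ uniformly in $\hslash\in(0,1)$. Lemma \ref{lem:estN} then shows that conjugation by Fock-space Weyl operators preserves $(\hat N_\hslash+1)^{1/2}$-boundedness uniformly in $\hslash$, while $\hat T(z_0)$ commutes with $\hat N_\hslash$; combining this with the uniform trace-class bound $\|(\hat N_\hslash+1)^{1/2}\varrho_{\hslash_j}(\hat N_\hslash+1)^{1/2}\|_{\mathscr L^1(\mathscr H)}\leq C_1$ coming from hypothesis \emph{(}\hyperref[eq:Ass1]{$S_1^{(1)}$}\emph{)} yields the desired uniform bound via the cyclic inequality $|\Tr[\varrho_{\hslash_j}A]|\leq \|(\hat N_\hslash+1)^{1/2}\varrho_{\hslash_j}(\hat N_\hslash+1)^{1/2}\|_{\mathscr L^1}\,\|(\hat N_\hslash+1)^{-1/2}A(\hat N_\hslash+1)^{-1/2}\|$. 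The same reasoning shows that $\Tr[\varrho_{\hslash_j}\mathcal{W}(\xi)\hat a^\#(f)]$ is absolutely convergent.

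Given pointwise convergence and the uniform $\mathscr{C}^2$ bound, a symmetric-difference Taylor argument will produce $G_j'(0)\to G'(0)$ and $\widetilde G_j'(0)\to\widetilde G'(0)$: for any $\epsilon>0$ one picks $s$ small so that $|G'(0)-\tfrac{G(s)-G(-s)}{2s}|<\epsilon/3$ and, uniformly in $j$, $|G_j'(0)-\tfrac{G_j(s)-G_j(-s)}{2s}|<\epsilon/3$ (using the $C^2$ bound), then sends $j\to\infty$ at fixed $s$ to handle the remaining middle term. Applying the formulas of the first paragraph with the elementary identities $\ci\Ree\langle f,\alpha\rangle+\Imm\langle f,\alpha\rangle=\ci\overline{\langle f,\alpha\rangle}=\ci\langle\alpha,f\rangle$ (which delivers \eqref{eq:B3}) and $\ci\Ree\langle f,\alpha\rangle-\Imm\langle f,\alpha\rangle=\ci\langle f,\alpha\rangle$ (which delivers \eqref{eq:B5}) concludes the proof. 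The hardest part will be the uniform $\mathscr{C}^2$ bound: the unboundedness of $\Phi(f)^2$, combined with the non-commutativity of $\mathcal{W}(\xi)$ with $\hat N_\hslash$, makes the estimate delicate, and this is precisely where the uniform number estimates of Section \ref{sec.PF} and the assumption \emph{(}\hyperref[eq:Ass1]{$S_1^{(1)}$}\emph{)} must enter in an essential way.
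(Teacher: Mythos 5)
Your proposal is correct and follows essentially the same route as the paper's proof: both recover $\hat a^\#(f)$ as a limit of Weyl difference quotients, use Definition~\ref{def:wigner} for pointwise convergence at fixed $s$, and justify the interchange of the $\hslash\to 0$ limit with the $s$-derivative by a uniform number-operator estimate resting on Lemma~\ref{lem:estN} and assumption~(\hyperref[eq:Ass1]{$S_1^{(1)}$}). The only cosmetic differences are that you work with $W(sf)$ and $W(\ci sf)$ and a symmetric difference quotient with a uniform $\mathscr{C}^2$ bound, whereas the paper splits into $\hat\phi(f)$, $\hat\pi(f)$ and uses a one-sided difference quotient with a first-order Taylor remainder estimate on $\mathrm{Id}-W(\sqrt2\tau f)$ — logically equivalent.
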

\begin{proof}
We use the same notation as \eqref{simp.not.ap}.
By linearity, it suffices to prove the following limits,
\begin{subequations}
\begin{align}
\label{eq:B9}
\lim_j\Tr\left[ \varrho_{\hslash_j} \, \mathcal{W}(\xi) \, \hat \phi(f)\right]=2\int_{X^0}  e^{\ci Q(\xi,u)} \,
\Ree\langle \alpha,f\rangle_{\mathfrak{H}^0} \;{\rm d}\mu(u)\,,\\
\label{eq:B10}
\lim_j\Tr\left[ \varrho_{\hslash_j} \, \mathcal{W}(\xi) \, \hat \pi(f)\right]=2\ci\int_{X^0}  e^{\ci Q(\xi,u)} \,
\Imm\langle f,\alpha\rangle_{\mathfrak{H}^0} \;{\rm d}\mu(u)\,,
\end{align}
\end{subequations}
where $\hat\phi$ and $\hat\pi$ are the fields operators defined as
\begin{equation}
\hat\phi(f)=\hat a^*(f)+\hat a(f)\,,\qquad \hat \pi(f)=\ci \hat a^*(f)-\ci \hat a(f)\,.
\end{equation}
The arguments for \eqref{eq:B9} and \eqref{eq:B10} are similar; so we will consider only the first one. Recall that $\hat\phi(\alpha_0)$ is a self-adjoint operator
satisfying  $W(f)=e^{\frac{\ci}{\sqrt{2}}\hat\phi(f)}$.
We will use an approximation argument based on the  inequality:
\begin{eqnarray}
\label{eq:B11}
\hspace{-.5in}\left|\Tr\left[ \varrho_{\hslash_j} \, \mathcal{W}(\xi) \, \hat\phi(f) \right]-
2\int_{X^0} e^{\ci Q(\xi,u)} \,\Ree\langle \alpha,f\rangle_{\mathfrak{H}^0} \;{\rm d}\mu\right| &\leq & \nonumber \\
&&\hspace{-3in}\left|\Tr\left[ \varrho_{\hslash_j} \, \mathcal{W}(\xi) \, \left(\hat\phi(f)-\frac{W(\sqrt{2}s f)-Id}{\ci s} \right)\right]\right| \\
\label{eq:B12}
&& \hspace{-3in} +\left|\Tr\left[ \varrho_{\hslash_j} \, \mathcal{W}(\xi) \, \frac{W(\sqrt{2}s f)-Id}{\ci s} \right]-
\int_{X^0} e^{\ci Q(\xi,u)} \,\frac{e^{2s\ci\Ree\langle\alpha,f\rangle_{\mathfrak{H}^0}}-1}{\ci s} \;{\rm d}\mu\right| \\
\label{eq:B13}
&& \hspace{-3in}
+\left|\int_{X^0} e^{\ci Q(\xi,u)} \, \left(\frac{e^{2s\ci\Ree\langle\alpha,f\rangle_{\mathfrak{H}^0}}-1}{\ci s}-2\Ree\langle \alpha,f\rangle_{\mathfrak{H}^0}
\right)\;{\rm d}\mu\right|\,.
\end{eqnarray}
Dominated convergence and Lemma \ref{lem:apest} imply that the right hand side of \eqref{eq:B13} tends to zero as $s\to 0$ since
the assumptions (\hyperref[eq:Ass0]{$S_0^{(1)}$})  and (\hyperref[eq:Ass1]{$S_1^{(1)}$}) lead the bound
$$
\int_{X^0} \|\alpha\|_{\mathfrak{H}^0}^2 \,{\rm d}\mu<\infty.
$$
On the other hand,  for any $s\in\R^*$ the right hand side of \eqref{eq:B12} vanishes when $\hslash_j\to 0$ as a consequence of Definition \ref{def:wigner}.
So, it remains to prove that the right hand side of  \eqref{eq:B11} can be made arbitrary small uniformly in $\hslash_j$ by choosing $s$ sufficiently close to $0$.
In fact, one has
\begin{align}
\label{eq:B14}
\left|\Tr\left[ \varrho_{\hslash_j} \, \mathcal{W}(\xi) \, \left(\hat\phi(f)-\frac{W(\sqrt{2}sf)-Id}{\ci s} \right)\right]\right|
& \leq \|(\hat N_\hslash+1)^{1/2} \varrho_\hslash \mathcal{W}(\xi) (\hat N_\hslash+1)^{1/2}\|_{\mathscr{L}^1(\mathscr{H})} \\ \label{eq:B15}
&\hspace{-1in}\times \|(\hat N_\hslash+1)^{-1/2} \left(\hat\phi(f)-\frac{W(\sqrt{2}sf)-Id}{\ci s} \right) (\hat N_\hslash+1)^{-1/2}\|_{\mathscr{L}(\mathscr H)}\,.
\end{align}
So, according to Lemma \ref{lem:estN} and assumption (\hyperref[eq:Ass1]{$S_1^{(1)}$}), the right hand side of \eqref{eq:B14} is uniformly bounded with respect to $\hslash_j$.
Moreover,  Taylor expansion yields
$$
\hat\phi(f)-\frac{W(\sqrt{2}sf)-Id}{\ci s} =\frac{1}{s}\int_0^s \hat\phi(f) \, (Id-W(\sqrt{2}\tau f)) \,{\rm d}\tau\,,
$$
and on the other hand the uniform inequality \cite[Lemma 3.1]{ammari2008ahp} gives
$$
\|(Id-W(\sqrt{2}\tau f)) (\hat N_\hslash+1)^{-1/2}\|_{\mathscr{L}(\mathscr H)} \lesssim |\tau| \;\|f\|_{\mathfrak{H}^0} \,.
$$
Hence, by Lemma \ref{lemma:2} and the above inequalities one obtains
\begin{eqnarray*}
r.h.s ~\eqref{eq:B15} \lesssim  \left|\frac{1}{s} \int_0^s |\tau| \; {\rm d}s\right| \underset{s\to 0}{\longrightarrow} 0\,,
\end{eqnarray*}
uniformly with respect to $\hslash_j$.

\end{proof}

\begin{lemma}
\label{lemma:rhoti}
Let $(\varrho_\hslash)_{\hslash\in(0,1)}$ be a family of density matrices on $\mathcal H$ satisfying
\emph{(}\hyperref[eq:Ass0]{$S_0^{(1)}$}\emph{)}  and \emph{(}\hyperref[eq:Ass1]{$S_1^{(1)}$}\emph{)}. Define
\begin{equation}
  \tilde{\varrho}_\hslash =
e^{-\ci \frac{t}{\hslash} \hat{H}^0_\hslash}  \, \varrho_\hslash \,  e^{\ci \frac{t}{\hslash} \hat{H}^0_\hslash}\,,
\end{equation}
then the family $(\tilde\varrho_\hslash)_{\hslash\in(0,1)}$ satisfies the assumption \eqref{eq:wigcd} with $\delta=1$ and
$$
\mathcal{M}({\varrho}_\hslash, \hslash\in(0,1))= \left\{(\Phi_t^0)_{\sharp}\tilde\mu, \;\tilde \mu\in\mathcal{M}(\tilde{\varrho}_\hslash, \hslash\in(0,1))\right\}\,,
$$
where $\Phi_t^0$ is the free flow defined on $X^0$  as
\begin{equation}
\label{eq:freeflowB}
\Phi_{t}^0(p,q,\alpha)=\,\left((p_i)_{i=1,\cdots,n}\,;(q_i+t \frac{p_i}{m_i})_{i=1,\cdots,n}\,; (e^{-\ci t|\cdot|}\alpha_\lambda)_{\lambda=1,\cdots,d-1}\right)\,.
\end{equation}
\end{lemma}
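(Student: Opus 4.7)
The plan is to proceed in two steps: first verifying the uniform trace bound \eqref{eq:wigcd} with $\delta=1$, and then computing the action of the free unitary conjugation on the Weyl--Heisenberg operators to relate the two families of Wigner measures.

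For the regularity, since $\hat{p}^2$, $\hat{N}_{\hslash}$ and $\hat{H}_{\mathrm{f}}$ all commute with $\hat{H}^0_{\hslash}$, one immediately has $\Tr[\tilde{\varrho}_\hslash \,\hat{p}^2] = \Tr[\varrho_\hslash \,\hat{p}^2]$ and $\Tr[\tilde{\varrho}_\hslash \,\hat{N}_\hslash] = \Tr[\varrho_\hslash \,\hat{N}_\hslash]$. For the position operator I would use the explicit Heisenberg evolution $e^{\mp \ci t/\hslash\,\hat{p}_i^2/(2m_i)}\, \hat{q}_i\, e^{\pm \ci t/\hslash\, \hat{p}_i^2/(2m_i)} = \hat{q}_i \mp t\hat{p}_i/m_i$, which yields
\[
\Tr[\tilde{\varrho}_\hslash \,\hat{q}^2] \lesssim \Tr[\varrho_\hslash \,\hat{q}^2] + t^2\, \Tr[\varrho_\hslash \,\hat{p}^2]\,,
\]
uniformly bounded in $\hslash\in (0,1)$ thanks to \emph{(}\hyperref[eq:Ass0]{$S_0^{(1)}$}\emph{)}, \emph{(}\hyperref[eq:Ass1]{$S_1^{(1)}$}\emph{)} and $\hat{p}^2 \lesssim \hat{H}^0_\hslash$. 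The quadratic form meaning of the above identity is handled by the Faris--Lavine type approximation already used in Lemma~\ref{lemma:q2}.

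For the identification of the Wigner measures, the key point is the intertwining formula
\[
e^{-\ci \tfrac{t}{\hslash}\hat{H}^0_\hslash}\,\mathcal{W}(\pi q,-\pi p,\sqrt{2}\pi\alpha)\,e^{\ci \tfrac{t}{\hslash}\hat{H}^0_\hslash} = \mathcal{W}(\pi q', -\pi p',\sqrt{2}\pi \alpha')\,,
\]
obtained by factoring $\mathcal{W} = \hat{T} \otimes W$, using the Heisenberg evolution of $\hat{q}_i$ under $\hat{p}_i^2/(2m_i)$ together with the free field identity $e^{-\ci t/\hslash \hat{H}_{\mathrm{f}}}\,\hat{a}^\sharp_\lambda(f)\,e^{\ci t/\hslash \hat{H}_{\mathrm{f}}} = \hat{a}^\sharp_\lambda(e^{\pm \ci t|k|} f)$. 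A direct bookkeeping of the signs shows that the new label $\xi' = (p',q',\alpha')$ is obtained from $\xi = (p,q,\alpha)$ by the real-linear adjoint of $\Phi_t^0$ with respect to $\Re\langle\cdot,\cdot\rangle_{X^0}$, namely $\Re\langle \xi', u\rangle_{X^0} = \Re\langle \xi, \Phi_t^0(u)\rangle_{X^0}$ for every $u\in X^0$. Then by cyclicity of the trace, along any subsequence $(\hslash_j)$ extracting $\mu \in \mathcal{M}(\varrho_\hslash, \hslash\in(0,1))$ one obtains
\[
\lim_j \Tr\big[\tilde{\varrho}_{\hslash_j}\,\mathcal{W}(\pi q,-\pi p,\sqrt{2}\pi\alpha)\big] = \int_{X^0} e^{2\pi\ci \Re\langle \xi,\Phi_t^0(u)\rangle_{X^0}}\,\mathrm{d}\mu(u) = \int_{X^0} e^{2\pi\ci\Re\langle \xi, v\rangle_{X^0}}\,\mathrm{d}((\Phi_t^0)_\sharp \mu)(v)\,,
\]
which identifies $(\Phi_t^0)_\sharp \mu$ as an element of $\mathcal{M}(\tilde{\varrho}_\hslash, \hslash\in(0,1))$. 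Running the argument in reverse and using that $\Phi_t^0$ is a real-linear bijection of $X^0$ with inverse $\Phi_{-t}^0$ yields the converse inclusion, hence the claimed equality of sets.

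The main care required is in ensuring that all the conjugation identities hold strongly on $Q(\hat{H}^0_\hslash)$ so that the trace manipulations are legitimate. This follows from the invariance of $Q(\hat{H}^0_\hslash)$ under both $\mathcal{W}(\xi)$ and $e^{\pm\ci t/\hslash\,\hat{H}^0_\hslash}$ (Corollary~\ref{s4.lemE} and Lemmas~\ref{s3.lemW}, \ref{lem:Tsmo}) combined with the uniform bound \eqref{eq:Ass0}. Beyond this bookkeeping no substantial obstacle is anticipated; in particular, no commutator with the interaction part appears, so none of the delicate estimates of Section~\ref{sec.PF} are needed here.
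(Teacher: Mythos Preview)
Your approach is essentially identical to the paper's: both verify the regularity by noting that $\hat p^2$, $\hat N_\hslash$ and $\hat H_{\mathrm f}$ commute with $\hat H^0_\hslash$ and handle $\hat q^2$ via the free Heisenberg evolution (the paper simply invokes Lemma~\ref{lemma:q2} at that step), and both compute the conjugation of $\mathcal W(\xi)$ by $e^{\pm\ci t/\hslash\,\hat H^0_\hslash}$ to identify the pushforward relation on Wigner measures, obtaining the reverse inclusion by symmetry. One caveat on sign bookkeeping: the conjugation you display is $e^{-\ci t/\hslash\,\hat H^0_\hslash}\,\mathcal W(\xi)\,e^{+\ci t/\hslash\,\hat H^0_\hslash}$, whereas cyclicity of the trace with $\tilde\varrho_\hslash=e^{-\ci t/\hslash\,\hat H^0_\hslash}\varrho_\hslash e^{+\ci t/\hslash\,\hat H^0_\hslash}$ produces the \emph{opposite} conjugation, and correspondingly your conclusion $\tilde\mu=(\Phi_t^0)_\sharp\mu$ is the inverse of the lemma's stated direction $\mu=(\Phi_t^0)_\sharp\tilde\mu$; this is not a gap in the method, only a sign to be tracked carefully.
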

\begin{proof}
First notice  that
$$
\Tr[\tilde{\varrho}_\hslash \,(\hat N_\hslash+\hat H_\hslash^0)  ]=\Tr[{\varrho}_\hslash \,(\hat N_\hslash+\hat H_\hslash^0)]\,,
$$
since $\hat{H}^0_\hslash$  strongly commutes with the operator  $\hat N_\hslash$. So, one only needs to check that
$$
\Tr[\tilde{\varrho}_\hslash \,\hat q^2  ]<\infty\,.
$$
The last statement follows by Lemma \ref{lemma:q2} and the spectral decomposition of $\varrho_\hslash$.  Hence, this
ensures that the family  $(\tilde\varrho_\hslash)_{\hslash\in(0,1)}$ satisfies the assumption \eqref{eq:wigcd} with $\delta=1$.

Now take $\mu\in \mathcal{M}({\varrho}_\hslash, \hslash\in(0,1))$. Then there exists a sequence $\hslash_k$ such that $\hslash_k\to 0$ and
$$
\mathcal{M}({\varrho}_{\hslash_k}, k\in\N)=\{\mu\}\,.
$$
In particular, Definition \ref{def:wigner} implies that for any $\xi=(p_0,q_0,\alpha_0)\equiv(z_0,\alpha_0)\in X^0$,
\begin{equation}
\lim_{k} \Tr\big[ \varrho_{\hslash_k} \;\mathcal{W}\big(\xi\big)\big] =
\int_{X^0}e^{2\ci\Imm\langle z,z_0\rangle} e^{\sqrt{2}\ci\Ree\langle \alpha_0,\alpha\rangle_{\mathfrak{H}^0}}\; \mathrm{d}\mu(u)\,,
\end{equation}
where $u=(z,\alpha)$. So, the latter identity yields for  $\tilde\xi=(\pi q,-\pi p,\sqrt{2}\pi \alpha)$,
 \begin{align*}
 \Tr\big[ \tilde\varrho_{\hslash_k} \;\mathcal{W}\big(\xi\big)\big] &=
\Tr\big[ \varrho_{\hslash_k} \;e^{\ci \frac{t}{\hslash} \hat{H}^0_\hslash}\mathcal{W}\big(\xi\big) \,e^{-\ci \frac{t}{\hslash} \hat{H}^0_\hslash}\big]\\
 &= \Tr\big[ \varrho_{\hslash_k} \;\mathcal{W}\big(\xi_t\big) \big]\,,
\end{align*}
with $\xi_t=(p_0,q_0+tp_0, e^{-\ci t|\cdot|}\alpha_0)\equiv ((q_0+tp_0)+\ci p_0, e^{-\ci t|\cdot|}\alpha_0) \in X^0$. Hence, one obtains
\begin{align*}
\lim_{k} \Tr\big[ \tilde\varrho_{\hslash_k} \;\mathcal{W}\big(\xi\big)\big]&=\int_{X^0}
e^{2\ci\Imm\langle z,z_0(t)\rangle} e^{\sqrt{2}\ci\Ree\langle \alpha_0(t),\alpha\rangle_{\mathfrak{H}^0}}\; \mathrm{d}\mu(u)\\
&=\int_{X^0} e^{2\ci\Imm\langle z,z_0\rangle} e^{\sqrt{2}\ci\Ree\langle \alpha_0,\alpha\rangle_{\mathfrak{H}^0}}\; \mathrm{d}(\Phi_{-t}^0)_\sharp\mu(u)\,,
\end{align*}
where $u=(z,\alpha)$, $z_0(t)=(q_0+tp_0)+\ci p_0$ and $\alpha_0(t)=e^{-\ci t|\cdot|}\alpha_0$. Note that we have used for all $u=(z,\alpha)\equiv(p,q,\alpha)\in X^0$ and $t\in\R$,
$$
\Imm\langle q+ip,z_0(t) \rangle=\Imm\langle q-tp+ip,z_0 \rangle \,, \qquad \Ree\langle  \alpha,\alpha_0(t) \rangle_{\mathfrak{H}^0}=\Ree\langle  e^{it|\cdot|}\alpha,\alpha_0 \rangle_{\mathfrak{H}^0}\,.
$$
Therefore, the following statement holds true
$$
 \mathcal{M}({\tilde\varrho}_{\hslash_k}, k\in\N)=\{\tilde\mu\}\, \qquad \text{ and } \qquad \tilde\mu=(\Phi_{-t}^0)_\sharp\mu \,.
$$
This proves
$$
\mathcal{M}({\varrho}_\hslash, \hslash\in(0,1))\subset\left\{(\Phi_t^0)_{\sharp}\tilde\mu, \;\tilde \mu\in\mathcal{M}(\tilde{\varrho}_\hslash, \hslash\in(0,1))\right\}\,.
$$
On the other hand, starting from $\tilde\mu\in \mathcal{M}({\tilde\varrho}_\hslash, \hslash\in(0,1))$ and mimicking the same argument one shows the  opposite
inclusion.
\end{proof}

\bibliographystyle{plain}

\end{document}